\theoremstyle{thmstyleone}%
\newtheorem{theorem}{Theorem}
\newtheorem{proposition}[theorem]{Proposition}%
\newtheorem{corollary}[theorem]{Corollary}
\newtheorem{lemma}[theorem]{Lemma}
\theoremstyle{thmstyletwo}%
\newtheorem{remark}{Remark}%
\theoremstyle{thmstylethree}%
\let\cal\mathcal
\newcommand{\one}{\boldsymbol{1}}
\def\XXint#1#2#3{{\setbox0=\hbox{$#1{#2#3}{\int}$ }
\vcenter{\hbox{$#2#3$ }}\kern-.6\wd0}}
\definecolor{codegreen}{rgb}{0,0.6,0}
\definecolor{codegray}{rgb}{0.5,0.5,0.5}
\definecolor{codepurple}{rgb}{0.58,0,0.82}
\definecolor{backcolour}{rgb}{0.95,0.95,0.92}
\lstdefinestyle{mystyle}{
  backgroundcolor=\color{backcolour},   commentstyle=\color{codegreen},
  keywordstyle=\color{magenta},
  numberstyle=\tiny\color{codegray},
  stringstyle=\color{codepurple},
  basicstyle=\footnotesize,
  breakatwhitespace=false,         
  breaklines=true,                 
  captionpos=b,                    
  keepspaces=true,                 
  numbers=left,                    
  numbersep=5pt,                  
  showspaces=false,                
  showstringspaces=false,
  showtabs=false,                  
  tabsize=2
}
\newcommand{\proofparagraph}[1]{\noindent~~\newline\newline\noindent\textbf{ \underline{#1}.}\\}
\def\/{|\!|\!|}
\def\${|\!|\!|}
\def\l|{\left|\!\left|\!\left|}
\def\r|{\right|\!\right|\!\right|}
\def\R{{\mathbb{R}}}
\def\T{{\mathcal{T} }}
\def\L{{\mathcal{L}}}
\def\Z{{\mathbb{Z}}}
\newcommand{\Rplus}{\mathbb{R}_{+}}
\newcommand{\uhp}{\mathbb{H}}
\newcommand{\E}[0]{\mathbb{E}}
\newcommand\scal[2][ ]{\ifthenelse{\equal{#1}{ }}{\langle#2\rangle}{}
        \ifthenelse{\equal{#1}{b}}{\bigl\langle#2\bigr\rangle}{}
        \ifthenelse{\equal{#1}{B}}{\Bigl\langle#2\Bigr\rangle}{}
        \ifthenelse{\equal{#1}{bb}}{\biggl\langle#2\biggr\rangle}{}
        \ifthenelse{\equal{#1}{BB}}{\Biggl\langle#2\Biggr\rangle}{}}
\newcommand{\Prob}{\mathbb{P}}
\newcommand{\Proba}[1]{\mathbb{P}\left[#1\right]}
\newcommand{\iid}{\text{i.i.d.}}
\newcommand{\Expe}[1]{\mathbb{E}\left[ #1\right] }
\newcommand{\conditional }{\biggm| }
\newcommand{\Exp}{\mathbb{E}}
\newcommand{\wt}{\widetilde}
\newcommand\thickbar[1]{\accentset{\rule{.4em}{.8pt}}{#1}}
\def\threebars{|\!|\!|}
\def\thrb{\big|\!\big|\!\big|}
\def\thrB{\Big|\!\Big|\!\Big|}
\def\thrbb{\bigg|\!\bigg|\!\bigg|}
\def\thrBB{\Bigg|\!\Bigg|\!\Bigg|}
\newcommand\nn[3][ ]{%
\ifthenelse{\equal{#1}{ }}{\|{#3}\|_{#2}}{}%
\ifthenelse{\equal{#1}{b}}{\bigl\|{#3}\bigr\|_{#2}}{}%
\ifthenelse{\equal{#1}{B}}{\Bigl\|{#3}\Bigr\|_{#2}}{}%
\ifthenelse{\equal{#1}{bb}}{\biggl\|{#3}\biggr\|_{#2}}{}%
\ifthenelse{\equal{#1}{BB}}{\Biggl\|{#3}\Biggr\|_{#2}}{}}%
\newcommand\nnn[3][ ]{%
\ifthenelse{\equal{#1}{ }}{\threebars{#3}\threebars_{#2}}{}%
\ifthenelse{\equal{#1}{b}}{\mathopen\thrb{#3}\mathclose\thrb_{#2}}{}%
\ifthenelse{\equal{#1}{B}}{\mathopen\thrB{#3}\mathclose\thrB_{#2}}{}%
\ifthenelse{\equal{#1}{bb}}{\mathopen\thrbb{#3}\mathclose\thrbb_{#2}}{}%
\ifthenelse{\equal{#1}{BB}}{\mathopen\thrBB{#3}\mathclose\thrBB_{#2}}{}}%
\newcommand{\bigcapls}[1]{\bigcap\limits_{\substack{#1}}}
\newcommand{\bigcupls}[1]{\bigcup\limits_{\substack{ #1  }  }  }
\newcommand{\Holder}{H\"{o}lder }
\def\mop#1{\mathop{\hbox{\textrm{#1}}}\nolimits}
\def\moplim#1{\mathop{\hbox{\textrm{#1}}}}
\def\harr#1#2{\smash{\mathop{\hbox to .5in{\rightarrowfill}}
        \limits^{\scriptstyle#1}_{\scriptstyle#2}}}
\newcommand*{\relrelbarsep}{.386ex}
\newcommand*{\relrelbar}{%
  \mathrel{%
    \mathpalette\@relrelbar\relrelbarsep
  }%
}
\newcommand*{\@relrelbar}[2]{%
  \raise#2\hbox to 0pt{$\m@th#1\relbar$\hss}%
  \lower#2\hbox{$\m@th#1\relbar$}%
}
\providecommand*{\rightrightarrowsfill@}{%
  \arrowfill@\relrelbar\relrelbar\rightrightarrows
}
\providecommand*{\leftleftarrowsfill@}{%
  \arrowfill@\leftleftarrows\relrelbar\relrelbar
}
\providecommand*{\xrightrightarrows}[2][]{%
  \ext@arrow 0359\rightrightarrowsfill@{#1}{#2}%
}
\providecommand*{\xleftleftarrows}[2][]{%
  \ext@arrow 3095\leftleftarrowsfill@{#1}{#2}%
}
\def\@boxsqr#1#2#3{\vbox to0pt{
\fboxsep0pt%
\definecolor{tempc}{rgb}{#3}%
\colorbox{tempc}{\vbox to #2{\vss\hbox to #1{\hss}}}%
}}
\long\def\fillbox{\@ifnextchar[{\@fillbox{}{}}{\@fillbox{}{}[0.5mm,0.5mm]}}
\long\def\mfillbox{\@ifnextchar[%
	{\@fillbox{$\displaystyle\bgroup}{\egroup$}}%
	{\@fillbox{$\displaystyle\bgroup}{\egroup$}[0.5mm,0.5mm]}%
}
\long\def\@fillbox#1#2[#3,#4]#5#6{%
	\leavevmode%
  \setbox\@tempboxa\hbox{#1#6#2}%
  \@tempdima\wd\@tempboxa%
  \@tempdimb\ht\@tempboxa%
  \@tempdimc\dp\@tempboxa%
  \advance\@tempdima#3\advance\@tempdima#3%
  \advance\@tempdimb#4\advance\@tempdimb#4\advance\@tempdimb\@tempdimc%
  \advance\@tempdimc#4%
  \setbox\@tempboxa\hbox{%
	\@boxsqr{\number\@tempdima}{\number\@tempdimb}{#5}%
	\kern#3\raise\@tempdimc\box\@tempboxa}%
  \kern-#3\raise-\@tempdimc\box\@tempboxa%
  }
\let\phi\varphi
\def\rho{\varrho}
\def\doncl{\quad\Leftrightarrow\quad}
\def\ie{{\it i.e.}\@ifnextchar,{}{\ }}
\def\eg{{\it e.g.}\@ifnextchar,{}{\ }}
\def\({\left(}
\def\){\right)}
\newcommand{\para}[1]{\left(#1\right) }
\newcommand{\spara}[1]{\left[ #1\right] }
\newcommand{\maxp}[1]{\max\left(#1\right) }
\newcommand{\minp}[1]{\min\left(#1\right) }
\newcommand\cinf[1][ ]{\ifthenelse{\equal{#1}{ }}{{\cal C}^\infty}{{\cal C}^\infty(#1)}}
\newcommand\cOinf[1][ ]{\ifthenelse{\equal{#1}{ }}{{\cal C}_0^\infty}{{\cal C}_0^\infty(#1)}}
\def\L^#1{{\textrm L}^{\!#1}}
\def\newop#1{\expandafter\def\csname#1\endcsname{\mop{#1}}}
\def\newoplim#1{\expandafter\def\csname#1\endcsname{\moplim{#1}}}
\newcommand{\e}{\varepsilon}
\def\CF{\mathcal{F} }
\def\CU{{\mathcal{U} } }
 \def\T{\mathbf{T}}
 \def\E{\mathbf{E}}
 \newcommand{\tor}{\text{ \normalfont{or} }}
 \newcommand{\tand}{\text{ \normalfont{and} }}
 \newcommand{\tifc}{\text{ ~,\normalfont{if}~}}
 \newcommand{\tfor}{\text{ \normalfont{for} }}
  \newcommand{\twhen}{\text{ \normalfont{when} }}
 \newcommand{\tforsome}{\text{ for some }}
 \newcommand{\twith}{\text{ with }}
 \newcommand{\tas}{\text{ as }}
 \newcommand{\mb}[1]{\mathbb{#1}}
\newcommand{\tcwhen}{\text{~,\normalfont{when}~}}
\newcommand{\sectm}[1]{\texorpdfstring{#1}{OOES}}
\newcommand{\branchmat}[1]{ \left\{\begin{matrix}
 #1
\end{matrix}\right. }
 \newenvironment{eqalign}{\begin{equation}\begin{aligned}}{\end{aligned}\end{equation}}
\providecommand{\customgenericname}{}
\newcommand{\newcustomtheorem}[2]{%
  \newenvironment{#1}[1]
  {%
   \renewcommand\customgenericname{#2}%
   \renewcommand\theinnercustomgeneric{##1}%
   \innercustomgeneric
  }
  {\endinnercustomgeneric}
}
 \newcommand{\ba}{\[\begin{aligned}}
\newcommand{\ea}{\end{aligned}\]}
 \newcommand{\ben}{\begin{enumerate}}
 \newcommand{\een}{\end{enumerate}}
 \newcommand{\bena}{\begin{enumerate}[label={\bf (\alph*) } ]}
 \newcommand{\benr}{\begin{enumerate}[\textbf{i.}]}
\newcommand{\ind}[1]{\chi\left\{#1 \right\}}
\newcommand{\liz}[2][0]{\lim\limits_{#2\to #1}}
 \newcommand{\supl}[1]{\sup\limits_{#1}}
\newcommand{\infl}[1]{\inf\limits_{#1}}
\newcommand{\infp}[1]{\inf\left\{#1\right\} }
  \newcommand{\maxl}[1]{\max\limits_{#1}}
  \newcommand{\maxls}[1]{\max\limits_{\substack{#1}}}
\newcommand{\minl}[1]{\min\limits_{#1}}
\newcommand{\minls}[1]{\min\limits_{\substack{#1}}}
\newcommand{\eqdis}{\stackrel{d}{=}}
\newcommand{\dx}{\mathrm{d}x  }
\newcommand{\dy}{\mathrm{d}y  }
\newcommand{\dt}{\mathrm{d}t  }
\newcommand{\ds}{\mathrm{d}s  }
\newcommand{\dr}{\mathrm{d}r  }
\newcommand{\deta}{\mathrm{d}\eta  }
\newcommand{\dlambda}{\mathrm{d}\lambda  }
\newcommand{\etam}[1]{\eta \left(\left[ #1\right]\right) }
\newcommand{\etamu}[2]{\eta^{#2} \left(\left[ #1\right]\right) }
\newcommand{\etaum}[2]{\eta^{#1} \left(\left[ #2\right]\right) }
\newcommand{\etauin}[2]{\eta^{#1}  \left( #2\right) }
  \newcommand{\prodnk}[1]{\prod_{k=0}^{n}}
\newcommand{\sumls}[1]{\mathlarger{\mathlarger{\sum \limits_{\substack{#1}  } }}}
\newcommand{\normthrb}[1]{{\left\vert\kern-0.25ex\left\vert\kern-0.25ex\left\vert #1 
    \right\vert\kern-0.25ex\right\vert\kern-0.25ex\right\vert}}
\providecommand{\vrectangle}{\mkern1mu\mathpalette\v@rectangle\relax\mkern1mu}
\newcommand{\v@rectangle}[2]{%
  \hbox{
  \fboxrule=0.5\fontdimen 8
    \ifx#1\displaystyle\textfont\else
    \ifx#1\textstyle\textfont\else
    \ifx#1\scriptstyle\scriptfont\else
    \scriptscriptfont\fi\fi\fi 3
  \fboxsep=-\fboxrule
  \fbox{$\m@th#1\phantom{(}$}%
  }
}
\newcommand{\expo}[1]{\exp\left\{ #1\right\}}
\newcommand{\floor}[1]{\lfloor#1 \rfloor}
\newcommand{\ceil}[1]{\lceil#1 \rceil }
\DeclareRobustCommand{\cev}[1]{%
  \mathpalette\do@cev{#1}%
}
\newcommand{\do@cev}[2]{%
  \fix@cev{#1}{+}%
  \reflectbox{$\m@th#1\vec{\reflectbox{$\fix@cev{#1}{-}\m@th#1#2\fix@cev{#1}{+}$}}$}%
  \fix@cev{#1}{-}%
}
\newcommand{\fix@cev}[2]{%
  \ifx#1\displaystyle
    \mkern#23mu
  \else
    \ifx#1\textstyle
      \mkern#23mu
    \else
      \ifx#1\scriptstyle
        \mkern#22mu
      \else
        \mkern#22mu
      \fi
    \fi
  \fi
}
\newcommand{\EE}{\ensuremath{\mathbb{E}}}
\newcommand{\setword}[2]{%
  \phantomsection
  #1\def\@currentlabel{\unexpanded{#1}}\label{#2}%
}
\let\old@rule\@rule
\def\@rule[#1]#2#3{\textcolor{blue}{\old@rule[#1]{#2}{#3}}}
\begin{document}

\title[Article Title]{Inverse of the Gaussian multiplicative chaos: Decoupling and Multipoint moments}


\author*[1]{\fnm{Ilia} \sur{Binder}}\email{ilia@math.utoronto.ca}

\author*[1]{\fnm{Tomas} \sur{Kojar}}\email{tomas.kojar@mail.utoronto.ca}

\affil*[1]{\orgdiv{Math Department}, \orgname{University of Toronto}, \orgaddress{\street{40 St. George Street}, \city{Toronto}, \postcode{M5S 2E4}, \state{Ontario}, \country{Canada}}}


\abstract{ In this article we study the decoupling structure and multipoint moment of the inverse of the Gaussian multiplicative chaos. It is also the second part of preliminary work for extending the work in "Random conformal weldings" \cite{AJKS} to the existence of Lehto welding for the inverse. In particular, we prove that the dilatation of the inverse homeomorphism on the positive real line is in $L^{1}([0,1]\times[0,2])$.}

\keywords{Gaussian multiplicative chaos, Log-correlated Gaussian fields}


\pacs[MSC Classification]{3Primary 60G57; Secondary 60G15}

\maketitle
\tableofcontents

\par\rule{\textwidth}{0.4pt} 
 
\newpage\part{Introduction}
In this article we continue our systematic study of the inverse of 1d-GMC from \cite{binder2023inverse}. We consider a Gaussian field $U(t)$ on the real line $[0,\infty)$ with logarithmic covariance $\Expe{U(t)U(s)}=-\ln\abs{t-s}$, the corresponding GMC measure $\eta$
\begin{eqalign}
\eta(I):=\liz{\e}\int_{I}e^{\gamma U_{\e}(x)-\frac{\gamma^{2}}{2}\Expe{\para{U_{\e}(x)}^{2}}}\dx, I\subset [0,\infty)
\end{eqalign}
and its inverse $Q=Q_{\eta}$ i.e. $Q(\eta(x))=x=\eta(Q(x))$ (see precise definitions below) and let $Q(a,b):=Q(b)-Q(a)$. In \cite{binder2023inverse}, we studied the moments $\Expe{(Q(a,b))^{p}}$ for $p\in\para{-\frac{(1+\frac{\gamma^{2}}{2})^{2}}{2\gamma^{2}}, \infty}$. In this article, we study the behaviour of the multipoint moments 
\begin{equation}
\Expe{\prod_{k}\para{Q^{k}(a_{k},b_{k})}^{p_{k}} } \tand \Expe{\prod_{k}\para{\frac{Q^{k}(a_{k},b_{k})}{Q^{k}(c_{k},d_{k})}}^{p_{k}} } ,  \end{equation}
with $d_{k+1}<a_{k}<b_{k}<c_{k}<d_{k}$. This requires an in-depth understanding of the correlation between two intervals $Q([a,b])$ and $Q([c,d])$. This article is also the second part of the project extending the work in \cite{AJKS} to the case of inverse conformal welding, see \cite{AJKS,binder2023inverse} for the precise statement and motivation of the problem.

\subsection{Outline}
Here is an outline of the paper.
\begin{itemize}  
    \item In \cref{partdecoupling} we study the decoupling structure for the inverse. 
    \item In \cref{part:inverse_ratio_moments} we study the moments of the ratio $\frac{Q(a,a+x)}{Q(b,b+x)}$ because of its relevance to the dilatation in the \cite{AJKS}-framework.
    \item In \cref{partmultipointestimates} we use the decoupling structure to study the multipoint moments
    \begin{equation}
\Expe{\prod_{k}\para{Q^{k}(a_{k},b_{k})}^{p_{k}} } \tand \Expe{\prod_{k}\para{\frac{Q^{k}(a_{k},b_{k})}{Q^{k}(c_{k},d_{k})}}^{p_{k}} },   
\end{equation}
with $d_{k+1}<a_{k}<b_{k}<c_{k}<d_{k}$.

    \item In the appendix, we prove GMC estimates that we need for this work. 

\end{itemize}
\section{Main results}
The main results are the moments for the ratio $\frac{Q(a,b)}{Q(c,d)}$ and the product of the ratios.
\subsection{Single-point Ratio}
For the moments of the ratio, we go over the cases of decreasing numerator, equal-length intervals for $Q$ and equal-length intervals for $Q_{H}$, the inverse corresponding to the field $H$ in \cref{eq:covarianceunitcircle}. 
\begin{proposition}
We go over the cases of decreasing numerator, equal-length intervals for $Q$ and the same for $Q_{H}$.
\begin{itemize}
    \item Fix  $\beta\in  (0,0.152)$ and $\delta\leq 1$ . Assume $\abs{I}=r\delta$ for some $r\in (0,1)$ and $\abs{J}\to 0$. Then there exists $\e_{1},\e_{2}>0$, such that for each $p=1+\e_{1}$, one can find $q=1+\e_{2}\tand \tilde{q}>0$ such that
\begin{eqalign}
  \Expe{\para{\frac{Q^{\delta}\para{J}}{Q^{\delta}\para{I}}}^{p}}\lessapprox \para{\frac{\abs{J}}{\delta}}^{q} r^{-\tilde{q}}.
\end{eqalign}

 \item Fix $\gamma<\frac{2}{\sqrt{3}}$ and $\delta\leq 1$. Assume we have two equal-length intervals $J=(a,a+x),I=(b,b+x)\subset [0,1]$ with $b-a=c_{b-a}x$ for $c_{b-a}>1$  and $a=c_{1}\delta,b=c_{2}\delta$ and $x< \delta$.Then we have for all $p\in [1,1+\e_{1}]$ with small enough $\e_{1}>0$, a bound of the form
\begin{eqalign}
\maxp{\Expe{\para{\frac{Q(a,a+x)}{Q(b,b+x)}}^{p} },\Expe{\para{\frac{Q(b,b+x)}{Q(a,a+x)}}^{p} }}\leq c_{a,b,p,\delta }\para{\frac{x}{\delta}}^{-\e_{ratio}(p)},
\end{eqalign}
where $\e_{ratio}(p)\in (0,1)$ can be made arbitrarily small at the cost of larger comparison constant $c_{a,b,p,\delta }$. The constant $c_{a,b,p,\delta }$ is uniformly bounded in $\delta$. 

\item Fix $\gamma<\frac{2}{\sqrt{3}}$.  For the inverse $Q_{H}$ corresponding to the field $H$ in \Cref{eq:covarianceunitcircle}, we have the same singular estimate as above for $\delta=1$
\begin{eqalign}
\maxp{\Expe{\para{\frac{Q_{H}(a,a+x)}{Q_{H}(b,b+x)}}^{p} },\Expe{\para{\frac{Q_{H}(b,b+x)}{Q_{H}(a,a+x)}}^{p} }}\leq c_{a,b,p,\delta }x^{-\e_{ratio}(p)}.
\end{eqalign}

\end{itemize}
\end{proposition}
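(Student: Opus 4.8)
The plan is to reduce the statement to the $\delta=1$ case of the second item of the Proposition, by comparing the circle field $H$ with the exactly scale--invariant field $U$. The point is that $H$ has, locally, essentially the same covariance as $U$: since $\abs{e^{\I t}-e^{\I s}}=\abs{t-s}\cdot\big|\tfrac{\sin((t-s)/2)}{(t-s)/2}\big|$, the covariance of $H$ differs on any bounded arc from $-\ln\abs{t-s}$ by the smooth bounded kernel $g(t,s)=-\ln\big|\tfrac{\sin((t-s)/2)}{(t-s)/2}\big|$, which vanishes on the diagonal, and in particular $C_{U}\le C_{H}\le C_{U}+\linf{g}$ on $[0,2]^{2}$. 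By the standard decomposition of a log--correlated field on a compact set (cf.\ \cite{AJKS}) one may realize $H$ and $U$ on a common probability space with $H=U+G$, $G$ an a.s.\ bounded Gaussian correction, so that $\eta_{H}(\dx)=\Phi(x)\,\eta_{U}(\dx)$ for a positive weight $\Phi$ with $\sup\Phi,(\inf\Phi)^{-1}\in(0,\infty)$ a.s.; Fernique's inequality gives $\linf{\ln\Phi}$ Gaussian--type tails, so $\sup\Phi$, $(\inf\Phi)^{-1}$ and $M:=\sup\Phi/\inf\Phi$ have moments of all orders. At a single fixed scale this makes $\eta_{H}$ interchangeable with $\eta_{U}$: the GMC inputs used in the second item --- positive and negative moments of sub--interval masses (the latter in the range of \cite{binder2023inverse}, which contains $\gamma<\tfrac2{\sqrt3}$), together with the multipoint and decoupling bounds of \cref{partdecoupling} --- either transfer directly to $\eta_{H}$ or follow from their $\eta_{U}$--versions by Kahane's convexity inequality using the covariance sandwich above.

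Given this, I would rerun the proof of the second item at $\delta=1$ with $(\eta,Q)$ replaced by $(\eta_{H},Q_{H})$: every constant acquires a harmless dependence on $\linf{g}$, and one obtains $\maxp{\Expe{\para{Q_{H}(a,a+x)/Q_{H}(b,b+x)}^{p}},\Expe{\para{Q_{H}(b,b+x)/Q_{H}(a,a+x)}^{p}}}\le c_{a,b,p}\,x^{-\e_{ratio}(p)}$. If instead one wishes to invoke the second item as a black box, the same bound follows from the change of variables $Q_{H}(s)=Q_{U}\para{\Psi^{-1}(s)}$ with $\Psi(v):=\int_{0}^{v}\Phi\para{Q_{U}(w)}\,\dw$: since $\inf\Phi\le\Psi'\le\sup\Phi$, the map $\Psi$ is bi--Lipschitz with ratio $M$, whence $Q_{H}(a,a+x)=Q_{U}(v_{0},v_{0}+y)$ and $Q_{H}(b,b+x)=Q_{U}(v_{1},v_{1}+y')$ with $y,y'\asymp x$, $v_{1}-v_{0}\asymp b-a=c_{b-a}x$, and $v_{0}\asymp a$, $v_{1}\asymp b$, the comparisons involving only the random constant $M$. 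Conditioning on $\{M\le K\}$ and on the a.s.\ finite mass $\eta_{U}([0,1])$ being $\le K$ --- the complementary events having probability decaying in $K$ and negligible against the polynomial loss --- places this in the $\delta=1$ regime of the second item, up to enlarging the range of $\eta_{U}$--times to $[0,K]$ and letting the two small lengths be merely comparable rather than equal, both of which that argument accommodates; integrating over $K$ absorbs the constants into $c_{a,b,p}$.

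The genuine limitation, rather than an obstacle to this particular statement, is the absence of exact $\star$--scale invariance for $H$: the uniformity in $\delta$ in the second item rested on $U(\delta\,\cdot)\eqdis U(\cdot)+\mathcal N\para{0,\ln\tfrac1\delta}$, which $H$ satisfies only asymptotically near the diagonal, so the argument closes only at unit scale. The point needing the most care is that not only the listed moment estimates but also the dyadic multiscale part of the second item's proof survives replacing $U$ by $H$ --- it does, since $H$ inherits $U$'s conditional--independence--across--scales structure up to the smooth correction $g$ --- and that the random comparison constants ($\sup\Phi$, $(\inf\Phi)^{-1}$, $M$, or the log--normal factor from Kahane) are integrable against the at most polynomial loss in $x$; I do not anticipate a new difficulty here, which is consistent with the admissible range $\gamma<\tfrac2{\sqrt3}$ coinciding with the one in the second item.
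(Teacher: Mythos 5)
There is a genuine gap, and it is structural: the statement you were asked to prove is the whole proposition, but your proposal only addresses the third bullet, and even that by reduction to the second. The first bullet (decreasing numerator, with the constraint $\beta\in(0,0.152)$ coming from balancing the inverse-moment exponents after H\"older) and the second bullet (equal-length intervals on the real line) are never proved; they are invoked as black boxes. In the paper these two items carry essentially all of the analytic content: the decomposition of the random shift $Q(a)$ and of $Q_{b-a}\bullet Q_{a}$, the application of the lognormal scaling law with respect to the random scale $a_{k+1}$ (not $x$), the FKG and max/min modulus estimates, the $t$-integral and the $k,\ell$-sums, and the constraint analysis that produces the exponent $\e_{ratio}(p)$ and the admissible ranges of $\beta$ and $\gamma$. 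None of this is reproduced or replaced in your argument, so as a proof of the stated proposition it is incomplete.

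Even restricted to the $Q_{H}$ item, the black-box reduction does not close. After coupling $\eta_{H}=\Phi\,\eta_{U}$ (which itself needs justification beyond the covariance sandwich: a bounded difference of covariances is not automatically a positive-definite kernel, so one has to use the common white-noise construction, as the paper does with the comparison field $\xi=\gamma(H-U)$ whose supremum has exponential moments), the intervals $[v_{0},v_{0}+y]$, $[v_{1},v_{1}+y']$ produced by your change of variables are random and correlated with the field. The second item is a statement about deterministic intervals with a fixed relation $b-a=c_{b-a}x$; to use it here one needs versions of all the underlying estimates that are uniform over random shifts and lengths in a range, i.e.\ supremum/infimum-over-shift moment bounds, which is precisely the extra machinery the paper builds (the decomposition events for $\sup\xi$, the modified scaling-law step where $\tilde{x}=xe^{d_{m_{1}}}$ can be large, and the max/min shifted-GMC lemmas). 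Your fallback of conditioning on $\{M\le K\}$ and "integrating over $K$" also does not decouple: $M$ and the ratio are correlated, so separating them requires H\"older with an exponent $pp_{1}>p$, and the whole point of the proposition is that the ratio moments are only available for $p$ in a tight window near $1$ (in the proof the constraints such as $q_{0}q_{1}<\beta^{-1}$ with $\beta^{-1}>2+o(\e)$ leave no room to raise the exponent). So the reduction either silently assumes a stronger, uniform version of the second item or exits the admissible moment range; in both cases a genuine argument — essentially the one the paper carries out by rerunning the real-line proof with $\eta_{H}=e^{\xi}\eta_{U}$ and controlling $\xi$ by decomposition — is missing.
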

One major corollary of this proposition is that the dilatation of the inverse homeomorphism on the positive real line is in $L^{1}_{loc}$. This is the analogous result to \cite[Lemma 4.5]{AJKS}. There they studied the dilation $K$ for the Ahlfors-Beurling extension of the homeomorphism $h(x)=\frac{\tau(0,x)}{\tau(0,1)}$. Here we study the dilatation $K_{Q}$ for the Ahlfors-Beurling extension of the inverse map $Q^{1}(0,x)$ mapping $Q^{1}:\Rplus\to \Rplus$.
\begin{corollary}
We have finiteness
\begin{equation}
\int_{[0,1]\times [0,2]}K_{Q}(x+iy)\dx\dy\leq \frac{c}{2(1-\epsilon_{ratio}(1))}<\infty.    
\end{equation}
\end{corollary}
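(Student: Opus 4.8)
The plan is to combine the classical pointwise dilatation estimate for the Beurling--Ahlfors extension with the ratio bound of the Proposition above, applied at $p=1$. Write $h(x):=Q^{1}(0,x)$ for the boundary homeomorphism of the half--line and extend it to a homeomorphism of $\R$ by odd reflection through the origin. For the Beurling--Ahlfors extension $E(h):\uhp\to\uhp$ of $h$, the maximal dilatation at a point $x+iy$ is controlled, as in \cite[Lemma 4.5]{AJKS} and the classical Beurling--Ahlfors estimate, by the quasisymmetry ratio of $h$ at scale $y$:
\begin{eqalign}
K_{Q}(x+iy)\leq c\maxp{\rho(x,y),\ \rho(x,y)^{-1}},\qquad \rho(x,y):=\frac{h(x+y)-h(x)}{h(x)-h(x-y)}=\frac{Q^{1}(x,x+y)}{Q^{1}(x-y,x)}.
\end{eqalign}
So it suffices to bound $\Expe{\int_{[0,1]\times[0,2]}K_{Q}(x+iy)\dx\dy}$, which by Fubini equals $\int_{0}^{1}\int_{0}^{2}\Expe{\rho(x,y)+\rho(x,y)^{-1}}\dy\dx$. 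The odd extension is only felt on the region $y>x$, where $h(x)-h(x-y)=Q^{1}(0,x)+Q^{1}(0,y-x)$ is a sum of genuine GMC--inverse increments; this region carries no new difficulty and is treated by the same estimates as the main one.

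For fixed $(x,y)$ the numerator and denominator of $\rho$ are increments of $Q^{1}$ over two adjacent intervals of length $y$. When $0<y\leq 1$ this is the equal-length setting of the second bullet of the Proposition, applied with $\delta=1$ and $p=1$: by translation invariance of the covariance $\Expe{U(t)U(s)}=-\ln\abs{t-s}$ the estimate is insensitive to the intervals lying in $[-2,3]$ rather than in $[0,1]$, and the adjacent case is reached from the range $c_{b-a}>1$ in the Proposition by the elementary comparison $Q^{1}(x-y,x)\geq Q^{1}(x-y,x-\tfrac{y}{2})$, which turns the two adjacent increments into two comparable, positively separated ones, to which the Proposition (or its proof) applies and yields the same power $y^{-\e_{ratio}(1)}$. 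Since the Proposition bounds the maximum of the ratio and of its reciprocal simultaneously, this gives
\begin{eqalign}
\Expe{\rho(x,y)+\rho(x,y)^{-1}}\leq c\,y^{-\e_{ratio}(1)},\qquad 0<y\leq1,
\end{eqalign}
with $\e_{ratio}(1)\in(0,1)$ and $c$ uniform in $x\in[0,1]$. For $1\leq y\leq 2$ there is no small-scale singularity: by \Holder and the positive and negative moment bounds for $Q^{1}(a,b)$ from \cite{binder2023inverse} --- available because $\gamma<\tfrac{2}{\sqrt3}$ forces $\tfrac{(1+\gamma^{2}/2)^{2}}{2\gamma^{2}}>1$, so $\Expe{Q^{1}(x-y,x)^{-(1+\e)}}<\infty$ for a small $\e>0$ --- both $\Expe{\rho(x,y)}$ and $\Expe{\rho(x,y)^{-1}}$ are $\leq c$. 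Integrating in $(x,y)$,
\begin{eqalign}
\int_{0}^{1}\int_{0}^{2}\Expe{K_{Q}(x+iy)}\dy\dx\leq c\para{\int_{0}^{1}y^{-\e_{ratio}(1)}\dy+1}\leq \frac{c}{1-\e_{ratio}(1)},
\end{eqalign}
which is finite precisely because $\e_{ratio}(1)<1$; after absorbing absolute constants into $c$ this is the stated bound.

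The whole difficulty sits in the second step, and it is exactly what the Proposition supplies: the point is that $\Expe{\rho(x,y)}\lesssim y^{-\e_{ratio}(1)}$ with an exponent strictly below $1$, which is the only thing that makes the double integral converge and is the genuinely delicate GMC input. Beyond invoking the Proposition, the only matters requiring a little care are (i) that the two increments in $\rho$ are adjacent rather than separated, handled by the comparison above, and (ii) the regimes $y\in[1,2]$ and $x$ near $0$, where there is no singularity and the moment bounds of \cite{binder2023inverse} suffice; the Beurling--Ahlfors dilatation estimate, the use of Fubini, and the odd extension past the origin are all standard.
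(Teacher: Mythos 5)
Your overall plan (reduce the dilatation to the $p=1$ ratio moment bound and integrate the resulting $y^{-\e_{ratio}(1)}$ singularity) is the same as the paper's, but the pointwise inequality you start from is not correct, and this is a genuine gap. For the Beurling--Ahlfors extension the derivatives at $x+iy$ are determined by $A:=h(x+y)-h(x)$, $B:=h(x)-h(x-y)$ \emph{and} by the averages $\alpha=\frac1y\int_0^y h(x+t)\dt$, $\beta=\frac1y\int_0^y h(x-t)\dt$; a short computation gives (up to constants) $K(x+iy)\asymp \frac{(A+B)^2}{A\,b+a\,B}$ with $a:=h(x+y)-\alpha$, $b:=\beta-h(x-y)$. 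If $h$ rises essentially immediately to the right of $x$ and drops essentially immediately to the left of $x$ (so $a\approx 0$ and $b\approx 0$), then $K(x+iy)$ blows up even though the symmetric quotient $\rho(x,y)=A/B$ equals $1$. So the bound $K_{Q}(x+iy)\leq c\max\para{\rho(x,y),\rho(x,y)^{-1}}$ you attribute to the classical Beurling--Ahlfors estimate is false: the dilatation is controlled only once you also control ratios of \emph{sub}-increments at a comparable finer scale (e.g.\ $\frac{h(x+y)-h(x+y/2)}{h(x+y/2)-h(x)}$ and its analogues), which is exactly what the bound of \cite[Theorem 2.6]{AJKS} encodes by summing $\frac{Q(J_1)}{Q(J_2)}+\frac{Q(J_2)}{Q(J_1)}$ over pairs of dyadic intervals $J_1,J_2\in\mathcal{D}_{n+5}$ inside the neighborhood of the Whitney square. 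This is the route the paper takes: Whitney squares of $[0,1]\times[0,2]$, the AJKS dilatation bound with a bounded number of equal-length comparable pairs per square, then \cref{prop:inverse_ratio_moments} at $p=1$ and summation over scales, giving $\frac{c}{2(1-\e_{ratio}(1))}$.

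Once you replace your single-ratio estimate by the finer-scale (or dyadic) version, the rest of your computation does go through and essentially coincides with the paper's: each of the finitely many sub-interval ratios at scale $\sim y$ has expectation $\lesssim y^{-\e_{ratio}(1)}$ by the equal-length case of \cref{prop:inverse_ratio_moments}, and the $y$-integral converges because $\e_{ratio}(1)<1$. Two smaller points would still need care even then: (i) the Proposition is stated for separated equal-length intervals ($c_{b-a}>1$), so the adjacent pairs arising from the extension must be handled by a splitting/comparison of the kind you sketch, keeping the lengths equal; and (ii) near the origin (your odd-reflection region, resp.\ the Whitney squares touching $x=0$) the comparison constant $c_{a,b,p,\delta}$ of the Proposition depends on the position of the intervals, so one has to check it stays integrable as the intervals approach $0$ --- this position dependence is precisely what produces the $x^{-\e_{ratio}(1)}$ factor inside the paper's Whitney-square integral, and it cannot simply be declared uniform.
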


\subsection{Decoupling}
In the decoupling the goal is to estimate the multipoint estimates 
\begin{equation}
\Expe{\prod_{k}\para{\frac{Q^{k}(a_{k},b_{k})}{Q^{k}(c_{k},d_{k})}}^{q_{k}} } ,   
\end{equation}
with $d_{k+1}<a_{k}<b_{k}<c_{k}<d_{k}$, using a conditional independence result for the inverse. The ratios $\frac{Q^{1}(a_{1},b_{1})}{Q^{1}(c_{1},d_{1})},\frac{Q^{2}(a_{2},b_{2})}{Q^{2}(c_{2},d_{2})}$ can be decoupled into separate conditional factors  given the event that they are well-separated $G_{1,2}:=\set{Q^{2}(d_{2})+\delta_{2} <Q^{1}(a_{1})}$ 
\begin{equation}
\Expe{\frac{Q^{1}(a_{1},b_{1})}{Q^{1}(c_{1},d_{1})}\frac{Q^{2}(a_{2},b_{2})}{Q^{2}(c_{2},d_{2})}\ind{G_{1,2}}  }=\Expe{\Expe{\frac{Q^{1}(a_{1},b_{1})}{Q^{1}(c_{1},d_{1})}\ind{G_{1,2}} \mid \mathcal{F}}\frac{Q^{2}(a_{2},b_{2})}{Q^{2}(c_{2},d_{2})} },    
\end{equation}
see notations for detail on the filtration $\mathcal{F}$. These estimates are needed because they are the analogous of the decoupling
\begin{eqalign}
\Expe{\prod_{k}\para{\frac{\eta^{k}(a_{k},b_{k})}{\eta^{k}(c_{k},d_{k})}}^{q_{k}} }=\prod_{k}\Expe{\para{\frac{\eta^{k}(a_{k},b_{k})}{\eta^{k}(c_{k},d_{k})}}^{q_{k}} },    
\end{eqalign}
showing up in estimating the dilatation in \cite[equation (89)]{AJKS}. Starting with a collection of increments $\set{Q^{k}(a_{k},b_{k})}_{k\geq 1}$,we aim to extract a subsequence of them that will satisfy the gap events $Q_{a}^{k}-Q_{b}^{k+m}\geq \delta_{k+m}$ that we need in order to do decoupling i.e. obtaining with high probability a subsequence $S=\set{i_{1},\cdots, i_{\abs{S}}}\subset \set{1,\cdots,N}$ of size $\abs{S}\geq c N$, $c\in (0,1)$, such that for $i_{k}\in S$
\begin{eqalign}
&Q_{a_{i_{k}}}^{i_{k}}-Q_{b_{i_{k+1}}}^{i_{k+1}}\geq \delta_{i_{k+1}}.
\end{eqalign}
We turn this existence into a deviation estimate using graph theory and the concept of maximally independent subset. For $k,m\in [N]$ we consider the complement gap event i.e. the \textit{overlap} event
\begin{equation}
O_{k,m}:=G_{k,m}^{c}:=\ind{Q^{k\wedge m}(a_{k\wedge m})-Q^{m\vee k}(b_{m\vee k}) \leq \delta_{k\vee m}   }.    
\end{equation}
We then consider the random graph $G:=G_{Q}(N)$ with $N$ vertices labeled by $v_{k}:=\set{(Q^{k}(a_{k}),Q^{k}(b_{k})}$ and we say that $v_{k},v_{k+m}$  are connected by edges only when this event happens $G_{k,k+m}^{c}=1$. So the question of having a subsequence of increments that don't intersect is the same as obtaining the largest clique of vertices $S\subset G$ that do not connect to each other, this is called the \textit{independence number $\alpha(G)$ of $G$}.
\begin{theorem}
We show that the independence number being less than $c_{gap}N$ decays exponentially in $N$
\begin{equation}
\Proba{\alpha(G)< c_{gap}N}\lessapprox c \rho_{*}^{(1+\e_{*})N},
\end{equation}
for some $\e_{*}>0$.
\end{theorem}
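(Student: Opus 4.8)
The plan is to reduce the statement to an upper-tail bound for the number of overlapping pairs, and then to prove that bound by combining the moment estimates of the previous sections with the decoupling. Fix any maximal independent set $T$ of $G$ (for instance the one obtained by the greedy scan $k=1,\dots ,N$). On the event $\{\alpha(G)<c_{gap}N\}$ we have $|T|\le\alpha(G)<c_{gap}N$, while maximality forces every vertex outside $T$ to be joined to $T$ by an edge; these edges are pairwise distinct (they have distinct endpoints outside $T$), so $G$ has more than $(1-c_{gap})N$ edges. Writing $|E(G)|=\sum_{1\le k<l\le N}\mathbf 1\{O_{k,l}\}$, this gives the clean reduction
\[
\Proba{\alpha(G)<c_{gap}N}\le\Proba{|E(G)|>(1-c_{gap})N}.
\]

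The quantitative input comes from the negative and ratio moment estimates of \cref{partdecoupling} and \cref{part:inverse_ratio_moments}, together with a sufficiently small choice of the separation thresholds $\delta_k$: shrinking $\delta_k$ only makes the gap event $G_{k,m}$ more likely, hence the overlap event $O_{k,m}$ less likely, so these estimates give a bound on $\Prob[O_{k,m}]$ that is uniform in the base index $k$ and summable in the gap $m$, with total $D:=\sum_{m\ge1}\sup_k\Prob[O_{k,m}]$ as small as we wish. Thus $\E|E(G)|\le DN$, and if $c_{gap}$ is chosen with $1-c_{gap}>D$ then the event in the display asks $|E(G)|$ to exceed a fixed multiple of its mean. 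Were the indicators $\mathbf 1\{O_{k,l}\}$ independent, a Chernoff bound would give $\Proba{|E(G)|>(1-c_{gap})N}\le e^{-cN}$, which we could then rewrite as $c\,\rho_*^{(1+\e_*)N}$ for suitable $\rho_*\in(0,1)$ and $\e_*>0$; the remaining point is to obtain such an estimate in the presence of dependence.

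This is the main difficulty, and it is where the decoupling of \cref{partdecoupling} is used: it furnishes a $\sigma$-algebra $\mathcal F$ conditionally on which overlap events associated to sufficiently separated index data become independent. Concretely I would split the pairs at a fixed index distance $L$. For the long-range part ($l-k>L$), chosen with $L$ large so that its mean, which is at most $N\sum_{m>L}\sup_k\Prob[O_{k,m}]$, is negligible, the decoupling lets one dominate it by a sum of independent indicators and apply Chernoff around the small mean. For the short-range part ($l-k\le L$), partition $\{1,\dots,N\}$ into blocks of a fixed size $\gg L$; the short-range overlap count is the sum of block contributions, each bounded by $O(L^2)$, and contributions of non-adjacent blocks are, conditionally on $\mathcal F$, independent, so a Chernoff/Hoeffding bound applies with only a negligible boundary correction. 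Adding the two exponential bounds and rewriting $e^{-cN}$ in the form $c\,\rho_*^{(1+\e_*)N}$ completes the argument. The delicate checks are that the \emph{conditional} overlap probabilities produced by the decoupling remain uniformly small (so the Chernoff rates do not degrade), that the errors incurred over $O(N)$ applications of the decoupling stay geometrically small, and that $L$, the block size and $c_{gap}$ can be fixed mutually consistently with $1-c_{gap}>D$; the values of $\rho_*$ and $\e_*$ are then read off from the final rate. The reduction in the first paragraph and the moment bookkeeping in the second are, by contrast, routine given the earlier sections.
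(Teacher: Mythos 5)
Your opening reduction (a maximal independent set is dominating, so on $\{\alpha(G)<c_{gap}N\}$ the graph has more than $(1-c_{gap})N$ edges) is correct and is in fact a cleaner combinatorial reduction than the one the paper uses (the paper instead restricts to subsets $S$ with $|S|\ge (1-c_{gap})N$ on which every index has at least one later overlap). The gap is in everything after that. Your entire exponential bound rests on the assertion that the decoupling of \cref{partdecoupling} "furnishes a $\sigma$-algebra $\mathcal F$ conditionally on which overlap events associated to sufficiently separated index data become independent." No such $\sigma$-algebra is available: the decoupling in that part is conditional independence \emph{given a gap event} $\{Q^{k}(a_k)-Q^{n}(b_n)\ge\delta_n\}$, and those gap events are exactly what this theorem is trying to establish, so invoking them here is circular. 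Index separation $l-k$ does not by itself give spatial separation of the relevant field increments, because the overlap events involve the inverse evaluated at random, unbounded locations. Moreover the paper explicitly records (see \cref{rem:averagedegreedeviation}) that the route you propose — an upper-tail/Chernoff bound for the edge count obtained through filtration/conditional-independence arguments — was attempted and fails, precisely because after conditioning one is left with shifted-GMC, lognormal-type conditional probabilities that are random and unbounded rather than uniformly small constants; so "dominate by a sum of independent indicators and apply Chernoff" is the step that does not go through, both for your long-range part and for your claim that non-adjacent blocks are conditionally independent.

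Two further quantitative points are glossed over. First, you cannot make $D=\sum_m\sup_k\Prob[O_{k,m}]$ "as small as we wish" by shrinking the thresholds $\delta_k$: the gap must be at least the correlation length $\delta_{k+m}=\rho_\delta^{k+m}$ of the scale-$(k{+}m)$ field for the downstream $\delta$-Markov decoupling to be of any use, and in the main regime $\rho_\delta=\rho_*$ the bound of \cref{prop:decayoverlap}/\cref{lem:unifboundedmean} decays only in the scale separation $m$, not in $k$, so $D$ is a fixed constant of the framework. Second, the target rate is not a generic $e^{-cN}$ but $\rho_*^{(1+\e_*)N}$ with $\rho_*$ small, so the required rate constant is at least $(1+\e_*)\ln(1/\rho_*)$ and must be beaten by per-pair probabilities that themselves are powers of $\rho_*$; this is exactly the bookkeeping the paper performs via the covering condition and the iterative extraction of well-separated events, and it is the source of the explicit constraint \eqref{eq:constraintdecouplingbeta} on $\beta$ in \cref{thm:gapeventexistence}. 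Your sketch produces no analogue of that constraint, which is a sign the decisive estimates have not actually been confronted.
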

This theorem shows that for each $N$, we can obtain such a subset $S\subset G$  with probability at least $1- c \rho_{*}^{(1+\e_{*})N}$.
\subsection{Multipoint moments and ratios}
From the above theorem for the independence number, for each $N\geq 1$ we are given a subset $S:=\set{i_{1},\cdots,i_{M}}\subset \set{1,\cdots,N}$ with $M\geq \alpha N$ such that we have large enough gaps
\begin{equation}
G_{S}:=G_{i_{1},\cdots,i_{M}}:=\bigcap_{k\in S}\set{Q_{a_{i_{k}}}^{i_{k}}-Q_{b_{i_{k+1}}}^{i_{k+1}}\geq \delta_{i_{k+1}}}.
\end{equation}
If we intersect with this event, we get the following estimate on the multipoint-moments.
\begin{proposition}
We fix $p_{k},k\in S$ then we have
\begin{eqalign}
\Expe{\prod\limits_{k\in S} \para{Q^{k}(J_{k})}^{p_{k}} \ind{G_{S}}}\leq c^{\abs{S}} \prod_{k\in S} x_{k}^{q_{k}},
\end{eqalign}
for $q_{k}>0$ satisfying
\begin{eqalign}
p_{k}>-\zeta(-q_{k})+1, k\in S.
\end{eqalign}
\end{proposition}
In the following proposition we study the first term to get moment estimates as in the single-ratio case.
\begin{proposition}[Product of ratios]
We fix set $S\subset [N]$.
\begin{itemize}
 \item (Equal length)  Fix $\beta<\frac{2}{3}$. Suppose we have equal length intervals $x_{k}=\abs{J_k}=\abs{I_k}<\delta_k$ for each $k\in S$. Fix $\delta_k\leq 1$ and intervals $J=(c_k,c_k+x_k),I=(d_k,d_k+x_k)\subset [0,1]$ with $d_k-c_k=c_{d-c,k}x_k$ for $c_{d-c,k}>1$  and $c_k=\lambda_{1,k}\delta_k,d_k=\lambda_{2,k}\delta_k$. Then we have for all $p_k\in [1,1+\e_{k}]$ with small enough $\e_{k}>0$, a bound of the form
\begin{eqalign}
\Expe{\prod\limits_{k\in S} \para{\frac{Q^{k}(J_{k})}{Q^{k}(I_{k})}}^{p_{k}} \ind{G_{S}}}\leq c^{\abs{S}}\prod\limits_{k\in S} \para{\frac{x_{k} }{\delta_{k}}}^{-\e_{ratio}(p_k)},  
\end{eqalign}
where $\e_{ratio}(p_k)>0$ can be made arbitrarily small at the cost of a larger proportionality constant $c$. The constants are uniformly bounded in $x_k$ and also in $\delta_k$.

    \item (Decreasing numerator) Fix $\beta\in \para{0,0.152}.$
    Suppose that $\abs{I_{k}}=r_{k}\delta_{k}$ for $r_{k}>0$ and $\abs{J_{k}}\to 0$ for each $k\in S$. Then we have for each $p_{k}\in [1,1+\e_{ratio}]$ some  $q_{k}>1$ such that
\begin{eqalign}
\Expe{\prod\limits_{k\in S} \para{\frac{Q^{k}(J_{k})}{Q^{k}(I_{k})}}^{p_{k}} \ind{G_{S}}}\leq \prod\limits_{k\in S} \para{\frac{\abs{J_{k}}}{\delta_{k}}}^{q_{k}}.
\end{eqalign}

\end{itemize}

\end{proposition}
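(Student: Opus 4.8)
The plan is to reduce the multipoint statement to the single-point ratio bounds already established in the first Proposition of the paper, using the decoupling machinery from \Cref{partdecoupling}. First I would observe that on the event $G_S$ the increments $\{Q^k(J_k), Q^k(I_k)\}_{k\in S}$ sit in well-separated windows, since for consecutive $i_k, i_{k+1}\in S$ we have $Q^{i_k}_{a_{i_k}} - Q^{i_{k+1}}_{b_{i_{k+1}}} \ge \delta_{i_{k+1}}$. This separation is exactly the hypothesis needed to invoke the conditional-independence result: peeling off the factors one at a time from the outermost index, each conditional expectation $\Expe{\para{Q^k(J_k)/Q^k(I_k)}^{p_k}\ind{G_S}\mid \mathcal F_k}$ depends only on the field restricted to a neighbourhood of the window $[Q^k(a_k), Q^k(d_k)]$, and the gap event ensures these neighbourhoods are disjoint across $k$. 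Iterating the tower property then gives
\begin{eqalign}
\Expe{\prod_{k\in S}\para{\frac{Q^k(J_k)}{Q^k(I_k)}}^{p_k}\ind{G_S}} \le \prod_{k\in S}\Expe{\para{\frac{Q^k(J_k)}{Q^k(I_k)}}^{p_k}\ind{G_k}},
\end{eqalign}
up to a multiplicative error that is at most $c^{\abs S}$ (one factor of $c$ per decoupling step, absorbing the conditional densities and the mild blow-up from localizing the field).

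Next I would apply the single-point estimates to each factor separately. In the equal-length case, the second bullet of the first Proposition gives $\Expe{(Q^k(J_k)/Q^k(I_k))^{p_k}}\le c_{\lambda_{1,k},\lambda_{2,k},p_k,\delta_k}(x_k/\delta_k)^{-\e_{ratio}(p_k)}$, with the comparison constant uniform in $\delta_k$; multiplying these over $k\in S$ and collecting the constants into $c^{\abs S}$ yields the claimed bound. In the decreasing-numerator case, the first bullet gives $\Expe{(Q^{\delta_k}(J_k)/Q^{\delta_k}(I_k))^{p_k}}\lessapprox (\abs{J_k}/\delta_k)^{q_k} r_k^{-\tilde q_k}$; here one must check that the exponent $q_k>1$ can be chosen so that the product telescopes to $\prod_k (\abs{J_k}/\delta_k)^{q_k}$, absorbing the $r_k^{-\tilde q_k}$ factors and the step-constants into the implied proportionality constant (or, if one wants the clean statement, choosing $q_k$ slightly smaller than the single-point exponent to soak up the polynomial-in-$r_k$ loss). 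One also has to verify the separation parameters: the ratio-window for index $k$ has total length comparable to $\delta_k$ (since $c_k,d_k$ are order-$\delta_k$ and $x_k<\delta_k$), so the gap $\delta_{i_{k+1}}$ in $G_S$ is of the right order to keep the conditional localization radius from swallowing the neighbouring window.

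The main obstacle I anticipate is controlling the accumulation of constants through the $\abs S$ decoupling steps and making the localization quantitative: each time one conditions and replaces the true field by a field localized to a window of size $\sim\delta_k$, there is a Radon–Nikodym / Kahane-comparison cost, and one needs this cost to be bounded by an absolute constant independent of $k$, $\delta_k$, $x_k$, and the position of the window, so that the total is $c^{\abs S}$ rather than something growing super-geometrically. This is where the uniformity-in-$\delta$ clauses of the single-point Proposition and the GMC estimates in the appendix do the real work; the rest is bookkeeping. A secondary technical point is that the decoupling identity in the excerpt is stated for two indices and for $q_k=1$, so for general $p_k$ and $\abs S>2$ one should either iterate a two-block version or re-run the conditional-independence argument with the product of the remaining ratios playing the role of the ``outer'' factor — either way the structure is the same, and the key input is that $\prod_{j>k}(Q^j(J_j)/Q^j(I_j))^{p_j}\ind{G_S}$ is measurable with respect to the $\sigma$-algebra generated by the field away from window $k$.
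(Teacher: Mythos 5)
Your high-level skeleton (peel off one scale at a time with the tower property, using the gap event to reduce to the single-point ratio estimates) is the same as the paper's, but the step you state as the core of the argument --- the factorization
$\Expe{\prod_{k\in S}(Q^k(J_k)/Q^k(I_k))^{p_k}\ind{G_S}}\le c^{\abs{S}}\prod_{k\in S}\Expe{(Q^k(J_k)/Q^k(I_k))^{p_k}\ind{G_k}}$,
justified by ``each conditional expectation depends only on the field restricted to a neighbourhood of the window, at a Radon--Nikodym/localization cost of one constant per step'' --- is precisely the part that is not true as stated and that the paper has to work for. The obstruction is that the window itself is located at the random time $Q^k(c_k)$, and the event $\{Q^k(c_k)\in\cdot\}$ involves $\eta^k(0,\cdot)$ over the whole range, including the region $[0,Q^{k+1}(b_{k+1})]$; so even on $G_S$ the conditional expectation given $\CF([0,Q^{k+1}(b_{k+1})])$ is \emph{not} the unconditional single-point quantity up to a constant, and there is no change-of-measure or Kahane-comparison argument in sight that converts it into one. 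What the paper actually does is: (i) decompose the random location into slabs anchored at the gap point, i.e.\ events $D^{c_1}_{\ell,k}=\{Q^1(c_1)\in[c_{\ell,k},c_{\ell+1,k}]+Q^2(b_2)+\delta_2\}$; (ii) separate the slab event by a \emph{conditional} H\"older inequality; (iii) decouple the remaining factor --- now a supremum over a deterministic slab lying a distance $\delta_2$ beyond $Q^2(b_2)$ --- with the $\delta$-strong Markov property; and (iv) control the slab event by keeping only its lower bound, which replaces $\eta^1$ by an independent copy $\wt\eta^1$ and avoids the still-coupled term $\eta^1(0,Q^2(b_2)+\delta_2)$. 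The resulting $\ell$-sum converges only after choosing the H\"older exponents carefully ($b_{11}=1+\e$, $b_{12}$ large, large negative-moment exponents), and this is exactly where the stated constraints $\beta<\tfrac23$, resp.\ $\beta\in(0,0.152)$, and the factor $c^{\abs{S}}$ come from; none of this is captured by ``absorbing the conditional densities.''

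A secondary but real gap: even after decoupling you cannot invoke the single-point proposition as a black box, because what survives is a maximum of the ratio over the slab (shifted starting points $u\in I_{\ell,k}$), so the paper re-runs the single-point proof with the max/min modulus estimates (\Cref{prop:maxmoduluseta}, \Cref{prop:minmodeta}) and, in the decreasing-numerator case, the shifted-GMC moment bounds \Cref{lem:maxdshiftedGMC}, \Cref{lem:minshiftedGMC}, rather than citing the statement of \Cref{prop:inverse_ratio_moments}. Your plan would need to be rewritten around these two points (slab decomposition of the random shift plus conditional H\"older plus $\delta$-SMP, and a maximal version of the single-point argument) before it becomes a proof.
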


\subsection{Acknowledgements}
First and foremost, we thank Eero Saksman and Antti Kupiainen. We had numerous useful discussions over many years. Eero asked us to compute the moments of the inverse and that got the ball rolling seven years ago. We also thank J.Aru, J.Junnila and V.Vargas for discussions and comments.

\section{Notations and GMC results }\label{notations}
\subsection{White noise expansion on the unit circle and real line }
Gaussian fields with logarithmic covariance indexed over time or even functions $f\in \mathbb{H}$, for some Hilbert space $\mathbb{H}$, have appeared in many places in the literature: for the \textit{Gaussian free field} which is a Gaussian field indexed over the $L^{2}$ Hilbert space equipped with the Dirichlet inner product \cite{sheffield2007gaussian,lodhia2016fractional,duplantier2017log} and for the \textit{Gaussian multiplicative chaos} which is about Gaussian fields with logarithmic covariance plus some continuous bounded function $\ln\frac{1}{\abs{x-y}}+g(x,y)$ \cite{robert2010gaussian,bacry2003log,rhodes2014gaussian,aru2020gaussian}. In this work we will work with the Gaussian field indexed over sets in the upper half-plane found in the works \cite{barral2002multifractal,bacry2003log} and used in the random welding context in \cite[section 3.2]{AJKS}. For the hyperbolic measure $\dlambda(x,y):=\frac{1}{y^{2}}\dx \dy$ in the upper half-plane $\uhp$ we consider a Gaussian process $\set{W(A)}_{A\in B_{f}(\uhp)}$ indexed by Borel sets of finite hyperbolic area:
\begin{eqalign}
    B_{f}(\uhp):=\{A\subset \uhp: \lambda(A)<\infty; \supl{(x,y),(x',y')\in A}|x-x'|<\infty\}
\end{eqalign}
with covariance
\begin{eqalign}
    \Expe{W(A_{1})W(A_{2})}:=\lambda(A_{1}\cap A_{2}).
\end{eqalign}
Its periodic version $W_{per}$ has covariance
\begin{eqalign}
\Expe{W_{per}(A_{1})W_{per}(A_{2})}:=\lambda(A_{1}\cap \bigcup_{n\in \Z} (A_{2}+n)).
\end{eqalign}
\subsubsection*{Logarithmic field on the real line}Consider the triangular region
\begin{eqalign}
\mathcal{S}:=\{(x,y)\in \uhp: x\in [-1/2,1/2]\tand y>2\abs{x}\}
\end{eqalign}
and define the field on the real line
\begin{eqalign}
U(x):=W(\mathcal{S}+x)\tfor x\in \mb{R}.
\end{eqalign}
It has logarithmic covariance (\cite{bacry2003log})
\begin{eqalign}
\Exp[U(x)U(y)]=\ln\para{\frac{1}{\min(\abs{x-y},1)}}.    
\end{eqalign}
This is the main field we will work with throughout this article. Because of the divergence along the diagonal, we upper and lower truncate by considering the field evaluated over shifts of the region
\begin{eqalign}
\mathcal{S}_{\e}^{\delta}(x_{0}):=\{(x,y)\in \uhp: x\in [x_{0}-\frac{\delta}{2},x_{0}+\frac{\delta}{2}]\tand \para{2\abs{x_{0}-x}}\vee \e<y\leq \delta\}.
\end{eqalign}
and $\mathcal{S}_{\e}^{\delta}=\mathcal{S}_{\e}^{\delta}(0)$. The covariance of $U_{\e}^{\delta}(x)=W(\mathcal{S}_{\e}^{\delta}+x)$ is the following (\cite{bacry2003log}).
\begin{lemma}\label{linearU}
The truncated covariance satisfies for $\delta\geq \e$ and all $x_{1},x_{2}\in \mathbb{R}$:
\begin{eqalign}\label{eq:Ucovariance}
\Exp[U_{ \varepsilon}^{  \delta }(x_{1} )U_{ \varepsilon}^{  \delta }(x_{2} )  ]=R_{ \varepsilon}^{  \delta }(\abs{x_{1}-x_{2}}):=\left\{\begin{matrix}
\ln(\frac{\delta }{\varepsilon} )-\para{\frac{1}{\e}-\frac{1}{\delta}}\abs{x_{2}-x_{1}} &\tifc \abs{x_{2}-x_{1}}\leq \varepsilon\\
 \ln(\frac{\delta}{\abs{x_{2}-x_{1}}}) +\frac{\abs{x_{2}-x_{1}}}{\delta}-1&\tifc \e\leq \abs{x_{2}-x_{1}}\leq \delta\\
 0&\tifc  \delta\leq \abs{x_{2}-x_{1}}.
\end{matrix}\right.    .
\end{eqalign}
In the case of $\e=0$, we shorthand write $R^{  \delta }(\abs{x_{1}-x_{2}})=R_{0}^{  \delta }(\abs{x_{1}-x_{2}})$.
\end{lemma}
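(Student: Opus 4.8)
The plan is to read the covariance straight off the white-noise representation. Since $U_{\varepsilon}^{\delta}(x)=W(\mathcal{S}_{\varepsilon}^{\delta}+x)$ and $\Exp[W(A_{1})W(A_{2})]=\lambda(A_{1}\cap A_{2})$ for the hyperbolic measure $\d\lambda=y^{-2}\,\d x\,\d y$, we have $\Exp[U_{\varepsilon}^{\delta}(x_{1})U_{\varepsilon}^{\delta}(x_{2})]=\lambda\big((\mathcal{S}_{\varepsilon}^{\delta}+x_{1})\cap(\mathcal{S}_{\varepsilon}^{\delta}+x_{2})\big)$. By translation invariance of $\lambda$ it suffices to take $x_{1}=0$ and $x_{2}=t:=\abs{x_{1}-x_{2}}\ge 0$ (the regions are bounded, hence in $B_{f}(\uhp)$), so everything reduces to computing the hyperbolic area of $\mathcal{S}_{\varepsilon}^{\delta}\cap(\mathcal{S}_{\varepsilon}^{\delta}+t)$.

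First I would describe this intersection explicitly: a point $(x,y)$ lies in it iff $x\in[t-\tfrac{\delta}{2},\tfrac{\delta}{2}]$ and $\max\!\big(2\abs{x},\,2\abs{x-t},\,\varepsilon\big)<y\le\delta$. In particular the intersection is empty — hence the covariance vanishes — once $t>\delta$, which is the third branch. Set $m(x):=2\max(\abs{x},\abs{x-t})$; on the window $[t-\tfrac{\delta}{2},\tfrac{\delta}{2}]$ this is the $V$-shaped function equal to $2(t-x)$ for $x\le t/2$ and to $2x$ for $x\ge t/2$, with minimum $m(t/2)=t$ and boundary values $m(t-\tfrac{\delta}{2})=m(\tfrac{\delta}{2})=\delta$. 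Carrying out the inner integral, $\int_{\max(m(x),\varepsilon)}^{\delta}y^{-2}\,\d y=\max(m(x),\varepsilon)^{-1}-\delta^{-1}$, reduces the problem to a one-dimensional $x$-integral symmetric about $x=t/2$.

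It then remains to split into two cases. If $\varepsilon\le t\le\delta$ then $m(x)\ge\varepsilon$ throughout, and $2\int_{t/2}^{\delta/2}\big(\tfrac{1}{2x}-\tfrac{1}{\delta}\big)\,\d x=\ln\tfrac{\delta}{t}+\tfrac{t}{\delta}-1$, the middle branch. If $t<\varepsilon$ (so automatically $t\le\delta$), then $m(x)<\varepsilon$ exactly on the central sub-interval $(t-\tfrac{\varepsilon}{2},\tfrac{\varepsilon}{2})$, of length $\varepsilon-t$, where the integrand equals the constant $\tfrac{1}{\varepsilon}-\tfrac{1}{\delta}$; on the two outer pieces one repeats the previous computation with $\varepsilon$ in place of $t$, getting $\ln\tfrac{\delta}{\varepsilon}+\tfrac{\varepsilon}{\delta}-1$. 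Adding $(\varepsilon-t)\big(\tfrac{1}{\varepsilon}-\tfrac{1}{\delta}\big)+\ln\tfrac{\delta}{\varepsilon}+\tfrac{\varepsilon}{\delta}-1$ and simplifying gives $\ln\tfrac{\delta}{\varepsilon}-\big(\tfrac{1}{\varepsilon}-\tfrac{1}{\delta}\big)t$, the first branch; as a consistency check the three expressions agree at $t=\varepsilon$ and at $t=\delta$. I do not expect a genuine obstacle here: the only care needed is in pinning down the geometry of the intersection and the placement of the case boundaries inside the $x$-integral, after which everything is an elementary antiderivative. (Alternatively one may simply invoke \cite{bacry2003log}, where this identity first appears.)
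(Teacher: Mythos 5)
Your computation is correct: the covariance is indeed the hyperbolic area $\lambda\big(\mathcal{S}_{\varepsilon}^{\delta}\cap(\mathcal{S}_{\varepsilon}^{\delta}+t)\big)$, your description of the intersection, the case split at $t=\varepsilon$ and $t=\delta$, and the resulting elementary integrals all check out (including the boundary consistency at $t=\varepsilon$ and the vanishing for $t\geq\delta$). The paper itself gives no proof and simply cites \cite{bacry2003log}; your argument is exactly the standard area computation underlying that reference, so it fills in the omitted details by the same route rather than a genuinely different one.
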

\begin{remark}
This implies that we have a bound for its difference:
\begin{eqalign}
\Expe{\abs{U_{\varepsilon_{1}}^{\delta}(x_{1})-U_{\varepsilon_{2}}^{\delta}(x_{2})}^{2}}\leq 2 \frac{|x_{2}-x_{1}|+|\varepsilon_{1}-\varepsilon_{2}|}{\varepsilon_{2}\wedge \e_{1}}.
\end{eqalign}
Therefore, being Gaussian this difference bound is true for any $a>1$
\begin{eqalign}
\Expe{|U_{\varepsilon_{1}}^{\delta}(x_{1})-U_{\varepsilon_{2}}^{\delta}(x_{2})|^{a}}\leq c\para{ \frac{|x_{2}-x_{1}|+|\varepsilon_{1}-\varepsilon_{2}|}{\varepsilon_{2}\wedge \e_{1}}     }^{a/2}.
\end{eqalign}
and so we can apply the Kolmogorov-Centsov lemma from \cite[Lemma C.1]{hu2010thick}:
\begin{eqalign}
|U_{\varepsilon_{1}}(x_{1})-U_{\varepsilon_{2}}(x_{2})|\leq M~ (\ln(\frac{1}{\varepsilon_{2}}))^{\zeta}\frac{(|x_{2}-x_{1}|+|\varepsilon_{1}-\varepsilon_{2}|)^{\gamma}}{ \varepsilon_{2}^{\gamma+\varepsilon} },
\end{eqalign}
where $\frac{1}{2}< \frac{\e_{1}}{\e_{2}}<2$, $0<\gamma<\frac{1}{2}$, $\varepsilon,\zeta>0 $ and $M=M(\varepsilon,\gamma,\zeta)$.
\end{remark}
Also, we can consider strictly increasing larger heights i.e. $U^{h}_{0}$ denotes the field of height $h\geq \delta_{1}$. The infinitely large field  $U^{\infty}_{0}$ does not exist because it has infinite variance.\\

This field is needed because it allows us to work on the entire real line and not worry about the boundary effects of working on the unit circle $[0,1]/ 0\sim 1$.

\subsubsection*{Infinite cone field}
For the region
\begin{eqalign}
\mathcal{A}_{\e}^{\delta}:=\{(x,y)\in \uhp: x\in [-\frac{\delta}{2},\frac{\delta}{2}]\tand \para{2\abs{x}}\vee \e<y\}
\end{eqalign}
we let $U_{\omega,\e}^{\delta}:=\omega_{\e}^{\delta}(x):=W(\mathcal{A}+x)$. This field was considered in \cite{bacry2003log}. It has the covariance
\begin{eqalign}\label{eq:exactscalinglogafieldcova}
\Exp[\omega_{ \varepsilon}^{  \delta }(x_{1} )\omega_{ \varepsilon}^{  \delta }(x_{2} )  ]=\left\{\begin{matrix}
\ln(\frac{\delta }{\varepsilon} )+1-\frac{\abs{x_{2}-x_{1}} }{\e}&\tifc \abs{x_{2}-x_{1}}\leq \varepsilon\\
 \ln(\frac{\delta}{\abs{x_{2}-x_{1}}}) &\tifc \e\leq \abs{x_{2}-x_{1}}\leq \delta\\
  0&\tifc  \delta\leq \abs{x_{2}-x_{1}}
\end{matrix}\right.    .
\end{eqalign}
This field is interesting because it has "exact scaling laws" as we will see below. And so it helps in estimating the moments for the GMC corresponding to $U$ from above.
\subsubsection*{Trace of the Gaussian free field on the unit circle}\label{sec:fieldonunitcircle}
For the wedge shaped region
\begin{eqalign}
H:=\{(x,y)\in \uhp: x\in [-1/2,1/2]\text{ and }y>\frac{2}{\pi}tan(\pi x)\},
\end{eqalign}
we define the \textit{trace GFF} $H$ to be
\begin{eqalign}
H(x):=W_{per}(H+x), x\in \R/\Z.
\end{eqalign}
A regularized version of $H$ comes from truncating all the lower scales. Let $A_{r,\varepsilon}:=\{(x,y)\in \uhp: r>y>\varepsilon\}$ and $H_{\varepsilon}^{r}:=H\cap A_{r,\varepsilon}$ then we set:
\begin{eqalign}
H_{\varepsilon}^{r}(x):=W(H_{\varepsilon}^{r}+x)
\end{eqalign}
and $H_{\varepsilon}:=h^{\infty}_{\varepsilon}$. Its covariance is as follows: For points $x,\xi\in [0,1)$, with 0 and 1 identified,  and $\varepsilon\leq r$ we find for $y:=|x-\xi|$:
\begin{eqalign}\label{eq:covarianceunitcircle}
\Exp[H_{\varepsilon}(x)H_{r}(\xi)]:=&\branchmat{2\log(2)+\log\frac{1}{2sin(\pi y)}  &\tifc y> \frac{2}{\pi} arctan(\frac{\pi}{2}\e)\\ \log(1/\e)+(1/2)\log(\pi^{2}\varepsilon^{2}+4)+\frac{2}{\pi}\frac{arctan(\frac{\pi}{2}\e)}{\varepsilon}&\tifc y\leq \frac{2}{\pi} arctan(\frac{\pi}{2}\e)\\
-\log(\pi)- y \varepsilon^{-1}-\log(cos(\frac{\pi}{2}y))&}.
\end{eqalign}
From the above covariance computations, we find the difference bound for $1/2\leq \frac{\e_{1}}{\e_{2}}\leq 2$:
\begin{eqalign}
\Exp[|H_{\varepsilon_{1}}(x_{1})-H_{\varepsilon_{2}}(x_{2})|^{2}]\lessapprox~\frac{|x_{2}-x_{1}|+|\varepsilon_{1}-\varepsilon_{2}|}{\varepsilon_{2}\wedge \e_{1} }
\end{eqalign}
Therefore, by Kolmogorov-Centsov \cite[Lemma C.1]{hu2010thick}, there is a continuous modification with the modulus:
\begin{eqalign}
|H_{\varepsilon_{1}}(x_{1})-H_{\varepsilon_{2}}(x_{2})|\leq M (\ln(\frac{1}{\varepsilon_{1}}))^{\zeta}\frac{(|x_{2}-x_{1}|+|\varepsilon_{1}-\varepsilon_{2}|)^{\gamma}}{ \varepsilon_{1}^{\gamma+\varepsilon} },
\end{eqalign}
where $0<\gamma<\frac{1}{2}$, $\varepsilon,\zeta>0 $ and $M=M(\varepsilon,\gamma,\zeta)$. A similar modulus is true for the field $U(x)$ on the real line constructed above.\\
This field is important because of the original problem being the conformal welding on the unit circle. So all our estimates need to return to the GMC with the field $H$.

\subsubsection*{Measure and Inverse notations}
 For a field $X$, let us define the corresponding Gaussian multiplicative chaos (GMC) as
\begin{eqalign}
\eta_{X}(I):=\liz{\e}\int_{I}e^{\overline{X}_{\e}(x)}\dx,\quad \overline{X}_{\e}:=\gamma X_{\e}-\frac{\gamma^{2}}{2}E[X_{\e}^{2}],\quad I\subset [0,1) ,
\end{eqalign}
provided the limit exists.

For the real-line field $U^{\delta}$ we use the following notation to match \cite{AJKS}:
\begin{eqalign}
\eta^{\delta}(A):=\liz{\e}\eta_{\e}^{\delta}(A):=\liz{\e}\int_{A}e^{\gamma U_{\e}^{\delta}(x)-\frac{\gamma^{2}}{2}\ln\frac{1}{\e}}\dx, \quad A\subset \mathbb{R}.
\end{eqalign}
Its inverse $Q^{\delta}:\Rplus\to \Rplus$ is defined as
\begin{eqalign}
Q^{\delta}(x)=Q^{\delta}_{x}=:=\infp{t\geq 0 : \eta^{\delta}\spara{0,t}\geq x}
\end{eqalign}
and we will also consider increments of the inverse over intervals $I:=(y,x)$
\begin{eqalign}
Q^{\delta}(I)=Q^{\delta}(y,x):=Q^{\delta}(x)-Q^{\delta}(y).
\end{eqalign}
For the inverse of the lower-truncated GMC $\eta_{\e}^{\delta}$, we will write
$Q^{\delta}_{\e,x}:=Q^{\delta}_{\e}(x).$

In the ensuing articles we will work with a sequence of truncations $\delta_{n}\to 0$, and so we will shorthand denote $U^{n}:=U^{\delta_{n}}$, $\eta^{n}$and $Q^{n}$. This is because as in \cite{AJKS}, we need to decouple nearby measures: to have the following two objects  
\begin{eqalign}
\eta^{\delta_{n+1}}(a_{n+1},b_{n+1})\tand \eta^{\delta_{n}}(a_{n},b_{n})    
\end{eqalign}
be independent with $a_{n+1}<b_{n+1}<a_{n}<b_{n}$ decreasing to zero, we simply require $b_{n+1}+\delta_{n+1}<a_{n}$. So we see that as $\delta_{n}\to 0$, this becomes easier to achieve. Whereas if we kept $\delta_{n}=\delta$, this would be false as $a_{n}\to 0$.\\
Similarly, when lower truncating $U_{n}:=U_{\delta_{n}}$, we will write $\eta_{n}$ and $Q_{n}$. By slight abuse of notation, we denote
\begin{eqalign}
U(s)\cap U(t):=W(\para{U+s}\cap \para{U+t})    \tand U(s)\setminus U(t):=W(\para{U+s}\setminus \para{U+t}).
\end{eqalign}
\subsubsection*{Semigroup formula}\label{not:semigroupformula}
In the spirit of viewing $Q_{a}=Q(a)$ as a hitting time we have the following semigroup formula and notation for increments
\begin{eqalign}
Q^{\delta}(y,x)=Q^{\delta}(x)-Q^{\delta}(y)=\infp{t\geq 0: \etamu{Q^{\delta}(y),Q^{\delta}(y)+t}{\delta}\geq x-y }=:Q_{x-y}^{\delta }\bullet Q_{y}^{\delta}.
\end{eqalign}
Generally for any nonnegative $T\geq 0$ we use the notation
\begin{eqalign}
Q_{x}^{\delta} \bullet T:=\infp{t\geq 0: \etamu{T,T+t}{\delta}\geq x }.
\end{eqalign}
To be clear the notation $Q_{x}^{\delta }\bullet Q_{y}^{\delta}$ is \textit{not} equal to the composition $Q^{\delta }\circ Q_{y}^{\delta}=Q^{\delta }(Q_{y}^{\delta})=Q^{\delta }(a),$ which is the first time that the GMC $\eta$ hits the level $a:=Q_{y}^{\delta}$.

\subsubsection*{Scaling laws}
Let $\omega^{\delta,\lambda}_{\e}(x)$ be the field with the following covariance \begin{eqalign}\label{eq:truncatedscaledomega}
\Expe{\omega^{\delta,\lambda}_{\e}(x_{1})\omega^{\delta,\lambda}_{\e}(x_{2})}=\left\{\begin{matrix}
\ln(\frac{\delta }{\varepsilon} )+1-\frac{1}{\e}\abs{x_{2}-x_{1}}&\tifc \abs{x_{2}-x_{1}}\leq \varepsilon\\ ~\\
 \ln(\frac{\delta}{\abs{x_{2}-x_{1}}}) &\tifc \e\leq \abs{x_{2}-x_{1}}\leq \frac{\delta}{\lambda}\\~\\
  0&\tifc \frac{\delta}{\lambda}\leq \abs{x_{2}-x_{1}}
\end{matrix}\right.    .
\end{eqalign}
In our study of the scaling laws we will use the following statement (\cite[theorem 4]{bacry2003log}, \cite[proposition 3.3]{robert2010gaussian}.)
\begin{proposition}\label{exactscaling}\label{prop:GMClogscalinglaw}
For $\lambda\in (0,1)$ and fixed $x_{0}$ and all Borel sets $A\subset B_{\delta/2}(x_0)$ we have
\begin{eqalign}
 \set{\eta^{\delta}_{\omega}(\lambda A)}_{A\subset B_{\delta/2}(x_0)}\eqdis \set{\lambda e^{\overline{\Omega_{\lambda}}}\eta^{\delta,\lambda}_{\omega}(A)}_{A\subset B_{\delta/2}(x_0) },
\end{eqalign}
where  $\overline{\Omega_{\lambda}}:=\gamma\sqrt{ \ln(\frac{1}{\lambda})}N(0,1)-\frac{\gamma^{2}}{2}\ln(\frac{1}{\lambda})$ and the measure $\eta_{\omega_{\e}}^{\delta,\lambda}$ has the underlying field $\omega^{\delta,\lambda}_{\e}(x)$
\end{proposition}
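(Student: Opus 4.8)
\medskip
\noindent\textbf{Proof plan.} The exact scaling law is a manifestation of the invariance of the hyperbolic measure $\dlambda(x,y)=y^{-2}\,\dx\,\dy$ on $\uhp$ under the dilation $S_{\lambda}(x,y)=(\lambda x,\lambda y)$, which gives $\{W(S_{\lambda}B)\}_{B}\eqdis\{W(B)\}_{B}$ for the underlying white noise $W$. Since $\mathcal{A}^{\delta}_{\e}+\lambda x=S_{\lambda}\para{\mathcal{A}^{\delta/\lambda}_{\e/\lambda}+x}$, this yields, as processes in $x$,
\[
\{\omega^{\delta}_{\e}(\lambda x)\}_{x\in\R}\ \eqdis\ \{\omega^{\delta/\lambda}_{\e/\lambda}(x)\}_{x\in\R};
\]
equivalently this can be checked directly, since plugging the separation $\lambda t$ into the covariance \eqref{eq:exactscalinglogafieldcova} of $\omega^{\delta}_{\e}$ reproduces, regime by regime, the covariance of $\omega^{\delta/\lambda}_{\e/\lambda}$ at separation $t$, and both fields have the same renormalizing variance $\ln(\delta/\e)+1$.

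Next I would transfer this to the chaos. For fixed $\e$, the substitution $x=\lambda u$ gives $\eta^{\delta}_{\omega,\e}(\lambda A)=\lambda\int_{A}e^{\gamma\omega^{\delta}_{\e}(\lambda u)-\frac{\gamma^{2}}{2}\Expe{(\omega^{\delta}_{\e})^{2}}}\,\du$, which is a fixed measurable functional of the process $\{\omega^{\delta}_{\e}(\lambda u)\}_{u\in B_{\delta/2}(x_{0})}$ applied uniformly over $A\subset B_{\delta/2}(x_{0})$. Replacing this process by $\{\omega^{\delta/\lambda}_{\e/\lambda}(u)\}_{u\in B_{\delta/2}(x_{0})}$ (same joint law; and the renormalizing constant $\frac{\gamma^{2}}{2}(\ln(\delta/\e)+1)$ is unchanged, since it equals $\frac{\gamma^{2}}{2}\Expe{(\omega^{\delta/\lambda}_{\e/\lambda})^{2}}$) and letting $\e\to0$ yields
\[
\{\eta^{\delta}_{\omega}(\lambda A)\}_{A\subset B_{\delta/2}(x_{0})}\ \eqdis\ \{\lambda\,\eta^{\delta/\lambda}_{\omega}(A)\}_{A\subset B_{\delta/2}(x_{0})}.
\]

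It remains to extract the lognormal factor from $\eta^{\delta/\lambda}_{\omega}$ on the ball; this is where the restriction $A\subset B_{\delta/2}(x_{0})$ matters. For $u_{1},u_{2}\in B_{\delta/2}(x_{0})$ the separation satisfies $t:=\abs{u_{1}-u_{2}}\le\delta\le\delta/\lambda$, and on this range a comparison of \eqref{eq:exactscalinglogafieldcova} (with $\delta$ replaced by $\delta/\lambda$) with \eqref{eq:truncatedscaledomega} shows, in every regime,
\[
\Expe{\omega^{\delta/\lambda}_{\e}(u_{1})\,\omega^{\delta/\lambda}_{\e}(u_{2})}\ =\ \ln\tfrac1\lambda+\Expe{\omega^{\delta,\lambda}_{\e}(u_{1})\,\omega^{\delta,\lambda}_{\e}(u_{2})}.
\]
Hence, with $\Omega_{\lambda}\sim N\!\para{0,\ln\tfrac1\lambda}$ taken independent of $\omega^{\delta,\lambda}_{\e}$, the restrictions to $B_{\delta/2}(x_{0})$ of $\omega^{\delta/\lambda}_{\e}$ and of $\Omega_{\lambda}+\omega^{\delta,\lambda}_{\e}$ have the same law and the same total variance; since adding an independent constant Gaussian to the field multiplies the $\e$-approximation of the chaos by $e^{\gamma\Omega_{\lambda}}$ and increases the renormalization by $\frac{\gamma^{2}}{2}\ln\tfrac1\lambda$, this gives $\eta^{\delta/\lambda}_{\omega,\e}(A)\eqdis e^{\overline{\Omega_{\lambda}}}\eta^{\delta,\lambda}_{\omega,\e}(A)$ jointly over $A$, with $\overline{\Omega_{\lambda}}=\gamma\Omega_{\lambda}-\frac{\gamma^{2}}{2}\ln\tfrac1\lambda=\gamma\sqrt{\ln\tfrac1\lambda}\,N(0,1)-\frac{\gamma^{2}}{2}\ln\tfrac1\lambda$. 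Passing to the limit $\e\to0$ (e.g.\ at the level of characteristic functions, which avoids coupling the decompositions across $\e$) gives $\{\eta^{\delta/\lambda}_{\omega}(A)\}_{A}\eqdis\{e^{\overline{\Omega_{\lambda}}}\eta^{\delta,\lambda}_{\omega}(A)\}_{A}$, and combining with the previous display proves the proposition.

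The delicate point is the last step: the covariance identity above holds only for separations up to $\delta/\lambda$ — for $t>\delta/\lambda$ the field $\omega^{\delta/\lambda}_{\e}$ has decorrelated to $0$ whereas $\Omega_{\lambda}+\omega^{\delta,\lambda}_{\e}$ has saturated at $\ln\tfrac1\lambda$ — so one genuinely needs the test sets $A$ to sit inside the ball $B_{\delta/2}(x_{0})$, whose diameter $\delta$ is comfortably below $\delta/\lambda$; keeping the two renormalizing constants and the order of the $\e\to0$ limits consistent also requires a little care. This is in essence \cite[Theorem~4]{bacry2003log} (see also \cite[Proposition~3.3]{robert2010gaussian}).
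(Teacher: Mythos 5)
Your proposal is correct, and it reconstructs the standard argument: the paper itself states \Cref{prop:GMClogscalinglaw} without proof, quoting it from \cite[Theorem 4]{bacry2003log} and \cite[Proposition 3.3]{robert2010gaussian}, and your route (dilation invariance of the hyperbolic white-noise cone construction giving $\omega^{\delta}_{\e}(\lambda\,\cdot)\eqdis\omega^{\delta/\lambda}_{\e/\lambda}(\cdot)$, then splitting off an independent $N(0,\ln\frac{1}{\lambda})$ on sets of diameter at most $\delta\leq\delta/\lambda$) is exactly the proof in those references. You also correctly identify the only delicate points — the restriction $A\subset B_{\delta/2}(x_{0})$, which is what makes the covariance decomposition valid and is consistent with \cref{rem:negativecov}, and the bookkeeping of the renormalizing variances and the $\e\to0$ limit — so there is nothing to add.
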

 In the rest of the article, we let
\begin{eqalign}\label{logfield}
&G_{\lambda}:=\frac{1}{\lambda}\expo{-\overline{\Omega_{\lambda}}}.
\end{eqalign}
We will also use the field $U_{ \varepsilon}^{\delta, \lambda}$ with covariance $R_{\e}^{\delta,\lambda}(\abs{x_{1}-x_{2}}):=\Expe{U_{ \varepsilon}^{  \delta,\lambda }(x_{1} )U_{ \varepsilon}^{  \delta,\lambda }(x_{2} )  }$
\begin{eqalign}\label{eq:truncatedscaled}
R_{\e}^{\delta,\lambda}(\abs{x_{1}-x_{2}})=\left\{\begin{matrix}
\ln(\frac{\delta }{\varepsilon} )-\para{\frac{1}{\e}-\frac{1}{\delta}}\abs{x_{2}-x_{1}}+(1-\lambda)(1-\frac{\abs{x_{2}-x_{1}}}{\delta})&\tifc \abs{x_{2}-x_{1}}\leq \varepsilon\\ ~\\
 \ln(\frac{\delta}{\abs{x_{2}-x_{1}}})-1+\frac{\abs{x_{2}-x_{1}}}{\delta}+(1-\lambda)(1-\frac{\abs{x_{2}-x_{1}}}{\delta}) &\tifc \e\leq \abs{x_{2}-x_{1}}\leq \frac{\delta}{\lambda}\\~\\
  0&\tifc \frac{\delta}{\lambda}\leq \abs{x_{2}-x_{1}}
\end{matrix}\right.    .
\end{eqalign}
\begin{remark}\label{rem:negativecov}
We note that after $\abs{x_{2}-x_{1}}\geq \delta$ this covariance is negative and thus discontinuous at $\abs{x_{2}-x_{1}}=\frac{\delta}{\lambda}$ and so the GMC $\eta^{\delta,\lambda}$ cannot be defined (so far as we know in the literature, GMC has been built for positive definite covariances \cite{allez2013lognormal}). So we are forced to evaluate those fields only over sets $A$ with length $\abs{A}\leq \delta$ (see \cite[lemma 2]{bacry2003log} where they also need this restriction to define the scaling law.).
\end{remark}
 The $U_{ \varepsilon}^{\delta, \lambda}$ is related to $U^{\delta}_{\e}$ in the following way
\begin{eqalign}\label{eq:truncatedscalinglawGMC}
U^{\delta}_{\lambda\e}(\lambda x)\eqdis Z_{\lambda}+  U_{ \varepsilon}^{  \delta,\lambda }(x),
\end{eqalign}
where $Z_{\lambda}:=N(0,\ln\frac{1}{\lambda}-1+\lambda)$ is a Gaussian independent of $ U_{ \varepsilon}^{  \delta,\lambda }$. The corresponding lognormal is denoted by
\begin{eqalign}\label{logfieldshifted}
c_{\lambda}:=&\frac{1}{\lambda}\expo{-\overline{Z_{\lambda}}}.
\end{eqalign}
We can now restate \eqref{eq:truncatedscalinglawGMC}:
\begin{lemma}\label{lem:scalinglawudelta}[Scaling transformation for $U^{\delta}_{\e}$]
For $\lambda\in (0,1)$, $x_{0}\in\mathbb{R}$ and all Borel sets $A\subset B_{\delta/2}(x_0)$ we have
\begin{eqalign}
 \set{\eta^{\delta}_{U}(\lambda A)}_{A\subset B_{\delta/2}(x_0)}\eqdis \set{\lambda e^{\overline{Z_{\lambda}}}\eta^{\delta,\lambda}_{U}(A)}_{A\subset B_{\delta/2}(x_0) },  \quad \eta_{U}^{\delta,\lambda}:=\eta_{U^{\delta,\lambda}_{\e}}.\end{eqalign}
\end{lemma}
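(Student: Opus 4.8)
The plan is to deduce this from the exact-scaling identity \eqref{eq:truncatedscalinglawGMC} by a change of variables inside the $\varepsilon$-regularized integral defining the chaos, followed by the $\varepsilon\to0$ limit. Fix $\lambda\in(0,1)$. Since the covariances $R_{\varepsilon}^{\delta}$ and $R_{\varepsilon}^{\delta,\lambda}$ depend only on $|x_{1}-x_{2}|$, both $U_{\varepsilon}^{\delta}$ and $U_{\varepsilon}^{\delta,\lambda}$ are stationary, so it suffices to prove the identity for $x_{0}=0$; fix a Borel set $A\subset B_{\delta/2}(0)$, so that $\mathrm{diam}(A)\le\delta$ and, by Remark \ref{rem:negativecov}, $R_{\varepsilon}^{\delta,\lambda}$ restricted to $A$ is still a genuine positive definite covariance, hence $\eta^{\delta,\lambda}_{U}(A)=\eta_{U^{\delta,\lambda}_{\varepsilon}}(A)$ is well defined.

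Starting from $\eta^{\delta}_{U}(\lambda A)=\lim_{\varepsilon\to0}\int_{\lambda A}e^{\gamma U_{\varepsilon}^{\delta}(x)-\frac{\gamma^{2}}{2}\ln\frac{1}{\varepsilon}}\,\dx$, I would substitute $x=\lambda y$, which produces the Jacobian factor $\lambda$ and turns the domain into $A$, and then reparametrize the mollification scale by $\varepsilon=\lambda\varepsilon'$; this is harmless since we ultimately take $\varepsilon'\to0$, and it replaces $U_{\varepsilon}^{\delta}(\lambda y)$ by $U_{\lambda\varepsilon'}^{\delta}(\lambda y)$ while shifting the deterministic normalization via $\ln\frac{1}{\varepsilon}=\ln\frac{1}{\varepsilon'}+\ln\frac1\lambda$. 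At this point \eqref{eq:truncatedscalinglawGMC}, read as an identity of Gaussian processes jointly in the spatial variable (and, thanks to the exact-scaling structure of the underlying white noise $W$, jointly in the scale $\varepsilon'$), lets me replace $U_{\lambda\varepsilon'}^{\delta}(\lambda\,\cdot\,)$ by $Z_{\lambda}+U_{\varepsilon'}^{\delta,\lambda}(\,\cdot\,)$ with $Z_{\lambda}$ independent of $U_{\varepsilon'}^{\delta,\lambda}$, without changing the law of the whole $\varepsilon'$-indexed and $A$-indexed family. The constant Gaussian $Z_{\lambda}$ then factors out of the integral as $e^{\gamma Z_{\lambda}}$; collecting this with the $\lambda$ from the Jacobian and the deterministic pieces coming from the scale shift and the Wick renormalization — here the $-1+\lambda$ in $\Expe{Z_{\lambda}^{2}}$ and the matching $1-\lambda$ appearing in $R_{\varepsilon}^{\delta,\lambda}(0)$ are exactly what is needed — assembles the prefactor into $\lambda\, e^{\overline{Z_{\lambda}}}$ with $\overline{Z_{\lambda}}=\gamma Z_{\lambda}-\frac{\gamma^{2}}{2}\Expe{Z_{\lambda}^{2}}$, while the integral that remains converges to $\eta^{\delta,\lambda}_{U}(A)$.

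It then remains to pass the $\varepsilon'\to0$ limit through the identity: in the subcritical regime $\gamma^{2}<2$ both $\eta^{\delta}_{U}(\lambda A)$ and $\eta^{\delta,\lambda}_{U}(A)$ exist as a.s. (uniformly integrable) limits of their regularizations, the chaos being a measurable functional of the family $\{U^{\delta}_{\varepsilon}\}_{\varepsilon>0}$, so equality in law of those families (jointly over all $A\subset B_{\delta/2}(0)$) forces equality in law of the limiting random measures. I expect the one genuine point requiring care to be precisely this scale-jointness: \eqref{eq:truncatedscalinglawGMC} must be invoked not merely for a fixed $\varepsilon'$ but as an equality of processes over all scales on the common probability space of $W$ — which is exactly the sense in which the exact-scaling relations of \cite{bacry2003log} hold — after which the rest is the routine change of variables and bookkeeping of the lognormal constant, so the lemma is essentially a repackaging of \eqref{eq:truncatedscalinglawGMC}. (Alternatively one could route through Proposition \ref{exactscaling} for the cone field $\omega$ together with the relation between $U^{\delta}$ and $\omega^{\delta}$, but the direct argument is cleaner.)
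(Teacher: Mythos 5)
Your argument is correct and coincides with the paper's route: the paper offers no separate proof, introducing the lemma merely as a restatement of the field-level scaling relation \eqref{eq:truncatedscalinglawGMC}, and your change of variables $x=\lambda y$, $\e=\lambda\e'$ together with the lognormal bookkeeping (where the $-1+\lambda$ in $\Expe{Z_{\lambda}^{2}}$ cancels against the $1-\lambda$ in $R_{\e}^{\delta,\lambda}(0)$ under the Wick normalization) is exactly the omitted routine verification. Your flag that \eqref{eq:truncatedscalinglawGMC} must be read jointly in the spatial variable and the scale is the right point of care, and it holds here by the white-noise construction of $U^{\delta}_{\e}$.
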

 For any $p\in \R$ we have the moment formula
\begin{eqalign}
\Expe{\expo{p\overline{Z_{\lambda}}}  }=\expo{p\beta r_{\lambda} (p-1)}.
\end{eqalign}
More generally, for positive continuous bounded function $g_{\delta}:\Rplus\to \Rplus$ with $g_{\delta}(x)=0$ for all $x\geq \delta$, we let
\begin{eqalign}
\Expe{U_{ \varepsilon}^{\delta,g }(x_{1} )U_{ \varepsilon}^{  \delta,g }(x_{2} )  }=\left\{\begin{matrix}
\ln(\frac{\delta }{\varepsilon} )-\para{\frac{1 }{\e}-\frac{1}{\delta}}\abs{x_{2}-x_{1}}+g_{\delta}(\abs{x_{2}-x_{1}})&\tifc \abs{x_{2}-x_{1}}\leq \varepsilon\\
 \ln(\frac{\delta}{\abs{x_{2}-x_{1}}})-1+\frac{\abs{x_{2}-x_{1}}}{\delta}+g_{\delta}(\abs{x_{2}-x_{1}}) &\tifc \e\leq \abs{x_{2}-x_{1}}\leq \delta\\
  0&\tifc \delta\leq \abs{x_{2}-x_{1}}
\end{matrix}\right.  .
\end{eqalign}

\subsubsection*{Singular integral}
Let the singular/fusion integrals for GMC measure be defined by (\cite[lemma A.1]{david2016liouville})
\begin{eqalign}\label{eq:singularintegralfusion}
\eta_{R(a)}(A):=\int_{A}e^{\gamma^{2}\Expe{U(s)U(a)}}e^{\gamma U(s)}\ds,\hfil A\subset \Rplus,\hfil a\in\Rplus.\end{eqalign}
As before, let $Q_{R}(x)$ denote its inverse.

Using the Girsanov/tilting-lemma (\cite[Lemma 2.5]{berestycki2021gaussian}) we have
\begin{eqalign}
\Expe{\ind{\eta(x)\geq t}e^{\thickbar{U}(a)}}=\Proba{\eta_{R(a)}(x)\geq t}.
\end{eqalign}

\subsubsection*{Filtration notations}
Following \cite[section 4.2]{AJKS}, for a Borel set $S\subset \mathbb{H}$ we define $\mathcal{U}_{S}=\mathcal{U}(S)$ to be the $\sigma$-algebra generated by the randoms variables $U(A)$, where $A$ runs over Borel subsets $A\subseteq S$. For short if $X\in \CU_{S}$ we will call such a variable \textit{measurable}. Moreover, since we will be using the triangular region $\mathcal{S}$ over intervals $[a,b]$, we define
\begin{eqalign}
\mathcal{U}_{\e}^{\delta}([a,b]):=\mathcal{U}(\bigcup_{x_{0}\in [a,b]}\mathcal{S}_{\e}^{\delta}(x_{0})).
\end{eqalign}
We fix decreasing sequence $(\delta_{n}=\rho_{*}^{n})_{n\geq 1}$ for some $\rho_{*}<1$. We denote the $\sigma-$algebra of upper truncated fields
\begin{eqalign}
\CU^{n}=\CU^{n}_{\infty}:=\CU_{\mathcal{S}^{\delta_{n}}_{0}}.
\end{eqalign}
All the lower truncations are measurable with respect to the filtration of the larger scales
\begin{eqalign}
U^{n}(s)=U^{\delta_{n}}_{0}(s)\in \CU^{k}([a,b]) \tfor n\geq k,s\in [a,b],
\end{eqalign}
and the same is true for the measure and its inverse
\begin{eqalign}
\set{\eta^{n}(a,b)\geq t}\in \CU^{k}([a,b])\tand \set{Q^{n}(0,x)\geq t}\in \CU^{k}([0,t]).  \end{eqalign}
We define measurability with respect to random times in a standard fashion. Namely, for a single time $Q^{k}(b)$ we have the usual notion of the stopping time filtration
\begin{eqalign}\label{eq:filtrationnotation}
\mathcal{F}\para{[0,Q^{k}(b)]}:=\left\{A\in \bigcup_{t\geq 0}  \CU^{k}([0,t]): A\cap\{Q^{k}(b) \leq t\}\in  \CU^{k}([0,t]), \forall t\geq 0\right\}.
\end{eqalign}
For example, $\CU^{n}\para{[0,Q^{k}(b)-s]}\subset \mathcal{F}\para{[0,Q^{k}(b)]}$ for all $n\geq k$ and $s\in [0,Q^{k}(b)]$. We also have, for $n\geq k$,
\begin{eqalign}
\set{Q^{k}(b)\geq Q^{n}(a)+s} \in  \mathcal{F}\para{[0,Q^{k}(b)]}.
\end{eqalign}

This perspective helps us achieve a decoupling. For two interval increments $Q^{k}(a_{k},b_{k}),Q^{n}(a_{n},b_{n})$ from possibly different scales $n\geq k$ , we we will need to consider the \textit{gap event}
\begin{eqalign}
G_{k,n}:=\set{Q^{k}(a_{k})-Q^{n}(b_{n})\geq \delta_{n}} \in  \mathcal{F}\para{[0,Q^{k}(a_{k})]}.
\end{eqalign}

 \part{Decoupling }\label{partdecoupling}\label{corrstruinver} 
 In this part we study the conditional decoupling for the increments of the inverse $Q^{\delta}(x)$ (for either field $U^{\delta}$ or $\omega^{\delta}$ because we only use their short-range correlations). We start by briefly discussing in a heuristic manner the main idea of decoupling the inverse.  This was inspired by the Brownian motion setting where increments of hitting times $T_{d}-T_{c}$ and $T_{b}-T_{a}$ are independent iff $c>b$ or $a>d$ \cite[theorem 2.35]{mortersperes2010brownian}. It was also inspired by the following conditional independence for Markov chains $S_{n}=\sum_{k=1}^{n} X_{k}$ (\cite[section 7.3]{chow2003probability} )
\begin{equation}\label{eq:randomwalkcondinde}
\Proba{S_{1}=i, S_{3}=j\conditional S_{2}=k}=\Proba{S_{1}=i\conditional S_{2}=k}\Proba{S_{3}=j\conditional S_{2}=k}.    
\end{equation}
In words the past $S_{1}$ and the future $S_{3}$ are conditionally independent given the present $S_{2}$.

We start with two inverse increments $Q^{k}(a_{k},b_{k}),Q^{n}(a_{n},b_{n})$ from possibly different scales $n\geq k$ and an event about their gap
\begin{equation}\label{gapeventkndelta}
G_{k,n}:=\set{Q^{k}(a_{k})-Q^{n}(b_{n})\geq \delta_{n}}.    
\end{equation}
In analogy with the random walk, this gap is thought of as the current step: $\set{S_{2}=k}$ and so conditional on the the gap $G_{k,n}$ we study the decoupling of $S_{1}:=Q^{n}(a_{n},b_{n})$ and $S_{3}:=Q^{k}(a_{k},b_{k})$. Here is a way we will use this idea. We start with applying the tower property with respect to $ \CF\para{[0,Q^{n}(b_{n})]}$:
\begin{eqalign}
 &\Proba{Q^{k}(a_{k},b_{k})\in A_{1},Q^{n}(a_{n},b_{n})\in A_{2},G_{k,n} }\\
 =&\Expe{\Proba{Q^{k}(a_{k},b_{k})\in A_{1},G_{k,n}\mid  \CF\para{[0,Q^{n}(b_{n})]}} \ind{Q^{n}(a_{n},b_{n})\in A_{2}} },
\end{eqalign}
where $A_{1},A_{2}$ are Borel subsets of $\Rplus$. Next we further clarify the source of the correlation between $S_{1}$ and $S_{3}$ via the gap $G_{k,n}$. We take discrete dyadic approximations $T_{N}(x)\downarrow Q^{k}(x)$:
\begin{equation}
T_{N}(x):=\frac{m+1}{2^{N}}\twhen \frac{m}{2^{N}}\leq Q^{k}(x) < \frac{m+1}{2^{N}}, m\geq 0.
\end{equation}
By fixing $T_{N}(a_{k})=\ell$, we have the gap event $G_{k,n}^{N}:=\set{\ell-Q^{n}(b_{n})\geq \delta_{n}}\in \CF\para{[0,Q^{n}(b_{n})]}$ and so we are left with
\begin{eqalign}
 &\sum_{\ell\in D_{N}}\Expe{\Proba{Q^{k}_{b_{k}-a_{k}}\bullet \ell\in A_{1},T_{N}(a_{k})=\ell\mid  \CF\para{[0,Q^{n}(b_{n})]}} \ind{G_{k,n}^{N},Q^{n}(a_{n},b_{n})\in A_{2}} },
\end{eqalign}
where  we used the semigroup notation
\begin{equation}   
Q^{k}(x)-Q^{k}(y)=\infp{t\geq 0: \etamu{Q^{k}(y),Q^{k}(y)+t}{k}\geq x-y }=:Q_{x-y}^{k }\bullet Q_{y}^{k}.
\end{equation}   
But here because of the gap event $\set{\ell\geq Q^{n}(b_{n})+ \delta_{n}}$, we get that 
\begin{equation}
Q^{k}_{b_{k}-a_{k}}\bullet \ell=\infp{t\geq 0: \etamu{\ell,\ell+t}{k}\geq b_{k}-a_{k} }
\end{equation}
is \textit{independent} of the filtration $ \CF\para{[0,Q^{n}(b_{n})]}$. So the correlation got localized through the decomposed event $\set{T_{N}(a_{k})=\ell}$. This is the essential decoupling that we will use. It comes at the cost of obtaining the gap events with high probability and so we next study the error of not having such gaps.
\section{Decay of overlap }
 In this section we estimate the probability that the gap between $Q^{k+m}(b_{k+m})$ and $Q^{k}(a_{k})$ is small:
\begin{equation}
\Proba{Q^{k}(a_{k})-Q^{k+m}(b_{k+m})< \delta_{k+m}}\to 0\tas m\to +\infty.     
\end{equation}
This says that the decoupling from above becomes asymptotically true in smaller scales. It shows that the complement of the gap event from \cref{gapeventkndelta}
\begin{equation}
G_{k,k+m}^{c}=\set{Q^{k}(a_{k})-Q^{k+m}(b_{k+m})\leq \delta_{k+m}}    
\end{equation}
decays fast in probability and so it is reasonable that the intervals $Q^{k}(a_{k},b_{k})$ and $Q^{k+m}(a_{k+m},b_{k+m})$ decorrelate for large $m$. This kind of \textit{asymptotic independence} was crucial in \cite{AJKS}, where even though the annuli were correlated, after removing the upper-scales, the lower scales exhibited decoupling.
\begin{proposition}\label{prop:decayoverlap}\label{rem:exponentialchoices}
We fix $k,m\geq 1$. We let $\delta_{k}:=\rho_{\delta}^{n}$ for some $\rho_{\delta}\leq \rho_{*}$. Then the probability for a small gap can be bounded by:
\begin{eqalign}\label{eq:decayofoverlap}
\Proba{Q^{k}(a_{k})-Q^{k+m}(b_{k+m})<\delta_{k+m}}\lessapprox&  \para{a_k-g_{k,m}-b_{k+m}}^{-2}  \delta_{k}^{p_{1}}b_{k+m}^{2-p_{1}}\\
&+\para{a_{k}-g_{k,m}-b_{k+m}}^{-p_{2}}\delta_{k}^{p_{2}} \para{\frac{\delta_{k+m}}{\delta_{k}}}^{\zeta(p_{2})-1}\\
=:&e^{gap}_{k,k+m}, 
\end{eqalign}
where $p_{1}\in (0,1),p_{2}\in [1,\beta^{-1})$, and the $\e$'s are arbitrarily small and sequence $g=g_{k,m}:=\rho_{g}\rho_{1}^{k}\rho_{2}^{m}$ for some $\rho_{1}\in [\rho_{\delta},\rho_{*}],\rho_{2}\in (0,1)$. Using the \nameref{def:exponentialchoiceparam} for general $\rho_{\delta}\leq \rho_{*}$, we bound
\begin{eqalign}
&\Proba{Q^{k}(a_{k})-Q^{k+m}(b_{k+m})<\delta_{k+m}}\\
\lessapprox &   \para{\rho_{a}- \para{\frac{\rho_{1}}{\rho_{*}}}^{k}\rho_{2}^{m}-\rho_{b}\rho_{*}^{m}}^{-2} \para{\frac{\rho_{\delta}}{\rho_{*}}}^{p_{1}k}\rho_{*}^{(2-p_{1})m}+\frac{1}{\rho_{g}^{p_{2}}} \para{\frac{\rho_{\delta}}{\rho_{1}}}^{p_{2}k}\para{\frac{\rho_{\delta}^{}}{\rho_{2}^{p_{2}}}}^{m}.
\end{eqalign}
\end{proposition}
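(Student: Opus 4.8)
The plan is to bound the small-gap probability by relating the event $\{Q^{k}(a_{k})-Q^{k+m}(b_{k+m})<\delta_{k+m}\}$ to events about the GMC measures $\eta^{k}$ and $\eta^{k+m}$ on fixed deterministic intervals, and then to apply the known moment estimates for $Q$ and $\eta$ from \cite{binder2023inverse} together with the singular integral / Girsanov machinery recalled in the notations. The key observation is that $Q^{k}(a_{k})<Q^{k+m}(b_{k+m})+\delta_{k+m}$ forces either $Q^{k+m}(b_{k+m})$ to be atypically large or $Q^{k}(a_{k})$ to be atypically small (in fact, since $Q^{k+m}$ and $Q^{k}$ are increments of related measures, one uses the deterministic separation $b_{k+m}<a_{k}$ of the \emph{arguments} to see that a violation of the gap is a genuine large-deviation event). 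Concretely, I would introduce an intermediate deterministic height $g=g_{k,m}=\rho_{g}\rho_{1}^{k}\rho_{2}^{m}$ and split
\begin{eqalign}
\Proba{Q^{k}(a_{k})-Q^{k+m}(b_{k+m})<\delta_{k+m}} \leq \Proba{Q^{k}(a_{k})<g+\delta_{k+m}} + \Proba{Q^{k+m}(b_{k+m})>g-\delta_{k+m}+\delta_{k+m}},
\end{eqalign}
wait --- more precisely I would choose $g$ strictly between the typical value of $Q^{k+m}(b_{k+m})$ and that of $Q^{k}(a_{k})$, so that the bad event is contained in $\{Q^{k}(a_{k})\le g + \delta_{k+m}\}\cup\{Q^{k+m}(b_{k+m})\ge g\}$, both of which are genuine deviations because $a_{k}-g-b_{k+m}>0$ by the exponential parameter choices.

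Next, each of the two pieces is turned into a GMC statement by duality: $\{Q^{k}(a_{k})\le t\}=\{\eta^{k}[0,t]\ge a_{k}\}$ and $\{Q^{k+m}(b_{k+m})\ge t\}=\{\eta^{k+m}[0,t]\le b_{k+m}\}$. For the first (small-$Q$, i.e. large-$\eta$) event I would use a first-moment / Markov bound on $\eta^{k}[0, g+\delta_{k+m}]$ at a power $p_{1}\in(0,1)$ after a Paley--Zygmund-type lower bound, or directly the negative-moment estimates on $Q^{k}$ from \cite{binder2023inverse}, producing a factor $\delta_{k}^{p_{1}}b_{k+m}^{2-p_{1}}$ up to the distance factor $(a_{k}-g-b_{k+m})^{-2}$ coming from a Chebyshev step at exponent $2$. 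For the second (large-$Q$, i.e. small-$\eta$) event, small values of $\eta^{k+m}$ over an interval of length $\sim g$ are controlled by the moments of $Q^{k+m}$ (or equivalently by negative moments of $\eta^{k+m}$) at exponent $p_{2}\in[1,\beta^{-1})$; here the scaling law (\Cref{lem:scalinglawudelta}, \Cref{prop:GMClogscalinglaw}) converts the ratio of truncation scales $\delta_{k+m}/\delta_{k}$ into the power $(\delta_{k+m}/\delta_{k})^{\zeta(p_{2})-1}$ via the structure function $\zeta$, and the distance factor enters as $(a_{k}-g-b_{k+m})^{-p_{2}}$.

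Finally I would substitute the exponential parametrization $\delta_{k}=\rho_{\delta}^{k}$, $\delta_{k+m}=\rho_{\delta}^{k+m}$, $g_{k,m}=\rho_{g}\rho_{1}^{k}\rho_{2}^{m}$ and the \nameref{def:exponentialchoiceparam} for $a_{k},b_{k+m}$, so that $a_{k}-g_{k,m}-b_{k+m}$ becomes $\rho_{a}-(\rho_{1}/\rho_{*})^{k}\rho_{2}^{m}-\rho_{b}\rho_{*}^{m}$ (after dividing through by $\delta_{k}=\rho_{*}^{k}$ in the appropriate normalization), $\delta_{k}^{p_{1}}/\rho_{*}^{p_{1}k} = (\rho_{\delta}/\rho_{*})^{p_{1}k}$, and collecting powers of $\rho_{*}$ in $m$ gives $\rho_{*}^{(2-p_{1})m}$ and $(\rho_{\delta}/\rho_{2}^{p_{2}})^{m}$; this is a routine bookkeeping step. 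The main obstacle I anticipate is the \emph{small-$\eta$} estimate: bounding $\Proba{\eta^{k+m}[0,g]\le b_{k+m}}$ sharply enough to extract the factor $(\delta_{k+m}/\delta_{k})^{\zeta(p_{2})-1}$ requires using the exact scaling law on an interval of length $g\le\delta_{k}$ (so Remark~\ref{rem:negativecov} applies and the scaled field is well defined) and then applying a negative-moment bound for $\eta^{\delta,\lambda}$ uniformly in the scale parameter $\lambda=\delta_{k+m}/g$ or similar — controlling the lognormal prefactor $e^{\overline{Z_{\lambda}}}$ and checking the admissible range $p_{2}<\beta^{-1}$ is where the delicate part of the argument lies, and it is essentially the place where the hypothesis $\rho_{\delta}\le\rho_{*}$ and the precise form of $g_{k,m}$ get used.
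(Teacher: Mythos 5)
Your split $\{Q^{k}(a_{k})-Q^{k+m}(b_{k+m})<\delta_{k+m}\}\subseteq\{Q^{k}(a_{k})\le g+\delta_{k+m}\}\cup\{Q^{k+m}(b_{k+m})\ge g\}$ is a legitimate union bound, but it is not the decomposition the paper uses, and as set up it cannot produce the factors that \cref{prop:decayoverlap} actually asserts. The paper first rewrites the event exactly as $a_{k}-\eta^{k}(Q^{k+m}(b_{k+m}))\le\int_{0}^{\delta_{k+m}}e^{U^{k}(s+Q^{k+m}(b_{k+m}))}\,\mathrm{d}s$ and only then inserts $g_{k,m}$, so the two pieces are: (i) a deviation of the \emph{precomposed} process, $\Proba{\eta^{k}(Q^{k+m}(b_{k+m}))-b_{k+m}\ge a_{k}-g_{k,m}-b_{k+m}}$, handled by Chebyshev at exponent $2$ together with \cref{lemm:precomposedetainvuppertrun}, i.e. $\Expe{\para{\eta^{k}(Q^{k+m}(x))-x}^{2}}\lesssim\delta_{k}^{p_{1}}x^{2-p_{1}}$ --- the only place the factor $b_{k+m}^{2-p_{1}}$ can come from; and (ii) an upper deviation of the height-$\delta_{k}$ field shifted to the random point, $\Proba{\int_{0}^{\delta_{k+m}}e^{U^{k}(s+Q^{k+m}(b_{k+m}))}\,\mathrm{d}s\ge g_{k,m}}$, bounded via Markov at $p_{2}\in[1,\beta^{-1})$ and the shifted-GMC moments of \cref{cor:shiftedGMCmoments}, which is where $\para{\delta_{k+m}/\delta_{k}}^{\zeta(p_{2})-1}$ is produced (a $\delta_{k}$-field integrated over a $\delta_{k+m}$-length interval).

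In your version both ingredients disappear, because you convert everything to GMC events on deterministic intervals before analyzing them. After passing to $\{\eta^{k}[0,g+\delta_{k+m}]\ge a_{k}\}$ the variable no longer involves $b_{k+m}$ at all, so a Chebyshev step can only center at the Lebesgue value and yields at best something like $(a_{k}-g-\delta_{k+m})^{-2}\delta_{k}^{\gamma^{2}}(g+\delta_{k+m})^{2-\gamma^{2}}$ (as in the $L^{2}$ computation of \cref{ratetoLebesgue}), not $(a_{k}-g_{k,m}-b_{k+m})^{-2}\delta_{k}^{p_{1}}b_{k+m}^{2-p_{1}}$; your Paley--Zygmund and Markov-at-$p_{1}\in(0,1)$ suggestions do not give a squared distance factor at all, so the claim that this step produces the stated first term is unsupported. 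Likewise your second piece $\Proba{\eta^{k+m}[0,g]\le b_{k+m}}$ is a small-ball event for the height-$\delta_{k+m}$ field on an interval of length $g$, in which the ratio $\delta_{k+m}/\delta_{k}$ never appears; no scaling-law or lognormal bookkeeping will extract $\para{\delta_{k+m}/\delta_{k}}^{\zeta(p_{2})-1}$ from it, since that exponent encodes precisely the cross-scale correlation (the field $U^{k}$ evaluated near $Q^{k+m}(b_{k+m})$) that your split discards. Your route might still yield some exponentially decaying bound in $m$ under the exponential parameter choices (e.g. negative moments give roughly $(b_{k+m}/g)^{q}$ for the second piece), but it does not prove the stated inequality; the missing idea is to keep the random shift $Q^{k+m}(b_{k+m})$ inside $\eta^{k}$ and invoke \cref{lemm:precomposedetainvuppertrun} and \cref{cor:shiftedGMCmoments}.
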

\begin{remark}
 The main choice in this article is $\rho_{\delta}=\rho_{*}$. And the above estimate cannot be used directly to obtain a subset of decoupled intervals. So we just include it to showcase the phenomenon of the exponential decay of having overlaps.
\end{remark}
\begin{remark}
For simplicity we let
\begin{eqalign}\label{eq:f1f2def}
f_{k,m}:=\maxp{\para{\frac{\rho_{\delta}}{\rho_{*}}}^{p_{1}k}\rho_{*}^{(2-p_{1})m},\para{\frac{\rho_{\delta}}{\rho_{1}}}^{p_{2}k} \para{\frac{\rho_{\delta}^{\zeta(p_{2})-1}}{\rho_{2}^{p_{2}}}}^{m}}.    
\end{eqalign}
We see that $f_{k,m}$ can be naturally represented as $f_{1}^{k} f_{2}^{m}$ for some $f_{1}\in (0,1],f_{2}\in (0,1)$.    
\end{remark}
\begin{proof} 
A recurring way to write the overlap event in terms of shifted GMCs is as follows
\begin{eqalign}\label{eq:maingapprogGMCinvert}
&\Proba{Q^{k}(a_{k})-Q^{k+m}(b_{k+m})< \delta_{k+m}}=\Proba{a_k-\eta^{k}(Q^{k+m}(b_{k+m})\leq \int_{0}^{\delta_{k+m}}e^{U^{k}(s+Q^{k+m}(b_{k+m})}\ds}.    
\end{eqalign}
Here we remove the upper bound 
\begin{eqalign}
\eqref{eq:maingapprogGMCinvert}\leq &\Proba{a_k-g_{k,m}-b_{k+m}\leq \eta^{k}(Q^{k+m}(b_{k+m})-b_{k+m}}\\
&+\Proba{g_{k,m}\leq \int_{0}^{\delta_{k+m}}e^{U^{k}(s+Q^{k+m}(b_{k+m})}\ds},    
\end{eqalign}
for our choice of $g_{k,m}$. For the first term we use \cref{lemm:precomposedetainvuppertrun} to bound it by
\begin{eqalign}\label{eq:l2estimatepreco}
\frac{1}{\para{a_k-g_{k,m}-b_{k+m}}^{2}}\Expe{\para{\eta^{k}(Q^{k+m}(b_{k+m})-b_{k+m}}^{2}}\lessapprox& \frac{1}{\para{a_k-g_{k,m}-b_{k+m}}^{2}}\delta_{k}^{p_{1}}b_{k+m}^{2-p_{1}},
\end{eqalign}
for $p_{1}\in (0,1)$. For the second term we use scaling by $\delta_{k}$ and \cref{cor:shiftedGMCmoments} for some $p_{2}\in [1,\beta^{-1})$ to upper bound
\begin{equation}
\Proba{g_{k,m}\leq \int_{0}^{\delta_{k+m}}e^{U^{k}(s+Q^{k+m}(b_{k+m})}\ds}\lessapprox g_{k,m}^{-p_{2}} \delta_{k}^{p_{2}} \para{\frac{\delta_{k+m}}{\delta_{k}}}^{\zeta(p_{2})-1}. 
\end{equation}
Using the \nameref{def:exponentialchoiceparam}
the first term is
\begin{eqalign}
\frac{1}{\para{\rho_{a}- \para{\frac{\rho_{1}}{\rho_{*}}}^{k}\rho_{2}^{m}-\rho_{b}\rho_{*}^{m}} ^{2}} \para{\frac{\rho_{\delta}}{\rho_{*}}}^{p_{1}k}\rho_{*}^{(2-p_{1})m},
\end{eqalign}
and the second term is
\begin{eqalign}
\frac{1}{\rho_{g}^{p_{2}}} \para{\frac{\rho_{\delta}}{\rho_{1}}}^{p_{2}k}\para{\frac{\rho_{\delta}^{\zeta(p_{2})-1}}{\rho_{2}^{p_{2}}}}^{m}.    
\end{eqalign}
\end{proof}
\begin{remark}
One possible room for sharpening this estimate is to write
\begin{eqalign}
&\Proba{g\leq \int_{0}^{\delta_{k+m}}e^{U^{k}(s+Q^{k+m}(b_{k+m})}\ds}    \\
=&\Proba{g-\delta_{k+m}\leq \int_{0}^{\delta_{k+m}}e^{U^{k}(s+Q^{k+m}(b_{k+m})}\ds-\delta_{k+m}}\\
\leq&\frac{1}{\para{g-\delta_{k+m}}^{2}} \Expe{\para{\int_{0}^{\delta_{k+m}}e^{U^{k}(s+Q^{k+m}(b_{k+m})}\ds-\delta_{k+m}}^{2}}.
\end{eqalign}
However, it is unclear to us how to estimate this difference  because $U^{k}(s+Q^{k+m}(b_{k+m})$ is not necessarily Gaussian anymore. If one uses the supremum, then we can do a chaining argument as in maximum estimates in \cite{binder2023inverse} but including the indicator
\begin{equation}
X_{T}:=\sup_{T}\ind{\eta(T,T+x)-x\geq g}\para{\eta(T,T+x)-x}    
\end{equation}
in order to preserve positivity.
\end{remark}

\section{The independence number of graphs   }\label{indnumber}
In this section we study the family of increments $\set{Q^{k}(a_{k},b_{k})}_{k\geq 1}$ and aim to extract a subsequence of them that will satisfy the gap events $Q_{a}^{k}-Q_{b}^{k+m}\geq \delta_{k+m}$ that we need in order to do decoupling i.e. obtaining with high probability a subsequence $S=\set{i_{1},\cdots, i_{\abs{S}}}\subset \set{1,\cdots,N}$ of size $\abs{S}\geq c N$, $c\in (0,1)$, such that for $i_{k}\in S$
\begin{eqalign}
&Q_{a_{i_{k}}}^{i_{k}}-Q_{b_{i_{k+1}}}^{i_{k+1}}\geq \delta_{i_{k+1}}.
\end{eqalign}
We turn this existence into a deviation estimate using graph theory and the concept of maximally independent subset.
For $k,m\in [N]$ we consider the complement gap event i.e. the \textit{overlap} event
\begin{equation}
O_{k,m}:=G_{k,m}^{c}:=\ind{Q^{k\wedge m}(a_{k\wedge m})-Q^{m\vee k}(b_{m\vee k}) \leq \delta_{k\vee m}   }.    
\end{equation}
We then consider the random graph $G:=G_{Q}(N)$ with $N$ vertices labeled by $v_{k}:=\set{(Q^{k}(a_{k}),Q^{k}(b_{k})}$ and we say that $v_{k},v_{k+m}$  are connected by edges only when this event happens $G_{k,k+m}^{c}=1$. So the question of having a subsequence of increments that don't intersect is the same as obtaining the largest clique of vertices $S\subset G$ that do not connect to each other, this is called the \textit{independence number $\alpha(G)$ of $G$}. The independence number has the following lower bounds \cite{larson2017independence}
\begin{equation}
\alpha(G)\geq \frac{N}{1+\thickbar{d}}, \alpha(G)\geq \frac{N}{\Delta_{N}+1}\tand    \alpha(G)\geq\sum_{k=1}^{N}\frac{1}{d(v_{k})+1},
\end{equation}
where the kth-degree is $d(v_k):=\sum_{m\in[N]\setminus \set{k} } O_{k,m}$, the average degree is $\thickbar{d}:=\frac{\sum_{k=1}^{N}d(v_{k})}{N}$ and the maximum degree is $\Delta_{N}:=\maxl{k\in[N] } d(v_{k})$. The sharpest bound is the third one and is called Caro-Wei bound. So returning to our decoupling, by showing that the event $\alpha(G)\geq c N$, for some constant $c\in (0,1)$, has high probability as $N\to +\infty$,  we obtain $cN$ large enough gaps $\set{Q_{a_{i_{k}}}^{i_{k}}-Q_{b_{i_{k+m}}}^{i_{k+m}}\geq \delta_{i_{k+m}}}$ for $i_{j}\in S$, in particular
\begin{equation}\label{eq:decayofsmallindependennumber}
\Proba{\alpha(G)\leq c N}\leq c_{1} e^{-c_{2}N},    
\end{equation}
where $c_{2}$ can be arbitrarily large by adjusting the parameters in \nameref{def:exponentialchoiceparamannul}.
\subsection{Maximum degree }
For independent interest, here we will study the tail of the maximum $\Delta_{N}:=\maxl{k\in[N] } d(v_{k})$ using a tail for sum of indicators \cite{linial2014chernoff}.
\begin{theorem} \label{lp_theorem}
Let $X_1,\ldots,X_n$ be indicator random variables, $0 < \beta < 1$ and $ 0 < k < \beta n $.
Then
\begin{equation}\label{main}
\Pr \left( \sum_{i=1}^n{X_i}\geq \beta n\right) \leq
\frac{1}{\binom{\beta n}{k}}\sum_{|S|=k}{\Pr\left(\wedge_{i \in S} (X_i = 1) \right)}.
\end{equation}
In particular, if $\Pr\left(\wedge_{i \in S} (X_i = 1) \right) \leq \alpha^k$ for every $S$ of size $k = \left(\frac{\beta-\alpha}{1-\alpha}\right)n$, where $0 < \alpha < \beta$ then
\begin{equation}
\Pr \left( \sum_{i=1}^n{X_i}\geq \beta n\right)  \leq e^{-D(\beta||\alpha) n},    
\end{equation}
where $D(\beta||\alpha):=\beta\ln\frac{\beta}{\alpha}+(1-\beta)\ln\frac{1-\beta}{1-\alpha}$ is the Kullback-Leibler distance between $\alpha,\beta$.
\end{theorem}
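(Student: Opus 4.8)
The plan is to prove the combinatorial inequality \eqref{main} by a double-counting/first-moment identity, and then deduce the Kullback--Leibler bound from it by Stirling's formula. First I would set $T:=\sum_{i=1}^{n}X_i$, the random number of indices with $X_i=1$. Since the $X_i$ are $\{0,1\}$-valued, $\prod_{i\in S}X_i=\one\{X_i=1\ \text{for all}\ i\in S\}$ for every $k$-subset $S\subseteq\{1,\dots,n\}$, and summing over all such $S$ gives the pointwise identity $\sum_{|S|=k}\prod_{i\in S}X_i=\binom{T}{k}$, where $\binom{\cdot}{k}$ is the generalized (falling-factorial) binomial, which vanishes at integers $<k$ in agreement with the count. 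Taking expectations gives $\sum_{|S|=k}\Pr\!\big(\wedge_{i\in S}(X_i=1)\big)=\Expe{\binom{T}{k}}$. Because $x\mapsto\binom xk$ is nondecreasing on $[k-1,\infty)$ and $\beta n>k$, on the event $\{T\ge\beta n\}$ one has $\binom Tk\ge\binom{\beta n}{k}$, so $\Expe{\binom Tk}\ge\binom{\beta n}{k}\,\Pr(T\ge\beta n)$; dividing by $\binom{\beta n}{k}$ yields \eqref{main}.

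For the second assertion of the theorem, I would feed the hypothesis $\Pr(\wedge_{i\in S}(X_i=1))\le\alpha^k$ into \eqref{main} and sum over the $\binom nk$ choices of $S$, obtaining $\Pr(T\ge\beta n)\le\binom nk\alpha^k/\binom{\beta n}{k}$. Substituting the prescribed $k=\tfrac{\beta-\alpha}{1-\alpha}n$ (which satisfies $0<k<\beta n$ since $0<\alpha<\beta<1$), the relevant identities are $1-\tfrac kn=\tfrac{1-\beta}{1-\alpha}$ and $\beta-\tfrac kn=\tfrac{\alpha(1-\beta)}{1-\alpha}$. Expanding $\binom nk\alpha^k/\binom{\beta n}{k}=\frac{n!\,(\beta n-k)!}{(n-k)!\,(\beta n)!}\,\alpha^k$ via Stirling, $\ln m!=m\ln m-m+\tfrac12\ln(2\pi m)+O(1/m)$: the linear-in-$n$ terms and the $n\ln n$ terms cancel, the $\tfrac12\ln$ terms collapse to $\tfrac12\ln(\alpha/\beta)$, and the remaining $O(n)$ part, using the two identities above, simplifies exactly to $-\big(\beta\ln\tfrac\beta\alpha+(1-\beta)\ln\tfrac{1-\beta}{1-\alpha}\big)n=-D(\beta\|\alpha)\,n$. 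Since $\alpha<\beta$ gives $\sqrt{\alpha/\beta}<1$, this yields $\Pr(T\ge\beta n)\le\sqrt{\alpha/\beta}\,e^{O(1/n)}e^{-D(\beta\|\alpha)n}\le e^{-D(\beta\|\alpha)n}$ for $n$ large, which is the asserted bound.

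The double-counting identity and the monotonicity of $\binom xk$ are routine; the one place needing care is the Stirling computation in the last step, where one must verify that after the substitution $k=\tfrac{\beta-\alpha}{1-\alpha}n$ every $n\ln n$ and $O(n)$ contribution cancels and the entropy-type residue is \emph{exactly} $D(\beta\|\alpha)$ (equivalently, that $\tfrac{\beta-\alpha}{1-\alpha}$ is the value of $k/n$ that makes the right side of \eqref{main} decay at the KL rate). A minor point is non-integrality: if $k$ and $\beta n$ are not integers one takes $k=\lfloor\tfrac{\beta-\alpha}{1-\alpha}n\rfloor$, which perturbs the exponent by only $O(1)$ (absorbed into constants or into the $\lessapprox$ convention), while $\binom{\beta n}{k}$ is read as the generalized binomial throughout.
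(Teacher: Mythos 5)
Your proof of \eqref{main} is correct and is essentially the same argument as in the source the paper quotes for this theorem (\cite{linial2014chernoff}, which is cited without proof in the text): your identity $\sum_{|S|=k}\Pr(\wedge_{i\in S}X_i=1)=\Expe{\binom{T}{k}}$ together with monotonicity of $x\mapsto\binom{x}{k}$ on $[k-1,\infty)$ is just a slick packaging of the counting observation that on $\{T\ge\beta n\}$ at least $\binom{\beta n}{k}$ of the $k$-subsets are all ones. The choice $k=\frac{\beta-\alpha}{1-\alpha}n$ and your bookkeeping identities $1-\frac kn=\frac{1-\beta}{1-\alpha}$, $\beta-\frac kn=\frac{\alpha(1-\beta)}{1-\alpha}$ are also right, and the exponential-order computation does land exactly on $-D(\beta\|\alpha)n$ (I checked the coefficient collapse: the $\ln\alpha$-terms sum to $\beta n$ and the $\ln\frac{1-\beta}{1-\alpha}$-terms to $-(1-\beta)n$).

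The one place your route falls short of the statement is the final step. Stirling with error terms gives $\binom nk\alpha^k/\binom{\beta n}{k}\le\sqrt{\alpha/\beta}\,e^{\theta_n}e^{-D(\beta\|\alpha)n}$ where $\theta_n$ involves $\frac{1}{12(\beta n-k)}=\frac{1-\alpha}{12\alpha(1-\beta)n}$; this correction need not be dominated by $\sqrt{\alpha/\beta}$ uniformly (e.g.\ small $n$ or small $\alpha$), and the floor-rounding of $k$ adds another $O(1)$ perturbation, so as written you only get the clean bound ``for $n$ large'' or up to a bounded constant. That is harmless for the way the paper uses the theorem (it appears there with a prefactor $c N$ anyway), but it is weaker than the stated inequality. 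The exact bound for every $n$ follows without Stirling: write
\begin{equation*}
\ln\!\para{\frac{\binom nk}{\binom{\beta n}{k}}\alpha^k}=\sum_{i=0}^{k-1}\ln\frac{\alpha(n-i)}{\beta n-i}\le\int_0^k\ln\frac{\alpha(n-x)}{\beta n-x}\,\dx ,
\end{equation*}
using that the integrand is increasing in $x$; the integral evaluates, at $k=\frac{\beta-\alpha}{1-\alpha}n$, exactly to $-nD(\beta\|\alpha)$ (the $n\ln n$ terms cancel and the remaining coefficients are the same $\beta$ and $-(1-\beta)$ you found). Substituting this comparison for your Stirling expansion turns your argument into a complete proof of the theorem as stated, matching the elementary proof in the cited reference.
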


 By the union bound we study
\begin{equation}
 \Proba{\maxl{k\in[N] } d(v_{k})\geq \beta N}\leq \sum_m\Proba{d(v_{m})\geq \beta N}.
\end{equation}
For every set $S\subset [N]$ with large enough cardinality $\abs{S}=k>\frac{N}{2}$, there exists an index $m_{*}\in S$ that is the most distant scale $m_{*}(m,S):=\arg\max\limits_{i\in S}\abs{m-i}$ such that $\abs{m-m_{*}}\geq r \abs{S}=rk$ for some uniform in $S$ fixed constant $r<\frac{1}{2}$. Moreover, as explained in \cref{rem:exponentialchoices}, if we use exponential choices we get uniform exponential bound $e^{Q}_{k,k+m}\leq cf_{1}^{k}f_{2}^{m}$ and so we further bound 
\begin{equation}
\Pr\left(\wedge_{i \in S} (G_{m,i}^{c} = 1) \right)\leq c f_{2}^{\abs{m-m_{*}}} \leq cf_{2}^{rk}.    
\end{equation}
Therefore, the \cref{lp_theorem} implies
\begin{equation}
\Proba{\max_{m}d(v_{m})\geq \beta N}\leq cNe^{-rD(\beta||f_{2}) N}.    
\end{equation}
However, we will need a stronger estimate to get \cref{eq:decayofsmallindependennumber}. This is because to get a linearly-growing independent-clique we need for the maximum degree to behave almost as a constant
\begin{equation}
   \Proba{\alpha(G)\leq c_{N}N}\leq \Proba{\maxl{k\in[N] } d(v_{k})\geq \frac{1}{c_{N}}-1}, 
\end{equation}
for some sequence $ c_{N}$. So for this probability to decay exponentially, the maximum needs to behave almost as a constant or the sequence $c_{N}$ needs to go to zero and hence we lose the linear growth for $\alpha(G)$.
\subsection{Average degree and mean }
Another bound could come from the average degree. In the event 
\begin{equation}
\thickbar{d}=\frac{\sum_{k=1}^{N}d(v_{k})}{N}\geq \alpha \doncl 2\sum_{k=1}^{N}\sum_{m>k}^{N}\ind{Q^{k}(a_{k})-Q^{m}(b_{m})\leq \delta_{m}}\geq \alpha N    
\end{equation}
the lower bound is linear in $N$ and the sum has order $N^{2}$, so we would need a large number $N^{2}-\alpha N=N(N-\alpha)$ of indicator terms to be zero. This is reasonable because its mean is uniformly bounded.
\begin{lemma}\label{lem:unifboundedmean}
For $L\leq N$ we have the formula
\begin{eqalign}
\sum_{k=L}^{N}\sum_{m>k}^{N}\Proba{Q^{k}(a_{k})-Q^{m}(b_{m})\leq \delta_{m}}&\leq C \frac{f_{1}^{L}f_{2}}{(1-f_{1})(1-f_{2})},
\end{eqalign}
for the $f_{i}\in (0,1)$ from \cref{rem:exponentialchoices}. So for $L=1$ we have \begin{eqalign}
\sum_{k=1}^{N}\sum_{m>k}^{N}\Proba{Q^{k}(a_{k})-Q^{m}(b_{m})\leq \delta_{m}}&\leq C \frac{f_{1}f_{2}}{(1-f_{1})(1-f_{2})}.
\end{eqalign}
\end{lemma}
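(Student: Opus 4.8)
The plan is to reduce the double sum of overlap probabilities to a geometric-type series by invoking the exponential bound established in \cref{prop:decayoverlap}. First I would recall from \cref{rem:exponentialchoices} that, under the exponential choices of parameters, the overlap probability $\Proba{Q^{k}(a_{k})-Q^{m}(b_{m})\leq \delta_{m}}$ with $m>k$ is bounded by $e^{gap}_{k,m}\lessapprox f_{1}^{k}f_{2}^{m-k}$ for some $f_{1},f_{2}\in(0,1)$; here the key structural point is that the bound factors into a part depending on the coarser scale $k$ and a part depending only on the gap $m-k$, which is exactly what makes the double sum summable. (One must be a little careful about which exponent carries the $k$ versus the $m-k$; reading off \eqref{eq:f1f2def}, the natural representation is $f_{k,m}=f_1^k f_2^{m}$, so after renaming one writes the summand as $f_1^k f_2^{m-k}$ up to adjusting $f_1$.)

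The main computation is then a straightforward interchange and resummation. I would write
\begin{eqalign}
\sum_{k=L}^{N}\sum_{m=k+1}^{N}\Proba{Q^{k}(a_{k})-Q^{m}(b_{m})\leq \delta_{m}}
\leq C\sum_{k=L}^{N} f_{1}^{k}\sum_{m=k+1}^{N} f_{2}^{m-k}
\leq C\sum_{k=L}^{\infty} f_{1}^{k}\sum_{j=1}^{\infty} f_{2}^{j},
\end{eqalign}
and then bound the inner sum by $\dfrac{f_{2}}{1-f_{2}}$ and the outer sum by $\dfrac{f_{1}^{L}}{1-f_{1}}$, giving the claimed $C\,\dfrac{f_{1}^{L}f_{2}}{(1-f_{1})(1-f_{2})}$. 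Setting $L=1$ yields the second display. The only genuine work is checking that the bound of \cref{prop:decayoverlap} genuinely applies uniformly over all pairs $k<m\le N$ with the sequence choices in force — i.e. that the prefactor $\para{a_k-g_{k,m}-b_{k+m}}^{-2}$ and $\para{a_k-g_{k,m}-b_{k+m}}^{-p_2}$ appearing there stays bounded below away from zero uniformly in $k,m$, so that it can be absorbed into the constant $C$. Under the exponential parametrization this is precisely the content of the displayed estimate in \cref{prop:decayoverlap} after \nameref{def:exponentialchoiceparam}, where $\rho_a - (\rho_1/\rho_*)^k\rho_2^m - \rho_b\rho_*^m$ is bounded below by a positive constant once $\rho_a$ is chosen suitably larger than $\rho_b$ and the other ratios are small; so I would simply cite that.

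The step I expect to be the mild obstacle is bookkeeping rather than mathematics: making sure the factorization $e^{gap}_{k,m}\lesssim f_1^k f_2^{m-k}$ is stated in exactly the form needed (the paper writes $f_{k,m}=f_1^k f_2^m$, so one should either re-index or observe $f_1^k f_2^m = (f_1 f_2)^k f_2^{m-k}$ and rename $f_1 f_2 \rightsquigarrow f_1$), and confirming that the "$\lessapprox$" constants from \cref{prop:decayoverlap} are uniform in $k,m\le N$ so they collapse into a single $C$ independent of $N$. Once that is pinned down the lemma is immediate from the two geometric sums.
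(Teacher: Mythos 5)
Your proposal is correct and follows essentially the same route as the paper: both invoke the exponential bound $\Proba{Q^{k}(a_{k})-Q^{m}(b_{m})\leq \delta_{m}}\lessapprox f_{1}^{k}f_{2}^{m-k}$ from \cref{rem:exponentialchoices} and then sum the resulting geometric series in $k$ and $m-k$ (the paper evaluates the finite double sum exactly and lets $N\to\infty$, while you dominate it directly by the infinite series — the same estimate). Your indexing worry is moot since the $m$ in \cref{rem:exponentialchoices} already denotes the gap, so the summand is $f_{1}^{k}f_{2}^{m-k}$ with no renaming needed.
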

\begin{proof}
Once again we use the exponential choice in \cref{rem:exponentialchoices} that bound the gap
\begin{equation}
\Proba{Q^{k}(a_{k})-Q^{k+m}(b_{k+m})\leq \delta_{k+m}}\lessapprox f_{1}^{k}f_{2}^{m}.
\end{equation}
So we get
\begin{equation}
\sum_{k=L}^{N}\sum_{m=k+1}^{N}f_{1}^{k}f_{2}^{m-k}= f_{2}f_{1}^{L}\frac{-f_{2}\para{1-f_{2}^{N-L}}+f_{1}\para{1-f_{2}^{N+1-L}}-\para{1-f_{2}}f_{1}^{N+1-L}}{(1-f_{1})(1-f_{2})(f_{1}-f_{2})}.
\end{equation}
We take $N$ to infinity to be left with
\begin{equation}
\frac{f_{1}^{L}f_{2}}{(1-f_{1})(1-f_{2})}\tand \frac{f_{1}f_{2}}{(1-f_{1})(1-f_{2})}.
\end{equation}
\end{proof}
\begin{remark}\label{rem:averagedegreedeviation}
So we see here that if we restricted to the graph $G_{L,N}$ with vertices in $[L,N]$ and set $L:=\alpha_{0}N$ for some $\alpha_{0}\in (0,1)$, then by Markov we get
\begin{equation}
\Proba{\frac{\sum_{k=\alpha_{0}N}^{N}d(v_{k})}{N}\geq \alpha } \leq \frac{1}{\alpha N} \frac{f_{1}^{\alpha_{0}N}f_{2}}{(1-f_{1})(1-f_{2})},   
\end{equation}
which indeed it decays exponentially in $N$. This means that we get with high probability
\begin{equation}
\alpha(G_{L,N})\geq \frac{N-L}{1+\alpha}=\frac{(1-\alpha_{0})N}{1+\alpha} .   
\end{equation}
We tested getting a large deviation result
\begin{equation}
\Proba{2\sum_{k=1}^{N}\sum_{m>k}^{N}\ind{Q^{k}(a_{k})-Q^{m}(b_{m})\leq \delta_{m}}\geq \alpha N }\lessapprox e^{-cN}   
\end{equation}
using projective operators \cite[theorem 3.20]{merlevede2019functional} but we got negative results because those estimates often assume a $L_{\infty}$-bounded object whereas here if we use filtrations we end up with semi-random GMC objects and thus lognormals which are unbounded. 
\end{remark}
\section{Existence of Gap event Estimate}
In this section we obtain the exponential decay of the independent number being small.
\begin{theorem}\label{indnumbertheorem}\label{thm:gapeventexistence}
We show that the independence number being less than $c_{gap}N$ decays exponentially in $N$
\begin{equation}
\Proba{\alpha(G)< c_{gap}N}\leq c \rho_{*}^{(1+\e_{*})N},
\end{equation}
for some $\e_{*}>0$, given the following constraint
\begin{eqalign}\label{eq:constraintdecouplingbeta}
\para{\frac{(\beta+1)^{2}}{4\beta}-\frac{(1+\e_{*})}{1-c_{gap}}}-\frac{\beta^{-1}+1}{2}r_{a}> \frac{(1+\e_{*})}{1-c_{gap}}.
\end{eqalign}
\end{theorem}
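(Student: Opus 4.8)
The plan is to bound $\Proba{\alpha(G)< c_{gap}N}$ by translating the event $\set{\alpha(G)<c_{gap}N}$ into an event about the maximum degree $\Delta_N$ of the random graph $G$, and then to apply the Łopital-type bound \Cref{lp_theorem} (the sum-of-indicators Chernoff inequality) to that event. Concretely, since $\alpha(G)\geq \frac{N}{\Delta_N+1}$, the event $\set{\alpha(G)<c_{gap}N}$ is contained in $\set{\Delta_N+1> c_{gap}^{-1}}$, i.e. $\set{\Delta_N\geq \beta N}$ for an appropriate $\beta=\beta(c_{gap},\e_*)$; here the exponent $(1+\e_*)$ will come from the freedom in choosing $\beta$ slightly above $c_{gap}^{-1}-1$ divided by... more carefully, one wants $\beta N \approx \frac{1}{c_{gap}}-1$, but since we need $\Delta_N$ to grow linearly we instead set $c_{gap}$ so that $\frac{1}{c_{gap}}-1$ is a small multiple of $N$; the constraint \eqref{eq:constraintdecouplingbeta} is exactly what makes the Kullback–Leibler exponent in \Cref{lp_theorem} beat $\rho_*^{(1+\e_*)N}$.

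First I would set up the union bound $\Proba{\Delta_N\geq \beta N}\leq \sum_{m=1}^N\Proba{d(v_m)\geq \beta N}$ and, as in the Maximum degree subsection, observe that for any $S\subset[N]$ with $|S|=k=\left(\frac{\beta-\alpha}{1-\alpha}\right)N$ and $k>N/2$, there is a most-distant index $m_*\in S$ with $|m-m_*|\geq rk$ for a uniform $r<\tfrac12$, so that the joint overlap probability satisfies $\Proba{\wedge_{i\in S}(G_{m,i}^c=1)}\leq c f_2^{|m-m_*|}\leq c f_2^{rk}=:\alpha^k$ after absorbing constants (using the exponential bounds $e^Q_{k,k+m}\leq c f_1^k f_2^m$ from \Cref{rem:exponentialchoices}). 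Then \Cref{lp_theorem} gives $\Proba{d(v_m)\geq \beta N}\leq e^{-D(\beta\|\alpha)N}$, hence $\Proba{\Delta_N\geq\beta N}\leq cN e^{-rD(\beta\|f_2^{r})N}$. The remaining task is purely computational: express everything in terms of the \nameref{def:exponentialchoiceparam} parameters, so that $f_1,f_2$ are explicit powers of $\rho_*,\rho_\delta,\rho_a$ etc., and verify that the resulting exponent, namely $r D(\beta\|\alpha)$ with $\beta$ chosen just above $c_{gap}^{-1}$-scaled threshold and $\alpha$ the overlap rate, exceeds $(1+\e_*)\ln(1/\rho_*)$. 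Unwinding $D(\beta\|\alpha)=\beta\ln\frac\beta\alpha+(1-\beta)\ln\frac{1-\beta}{1-\alpha}$ and keeping only the dominant $\beta\ln(1/\alpha)$ term, the leading behavior is governed by $\ln(1/f_2)$, which from the definition of $f_2$ in \eqref{eq:f1f2def} contains the factor $\frac{(\beta+1)^2}{4\beta}$ (the $L^2$/moment exponent $2-p_1$ optimized over $p_1$, combined with $\zeta(p_2)-1$) minus the shift $\frac{\beta^{-1}+1}{2}r_a$ coming from the $g_{k,m}^{-p_2}$ term; setting this greater than $\frac{(1+\e_*)}{1-c_{gap}}$ is precisely \eqref{eq:constraintdecouplingbeta}, where the $\frac{1}{1-c_{gap}}$ appears because $k=\left(\frac{\beta-\alpha}{1-\alpha}\right)N$ and one needs $k\geq (1-c_{gap})N$-type control to translate the degree bound back to $\alpha(G)$.

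The main obstacle I anticipate is the interplay between \emph{which} vertex subset $S$ one feeds into \Cref{lp_theorem} and the \emph{directionality} of the overlap event $O_{k,m}$: the bound $e^Q_{k,k+m}\leq c f_1^k f_2^m$ decays in both the smaller index $k$ and the gap $m$, but \Cref{lp_theorem} needs a \emph{uniform} bound $\alpha^k$ over all size-$k$ subsets $S$, so one must argue that in any such $S$ the most distant pair realizes a gap of order $rk$ and that the smaller-index factor $f_1^k$ only helps. A secondary subtlety is that $\Delta_N\geq\beta N$ with $\beta$ a genuine constant forces $\alpha(G)\geq N/(\beta N+1)$ which is only $O(1)$, not linear — so one cannot simply use $\beta$ constant; instead the correct reading is that $c_{gap}$ is chosen small enough that $c_{gap}^{-1}-1$, while large, is still $o(N)$ in the relevant regime, or equivalently one applies the Caro–Wei bound $\alpha(G)\geq\sum_k (d(v_k)+1)^{-1}$ restricted to the high-index vertices where degrees are small with overwhelming probability (as in \Cref{rem:averagedegreedeviation}), and the linear lower bound $c_{gap}N$ then survives. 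Reconciling the clean statement $\Proba{\alpha(G)<c_{gap}N}\leq c\rho_*^{(1+\e_*)N}$ with these competing mechanisms — and checking that the algebra of $D(\beta\|\alpha)$ under the \nameref{def:exponentialchoiceparam} collapses exactly to \eqref{eq:constraintdecouplingbeta} — is where the real work lies; everything else is bookkeeping with geometric series as in \Cref{lem:unifboundedmean}.
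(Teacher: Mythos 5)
Your route through the maximum degree is precisely the one the paper sets up in the ``Maximum degree'' subsection and then explicitly discards as insufficient, and the gap you yourself flag in your last paragraph is fatal rather than a ``secondary subtlety''. To deduce $\alpha(G)\geq c_{gap}N$ from $\alpha(G)\geq N/(\Delta_N+1)$ you need $\Delta_N\leq c_{gap}^{-1}-1$, a \emph{constant} threshold, whereas \cref{lp_theorem} only controls $\Proba{d(v_m)\geq \beta N}$ for a linear threshold $\beta N$; the resulting bound $cNe^{-rD(\beta\|\alpha)N}$ gives $\alpha(G)\gtrsim 1/\beta=O(1)$, not $c_{gap}N$. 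Your two proposed repairs do not close this: (a) ``choose $c_{gap}$ so that $c_{gap}^{-1}-1$ is large but $o(N)$'' is incoherent, since $c_{gap}$ is a fixed constant in the statement; (b) the Caro--Wei/average-degree route of \cref{rem:averagedegreedeviation} rests on the decay $f_1^{\alpha_0 N}$, which requires $f_1<1$, i.e.\ $\rho_\delta<\rho_*$ --- but the theorem is needed (and proved) in the main case $\rho_\delta=\rho_*$, where $f_1=1$ and the pairwise overlap bound $e^{gap}_{k,k+m}\lesssim f_1^kf_2^m$ no longer decays in the scale index $k$ at all (the paper's remark after \cref{prop:decayoverlap} says exactly that this estimate ``cannot be used directly''). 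So the whole mechanism you lean on --- uniform product bounds $\alpha^k$ fed into a Chernoff/Kullback--Leibler estimate --- is unavailable in the regime the theorem addresses, except for the easy warm-up case $\rho_\delta<\rho_*$ which the paper dispatches in a few lines by a union bound over the leftmost block $S_*$ and \cref{lem:unifboundedmean}.

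The paper's actual proof for $\rho_\delta=\rho_*$ is of a different nature: it reduces $\{\alpha(G)<c_{gap}N\}$ to the event that every vertex of a subset $S$ with $|S|\geq(1-c_{gap})N$ has at least one overlap, inverts each overlap event back to GMC tail and small-ball events over a hierarchy of shrinking partition intervals $I_{m,k,\ell},J_{m,k,\ell}$, runs a selection/covering argument over scales (producing the covering condition $\sum_j(C_j-i_j+1)\geq M$), and then extracts the factor $\rho_*^{\frac{1+\e_*}{1-c_{gap}}(\ell_j+(m_{i_j}-i_j)r_g+1)}$ from each factor via Markov with the multifractal exponent $\zeta(p)$ and via the small-ball estimate \cref{cor:smallballestimateexpocho}. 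The constraint \eqref{eq:constraintdecouplingbeta} arises in the compatibility step from maximizing $\zeta(p)$ at $p=\frac{\beta^{-1}+1}{2}$ (giving $\frac{(\beta+1)^2}{4\beta}$) together with the Markov factor $\rho_a^{-p}$, which is where the term $\frac{\beta^{-1}+1}{2}r_a$ comes from; no Kullback--Leibler divergence appears, and a degree-based argument has no mechanism to produce that $r_a$-dependent term. So the proposal, as written, would not yield the theorem: the key missing idea is the decomposition-and-covering argument over scales that replaces the unavailable uniform pairwise decay.
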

\begin{remark}
The constraint \cref{eq:constraintdecouplingbeta} is a quadratic inequality in $\beta$ that has a solution for $\beta<0.171452$  and some small enough $\e_{*},c_{gap}$.
\end{remark}

\begin{proof}
\proofparagraph{Case $\rho_{\delta}<\rho_{*}$}
Recall $\delta_{m}:=\rho_{\delta}^{m}$. Even though we are not using $\rho_{\delta}<\rho_{*}$ in this article, we do this case because the proof is very short and so it gives an idea of the underlying phenomenon being the decay of the overlap. We study the following event and its complement going over all totally isolated subsets of the graph
\begin{eqalign}
E_{\alpha(G)}:=\bigcupls{S\subset [1,N]\\\abs{S}\geq c_{gap}N}\bigcapls{k,m\in S\\ m>k}\set{Q^{k}(a_{k})-Q^{m}(b_{m})\geq \delta_{m}}.    
\end{eqalign}
Since this event gives the existence of an independent set with cardinality larger than $c_{gap}N$, we have
\begin{eqalign}\label{eq:intersectioneventoflargesets}
 \Proba{\alpha(G)< c_{gap}N}=& \Proba{E_{\alpha(G)}^{c}}   \\
 =&\Proba{\bigcapls{S\subset [1,N]\\\abs{S}\geq c_{gap}N}\bigcupls{k,m\in S\\ m>k}\set{Q^{k}(a_{k})-Q^{m}(b_{m})\leq \delta_{m}}}.
\end{eqalign}
Here we only keep the leftmost subset
\begin{equation}\label{eq:sstarset}
S_{*}:=\set{N,...,(1-c_{gap})N-1}    
\end{equation}
and then use union bound and \cref{lem:unifboundedmean}
\begin{eqalign}
 \eqref{eq:intersectioneventoflargesets} \leq\sumls{k,k+m\in S_{*}}\Proba{Q^{k}(a_{k})-Q^{k+m}(b_{k+m})\leq \delta_{k+m}}
 &\leq \sum_{k=(1-c_{gap})N-1}^{N}\sumls{m=1}^{N-k}f_{1}^{k}f_{2}^{m}\\
 \lessapprox&\frac{f_{1}^{(1-c_{gap})N}f_{2}}{(1-f_{1})(1-f_{2})}.
\end{eqalign}
\proofparagraph{Case $\rho_{\delta}=\rho_{*}$}
We start with splitting over all possibilities of positive/zero for the sums of edge-weights with $m>k$
\begin{eqalign}\label{eq:avch10}
\Proba{\alpha(G)< c_{gap}N   }=\sum_{S\subset [N]}    \Proba{\alpha(G)< c_{gap}N, \bigcap_{i\in S}\set{Y_{i}\geq 1} , \bigcap_{i\in S^{c}}\set{Y_{i}= 0}   },
\end{eqalign}
for 
\begin{eqalign}
Y_{k}:=\sum_{m>k}^{N}\ind{O_{k,m}}:=\sum_{m>k}^{N}\ind{Q^{k}(a_{k})-Q^{m}(b_{m})\leq \delta_{m}}.    
\end{eqalign}
If we have $\abs{S^{c}}\geq c_{gap}N$ , then we obtain that the vertices corresponding to $S^{c}:=\set{j_{1},...,j_{R}}$ do not overlap with each other. This is because the zero sum $Y_{j_{1}}=0$ implies that the node $v_{j_{1}}$ doesn't have any edges with $v_{j_{2}},...,v_{j_{R}}$; the zero sum $Y_{j_{2}}=0$ similarly implies that $v_{j_{2}}$ doesn't have any edges with $v_{j_{3}},...,v_{j_{R}}$ and so on and so forth. But this not allowed due to the event $\alpha(G)< c_{gap}N$. Therefore, we reduce the sum over only subsets that satisfy $\abs{S^{c}}\leq  c_{gap}N\doncl \abs{S}\geq (1-c_{gap}) N$
\begin{eqalign}
\eqref{eq:avch10}=\sumls{S\subset [N]\\\abs{S}\geq (1-c_{gap}) N}    \Proba{\alpha(G)<c_{gap}N, \bigcap_{i\in S}Y_{i}\geq 1 , \bigcap_{i\in S^{c}}Y_{i}= 0   }.
\end{eqalign}
So it remains to study the probability term
\begin{equation}\label{eq:Ykdecompseventprob}
\Proba{\bigcap_{i\in S} \set{\sum_{m>i, m\in S }\ind{O_{i,m}} \geq 1}}=\Proba{\bigcap_{i\in S} \bigcup_{m_{i}>i, m\in S } O_{i,m}},    
\end{equation}
for  $S:=\set{s_{1},...,s_{M}}\subset [N]$ with $s_{i}<s_{i+1}$ and $M=  \ceil{(1-c_{gap}) N}$.
\proofparagraph{Step 1: Independent and Shrinking intervals}
Here we use a similar strategy as done for the terms $L_{n,m}$ in \cite[section 4.3]{AJKS}. Namely we invert back to GMC and split into shrinking intervals. We insert the deviation estimate
\begin{equation}
\set{Q^{m}(b_{m})\leq g_{m,k}},    
\end{equation}
for sequence $g_{m,k}:=\rho_{g}^{m-k}\rho_{*}^{k}$ with $\rho_{g}>\rho_{*}$, to get the inclusion
\begin{eqalign}
\set{Q^{k}(a_{k})-Q^{m}(b_{m})\leq \delta_{m}}\subseteq &\set{Q^{k}(a_{k})\leq g_{m,k}+\delta_{m}}\cup\set{Q^{m}(b_{m})\geq g_{m,k}}\\
=&\set{a_{k}\leq \etaum{k}{0,g_{m,k}+ \delta_{m}}}\cup\set{b_{m}\geq \etaum{m}{0,g_{m,k}}}.
\end{eqalign}
So we reduced to studying the tail and small-ball of GMC respectively. We decompose 
\begin{equation}
\spara{0,g_{m,k}+ \delta_{m}}=\bigcup_{\ell\geq 1}I_{m,k,\ell}\tand   \spara{0,g_{m,k}}=\bigcup_{\ell\geq 1}J_{m,k,\ell},  
\end{equation}
for sequences of partition intervals $I_{m,k,\ell}=\spara{x_{m,k,\ell},x_{m,k,\ell}+y_{m,k,\ell}}$ and $J_{m,k,\ell}=\spara{z_{m,k,\ell},z_{m,k,\ell}+w_{m,k,\ell}}$. In particular, we let
\begin{eqalign}
x_{m,k,\ell}:=&\rho_{*}(g_{m,k}+\delta_{m})\rho_{*}^{\ell}=\rho_{*}\para{\rho_{g}^{m-k}\rho_{*}^{k}+\rho_{*}^{m}}\rho_{*}^{\ell},   \\
y_{m,k,\ell}:=&(1-\rho_{*}) \para{\rho_{g}^{m-k}\rho_{*}^{k}+\rho_{*}^{m}}\rho_{*}^{\ell},   \\
z_{m,k,\ell}:=&\rho_{*}\rho_{g}^{m-k}\rho_{*}^{k}\rho_{*}^{\ell},   \\
w_{m,k,\ell}:=&(1-\rho_{*}) \rho_{g}^{m-k}\rho_{*}^{k+\ell} 
\end{eqalign}
to obtain the partition constraints
\begin{eqalign}
x_{m,k,\ell+1}+y_{m,k,\ell+1}=x_{m,k,\ell}\text{ since } \rho_{*}^2+(1-\rho_{*})\rho_{*} =\rho_{*}, 
\end{eqalign}
and
\begin{eqalign}
\sum_{\ell\geq 0}y_{m,k,\ell}=  g_{m,k}+\delta_{m},  
\end{eqalign}
and same for $J_{m,k,\ell}$
\begin{eqalign}
z_{m,k,\ell+1}+w_{m,k,\ell+1}=z_{m,k,\ell}\text{ since } \rho_{*}^2+(1-\rho_{*})\rho_{*} =\rho_{*}, 
\end{eqalign}
and
\begin{eqalign}
\sum_{\ell\geq 0}w_{m,k,\ell}=  g_{m,k}.  
\end{eqalign}
Let
\begin{eqalign}
u_{k,\ell}:=&\frac{(1-\rho_{u})}{2}a_{k}\rho_{u}^{\ell}    \tand\tilde{u}_{m,\ell}:=\frac{1}{\rho_{b}}b_{m}\rho_{\tilde{u}}^{\ell}=\delta_{m}\rho_{\tilde{u}}^{\ell},
\end{eqalign}
for $\rho_{u}:=\rho_{*}^{r_{u}},\rho_{\tilde{u}}:=\rho_{*}^{r_{\tilde{u}}}$, with $r_{u},r_{\tilde{u}}>0$, to get
\begin{eqalign}
 \sum_{\ell\geq 0}u_{k,\ell}=\frac{a_{k}}{2}<a_{k} \tand \sum_{\ell\geq 0}\tilde{u}_{m,\ell}=\frac{b_{m}}{\rho_{b}(1-\rho_{\tilde{u}})}>b_{m}.   
\end{eqalign}
In summary,  we upper bound by the following union over $\ell$
\begin{eqalign}\label{eq:mainprobabilityintersectionsnonzeror}
\eqref{eq:Ykdecompseventprob}\leq &\Proba{\bigcapls{j\in S}\bigcup_{m_{j}=j+1,m_{j}\in S}E_{j,m_j}}=\Proba{\bigcapls{j\in S}E_{j}},  
\end{eqalign}
 for
\begin{eqalign}
E_{a,b}:=&\bigcup_{\ell\geq 0}\set{u_{a,\ell}\leq \etauin{a}{I_{b,a,\ell}}}\cup\bigcup_{\tilde{\ell}\geq 0}\set{\tilde{u}_{b,\tilde{\ell}}\geq \etauin{b}{J_{b,a,\tilde{\ell} }}},\\
E_{j}:=&\bigcup_{m_{j}=j+1,m_{j}\in S}E_{j,m_j}.
\end{eqalign}
\proofparagraph{Step 2: Decoupling}
As done at step \cite[eq. (98)]{AJKS}
we pull the first scale $i_{1}$ for which $r_{i_{1}}>0$ using the union bound
\begin{eqalign}
\eqref{eq:mainprobabilityintersectionsnonzeror}\leq&\sum_{m_{i_{1}}=i_{1}+1,m_{i_{1}}\in S }\sum_{\ell_{1}\geq 0}\Proba{\set{u_{i_1,\ell_{1}}\leq \etauin{i_1}{I_{m_{i_{1}},i_{1},\ell_{1}}}}\cap \bigcapls{j\in S\setminus\{i_1\}}E_j}\\
+&\sum_{m_{i_{1}}=i_{1}+1,m_{i_{1}}\in S }\sum_{\tilde{\ell}_{1}\geq 0}\Proba{\set{\tilde{u}_{m_{i_{1}},\tilde{\ell}_{1}}\geq \etauin{m_{i_{1}}}{J_{m_{i_{1}},i_{1},\tilde{\ell}_{1}}}}\cap \bigcapls{j\in S\setminus\{i_1\}}E_j}.
\end{eqalign}
For the first sum we pick the smallest scale $i_{2}\geq i_{1}$ such that 
\begin{eqalign}
 &g_{i_{2},1}+\delta_{i_{2}+1}+\delta_{i_{2}}\leq x_{m_{i_{1}},i_{1},\ell_{1}},
\end{eqalign}
and if none exists then we have $x_{m_{i_{1}},\ell_{1}}\leq g_{s_{M}}+\delta_{s_{M}+1}+\delta_{s_{M}}$. We have the analogous choice for the second sum, namely the smallest scale  $i_{2}\geq m_{i_{1}}$ with 
\begin{eqalign}
 g_{i_{2},1}+\delta_{i_{2}+1}+\delta_{i_{2}}\leq w_{m_{i_{1}},i_{1},\tilde{\ell}_{1}},  
\end{eqalign}
and if none exists then we have $w_{m_{i_{1}},i_{1},\tilde{\ell}_{1}}\leq g_{s_{M}}+\delta_{s_{M}+1}+\delta_{s_{M}}$. By repeating the process, for each $L\in [N]$,  we get a subsequence $S_{L}:=\set{i_{1},...,i_{L}}\subset S$ such that we have the bound
\begin{eqalign}\label{eq:avch3}
\eqref{eq:mainprobabilityintersectionsnonzeror}\leq &\sumls{L=1}^{M}~~\sumls{m_{i}=i+1,m_{i}\in S,\\i\in S_{L}}^{N}\sumls{E\subset S_{L}}~~\sumls{\ell_{j}\geq 0\\ j\in E}~~\sumls{\tilde{\ell}_{j}\geq 0\\ j\in E^{c}}\\
&\quad\prod_{j\in E}\Proba{u_{i_{j},\ell_{j}}\leq \etauin{i_{j}}{I_{m_{i_{j}},i_{j},\ell_{j}}}}\prod_{j\in E^{c}}\Proba{\tilde{u}_{m_{i_{j}},\tilde{\ell}_{j}}\geq \etauin{m_{i_{j}}}{J_{m_{i_{j}},i_{j},\tilde{\ell}_{j}}}},
\end{eqalign}
where we sum over all subsets $E\subset S_{L}$ and the $m_{i_{j}},\ell_{j},\tilde{\ell}_{j}$ and $i_{i_{j+1}}$ satisfy 
\begin{eqalign}
 &g_{i_{j+1},1}+\delta_{i_{j+1}+1}+\delta_{i_{j+1}}\leq x_{m_{i_{j}},i_{j},\ell_{j}}\\
\tor &g_{i_{j+1},1}+\delta_{i_{j+1}+1}+\delta_{i_{j+1}}\leq w_{m_{i_{j}},i_{j},\tilde{\ell}_{j}},  
\end{eqalign}
depending on whether we study tail or small-ball respectively. Next we translate this condition into concrete inequalities for $m,\ell,\tilde{\ell}$.
\proofparagraph{Step 3: Covering condition}
First we study the $x-$terms. Using the \nameref{def:exponentialchoiceparam}, we write
\begin{eqalign}
 g_{i_{2},1}+\delta_{i_{2}+1}+\delta_{i_{2}}\leq x_{m_{i_{1}},i_{1},\ell_{1}}\doncl& \rho_{g}\rho_{*}^{i_{2}}+\rho_{*}^{i_{2}+1}+\rho_{*}^{i_{2}}\leq \rho_{*} \para{\rho_{g}^{m_{i_{1}}-i_{1}}\rho_{*}^{i_{1}}+\rho_{*}^{m_{i_{1}}}}\rho_{*}^{\ell_{1}}\\ \doncl&i_{1}+\ell_{1}+(m_{i_{1}}-i_{1})c_{1,\rho}+c_{2,\rho}+1\leq i_{2}, 
\end{eqalign}
for 
\begin{eqalign}
c_{1,\rho}:=&\frac{\ln\frac{1}{\rho_{g}}}{\ln\frac{1}{\rho_{*}}}=r_{g}\tand c_{2,\rho}:=\para{\ln\frac{1}{\rho_{*}}}^{-1}\para{\ln\para{1+\para{\frac{\rho_{*}}{\rho_{g}}}^{m_{i_{1}}-i_{1}}}^{-1} +\ln\para{\rho_{g}+\rho_{*}+1}^{-1}  }.    
\end{eqalign}
Conversely, if $r\in S$ and $r\leq i_{2}$, since $i_{2}$ is the smallest-choice,  we have the reverse inequality
\begin{eqalign}
 g_{r,1}+\delta_{r+1}+\delta_{r}> x_{m_{i_{1}},i_{1},\ell_{1}}\doncl&i_{1}+\ell_{1}+(m_{i_{1}}-i_{1})r_{g}+c_{2,\rho}\geq r. 
\end{eqalign}
For the $w-$terms we similarly have
\begin{eqalign}
 g_{i_{2},1}+\delta_{i_{2}+1}+\delta_{i_{2}}\leq w_{m_{i_{1}},i_{1},\tilde{\ell}_{1}}\doncl i_{1}+\tilde{\ell}_{1}+(m_{i_{1}}-i_{1})r_{g}+c_{3,\rho}+1\leq i_{2},
\end{eqalign}
for 
\begin{eqalign}
c_{3,\rho}:=\para{\ln\frac{1}{\rho_{*}}}^{-1}\ln\para{\rho_{g}+\rho_{*}+1}^{-1}. 
\end{eqalign}
By taking $\rho_{*}$ small enough, we get 
\begin{equation}
c_{2,\rho},c_{3,\rho}\in (0,\frac{1}{2})    
\end{equation}
and so since $i_{2},r$ are integers, we obtain 
\begin{eqalign}
r\leq i_{1}+\ell_{1}+(m_{i_{1}}-i_{1})r_{g}< i_{2}.    
\end{eqalign}
By repeating this process , we get a subsequence $S_{L}:=\set{i_{1},...,i_{L}}\subset S$ that satisfy covering condition for $S$
\begin{equation}
S\subset [i_{1},C_{1}]\cup [i_{2},C_{2}]\cup...\cup      [i_{L},C_{L}],
\end{equation}
for 
\begin{equation}
 C_{j}:=i_{j}+\ell_{j}+(m_{i_{j}}-i_{j})r_{g}.
\end{equation}
Therefore, due to the covering condition we obtain
\begin{eqalign}\label{eq:sumofellsgreaterthanM}
\sum_{j=1}^{L}(C_{j}-i_{j}+1)\geq M.    
\end{eqalign}
\proofparagraph{Step 4: Estimates} For the product over $E$ in \cref{eq:avch3} we simply use Markov for $p>1$
\begin{eqalign}\label{eq:Markovinequalityboundgap}
\Proba{u_{i_{j},\ell_{j}}\leq \etauin{i_{j}}{I_{m_{i_{j}},i_{j},\ell_{j}}}}\leq &c \frac{\para{\abs{I_{m_{i_{j}},i_{j},\ell_{j}}}/\delta_{i_{j}}}^{\zeta(p)}}{\para{u_{i_{j} ,\ell_{j}}/\delta_{i_{j}}}^{p}}\\
\leq &c 
\para{\frac{1}{\rho_{a}}}^{p}\para{1+\para{\frac{\rho_{*}}{\rho_{g}}}^{m_{i_{j}}-i_{j}}}^{\zeta(p)}\rho_{g}^{(m_{i_{j}}-i_{j})\zeta(p)}\para{\frac{\rho_{*}^{\zeta(p)}}{\rho_{u}^{p}}}^{\ell_{j}}.
\end{eqalign}
Here we need to extract a bound $\rho_{*}^{\frac{1}{1-\alpha}(1+\e_{*})(\ell_{j}+(m_{i_{j}}-i_{j})r_{g}+1)}$. As mentioned in \cref{eq:avch3}, the $\ell_{j}\geq 0$ and $m_{i_{j}}-i_{j}\geq 1$, so in order to extract the term $+1$ we need to use the second inequality $m_{i_{j}}-i_{j}\geq 1$. Assuming $\ell_{j}=0$, we are left with
\begin{eqalign}
\eqref{eq:Markovinequalityboundgap}\leq c2^{\zeta(p)}\rho_{a}^{-p}\rho_{g}^{(m_{i_{j}}-i_{j})\zeta(p)}=c2^{\zeta(p)}\rho_{*}^{r_{g}(m_{i_{j}}-i_{j})\zeta(p)-pr_{a}},    
\end{eqalign}
and so we require
\begin{eqalign}\label{eq:exponentgapconstraintoneplusepsilon}
  r_{g}(m_{i_{j}}-i_{j})\zeta(p)-pr_{a}> \frac{1}{1-c_{gap}} (1+\e_{*})(r_{g}(m_{i_{j}}-i_{j})+1).
\end{eqalign}
Secondly, for $\ell_{j}\geq 1$ we similarly require
\begin{eqalign}\label{eq:exponentgapconstraintoneplusepsilon2}
\zeta(p)-p r_{u}>\frac{1}{1-c_{gap}}(1+\e_{*}).    
\end{eqalign}
We get
\begin{eqalign}
\eqref{eq:Markovinequalityboundgap}\leq   c\rho_{*}^{\frac{1}{1-\alpha}(1+\e_{*})(\ell_{j}+(m_{i_{j}}-i_{j})r_{g}+1)}.  
\end{eqalign}
We discuss those constraints at the compatibility section below.\\
For the product over $E^{c}$ in \cref{eq:avch3} we apply the small ball estimate \cref{cor:smallballestimateexpocho} for the case \textit{$n>k$ and $\ell\geq 0$}, we need to take the parameters as follows:
\begin{eqalign}
&r=\tilde{u}_{m_{i_{j}},\tilde{\ell}_{j}}=\rho_{2}^{-n}\rho_{1}^{-\ell},\tand t=\abs{J_{m_{i_{j}},i_{j},\tilde{\ell}_{j}}}=\rho_{2}^{k+\ell+\e}\rho_{3}^{n-k}\\
&\rho_{1}=\rho_{\tilde{u}}=\rho_{*}^{r_{\tilde{u}}}, \rho_{2}=\rho_{*},\rho_{3}=\rho_{g}=\rho_{*}^{r_{g}}\\
&a_{1}=r_{\tilde{u}},a_{2}=1, a_{3}=r_{g},\\
&\tand n=m_{i_{j}}, k=i_{j},\ell=\tilde{\ell}_{j},\e:=\frac{\ln(1 - \rho_{*})}{\ln \frac{1}{\rho_*}}.
\end{eqalign}
To apply this corollary, we also need
\begin{eqalign}
r_{\tilde{u}}>1>r_{g} \tand r_{\tilde{u}}>(1+\e)(1+\beta).   
\end{eqalign}
We study these conditions later in \cref{eq:constraintscompatibilitygap}. Therefore, we get the bound 
\begin{eqalign}
\Proba{\tilde{u}_{m_{i_{j}},\tilde{\ell}_{j}}\geq \etauin{m_{i_{j}}}{J_{m_{i_{j}},i_{j},\tilde{\ell}_{j}}}}\leq c\branchmat{\rho_{*}^{\ell_{j} c_{2} } & \tcwhen\ell_{j}>c_{g}(m_{i_{j}}-i_{j})\\
\rho_{*}^{\ell_{j} c_{3,1}+(m_{i_{j}}-i_{j}) c_{3,2} } & \tcwhen 0\leq\ell_{j}\leq c_{g}(m_{i_{j}}-i_{j})},    
\end{eqalign}
for $c_{g}:=1-r_{g}$ and
\begin{eqalign}
c_{2}:=&\frac{1}{32}\para{\frac{r_{\tilde{u}}-(1+\e)-\beta (1+\e)}{\sqrt{\beta\para{1+\e}  }   }   }^{2},\\
c_{3,1}:=&(r_{\tilde{u}}-(1+\e))\para{\ln\frac{1}{\rho_{*}}}^{\alpha_{0}-1}\tand c_{3,2}:=(1-r_{g})\para{\ln\frac{1}{\rho_{*}}}^{\alpha_{0}-1},
\end{eqalign}
for $\e_{0}\approx 1$ and $\alpha_{0}>1$. To use the covering condition \cref{eq:sumofellsgreaterthanM} we again need to extract a bound $\rho_{*}^{(1+\e_{*})(\ell_{j}+(m_{i_{j}}-i_{j})r_{g}+1)}$. For the second range $0\leq\ell_{j}\leq c_{g}(m_{i_{j}}-i_{j})$  we simply require
\begin{eqalign}\label{eq:smalldeviexponentgaprequi}
c_{3,1}>\frac{1}{1-c_{gap}}(1+\e_{*})\tand c_{3,2}>\frac{1}{1-c_{gap}}2(1+\e_{*}),    
\end{eqalign}
where the $2$ is needed as above in order to give the term $+1$. For the first range $\ell_{j}>c_{g}(m_{i_{j}}-i_{j})$, we need to use the summation in order to extract the exponent $c_{g}(m_{i_{j}}-i_{j})$. By requiring
\begin{eqalign}\label{eq:smalldeviexponentgaprequi2}
c_{2}>\frac{1}{1-c_{gap}}2(1+\e_{*})\maxp{\frac{r_{g}}{1-r_{g}},1},
\end{eqalign}
we split this exponent to bound
\begin{equation}
\rho_{*}^{\ell_{j} c_{2}  }\leq \rho_{*}^{\ell_{j} \frac{c_{2}}{2}+\ell_{j} \frac{c_{2}}{2}  }\leq \rho_{*}^{ \frac{1}{1-c_{gap}}(1+\e_{*})\para{\ell_{j}+r_{g} (m_{i_{j}}-i_{j})+1} }.    
\end{equation}
\proofparagraph{Step 5: Conclusion} 
Here we can finally use the covering condition \cref{eq:sumofellsgreaterthanM}. We take out an $\e_{*}(1-\lambda)$-part, for small $\lambda\in (0,1)$, in order to extract exponent $M$
\begin{eqalign}\label{eq:avch4}
\eqref{eq:avch3}\leq&\rho_{*}^{\frac{1}{1-c_{gap}}(1+\e_{**}(1-\lambda))M}\sumls{L=1}^{M}c^{L}~~\sumls{m_{i}=i+1,m_{i}\in S\\i\in S_{L}}^{N}\sumls{E\subset S_{L}}~~\sumls{\ell_{j}\geq 0\\ j\in E}~~\sumls{\tilde{\ell}_{j}\geq 0\\ j\in E^{c}}\prod_{j\in E\cup E^{c}}\rho_{*}^{(\e_{*}\lambda)(\ell_{j}+(m_{i_{j}}-i_{j})r_{g}+1)}.
\end{eqalign}
The $\ell$-sums give a factor $\para{\frac{1}{1-\rho_{*}^{\e_{*}\lambda}}}^{L}$. The  $m_{i}$ sums give a factor $\para{\frac{\rho_{*}^{r_{g}\e_{*}\lambda}}{1-\rho_{*}^{r_{g}\e_{*}\lambda}}}^{L}$, and so we are left with
\begin{eqalign}\label{eq:avch5}
\eqref{eq:avch4}\leq&\rho_{*}^{(1+\e_{**}(1-\lambda))N}\sumls{L=1}^{M}c^{L}\rho_{*}^{(r_{g}\e_{*}(1-\lambda))L}\leq \tilde{c} \rho_{*}^{(1+\e_{**}(1-\lambda))M}.
\end{eqalign}
\proofparagraph{Step 6: Compatibility of constraints}
In summary we have the constraints
\begin{eqalign}\label{eq:constraintscompatibilitygap}
1)~~&r_{g}(m_{i_{j}}-i_{j})\zeta(p)-pr_{a}> \frac{1}{1-c_{gap}} (1+\e_{*})(r_{g}(m_{i_{j}}-i_{j})+1),    \\
2)~~&\zeta(p)-p r_{u}>\frac{1}{1-c_{gap}}(1+\e_{*}),\\ 
3)~~&r_{\tilde{u}}>1>r_{g} \tand r_{\tilde{u}}>(1+\e)(1+\beta),   \\
4)~~&c_{3,1}=(r_{\tilde{u}}-(1+\e))\para{\ln\frac{1}{\rho_{*}}}^{\alpha_{0}-1}>\frac{1}{1-c_{gap}}(1+\e_{*}),\\
5)~~&c_{3,2}=(1-r_{g})\para{\ln\frac{1}{\rho_{*}}}^{\alpha_{0}-1}>\frac{1}{1-c_{gap}}2(1+\e_{*}),    \\
6)~~&c_{2}=\frac{1}{32}\para{\frac{r_{\tilde{u}}-(1+\e)-\beta (1+\e)}{\sqrt{\beta\para{1+\e}  }   }   }^{2}>\frac{1}{1-c_{gap}}2(1+\e_{*})\maxp{\frac{r_{g}}{1-r_{g}},1}.
\end{eqalign}
The (4,5) constraints are immediate by taking small enough $\rho_{*}$. The constraints (3,6) also follow by taking large $r_{\tilde{u}}$. So it remains to study the first two constraints. We maximize $\zeta(p)$ by setting $p=\frac{\beta^{-1}+1}{2}$ (allowed since $\beta<1$) to get $\zeta(p)=\frac{(\beta+1)^{2}}{4\beta}$. For the second constraint we simply take small enough $r_{u}$. Since $m_{i_{j}}-i_{j}\geq 1$, the first constraint simplifies to
\begin{eqalign}
r_{g}\para{\frac{(\beta+1)^{2}}{4\beta}-\frac{(1+\e_{*})}{1-c_{gap}}}-\frac{\beta^{-1}+1}{2}r_{a}> \frac{(1+\e_{*})}{1-c_{gap}}. 
\end{eqalign}
To further simplify we take $r_{g}=1-\e$ for small $\e>0$ (at the cost of larger $r_{\tilde{u}}$ in (6))
\begin{eqalign}\label{eq:couplingconstraintforra}
(1-\e)\para{\frac{(\beta+1)^{2}}{4\beta}-\frac{(1+\e_{*})}{1-c_{gap}}}-\frac{\beta^{-1}+1}{2}r_{a}> \frac{(1+\e_{*})}{1-c_{gap}}.
\end{eqalign}

\end{proof}
\newpage \part{Ratios of increments }\label{part:inverse_ratio_moments}
 In this section we study the moments of the ratios of $Q_{\eta}$. There is an analogous result for $\eta$ in \cite[lemma 4.4]{AJKS}, where using the lognormal-scaling law one can bound
\begin{equation}
\Expe{\para{\frac{\eta^{\delta}(I)}{\eta^{\delta}((J)}}^{p}  }\leq c\para{\frac{\abs{I}}{\abs{J}}}^{\zeta(p)},    
\end{equation}
for all small enough intervals $I,J$ ie. $\abs{I},\abs{J}<\delta$ that are close and for $1<p<\beta^{-1}$.
\begin{remark}
    In the conformal welding part for the inverse, the dilatation $K$ is controlled by ratios $\frac{Q^{\delta}(J)}{Q^{\delta}(I)}$. The case $\abs{I}=\abs{J}$ represents the most singular behaviour of the dilatation as it approaches the real axis. In that case we return to the scaling law for GMC.
    \end{remark}
 In the following proposition we will study moments of the analogous ratio of the inverse increments.
As in \cite[lemma 4.4]{AJKS} we have two separate statements.
\begin{proposition}
\label{prop:inverse_ratio_moments}
We go over the cases of decreasing numerator, equal-length intervals for $Q$ and the same for $Q_{H}$.
\begin{itemize}
    \item Fix  $\beta\in  (0,0.152)$ and $\delta\leq 1$ . Assume $\abs{I}=r\delta$ for some $r\in (0,1)$ and $\abs{J}\to 0$. Then there exists $\e_{1},\e_{2}>0$, such that for each $p=1+\e_{1}$, one can find $q=1+\e_{2}\tand \tilde{q}>0$ such that
\begin{eqalign}\label{eq:ratiolpestimates}
  \Expe{\para{\frac{Q^{\delta}\para{J}}{Q^{\delta}\para{I}}}^{p}}\lessapprox \para{\frac{\abs{J}}{\delta}}^{q} r^{-\tilde{q}}.
\end{eqalign}

 \item Fix $\gamma<\frac{2}{\sqrt{3}}$ and $\delta\leq 1$. Assume we have two equal-length intervals $J=(a,a+x),I=(b,b+x)\subset [0,1]$ with $b-a=c_{b-a}x$ for $c_{b-a}>1$  and $a=c_{1}\delta,b=c_{2}\delta$ and $x< \delta$.Then we have for all $p\in [1,1+\e_{1}]$ with small enough $\e_{1}>0$, a bound of the form
\begin{eqalign}\label{eq:ratiobound}
\maxp{\Expe{\para{\frac{Q(a,a+x)}{Q(b,b+x)}}^{p} },\Expe{\para{\frac{Q(b,b+x)}{Q(a,a+x)}}^{p} }}\leq c_{a,b,p,\delta }\para{\frac{x}{\delta}}^{-\e_{ratio}(p)},
\end{eqalign}
where $\e_{ratio}(p)\in (0,1)$ can be made arbitrarily small at the cost of larger comparison constant $c_{a,b,p,\delta }$. The constant $c_{a,b,p,\delta }$ is uniformly bounded in $\delta$. 

\item Fix $\gamma<\frac{2}{\sqrt{3}}$.  For the inverse $Q_{H}$ corresponding to the field $H$ in \Cref{eq:covarianceunitcircle}, we have the same singular estimate as above for $\delta=1$
\begin{eqalign}\label{eq:ratioboundcircle}
\maxp{\Expe{\para{\frac{Q_{H}(a,a+x)}{Q_{H}(b,b+x)}}^{p} },\Expe{\para{\frac{Q_{H}(b,b+x)}{Q_{H}(a,a+x)}}^{p} }}\leq c_{a,b,p,\delta }x^{-\e_{ratio}(p)}.
\end{eqalign}

\end{itemize}
\end{proposition}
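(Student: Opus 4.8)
\emph{Proof strategy.} All three statements reduce to estimating $\Expe{(Q^{\delta}(J)/Q^{\delta}(I))^{p}}$ for two nearby increments, and they split according to which mechanism controls the ratio: for the decreasing-numerator bullet the two increments carry very different mass and a plain H\"older split is enough, whereas for the two equal-length bullets the near-cancellation between numerator and denominator must be preserved, which forces the use of the scaling transformation of \cref{lem:scalinglawudelta}.

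For the first bullet, write with $\tfrac1s+\tfrac1{s'}=1$, $s,s'>1$,
\begin{equation}
\Expe{\para{\frac{Q^{\delta}(J)}{Q^{\delta}(I)}}^{p}}\leq \Expe{Q^{\delta}(J)^{ps}}^{1/s}\,\Expe{Q^{\delta}(I)^{-ps'}}^{1/s'}.
\end{equation}
The first factor is controlled through $\{Q^{\delta}(J)>T\}=\{\eta^{\delta}(Q^{\delta}(y_{J}),Q^{\delta}(y_{J})+T)<\abs{J}\}$: after conditioning on $\mathcal{F}([0,Q^{\delta}(y_{J})])$, absorbing the $\delta$-range correlation of $U^{\delta}$ across the anchor by the tilting lemma of \cref{notations}, and rescaling by $\abs{J}/\delta$, the GMC small-ball estimates of the appendix give $\Expe{Q^{\delta}(J)^{ps}}^{1/s}\lessapprox \delta^{p}(\abs{J}/\delta)^{q}$ with $q$ increasing in $p$ and exceeding $1$ for a suitable range of $p>1+\gamma^{2}/2$ and of $s$. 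The second factor is the negative moment of the fixed-proportion increment $I$, finite precisely when $ps'<\frac{(1+\gamma^{2}/2)^{2}}{2\gamma^{2}}$ (the threshold of \cite{binder2023inverse}), and it contributes $\lessapprox \delta^{-p}r^{-\tilde q}$; the $\delta$-powers cancel, leaving $(\abs{J}/\delta)^{q}r^{-\tilde q}$. The restriction $\beta\in(0,0.152)$ is exactly what makes the set of $(p,s,s')$ with both $q>1$ and $ps'$ below the threshold non-empty.

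For the equal-length bullets H\"older is too lossy — it discards the cancellation that keeps $Q^{\delta}(J)/Q^{\delta}(I)$ close to $1$ with high probability — so I would instead use the scaling law directly. Scaling the whole picture by $\lambda=x/\delta$, the identity $Q^{\delta}(\lambda e^{\overline{Z_{\lambda}}}\tilde y)=\lambda Q^{\delta,\lambda}(\tilde y)$ following from \cref{lem:scalinglawudelta} yields
\begin{equation}
\frac{Q^{\delta}(a,a+x)}{Q^{\delta}(b,b+x)}\eqdis \frac{Q^{\delta,\lambda}(\tilde a,\tilde a+\tilde x)}{Q^{\delta,\lambda}(\tilde b,\tilde b+\tilde x)},
\end{equation}
where $\tilde a,\tilde b,\tilde x$ all carry the \emph{same} global lognormal factor $e^{-\overline{Z_{\lambda}}}$, which therefore cancels in the ratio; the right-hand side is a ratio of two increments, of common mass $\tilde x$ and separation $c_{b-a}\tilde x$, of the inverse of the exact-scaling (cone-type) GMC $\eta^{\delta,\lambda}$. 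Exploiting the self-similarity of $\eta^{\delta,\lambda}$ together with the moment bounds of \cite{binder2023inverse} (available for $\gamma<\tfrac2{\sqrt3}$) bounds the $p$-th moment of this ratio by $(x/\delta)^{-\e_{ratio}(p)}$, the residual power reflecting only the mild breakdown of scale-invariance once the window-to-region ratio $x/\delta$ is reinstated; $\e_{ratio}(p)$ can be taken as small as wished at the price of a larger constant, and the constant is uniform in $\delta$ since everything is phrased through $x/\delta$. The reciprocal ratio is symmetric. For the third bullet only $\delta=1$ is claimed; since $\eta_{H}$ has covariance $\ln\frac1{\abs{x-y}}+g$ with $g$ bounded and continuous, Kahane's comparison transfers all the GMC inputs used above to $\eta_{H}$, and the equal-length argument applies with no scaling step.

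The main obstacle throughout is the step of controlling an increment $Q^{\delta}(a,a+x)$ whose starting point is the field-dependent random time $Q^{\delta}(a)$: one cannot translate to a deterministic anchor, so the conditioning/tilting argument and the scaling transformation must be combined so that in the equal-length case the numerator and denominator lognormals cancel exactly and only the tunable residual $(x/\delta)^{-\e_{ratio}(p)}$ is left, while in the decreasing-numerator case the positive- and negative-moment exponents combine to the stated $q>1$ and $r^{-\tilde q}$ — keeping all constants uniform in $\delta$ being the delicate bookkeeping.
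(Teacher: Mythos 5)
Your first bullet is essentially the paper's own argument: scale by $\delta$, separate numerator and denominator by H\"older, and feed in the single-increment moments of the inverse (\cref{prop:momentsofshiftedinverse}); the constraint bookkeeping that forces $\beta\in(0,0.152)$ is the same. So that part stands.

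The equal-length bullets, however, contain a genuine gap, and it is exactly the trap that \cref{rem:tricky} is written to flag. Your key step --- rescale globally by $\lambda=x/\delta$ and claim $\frac{Q^{\delta}(a,a+x)}{Q^{\delta}(b,b+x)}\eqdis\frac{Q^{\delta,\lambda}(\tilde a,\tilde a+\tilde x)}{Q^{\delta,\lambda}(\tilde b,\tilde b+\tilde x)}$ with the lognormal \emph{cancelling} --- is not a valid reduction. First, the scaling law \cref{lem:scalinglawudelta} only holds for sets inside $B_{\delta/2}(x_0)$, and $\eta^{\delta,\lambda}$ is only defined over sets of length at most $\delta$ (\cref{rem:negativecov}); to rewrite $Q^{\delta}$ at the levels $a,b,b+x$ you would need the random times $Q^{\delta}(b+x)$ to stay inside such a ball, which fails on an event of substantial probability since these times are unbounded with heavy tails. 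Second, even formally the lognormal does not cancel: it reappears inside the arguments, $\tilde a=a\lambda^{-1}e^{-\overline{Z_{\lambda}}}$, $\tilde x=x\lambda^{-1}e^{-\overline{Z_{\lambda}}}$, so the right-hand side is a ratio of inverse increments at random locations and random masses coupled through $\overline{Z_{\lambda}}$ --- the same problem renamed, not solved; this is precisely point (1) of \cref{rem:tricky}, that for the inverse the lognormal appears as a prefactor/argument shift and there is no cancellation as in the GMC case of AJKS. Third, $x$ is the wrong scaling variable: by \cref{eq:lognormalscalinglawgoingtoone}, $Q_{x}$ decays much slower than linearly, which is why the paper's proof scales with respect to a deterministic proxy for the random quantity $Q_{b-a}\bullet Q_{a}$. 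The actual argument splits into cases according to $Q(b,b+x)$ and $Q_{b-a}\bullet Q_{a}$, decomposes $Q_{b-a}\bullet Q_{a}$ and $Q_{a}$ over deterministic levels, passes to shifted-GMC events, applies the scaling law only level-by-level with $\lambda=a_{k+1}/\rho_{b-a}$, integrates out the then-independent lognormal against the Gaussian density, and controls the pieces by FKG and the max/min modulus estimates; the singular factor $(x/\delta)^{-\e_{ratio}(p)}$ emerges from those $k$-sums and Gaussian integrals and cannot simply be asserted as a ``mild breakdown of scale invariance.'' The same gap carries into your third bullet: Kahane's inequality compares moments of GMC masses of fixed sets under convexity, whereas the ratio of inverse increments is not such a functional; the paper transfers pathwise through the bounded comparison field $\xi=\gamma(H_{\e}-U_{\e})$, controls $\sup\xi$ via its exponential moments, and still runs the full level-by-level scaling machinery, so the unit-circle case is not ``the equal-length argument with no scaling step.''
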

\section{Dilatation is in \sectm{$L^{1}_{loc}$}}
 One major corollary of this proposition is that the dilatation of the inverse homeomorphism on the positive real line is in $L^{1}_{loc}$. This is the analogous result to \cite[Lemma 4.5]{AJKS}. In particular, here we study the dilatation $K_{Q}$ corresponding to the inverse of scale one $Q^{1}:\Rplus\to \Rplus$.
\begin{corollary}\label{cor:dilatation}
We have finiteness
\begin{equation}
\int_{[0,1]\times [0,2]}K_{Q}(x+iy)\dx\dy\leq \frac{c}{2(1-\epsilon_{ratio}(1))}<\infty.    
\end{equation}
\end{corollary}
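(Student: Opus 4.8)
The plan is to mirror the proof of \cite[Lemma 4.5]{AJKS}. Set $h(x):=Q^{1}(0,x)$, let $F\colon\uhp\to\uhp$ be its Beurling--Ahlfors extension, and let $K_{Q}(x+iy)$ denote the pointwise maximal dilatation of $F$. First I would invoke the classical estimate for the dilatation of the Beurling--Ahlfors extension in the form used in \cite[Lemma 4.5]{AJKS}: there is an absolute constant $c_{BA}$ such that for each $(x,y)$
\[
K_{Q}(x+iy)\le c_{BA}\max\!\left(\frac{Q^{1}(J)}{Q^{1}(I)},\ \frac{Q^{1}(I)}{Q^{1}(J)}\right),
\]
where $I=I(x,y)$ and $J=J(x,y)$ are two \emph{equal-length} intervals of common length comparable to $y$, lying at distance $O(y)$ from $x$. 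Since $K_{Q}\ge 0$, Tonelli's theorem lets me interchange expectation and integral, so the claim reduces to bounding $\int_{[0,1]\times[0,2]}\Expe{K_{Q}(x+iy)}\,\dx\,\dy$; as usual the displayed deterministic inequality is to be read via this expectation, which in particular yields $K_{Q}\in L^{1}([0,1]\times[0,2])$ almost surely.

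Next I would estimate the integrand. For $y<1$ the pair $I(x,y),J(x,y)$ falls under the equal-length case of \cref{prop:inverse_ratio_moments} with $\delta=1$, interval length $\asymp y$, and endpoints ranging over a fixed compact subset of $\R$; applying that proposition with $p=1$ gives $\Expe{K_{Q}(x+iy)}\le c\,y^{-\e_{ratio}(1)}$, with $c$ uniform in $(x,y)$ since the constant $c_{a,b,p,\delta}$ there is bounded on compact sets of endpoints and uniform in $\delta$. For $1\le y\le 2$ the two intervals have length $\asymp 1$, and $\Expe{K_{Q}(x+iy)}$ is bounded by a fixed finite constant using the single-point moment bounds of \cite{binder2023inverse} (a quotient of two increments of comparable order-one length has finite first moment, e.g.\ by H\"older), so that strip contributes a finite amount. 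Integrating the bound $c\,y^{-\e_{ratio}(1)}$ over the box and using $\e_{ratio}(1)\in(0,1)$ gives $\int_{0}^{1}\!\int_{0}^{2}c\,y^{-\e_{ratio}(1)}\,\dy\,\dx = c\,2^{1-\e_{ratio}(1)}/(1-\e_{ratio}(1))$, which after renaming the constant is the asserted bound.

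The main obstacle is the first step: making the Beurling--Ahlfors dilatation bound precise and checking that the intervals it produces genuinely satisfy the hypotheses of \cref{prop:inverse_ratio_moments}. In particular the naive choice $J=(x-y,x)$, $I=(x,x+y)$ gives \emph{adjacent} intervals (separation ratio $c_{b-a}=1$), whereas the proposition is stated for $c_{b-a}>1$; I expect to need either an averaged/supremal form of the dilatation estimate (over scales $t\asymp y$), which naturally produces separated intervals, or a short perturbation argument letting $c_{b-a}\downarrow 1$. A secondary point is uniformity of $c_{a,b,p,\delta}$ as the left endpoint approaches $0$ (i.e.\ $x\to 0^{+}$): one must check the proposition's constant does not blow up there, or else shrink the box slightly and note $\{x=0\}$ is null. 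Everything else — the Tonelli interchange, the elementary $y$-integration, and the order-one regime $y\in[1,2]$ — is routine.
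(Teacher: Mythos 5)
Your proposal follows essentially the same route as the paper's proof: control the dilatation by ratios of equal-length increments of $Q^{1}$ at scale comparable to the height, apply the equal-length case of \cref{prop:inverse_ratio_moments} with $p=1$ (exponent $\e_{ratio}(1)<1$), and integrate the resulting singularity over $[0,1]\times[0,2]$, with the estimate understood in expectation. The only difference is cosmetic: the paper takes the deterministic dilatation input in the Whitney-square/dyadic form of \cite[Theorem 2.6]{AJKS} — a finite sum of $\frac{Q(J_{1})}{Q(J_{2})}+\frac{Q(J_{2})}{Q(J_{1})}$ over pairs in $\mathcal{D}_{n+5}$ — which is precisely the ``averaged/supremal'' version of the Beurling--Ahlfors bound you anticipated needing, and it treats the adjacency ($c_{b-a}>1$) and endpoint-uniformity issues with no more care than you do.
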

\vspace{-0.5cm}
\begin{proof}[proof of \Cref{cor:dilatation}]
We use the dilatation bound that depends linearly on the homeomorphism \cite[Theorem 2.6]{AJKS}: for each $I\in \mathcal{D}:=\bigcup_{n\geq 0}\mathcal{D}_{n}$ and Whitney square $C_{I}:=\set{(x,y)\in I: x\in I\tand y\in [2^{-n-1},2^{-n}]}$, we have
\begin{equation}
\sup_{z\in C_{I}}K(z)\leq \sum_{\mathbf{J}\in j_{\ell}(I) } \frac{Q(J_{1})}{Q(J_{2})}+ \frac{Q(J_{2})}{Q(J_{1})},
\end{equation}
for \begin{equation}
j_{\ell}(I):=\set{ \mathbf{J}=(J_{1},J_{2}): J_{1},J_{2}\in \mathcal{D}_{n+5}\tand  J_{1},J_{2}\subset j_{0}(I)}.    
\end{equation}
and $j_{0}(I)$ be the union of $I$ and its neighbors in $\mathcal{D}_n$. So by covering the rectangle $[0,1]\times [0,2]$ by Whitney squares we get similarly to \cite[Lemma 4.5]{AJKS} the bound
\begin{equation}
\int_{[0,1]\times [0,2]}K_{Q}(x+iy) \dx\dy \leq c\sum_{n=1}^{\infty}\sum_{k=1}^{2^{n}}\int_{\frac{k-1}{2^{n}}}^{\frac{k}{2^{n}}}\frac{1}{x^{\e_{ratio}(1)}}\frac{1}{2^{n+1}}\dx =\frac{c}{2(1-\epsilon_{ratio}(1))}<\infty.
\end{equation}
\end{proof}
 Since we have the bound for the unit circle case too in \Cref{eq:ratioboundcircle}, we have the above corollary there as well. For the inverse homeomorphism $Q_{H}(x\eta_{H}(1))$ is a bit more tricky because one has to first scale out the total mass. 
\begin{remark}\label{gammaconstraint}
One seemingly major research problem are extending the constraints on $\gamma$. It is unclear how to remove these given the current techniques. One source of them are in \Cref{eq:constraintongamma} which are because in the the maximum modulus estimates we get singular factors. So it seems that one would have  to avoid the modulus techniques all together. We talk about various routes in the further directions of \cite{binder2023inverse}.   
\end{remark}

\section{Moments of the ratio: proof of \sectm{\Cref{prop:inverse_ratio_moments}}}
\begin{remark}\label{rem:tricky}
The proof is tricky for the following three reasons. 
\begin{enumerate}
    
    \item \textbf{lack of prefactor scaling law}  The lognormal scaling law shows up as a prefactor in \Cref{exactscaling} and so there is no cancelation of these lognormals as in the GMC-case \cite[lemma 4.4]{AJKS}. And even then one still needs the constraint $Q(a,a+x)\leq x$ which an event that goes to zero in probability. Even for $a=0$ using the log-normal scaling law we actually have 
\begin{eqalign}\label{eq:lognormalscalinglawgoingtoone}
\Proba{Q_{x}\geq \delta x }=\Proba{ x\geq \eta^{\delta}\para{x\delta}}=\Proba{\sqrt{\beta \ln\frac{1}{x}}+\frac{\ln\frac{1}{\eta^{\delta}(\delta)}}{\sqrt{\beta \ln\frac{1}{x}}}\geq N(0,1)  }\to 1 \tas x\to 0,
\end{eqalign}
for independent Gaussian $N(0,1)$. This means that $Q_{x}$ decays a lot slower than linear.

    \item \textbf{$Q_{a}$ is random} In the GMC-case $\frac{\eta(a,a+x))}{\eta(b,b+x)}$, one can apply \Cref{exactscaling} at $x_{0}=a$. However, here $Q_{a}$ is random. In fact, the distribution of $U(s+Q_{a})$ will likely be assymmetric and non-Gaussian.  In \cite[lemma 2.2]{cui2017first} for the case of Brownian motion $U_{s}=B_{s}$ , they show that $\expo{\sigma B_{T_{a}}-(\frac{\sigma^{2}}{2}-\mu)T_{a}}\eqdis\frac{\sigma^{2}}{4} X_{a}^{2}$ for a Bessel process $X_{a}$ at location $a$ and index $\nu=\frac{2\mu}{\sigma^{2}}-1$. So in this work we decompose $Q_{a}$ and then extract a factor $x^{-p}$. If there was a way to deal with this issue, then perhaps it would also remove that singular factor.

\item \textbf{$Q_{b-a}\bullet Q_{a}$ is random} In the GMC-case, one can restrict $b-a\leq c x$ so that the scaling law will go through. However, the analogous object to $b-a$ here is $Q_{b-a}\bullet Q_{a}$ and that can be arbitrarily large. 
We turn the ratio into
\begin{eqalign}
&\Expe{\para{\frac{Q(a,a+x)}{Q(b,b+x)}}^{p}}\leq \para{\Expe{\para{\frac{Q(a,a+x)}{Q_{b-a}\bullet Q_{a}}}^{p_{1}}}}^{1/p_{1}} \para{\Expe{\para{\frac{Q_{b-a}\bullet Q_{a}}{Q(b,b+x)}}^{p_{2}}} }^{1/p_{2}} ,  
\end{eqalign}
in order to match scales and \textit{heuristically} apply the scaling law wrt $\tilde{x}\approx Q_{b-a}\bullet Q_{a}$. This fits the perspective of truly working in the \textit{image} and thus needing to do all the operations as if $Q$ is the domain i.e. keep reverting to shifted-GMC.
    
\end{enumerate}
\end{remark}



\begin{remark}
To understand the overall strategy, we start with a heuristic argument. We first approximate over the dyadics: $Q^{\delta,n}(a)=\ell\in D_{n}(0,\infty)$. Using the semigroup formula
\begin{eqalign}
Q^{\delta}(x)-Q^{\delta}(y)=&\infp{t\geq 0: \etamu{Q^{\delta}(y),Q^{\delta}(y)+t}{\delta}\geq x-y }\\
=&:Q_{x-y}^{\delta }\bullet Q_{y}^{\delta},
\end{eqalign}
and the approximation $Q^{\delta,n}(a)$ for $ Q^{\delta}(a)$ we replace $Q^{\delta}(J)=Q^{\delta}(a,a+x)=Q^{\delta}_{x}\bullet Q^{\delta}_{a}$ by
\begin{eqalign}
& Q_{x}\bullet Q^{\delta,n}(a)=  Q_{x}\bullet\ell=\infp{t\geq 0: \etamu{ \ell,\ell+t}{\delta}\geq x }
\end{eqalign}
and we replace $Q^{\delta}(I)=Q^{\delta}(b,b+x)$ by
\begin{eqalign}
&Q_{x}\bullet \para{ Q_{b-a}\bullet \ell     +\ell}=\infp{t\geq 0: \etamu{Q_{b-a}\bullet \ell     +\ell,Q_{b-a}\bullet \ell     +\ell+t}{\delta}\geq x }. 
\end{eqalign}
So by applying \Holder we get
\begin{eqalign}
&\sum_{\ell} \Proba{\frac{Q^{\delta}(J)}{Q^{\delta}(I)}>R,Q^{\delta,n}(a)=\ell ,Q_{b}-Q_{a}\leq \rho\delta }\\
&\leq \sum_{\ell}\para{\Proba{\frac{Q_{x}\bullet\ell}{Q_{x}\bullet \para{Q_{b-a}\bullet\ell+\ell} }>R,Q_{b-a}\bullet\ell \leq \rho\delta  }}^{\frac{1}{b}_{1}} \para{\Proba{Q^{\delta,n}(a)=\ell}}^{\frac{1}{b}_{2}},
\end{eqalign}
for $b_{i}$ \Holder conjugates. The first factor is perfectly set up for the log-scaling law for $\lambda=x$.  Now the sum is proportional to $2^{n}$ and thus diverges when we take the limit to approximate the original $Q^{\delta}(a)$. So instead we will decompose as in the proof of \Cref{cor:shiftedGMCmoments}.    
\end{remark}

\subsection{Proof of \sectm{$\abs{I}=r\delta$} for \sectm{$r>0$} and \sectm{$\abs{J}\to 0$}}
Here we simply scale by $\delta$ and apply \Holder:
\begin{eqalign}
    &\Expe{\para{Q^{1} \para{\frac{J}{\delta}}}^{p}\para{Q^{1} \para{\frac{I}{\delta}}}^{-p}}\\
    \leq& \para{\Expe{\para{Q^{1} \para{\frac{J}{\delta}}}^{pp_{1}}}}^{1/p_{1}}\para{\Expe{\para{Q^{1} \para{\frac{I}{\delta}}}^{-pp_{2}}}}^{1/p_{2}}
\end{eqalign}
for $\frac{1}{p_{1}}+\frac{1}{p_{2}}=1$ and then just apply the inverse increment moments \Cref{prop:momentsofshiftedinverse}
\begin{equation}
\leq \para{c_{11}\para{\frac{J}{\delta}}^{q_{11}}+c_{12}\para{\frac{J}{\delta}}^{q_{12}}}^{1/p_{1}}\para{c_{21}r^{-q_{21}}+c_{22}r^{-q_{22}}}^{1/p_{2}}.    
\end{equation}
Next we further explore the constraints for getting $\frac{q_{11}}{p_{1}},\frac{q_{12}}{p_{1}}\geq 1+\e_{ratio}$ for some $\e_{ratio}>0$. The constraint from \Cref{prop:momentsofshiftedinverse} is
\begin{equation}
 \zeta(-q_{11})+pp_{1}>1  \tand q_{12}>pp_{1},   
\end{equation}
where we simplified and relabelled to match the current setting. The second inequality is cleared by taking large $q_{12}>0$. We set $\frac{q_{11}}{p_{1}}=1+\e_{ratio}$ and obtain a constrain for $\e_{ratio}$ in terms of $\beta$. This choice translates second inequality to
\begin{eqalign}\label{eq:eratioconstraint}
&pp_{1}>\zeta(-q_{11})-1\\
\doncl &p>(1+\e_{ratio})\para{1+\beta\para{p_{1}(1+\e_{ratio})+1}}+\frac{1}{p_{1}}.
\end{eqalign}
Since $pp_{2}\in [1,\beta^{-1})$, we further take 
\begin{eqalign}
p=\beta^{-1}\frac{p_{1}-1}{p_{1}}-o(\e)>1    
\end{eqalign}
and so we study
\begin{eqalign}\label{eq:eratioconstraint2}
\eqref{eq:eratioconstraint}\doncl &\beta^{-1}\frac{p_{1}-1}{p_{1}}>(1+\e_{ratio})\para{1+\beta\spara{p_{1}(1+\e_{ratio})+1}}+\frac{1}{p_{1}}.
\end{eqalign}
Since the inverse has all its positive moments we are free to take arbitrarily large $p_{1}>1$. This has a solution $p_{1}>1, \e_{ratio}\in (0,1)$ when we further require $\beta\in (0,0.152)$.


\subsection{Proof of \sectm{$\abs{J}=\abs{I}<\delta$ and ratio of the form $\frac{Q(a,a+x)}{Q(b,b+x)}$ for $a<b$}}
Here we will use a similar argument as in the computation of moment increments in \cite{binder2023inverse}. 
 Now we proceed to the proof. For simplicity we assume $\delta=1$ by scaling
\begin{equation}
 \Expe{\para{\frac{Q^{\delta}(a,a+x)}{Q^{\delta}(b,b+x)}}^{p}} = \Expe{\para{\frac{Q(\frac{a}{\delta},\frac{a+x}{\delta})}{Q(\frac{b}{\delta},\frac{b+x}{\delta})}}^{p}}  
\end{equation}
and instead just abuse notation to write $\Expe{\para{\frac{Q(a,a+x)}{Q(b,b+x)}}^{p}}$ whereas in the proposition we write the fractions. We fix $\rho_{b-a}\in (0,1)$ that will be taken to be small enough below given the various constraints at the cost of large comparison constants. We will go over all the possibilities of $Q\para{b,b+x},Q\para{a,a+x}\tand Q_{b-a}\bullet Q_{a}$ depending on them being less or greater than one or $\rho_{b-a}$ respectively.
\subsubsection{Case \sectm{$Q(b,b+x)\in [1,\infty)$}}
This is the easiest case, since we can simply lower bound $Q(b, b+x)$ by $1$
\begin{equation}
\Expe{\para{\frac{Q(a,a+x)}{Q(b,b+x)}}^{p}\ind{Q\para{b,b+x}\geq 1}}\leq \Expe{\para{Q\para{a,a+x}}^{p}}.     
\end{equation}
So below we assume $Q(b,b+x)\in [0,1]$.
\subsubsection{Case \sectm{$Q(b,b+x)\in [0,1]$ and $Q_{b-a}\bullet Q_{a}\geq \rho_{b-a}$}}
Here we use the upper bound $x\leq b-a$ and just study
\begin{eqalign}\label{eq:caseoflargenumerator}
&\Expe{\para{\frac{Q(a,a+x)}{Q(b,b+x)}}^{p}\ind{E_{b,b+x}^{[0,1]}}}
\leq \Expe{\para{\frac{Q_{b-a}\bullet Q_{a}}{Q(b,b+x)}}^{p}\ind{E_{b,b+x}^{[0,1]}}}. 
\end{eqalign}
\proofparagraph{Step 1: Bounding the ratio $\para{\frac{Q_{b-a}\bullet Q_{a}}{Q(b,b+x)}}^{p}$ }
Here we will decompose the numerator event $E_{b-a}^{[\rho_{b-a},\infty)}$ with $E_{b-a}^{k}:=\set{Q_{b-a}\bullet Q_{a}\in \spara{b_{k},b_{k+1}}=:I_{b-a}^{k}}$ for $k\geq 1$ and the sequence  $b_{k}:=\rho_{b-a}k^{\lambda_{2}}$ for some $\lambda_{2}\in (0,1)$ and $k\geq 1$
\begin{eqalign}\label{eq:ratioQb-aoverQbbx}
\Expe{\para{\frac{Q_{b-a}\bullet Q_{a}}{Q(b,b+x)}}^{p}\ind{E_{b-a}^{[\rho_{b-a},\infty)},E_{b,b+x}^{[0,1]} }}&=\sum_{k}\Expe{\para{\frac{Q_{b-a}\bullet Q_{a}}{1\wedge Q(b,b+x)}}^{p}\ind{E_{b-a}^{k}}}.
\end{eqalign}
Then we use the upper bound and apply layercake 
\begin{eqalign}\label{eq:mainprobabilityeventsums}
&\sum_{k}b_{k+1}^{p}\Expe{\para{\frac{1}{1\wedge Q(b,b+x)}}^{p}\ind{E_{b-a}^{k},E_{b,b+x}^{[0,1]}}}\\
=&\sum_{k}b_{k+1}^{p}\int_{1}^{\infty}\Proba{\eta(Q_{b},Q_{b}+t^{-1/p})\geq x ,E_{b-a}^{k}}\dt.
\end{eqalign}
Here we need to further decompose $Q_{a}$
\begin{eqalign}
D_{\ell}^{a}:=&\set{  Q(a)\in [a_{\ell},a_{\ell+1}]=:I_{\ell}},
\end{eqalign}
 for the $a_{\ell}:=\rho_{1}\ell^{\lambda_{1}},\ell\geq 0$ and $\rho_{1},\lambda_{1}\in (0,1)$. Using the semigroup formula $Q(b)= Q_{b-a}\bullet Q(a)    +Q(a) $, we further upper bound by
\begin{eqalign}\label{eq:mainprobabilityeventsumsonlylower}
\eqref{eq:mainprobabilityeventsums}\leq &\sum_{k}b_{k+1}^{p}\int_{1}^{\infty}\sum_{\ell} \\
&\Proba{\max_{u\in I_{\ell},\tilde{u}\in I_{b-a}^{k}}\eta(\tilde{u}+u,\tilde{u}+u+t^{-1/p})\geq x, c_{b-a}x\geq \min_{u\in I_{\ell}}\eta(u,u+b_{k}) ,D_{\ell}^{a}},      
\end{eqalign}
where we only kept the upper bound in $E_{b-a}^{k}$. Now that we used the decomposition events $D_{\ell}^{a}$ that we will study later, we bound by the minimum
\begin{equation}
\minp{\Proba{\max_{u\in I_{\ell},\tilde{u}\in I_{b-a}^{k}}\eta(\tilde{u}+u,\tilde{u}+u+t^{-1/p})\geq x, c_{b-a}x\geq \min_{u\in I_{\ell}}\eta(u,u+b_{k})},\Proba{D_{\ell}^{a}}}.    
\end{equation}
For the first two events, we further apply the FKG inequality \Cref{FKGineq}
\begin{eqalign}
&\Proba{\max_{u\in I_{\ell},\tilde{u}\in I_{b-a}^{k}}\eta(\tilde{u}+u,\tilde{u}+u+t^{-1/p})\geq x, c_{b-a}x\geq \min_{u\in I_{\ell}}\eta(u,u+b_{k})}\\
\leq &\Proba{\max_{u\in I_{\ell},\tilde{u}\in I_{b-a}^{k}}\eta(\tilde{u}+u,\tilde{u}+u+t^{-1/p})\geq x}\Proba{c_{b-a}x\geq \min_{u\in I_{\ell}}\eta(u,u+b_{k})}.
\end{eqalign}
For the first event depending on $t$, we simply use the positive moments in \Cref{cor:shiftedGMCmoments} for $t^{-1}\leq 1$ and $p_{1}\in [1,\beta^{-1})$
\begin{equation}\label{eq:reallinelinemaxbound}
\Proba{\max_{u\in I_{\ell},\tilde{u}\in I_{b-a}^{k}}\eta(\tilde{u}+u,\tilde{u}+u+t^{-1/p})\geq x}\lessapprox x^{-p_{1}} t^{-\frac{\zeta(p_{1})-1}{p}} ,
\end{equation}
where the constant is uniformly bounded since $\abs{I_{\ell}}+\abs{I_{b-a}^{k}}\leq 10$ \setword{(*)}{conditionone}, which will be shown below. So for the t-integral to converge we require $\zeta(p_{1})>p+1$. In summary returning to \Cref{eq:mainprobabilityeventsums} we will study
\begin{eqalign}\label{eq:maineventoboundratioone}
\eqref{eq:mainprobabilityeventsums}\leq &\int_{[1,\infty)}\sum_{k}\sum_{\ell}b_{k+1}^{p}\minp{\para{x^{-p_{1}} t^{-\frac{\zeta(p_{1})-1}{p}}}\Proba{c_{b-a}x\geq \min_{u\in I_{\ell}}\eta(u,u+b_{k})}, \Proba{D_{\ell}^{a}}},
\end{eqalign}
 where we also applied translation invariance. We let $\alpha_{1}:=\frac{\zeta(p_{1})-1}{p}$ and 
\begin{eqalign}
R_{1,k}:=&\Proba{c_{b-a}x\geq \min_{u\in I_{\ell}}\eta(u,u+b_{k})} \tand R_{2,\ell}:=\Proba{D_{\ell}^{a}}.
\end{eqalign}
\proofparagraph{Step 2: The t-integral}
When $t\geq 1$, we have
\begin{eqalign}
   \para{x^{-p_{1}} t^{-\alpha_{1}}}R_{1,k} \leq R_{2,\ell}\doncl t\geq\para{x^{-p_{1}} R_{1,k}\para{R_{2,\ell}}^{-1}}^{1/\alpha_{1}} =:N_{1}.
\end{eqalign}
Therefore, all together, by integrating in $t$ we have 
\begin{eqalign}\label{eq:maineventoboundratiotwo}
\eqref{eq:maineventoboundratioone}\leq &\sum_{k}b_{k+1}^{p}\para{\sum_{\ell}N_{1}  R_{2,\ell}+N_{1}^{1-\alpha_{1}} x^{p_{1}} R_{1,k}}\\
=&\sum_{k}b_{k+1}^{p}\Big(\sum_{\ell}\para{x^{-p_{1}} R_{1,k}\para{R_{2,\ell}}^{-1}}^{1/\alpha_{1}}   R_{2,\ell}\\
&+\para{x^{-p_{1}} R_{1,k}\para{R_{2,\ell}}^{-1}}^{(1-\alpha_{1})/\alpha_{1}}  x^{-p_{1}}R_{1,k}\Big)\\
\leq&2\sum_{k}b_{k+1}^{p}\para{x^{-p_{1}} R_{1,k}}^{1/\alpha_{1}} \sum_{\ell}\para{R_{2,\ell}}^{1-1/\alpha_{1}}.
\end{eqalign}
\proofparagraph{Step 3: The $\ell$-sum for $R_{2,\ell}$}
For the event $D_{\ell}^{a}:=\set{  Q(a)\in [a_{\ell},a_{\ell+1}]=:I_{\ell}}$, we choose $a_{\ell}:=\rho_{1}\ell^{\lambda_{1}},\ell\geq 0$ and $\rho_{1},\lambda_{1}\in (0,1)$. For $\ell=0$, we bound by 1 and for $\ell\geq 1$ we apply Markov for $r_{1}>0$
\begin{equation}
R^{2}_{\ell}= \Proba{D_{\ell}^{a}}\leq  \Proba{a\geq \eta(a_{\ell})}\lessapprox (a/\rho_{1})^{r_{1}}\ell^{-\lambda_{1}r_{1} }=:c\ell^{-\lambda_{1}r_{1} },
\end{equation}
where for finiteness we require $\lambda_{1}r_{1}>1$. To get summability in the  $\ell$-sum \Cref{eq:maineventoboundratiotwo} we require
\begin{equation}
\lambda_{1}r_{1}\para{1-\frac{1}{\alpha_{1}}}  >1.
\end{equation}
Here we can take large $r_{1}$ to get this as long as again $\alpha_{1}>1\doncl \zeta(p_{1})>p+1$. Since $p=1+\e_{1}$, we require
\begin{eqalign}
\zeta(p_{1})>2,    
\end{eqalign}
which forces $\beta<\frac{2}{3}$.
\proofparagraph{Step 4: The $k$-sum and $x$}
We choose $b_{k}:=\rho_{b-a}k^{\lambda_{2}}$ for some $\lambda_{2}\in (0,1)$ and $k\geq 1$ in order to cover the $[\rho_{b-a},\infty)$ image. Together with above constraint on $\lambda_{1}$, we also get the constraint \ref{conditionone}. We apply the negative moments \Cref{cor:shiftedGMCmoments} for some $p_{2}>0$: when $b_{k}\leq 1$ we bound by
\begin{equation}\label{eq:sumkevent1}
R_{1,k}=\Proba{c_{b-a}x\geq \min_{u\in I_{\ell}}\eta(u,u+b_{k})}\lessapprox x^{p_2} b_{k}^{\zeta(-p_{2})-1}=x^{p_2}\rho_{b-a}^{\zeta(-p_{2})-1}k^{(\zeta(-p_{2})-1)\lambda_{2}}.   
\end{equation}
when $b_{k}\geq 1$ we bound by
\begin{equation}\label{eq:sumkevent2}
R_{1,k}=\Proba{c_{b-a}x\geq \min_{u\in I_{\ell}}\eta(u,u+b_{k})}\lessapprox x^{p_2} b_{k}^{-p_{2}-1}=x^{p_2}\rho_{b-a}^{-p_{2}-1}k^{-(p_{2}-1)\lambda_{2}}. 
\end{equation}
So to get finiteness in the $k$-sum we require 
\begin{eqalign}\label{eq:lambdaone}
&\lambda_{2}\para{\frac{p_{2}-1}{\alpha_{1}}-p}=\lambda_{2}p\para{\frac{p_{2}-1}{\zeta(p_{1})}-1} >1, 
\end{eqalign}
which is possible for large enough $p_{2}$.
For $x$ we have the exponent 
\begin{eqalign}\label{eq:exponenntofx}
p_{2}-\frac{p_{1}}{\alpha_{1}}=p_{2}-p\frac{p_{1}}{\zeta(p_{1})},
\end{eqalign}
where again we take large enough $p_{2}$ to get a positive exponent.
\begin{remark}
This case was simple because we truncated away from zero for $Q_{b-a}\bullet Q_{a}\in [\rho_{b-a},\infty)$ which then came at the price of a very large constant in $\rho_{b-a}^{-p_{2}}$ that doesn't affect overall finiteness.    
\end{remark}

\newpage
\subsection{Case \sectm{$Q_{b-a}\bullet Q_{a}\in [0,\rho_{b-a}]$ and $Q(b,b+x)\in [0,1]$}}
Here we finally study the singular case of both $Q_{b-a}\bullet Q_{a}$ and $Q(b,b+x)$ going to zero and thus we need to use the lognormal scaling law. As mentioned in \Cref{rem:tricky}, a key part is the difference $b-a$ which is required to decay proportionally to $x$ i.e. $b-a\leq c_{b-a}x$ for the scaling law to go through. Here we deal with a random variable $Q_{b-a}\bullet Q_{a}$ and as mentioned in \cite{binder2023inverse} even for $a=0$ using the log-normal scaling law we actually have 
\begin{eqalign}
\Proba{Q_{x}\geq \delta x }=\Proba{ x\geq \eta^{\delta}\para{x\delta}}=\Proba{\sqrt{\beta \ln\frac{1}{x}}+\frac{\ln\frac{1}{\eta^{\delta}(\delta)}}{\sqrt{\beta \ln\frac{1}{x}}}\geq N(0,1)  }\to 1 \tas x\to 0,
\end{eqalign}
for independent Gaussian $N(0,1)$. This means that $Q_{x}$ decays a lot slower than linear. So here the scaling law needs to be done with respect to a different variable, not $x$. Heuristically by turning to the shifted GMC, we will do the scaling law with respect to $\tilde{x}\approx Q_{b-a}\bullet Q_{a}$.\\\\
\proofparagraph{Step 1: Decomposing $Q_{b-a}\bullet Q_{a}$ and $Q_{a}$} We decompose the numerator 
\begin{equation}
E_{b-a}^{k}(Q_{a}):=\set{Q_{b-a}\bullet Q_{a}\in I_{k}:=\spara{a_{k+1},a_{k}}\subset [0,\rho_{b-a}]},    
\end{equation}
for the fixed small constant $\rho_{b-a}\in (0,1)$ and the sequence $a_{k}:=\rho_{b-a}k^{-\lambda_{b-a}}$ for $k\geq 1$ and $\lambda_{b-a}>0$, to obtain as above
\begin{eqalign}\label{eq:maineventosumsingularcase}
&\Expe{\para{\frac{Q_{b-a}\bullet Q_{a}}{Q(b,b+x)}}^{p}\ind{Q_{b-a}\bullet Q_{a}\in [0,\rho_{b-a}]}}\\
\leq&1+\sum_{k}\int_{1}^{\infty}\Proba{\eta\para{Q_{b},Q_{b}+ a_{k}t^{-1/p} }\geq x,  \eta\para{Q_{a},Q_{a}+ a_{k+1}}\leq b-a\leq \eta\para{Q_{a},Q_{a}+ a_{k}}  }\dt,
\end{eqalign}
where for convenience we removed the small ratio part i.e. $\para{\frac{Q_{b-a}\bullet Q_{a}}{Q(b,b+x)}}^{p}\leq 1$. So we again write $b-a=c_{b-a}x$ for $c_{b-a}>1$. In preparation of scaling law, we will use the lower bound in $E_{b-a}^{k}(Q_{a})$ to form a ratio and use the semigroup formula $Q(b)= Q_{b-a}\bullet Q(a)    +Q(a) $ to bound
\begin{eqalign}
&\Proba{\frac{\eta\para{Q_{b},Q_{b}+ a_{k}t^{-1/p} }}{\eta\para{Q_{a},Q_{a}+ a_{k+1}}}\geq c_{b-a}^{-1},E_{b-a}^{k}(Q_{a}) }\\
\leq &\Proba{\max_{u\in I_{k}}\frac{\eta\para{Q_{a}+u,Q_{a}+u+ a_{k}t^{-1/p} }}{\eta\para{Q_{a},Q_{a}+ a_{k+1}}}\geq c_{b-a}^{-1},E_{b-a}^{k}(Q_{a}) }. 
\end{eqalign} 
For each $k\geq 1$, we also decompose the variable $Q(a)$ with events $D_{\ell,k}^{a}:=\set{  Q(a)\in [c_{\ell,k},c_{\ell+1,k}]=:I_{\ell,k}}$ to overall bound
\begin{eqalign}\label{eq:mainsingulareventboundwithallsums}
\eqref{eq:maineventosumsingularcase}\leq &\sum_{k}\int_{1}^{\infty}\sum_{\ell}\\
&\Expe{\maxls{\set{u\in I_{k},\tilde{u}\in I_{\ell,k}}}\ind{\para{\frac{\eta\para{\tilde{u}+u,\tilde{u}+u+ a_{k}t^{-1/p}}}{\eta\para{\tilde{u},\tilde{u}+ a_{k+1}}}}\geq c_{b-a}^{-1} ,E_{b-a}^{k}(\tilde{u})}\ind{D_{\ell,k}^{a}} }\dt.    
\end{eqalign}
Now that we used the event $D_{\ell,k}^{a}$, we bound by a \Holder inequality
\begin{eqalign}\label{eq:maineventrealline}
&\sum_{\ell}\sum_{k}\para{\Proba{D_{\ell,k}^{a}} }^{1/b_{12}}  \\
\cdot&\int_{1}^{\infty}\para{\Expe{\maxls{\set{u\in I_{k},\tilde{u}\in I_{\ell,k}}}\ind{\para{\frac{\eta\para{\tilde{u}+u,\tilde{u}+u+ a_{k}t^{-1/p}}}{\eta\para{\tilde{u},\tilde{u}+ a_{k+1}}}}\geq c_{b-a}^{-1} ,E_{b-a}^{k}(\tilde{u})} }}^{1/b_{11} }\dt  
\end{eqalign}
for $b_{11}^{-1}+b_{12}^{-1}=1$. So now we study
\begin{eqalign}
\Expe{\maxls{u\in I_{k},\tilde{u}\in[0, \abs{I_{\ell,k}}]}\ind{\para{\frac{\eta\para{\tilde{u}+u,\tilde{u}+u+ a_{k}t^{-1/p}}}{\eta\para{\tilde{u},\tilde{u}+ a_{k+1}}}}\geq c_{b-a}^{-1},E_{b-a}^{k}(\tilde{u})} },    
\end{eqalign}
where we also used translation invariance.
\proofparagraph{Step 2: Scaling law step} Here we are finally in the position to apply the lognormal-scaling law \cref{eq:truncatedscalinglawGMC} for $\lambda=\frac{a_{k+1}}{\rho_{b-a}}\in (0,1)$ by requiring the condition
\begin{eqalign}\label{eq:scalinglawconstraintratio}
\frac{\rho_{b-a}}{a_{k+1}}\para{\abs{ I_{\ell,k}}+a_{k}+a_{k} t^{-1/p}}\leq \frac{1}{2}.
\end{eqalign}
We let $L_{1}:=\frac{\rho_{b-a}\abs{I_{\ell,k}}}{a_{k+1}}, L_{2}:=\frac{\rho_{b-a} a_{k}}{a_{k+1}}$ and we study this condition in \Cref{eq:scalinglawconstraintratioagain}. Therefore, by the scaling law we get
\begin{eqalign}
\Expe{\maxls{S_{k,\ell}(u,\tilde{u}) }\ind{\para{\frac{\eta\para{\tilde{u}+u,\tilde{u}+u+ L_{2}t^{-1/p}}}{\eta\para{\tilde{u},\tilde{u}+ \rho_{b-a}}}}\geq c_{b-a}^{-1},E_{b-a}^{k}(\tilde{u},N(0,1))} },    
\end{eqalign}
where $S_{k,\ell}(u,\tilde{u}):=\set{u\in \frac{\rho_{b-a}}{a_{k+1}}I_{k},\tilde{u}\in\spara{0, \frac{\rho_{b-a}}{a_{k+1}}\abs{I_{\ell,k}}}}$ and 
\begin{eqalign}
E_{b-a}^{k}(\tilde{u},N(0,1))&:=\set{\eta\para{\tilde{u},\tilde{u}+ \rho_{b-a}}\leq \frac{\rho_{b-a}(b-a)}{a_{k+1}}e^{-\bar{Z}_{\lambda}}\leq \eta\para{\tilde{u},\tilde{u}+ L_{2}}}\\
&=\set{\ln\frac{1}{\eta\para{\tilde{u},\tilde{u}+ L_{2}}}\leq \sigma_{k}N(0,1)-\frac{\sigma_{k}^{2}}{2} +\ln{ \frac{a_{k+1}}{\rho_{b-a}(b-a)}}\leq  \ln\frac{1}{\eta\para{\tilde{u},\tilde{u}+ \rho_{b-a}}}},        
\end{eqalign}
and the independent $\bar{Z}_{\lambda}:=\sigma_{k}N(0,1)-\frac{\sigma_{k}^{2}}{2}$ (defined in \Cref{logfieldshifted}) and $\sigma_{k}^{2}:=2\beta\para{\ln\frac{1}{\lambda}-1+\lambda}$. Here we further integrate out this independent lognormal
\begin{eqalign}\
\int_{\R}\Expe{\maxls{S_{k,\ell}(u,\tilde{u}) }\ind{\para{\frac{\eta\para{\tilde{u}+u,\tilde{u}+u+ L_{2}t^{-1/p}}}{\eta\para{\tilde{u},\tilde{u}+ \rho_{b-a}}}}\geq c_{b-a}^{-1},E_{b-a}^{k}(\tilde{u},y)} }g_{N(0,1)}(y)\dy,   
\end{eqalign}
where $g_{N(0,1)}(y):=\frac{1}{\sqrt{2\pi}}\expo{-y^{2}/2}$. Now that we used the event $E_{b-a}^{k}$, we again bound by another \Holder for $b_{21}^{-1}+b_{22}^{-1}=1$. So in summary we are studying
\begin{eqalign}\label{eq:summarizedbound}
&\sum_{\ell}\sum_{k}\para{\Proba{D_{\ell,k}^{a}} }^{1/b_{12}}   \\
&\cdot \int_{1}^{\infty}\dt\bigg\{\int_{\R}\para{\Proba{\bigcup_{S_{k,\ell}(u,\tilde{u}) }\set{\para{\frac{\eta\para{\tilde{u}+u,\tilde{u}+u+ L_{2}t^{-1/p}}}{\eta\para{\tilde{u},\tilde{u}+ \rho_{b-a}}}}\geq c_{b-a}^{-1}}}}^{1/b_{21}}\\ &\cdot \para{\Proba{\bigcup_{S_{k,\ell}(u,\tilde{u}) }E_{b-a}^{k}(\tilde{u},y)}}^{1/b_{22}} g_{N(0,1)}(y)\dy \bigg\}^{1/b_{11}}.    
\end{eqalign}

\proofparagraph{Step 3: Ratio modulus}
Now, we apply Markov inequality for $q_{0}\in [1,\beta^{-1})$ and \Holder inequality for $q_{1}^{-1}+q_{2}^{-1}=1$ with $q_{0}q_{1}\in \para{0,\frac{2}{\gamma^{2}}}$ to separate numerator and denominator. We then use \Cref{prop:maxmoduluseta} to get
\begin{eqalign}\label{eq:scalingnumer}
\para{\Expe{\max_{S_{k,\ell}(u,\tilde{u})}\para{\eta\para{u+\tilde{u},u+\tilde{u}+\frac{L_{2}}{t^{1/p} } }}^{q_{0}q_{1}}}}^{1/q_{1}}\lessapprox& \para{\frac{L_{2}}{t^{1/p} }}^{\frac{\zeta(q_{0}q_{1})-1}{q_{1}}}\approx t^{-\frac{1}{q_{1}p}\para{\zeta(q_{0}q_{1})-1 }},
\end{eqalign}
and the \Cref{prop:minmodeta} to get
\begin{eqalign}\label{eq:scalingdenomer}
\para{\Expe{\max_{\tilde{u}\in\spara{0, L_{1}}}\para{\eta\para{\tilde{u},\tilde{u}+\rho_{b-a} }}^{-q_{0}q_{2}}}}^{1/q_{2}}\lessapprox~\rho_{b-a}^{(\zeta(-q_{0}q_{2})-1)/q_{2}},
\end{eqalign}
which is uniformly bounded since we fix $\rho_{b-a}$.

\proofparagraph{Step 4: Integral over $t$} We return to the t-integral in  \Cref{eq:summarizedbound}. Here we have
\begin{eqalign}\label{eq:boundexponentfort}
&\int_{1}^{\infty}\bigg\{\para{\Proba{\bigcup_{S_{k,\ell}(u,\tilde{u}) }\set{\para{\frac{\eta\para{\tilde{u}+u,\tilde{u}+u+ L_{2}t^{-1/p}}}{\eta\para{\tilde{u},\tilde{u}+ \rho_{b-a}}}}\geq c_{b-a}^{-1}}}}^{1/b_{21}} \bigg\}^{1/b_{11}}\dt \\
\leq &c_{\rho_{b-a}} \int_{1}^{\infty}t^{-\frac{1}{b_{11}b_{21}q_{1}p}\para{\zeta(q_{0}q_{1})-1 }}\dt.
\end{eqalign}
 So in order for the t-integral to be finite we require
\begin{equation}\label{eq:tintegralcosnstraint}
\frac{\zeta(q_{0}q_{1})-1}{b_{11}b_{21}}>  pq_{1}.
\end{equation}
We take $p=1+\e$. We also take $q_{0}=\beta^{-1}-\e$ by taking $q_{1}=1+\e_{4}$ and thus $q_{1}$'s \Holder conjugate $q_{2}$ to be arbitrarily large. Finally, we take $b_{11}=1+\e_{5}$  (see \Cref{eq:akfromtheQa} where we take $b_{12}$ arbitrarily large) and $b_{21}=1+\e_{5}$ (we take $b_{22}$ arbitrarily large). So all together we get the constraint
\begin{equation}\label{eq:constraintongamma}
\beta^{-1}>2+o(\e_{i})\doncl \gamma<\sqrt{\frac{2}{2+o(\e_{i})}}  \doncl \gamma<1.
\end{equation}

\proofparagraph{Step 5: Event $D_{\ell,k}^{a}$}
Here we study $D_{\ell,k}^{a}:=\set{  Q(a)\in [c_{\ell,k},c_{\ell+1,k}]=:I_{\ell,k}}$.  For $\ell=0$, we bound by $\Proba{D_{\ell,k}^{a}}\leq 1$ and for $\ell\geq 1$ we bound by a minimum as we will see below.  We set $c_{\ell,k}:=\rho_{a}a_{k}c_{\ell}$ for some small enough $\rho_{a}\in (0,1)$ and $c_{\ell}:=\ell^{\lambda_{a}}$ for $\lambda_{a}\in (0,1)$ so that indeed $c_{\ell+1}-c_{\ell}\to 0$ as $\ell\to +\infty$. Then we have the following bounds by applying Markov for $p_{1,a}>0$ and $p_{2,a}\in [1,\beta^{-1})$ by \Cref{momentseta}
\begin{equation}
 \Proba{ \eta^{1}(c_{\ell,k})\leq a}\leq \branchmat{\para{\rho_{a}a_{k}c_{\ell}}^{\zeta(-p_{1,a})}  & c_{\ell,k}\leq 1 \\\para{\rho_{a}a_{k}c_{\ell}}^{-p_{1,a}}  & c_{\ell,k}\geq 1    }   =:B_{1,\ell,k}
\end{equation}
and 
\begin{equation}
 \Proba{ a\leq \eta^{1}(c_{\ell+1})}\leq \branchmat{\para{\rho_{a}a_{k}c_{\ell+1}}^{\zeta(p_{2,a})}  & c_{\ell+1}\leq 1 \\ \para{\rho_{a}a_{k}c_{\ell+1}}^{p_{2,a}}  & c_{\ell+1}\geq 1    } =:B_{2,\ell,k}   
\end{equation}
and bound by their minimum $ \Proba{D_{\ell,k}^{a}}\leq\min_{i=1,2}B_{i,\ell,k}$. When $1\geq c_{\ell+1,k}\geq c_{\ell,k}$, we have
\begin{equation}
B_{1,\ell,k}\leq B_{2,\ell,k}    \doncl\ell \geq \para{\frac{c_{\ell}}{c_{\ell+1}}}^{\alpha(1)/\lambda_{a}}\para{\rho_{a}a_{k}}^{-1/\lambda_{a}},
\end{equation}
where $\alpha(1):=\frac{\zeta(p_{2,a})}{\zeta(p_{2,a})-\zeta(-p_{1,a})}$. When $c_{\ell+1,k}\geq 1\geq c_{\ell,k}$, we have 
\begin{equation}
B_{1,\ell,k}\leq B_{2,\ell,k}    \doncl \ell\geq \para{\frac{c_{\ell}}{c_{\ell+1}}}^{\alpha(2)/\lambda_{a}}\para{\rho_{a}a_{k}}^{-1/\lambda_{a}},
\end{equation}
where $\alpha(2):=\frac{p_{2,a}}{p_{2,a}-\zeta(-p_{1,a})}$. Finally, when $c_{\ell+1,k}\geq c_{\ell,k}\geq 1$, we have 
\begin{equation}
B_{1,\ell,k}\leq B_{2,\ell,k}    \doncl \ell\geq \para{\frac{c_{\ell}}{c_{\ell+1}}}^{\alpha(3)/\lambda_{a}}\para{\rho_{a}a_{k}}^{-1/\lambda_{a}},
\end{equation}
where $\alpha(3):=\frac{p_{2,a}}{p_{2,a}+p_{1,a}}$. The ratio is uniformly bounded $\frac{c_{\ell}}{c_{\ell+1}}\in [\frac{1}{2},1]$ and so we upper bound by terms of the form
\begin{eqalign}\label{eq:akfromtheQa}
\sum_{\ell\geq 1} \para{\Proba{D_{\ell,k}^{a}} }^{1/b_{12}}  \lessapprox&  \para{\rho_{a}a_{k}}^{-p_{1,a}/b_{12}}\para{(\rho_{a}a_{k})^{-1/\lambda_{a}}}^{\para{1-p_{1,a}\lambda_{a}}/b_{12}}\\
&+\para{\rho_{a}a_{k}}^{\zeta(p_{2,a})/b_{12}}\para{(\rho_{a}a_{k})^{-1/\lambda_{a}}}^{\para{1+\zeta(p_{2,a})\lambda_{a}}/b_{12}}\\
=&c 2\para{\rho_{a}a_{k}}^{-\frac{1}{b_{12}\lambda_{a} } }=:c_{a}a_{k}^{-\frac{1}{b_{12}\lambda_{a} } },
\end{eqalign}
where for finiteness of the $\ell$-sum we require $\frac{p_{1,a}\lambda_{a}}{b_{12}}>1$ since as $\ell\to +\infty$ we eventually get $c_{\ell}\geq 1$. Due to existence of negative moments of GMC, here we can take $p_{1,a}>0$ arbitrarily large so that we allow $\lambda_{a}=1-\e_{a}$ and $b_{12}$ arbitrarily large which in turn allows $b_{11}=1+\e_{5}$ in \Cref{eq:tintegralcosnstraint}.
In summary the \Cref{eq:summarizedbound} is upper bounded by
\begin{eqalign}\label{eq:summarizedboundtwo}
&\sum_{k}a_{k}^{-\frac{1}{b_{12}\lambda_{a} } }\bigg\{\int_{\R}\para{\Proba{\bigcup_{S_{k,\ell}(u,\tilde{u}) }E_{b-a}^{k}(\tilde{u},y)}}^{1/b_{22}} g_{N(0,1)}(y)\dy \bigg\}^{1/b_{11}}. \end{eqalign}
\proofparagraph{Step 6: Requirement for scaling}
We set $a_{k}:=\rho_{b-a}k^{-\lambda_{b-a}}$ for $k\geq 1$ and $\lambda_{b-a}>0$. We verify the constraints in \Cref{eq:scalinglawconstraintratio}. We required that $\frac{a_{k+1}}{\rho_{b-a}}\in (0,1)$ and 
\begin{eqalign}\label{eq:scalinglawconstraintratioagain}
&\frac{\rho_{b-a}}{a_{k+1}}\para{\abs{ I_{\ell,k}}+a_{k}+a_{k} t^{-1/p}}\leq \frac{1}{2},
\end{eqalign}
where $t\geq 1$. For $k\geq 1$ we have that $\frac{a_{k}}{a_{k+1}}\leq 2^{\lambda_{b-a}} $ and for $\ell\geq 0$ we have
\begin{equation}
 \abs{I_{\ell,k}}=\rho_{a}a_{k}((\ell+1)^{\lambda_{a}}-(\ell)^{\lambda_{a}})\leq    \rho_{a}a_{k}
\end{equation}
and so we ask for $\frac{a_{k+1}}{\rho_{b-a}}\leq  \frac{\rho_{b-a}}{ 2^{\lambda_{b-a}}\rho_{b-a}}<1$ and
\begin{eqalign}
&\frac{\rho_{b-a}}{a_{k+1}}\para{\abs{ I_{\ell,k}}+a_{k}+a_{k} t^{-1/p}}\leq \rho_{b-a} 2^{\lambda_{b-a}}\para{ \rho_{a}+2}<\frac{1}{2}.
\end{eqalign}
\proofparagraph{Step 7: The event $E^{k}_{a,a+x}$}
 Here we are studying
\begin{eqalign}\label{eq:mainkeventsum}
&\sum_{k}a_{k}^{-\frac{1}{b_{12}\lambda_{a} } }\\
&\cdot\Bigg(\int_{\R}\para{\Proba{\bigcup_{u\in [0,\rho_{b-a}]}\set{ \ln\frac{1}{\eta\para{u,u+ L_{2}}}\leq \sigma_{k}y-\frac{\sigma_{k}^{2}}{2} +\ln{ \frac{a_{k+1}}{\rho_{b-a}(b-a)}}\leq  \ln\frac{1}{\eta\para{u,u+ \rho_{b-a}}} }}}^{1/b_{22}}\\
&\cdot g_{N(0,1)}(y)\dy\Bigg)^{1/b_{11}},
\end{eqalign}
where we had set $a_{k}=\rho_{b-a}k^{-\lambda_{b-a}}$ for $k\geq 1$ and $\lambda_{b-a}>0$ and $g_{N(0,1)}$ is the standard normal density,
\begin{eqalign}
\sigma_{k}&:=\sqrt{2\beta\para{\ln \frac{\rho_{b-a}}{a_{k+1}}-1+\frac{a_{k+1}}{\rho_{b-a}}}}\\
\tand M_{k,x}&:=\frac{\sigma_{k}^{2}}{2}+\ln{ \frac{\rho_{b-a}(b-a)}{a_{k+1}}}=\ln\frac{x}{\para{a_{k+1}}^{1+\beta}}+ \beta \para{-1+\frac{a_{k+1}}{\rho_{b-a}}} +\ln{ \rho_{b-a} c_{b-a} }. 
\end{eqalign}
For the probability term, one upper bound follows by taking a minimum and using the \Cref{prop:minmodeta} and \Cref{prop:maxmoduluseta} for $z^{0}_{2}>0$ and $z_{1}^{0}\in (0,\beta^{-1})$
\begin{eqalign}
&\para{\Proba{\bigcup_{u\in [0,\rho_{b-a}]}\set{ \ln\frac{1}{\eta\para{u,u+ L_{2}}}\leq \sigma_{k}y-M_{k,x}\leq  \ln\frac{1}{\eta\para{u,u+ \rho_{b-a}}} }}}^{1/b_{22}}\\
\leq &\para{\minp{\Proba{e^{\sigma_{k}y-M_{k,x}}\leq \max_{u\in [0,\rho_{b-a}]}\frac{1}{\eta\para{u,u+ \rho_{b-a}}} },\Proba{e^{-(\sigma_{k}y-M_{k,x})}\leq \max_{u\in [0,\rho_{b-a}]}\eta\para{u,u+ L_{2}} },1}}^{1/b_{22}}\\
\leq& c_{\rho_{b-a},L_{2}} \minp{e^{-(\sigma_{k}y-M_{k,x})z_{2}^{0}/b_{22}},e^{(\sigma_{k}y-M_{k,x})z_{1}^{0}/b_{22}},1},  
\end{eqalign}
for $z_{2}:=\frac{z_{2}^{0}}{b_{22}}>0$ and $z_{1}:=\frac{z_{1}^{0}}{b_{22}}$. Therefore, we are left with
\begin{eqalign}\label{eq:mainsumintegralGaussiandensity}
\sum_{k\geq 1}a_{k}^{-\frac{1}{b_{12}\lambda_{a} } }\int_{\R} \para{\minp{e^{-(\sigma_{k}y-M_{k,x})z_{2}},e^{(\sigma_{k}y-M_{k,x})z_{1}},1}   g_{N(0,1)}(y)}^{1/b_{11}}\dy,
\end{eqalign}
where we also brought in the power $b_{11}$ because we take $b_{11}=1+\e$ for small $\e>0$. Since it is almost one, we will ignore $b_{11}$ below. We actually only need to study
\begin{eqalign}
\minp{e^{(\sigma_{k}y-M_{k,x})z_{1}},1}.    
\end{eqalign}
\proofparagraph{Step 8: Constraints from the minimum}
Next we identify how the minimum behaves
\begin{eqalign}\label{eq:maininequalityconstraintminimum}
&e^{(\sigma_{k}y-M_{k,x})z_{1}}<1\\
&\doncl M_{k,x}-\sigma_{k}y>0\\
&\doncl  \frac{\sigma_{k}^{2}}{2}+\ln{ \frac{\rho_{b-a}(b-a)}{a_{k+1}}}   -\sigma_{k}y>0\\
&\doncl \ln\frac{x}{\para{a_{k+1}}^{1+\beta}}+ \beta \para{-1+\frac{a_{k+1}}{\rho_{b-a}}} +\ln{ \rho_{b-a} c_{b-a} }-y\sqrt{2\beta\para{\ln \frac{\rho_{b-a}}{a_{k+1}}-1+\frac{a_{k+1}}{\rho_{b-a}}}}>0\\
&\doncl (1+\beta)\lambda_{b-a}\ln (k+1)+ \ln x+C_{1}(k)-y\sqrt{2\beta\lambda_{b-a}\ln (k+1)+C_{2}(k)}>0\\
&\doncl \ln (k+1)+ \frac{\ln x+C_{1}(k)}{(1+\beta)\lambda_{b-a}}-\frac{y}{(1+\beta)\lambda_{b-a}}\sqrt{2\beta\lambda_{b-a}\ln (k+1)+C_{2}(k)}>0,
\end{eqalign}
for 
\begin{eqalign}
C_{1}(k):=& \beta \para{-1+\frac{a_{k+1}}{\rho_{b-a}}}    +\ln{ \rho_{b-a} c_{b-a} }\tand C_{2}(k):=&2\beta\para{-1+\frac{a_{k+1}}{\rho_{b-a}}}.
\end{eqalign}
For all other variables fixed, as $k$ grows to infinity this inequality becomes true. Next we focus on getting concrete lower bounds for $k$. The complication happens as $x\to 0$, so to simplify analysis we just focus on the case
\begin{eqalign}\label{eq:assumptionx}
&\frac{(1+\beta)\para{1-\frac{a_{k+1}}{\rho_{b-a}}}}{\ln\frac{1}{x}-C_{1}(k)}<1.
\end{eqalign}
The reverse inequality of larger $x$ is simpler because $\ln x$ can be treated as a constant that can be ignored by taking large $\lambda_{b-a}$ and so we leave it. For $y\leq 0$, the inequality \Cref{eq:maininequalityconstraintminimum} holds for:
\begin{eqalign}
\ln(k+1)\geq&  \frac{\ln\frac{1}{x}-C_{1}(k)}{(1+\beta)\lambda_{b-a}}=:N_{x,k}
\end{eqalign}
and for $y> 0$ the inequality \Cref{eq:maininequalityconstraintminimum} holds for:
\begin{eqalign}
\ln(k+1)\geq&  \frac{\ln\frac{1}{x}-C_{1}(k)}{(1+\beta)\lambda_{b-a}}+\para{\frac{y}{(1+\beta)\lambda_{b-a}}}^{2}\beta\lambda_{b-a}\\
&+\frac{1}{2}\abs{\frac{y}{(1+\beta)\lambda_{b-a}}}\sqrt{4 \frac{\ln\frac{1}{x}-C_{1}(k)}{(1+\beta)}2\beta+\para{\frac{y}{(1+\beta)}2\beta}^{2}+4C_{2}(k)}\\
=:&N_{x,k}+g_{y}+p(x,y,k),\\
\tfor N_{x,k}:=&\frac{\ln\frac{1}{x}-C_{1}(k)}{(1+\beta)\lambda_{b-a}}, \qquad g_{y}:=\para{\frac{y}{(1+\beta)\lambda_{b-a}}}^{2}\beta\lambda_{b-a},\\
p(x,y,k):=&\frac{1}{2}\abs{\frac{y}{(1+\beta)\lambda_{b-a}}}\sqrt{4 \frac{\ln\frac{1}{x}-C_{1}(k)}{(1+\beta)}2\beta+\para{\frac{y}{(1+\beta)}2\beta}^{2}+4C_{2}(k)}.
\end{eqalign}
Whereas the opposite inequality $ M_{k,x}-\sigma_{k}y\leq 0$ holds  for 
\begin{eqalign}
 y\leq 0 \tand \ln(k+1)\leq&  N_{x,k}+g_{y}-p(x,y,k)
\end{eqalign}
and for
\begin{eqalign}
 y>0\tand \ln(k+1)\leq&  N_{x,k}+g_{y}+p(x,y,k).
\end{eqalign}
 We go over each the four cases of the signs of $ M_{k,x}-\sigma_{k}y\tand y$ to bound the $k-$sum and the $y-$integral and extract a singular factor for $x$.
\proofparagraph{Case $M_{k,x}-\sigma_{k}y>0$ and $y\leq 0$}
Here we just study the tail-sum $k\geq \expo{N_{x,k}}$ for $e^{(\sigma_{k}y-M_{k,x})z_{1}}$
\begin{eqalign}\label{eq:tailsumNxk}
\sum_{k\geq \expo{N_{x,k}}}a_{k}^{-\frac{1}{b_{12}\lambda_{a} } }\expo{-z_{1}M_{k,x}+\sigma_{k}yz_{1}}   \lessapprox& x^{-z_{1}}\sum_{k\geq \expo{N_{x,k}}}k^{-(z_{1}(1+\beta)-\frac{1}{b_{12}\lambda_{a} })\lambda_{b-a}}\expo{-C_{1}(k)} \\
 \lessapprox& x^{-z_{1}} x^{\frac{(z_{1}(1+\beta)-\frac{1}{b_{12}\lambda_{a} })\lambda_{b-a}-1}{(1+\beta)\lambda_{b-a}}}\\
 \lessapprox& x^{-\frac{1}{(1+\beta)\lambda_{b-a}}-\frac{1}{b_{12}\lambda_{a} }}
 \end{eqalign}
for $z_{1}(1+\beta)\lambda_{b-a}$. For summability we require $(z_{1}(1+\beta)-\frac{1}{b_{12}\lambda_{a} })\lambda_{b-a}>1$, which is possible by taking large enough $\lambda_{b-a},b_{12}$. So the exponent of $x$ is of the form $x^{-\e}$ for small $\e>0$.
\proofparagraph{Case $M_{k,x}-\sigma_{k}y>0$ and $y> 0$}
We start by extracting the $y$-part in the summand $a_{k}^{-\frac{1}{b_{12}\lambda_{a} }}e^{(\sigma_{k}y-M_{k,x})z_{1}}$. We ignore the first part $a_{k}^{-\frac{1}{b_{12}\lambda_{a} }}$ since we can take arbitrarily small power. For the second part we factor out the leading asymptotic
\begin{eqalign}\label{eq:mainsummandypositive}
&\sigma_{k}y-M_{k,x}\\
 =&y\sqrt{2\beta\lambda_{b-a}\ln (k+1)+C_{2}(k)}+\ln \frac{1}{x}-C_{1}(k)-(1+\beta)\lambda_{b-a}\ln (k+1)\\
= &\ln\frac{1}{x}-C_{1}(k)-\ln (k+1)\para{(1+\beta)\lambda_{b-a}-\frac{y}{\sqrt{\ln (k+1)}}\sqrt{2\beta\lambda_{b-a}+\frac{C_{2}(k)}{\ln (k+1)}}}. 
\end{eqalign}
Then we use the $k-$ lower bound $G:=\expo{N_{x,k}+g_{y}+p(x,y,k)}$ to bound by
\begin{eqalign}\label{eq:mainsummandypositivelowerboundG}
\eqref{eq:mainsummandypositive}\leq &\ln\frac{1}{x}-C_{1}(k)-\ln (k+1)\para{(1+\beta)\lambda_{b-a}-\frac{y}{\sqrt{\ln (G)}}\sqrt{2\beta\lambda_{b-a}+\frac{C_{2}(k)}{\ln (G)}}}. 
\end{eqalign}
Since the focus in the $y-$integral is for large $y$, we see that for large $y$ the $G$ term is asymptotically equal to
\begin{eqalign}
&G\approx \expo{N_{x,k}+ y^{2}\frac{2\beta}{(1+\beta)^{2}\lambda_{b-a}}}.
\end{eqalign}
Therefore, the $y-$part in \Cref{eq:mainsummandypositivelowerboundG} behaves like 
\begin{equation}
\frac{y}{\sqrt{\ln (G+1)}}\sqrt{2\beta\lambda_{b-a}+\frac{C_{2}(k)}{\ln (G+1)}}\approx    \frac{y}{\ln\frac{1}{x}+y^{2}}.
\end{equation}
However, outside we are multiplying by the standard normal density $e^{-y^{2}}$ and so we still have integrability in the $y-$integral in \Cref{eq:mainsumintegralGaussiandensity}. Also the effect of small $x$ only decreases this bound and so it doesn't contribute a singular factor to $x$. Finally, for simplicity we also drop the $y-$part from the lower bound and thus return to the same tail sum $k\geq N_{x,k}$ as in the previous case in \Cref{eq:tailsumNxk}.
\proofparagraph{Case $M_{k,x}-\sigma_{k}y\leq 0$ and $y\leq 0$}
Here we study the bulk-sum $ k\leq \expo{ N_{x,k}+g_{y}-p(x,y,k)}=:\tilde{G}$. The part $e^{-(\sigma_{k}y-M_{k,x})z_{2}}$ actually grows in $k$, so we simply keep the $1$ and thus get the upper bound
\begin{eqalign}
\sum_{1\leq k\leq \tilde{G} }1\lessapprox \expo{\frac{1}{(1+\beta)\lambda_{b-a}}\ln\frac{1}{x}+\para{\frac{y}{(1+\beta)\lambda_{b-a}}}^{2}\beta\lambda_{b-a}}.  
\end{eqalign}
Here again we get a singular power $x^{-\e}$. We also get integrability in the $y$-integral by taking large $\lambda_{b-a}$ and using the normal density $e^{-y^{2}/2}$.
\proofparagraph{Case $M_{k,x}-\sigma_{k}y\leq 0$ and $y>0$}
Here we study the bulk-sum $ k\leq \expo{ N_{x,k}+g_{y}+p(x,y,k)}=:\bar{G}$. For the $g(y)+p(x,y,k)$ we have an upper bound without $k$ since $C_{1}(k),C_{2}(k)$ are bounded in $k$
\begin{eqalign}
&g(y)+p(x,y,k)\\
\leq &\para{\frac{y}{(1+\beta)\lambda_{b-a}}}^{2}\beta\lambda_{b-a}+\frac{1}{2}\abs{\frac{y}{(1+\beta)\lambda_{b-a}}}\sqrt{4 \frac{\ln\frac{1}{x}+1}{(1+\beta)}2\beta+\para{\frac{y}{(1+\beta)}2\beta}^{2}+4}:=R_{x,y}.
\end{eqalign}
So we get
\begin{equation}
\sum_{1\leq k\leq \bar{G} } \expo{\frac{1}{(1+\beta)\lambda_{b-a}}\ln\frac{1}{x}+R_{x,y}}.    
\end{equation}
We are left with the $y-$integral
\begin{eqalign}
&\int_{0}^{\infty}\expo{R_{x,y}-y^{2}/2}\dy\\
=&\int_{0}^{\infty}\expo{\frac{1}{\lambda_{b-a}}\para{c_{1}y^{2}+c_{2}y\sqrt{c_{3}\ln\frac{1}{x}+c_{4}y^{2}+4} }-y^{2}/2}\dy,    
\end{eqalign}
for some positive constants $c_{i}$. Integrability is clear by taking large $\lambda_{b-a}$. Next we study the singular factor $x$. Using the triangle inequality $\sqrt{a+b}\leq \sqrt{a}+\sqrt{b}$ we are left with 
\begin{eqalign}
&\int_{0}^{\infty}\expo{-\tilde{c}_{1}\para{\frac{1}{2}-\frac{1}{\lambda_{b-a}}}y^{2}+\frac{\tilde{c}_{2}}{\lambda_{b-a}}y\sqrt{\ln\frac{1}{x}} }\dy,    
\end{eqalign}
for some positive $\tilde{c}_{i}$. Here we use the general formula
\begin{equation}\label{eq:gaussianintegralpolynomial}
\int_{0}^{\infty}e^{ay-y^{2}/2}dy=\sqrt{\frac{\pi}{2}}e^{a^{2}/2}Erf(\frac{a}{\sqrt{2}}+1),
\end{equation}
where $Erf(z)$ is the error function $\int_{0}^{z}e^{-t^{2}}\dt$ and so for $a:=\frac{\tilde{c}_{2}}{\lambda_{b-a}}\sqrt{\ln\frac{1}{x}}$ we get the singular factor
\begin{equation}
\expo{\frac{1}{2}\para{\frac{\tilde{c}_{2}}{\lambda_{b-a}}}^{2}\ln{\frac{1}{x}}}=x^{-\frac{1}{2}\para{\frac{\tilde{c}_{2}}{\lambda_{b-a}}}^{2}}.    
\end{equation}
\newpage
\subsection{Proof of \sectm{$\abs{J}=\abs{I}<\delta$ and ratio of the form $\frac{Q(b,b+x)}{Q(a,a+x)}$ for $a<b$}}
We will try to keep find the similarities with the previous case of $\frac{Q(a,a+x)}{Q(b,b+x)}$ as much as possible to shorten it.
\subsubsection{Case \sectm{$Q(a,a+x)\in [1,\infty)$}}
This is again the easiest case, since we can simply upper bound the denominator
\begin{equation}
\Expe{\para{\frac{Q(b,b+x)}{Q(a,a+x)}}^{p}\ind{Q\para{a,a+x}\geq 1}}\leq  1^{-p}\Expe{\para{Q\para{b,b+x}}^{p}}.     
\end{equation}
\subsubsection{Case \sectm{$Q(a,a+x)\in [0,1]$ and $E_{\rho,b-a}:=\set{Q_{b-a+x}\bullet Q_{a}\geq \rho_{b-a+x}}$}}
Because the ratio is flipped here we need to instead divide and multiply by $Q(a,b+x)$. That leaves us with studying
\begin{equation}\label{eq:mainsingularatio}
\Expe{\para{\frac{Q(a,b+x)}{Q(a,a+x)}}^{p}\ind{Q\para{a,a+x}\geq 1}}     
\end{equation}
since $Q(a,b+x)\geq Q(b,b+x)$. Here we repeat the steps up to \Cref{eq:mainprobabilityeventsumsonlylower} to get
\begin{eqalign}
\eqref{eq:mainsingularatio}\leq \sum_{k}b_{k+1}^{p}\int_{1}^{\infty}\sum_{\ell} \Proba{\max_{u\in I_{\ell}}\eta(u,u+t^{-1/p})\geq x, (c_{b-a}+1)x\geq \min_{u\in I_{\ell}}\eta(u,u+b_{k}) ,D_{\ell}^{a}},      
\end{eqalign}
for $D_{\ell}^{a}:=\set{  Q(a)\in [a_{\ell},a_{\ell+1}]=:I_{\ell}}$. From here we proceed the same way we did with \Cref{eq:mainprobabilityeventsumsonlylower} to get a non-singular power of $x$ as in \Cref{eq:exponenntofx}.
\subsubsection{Case \sectm{$Q_{b-a+x}\bullet Q_{a}\in [0,\rho_{b-a+x}]$ and $Q(a,a+x)\in [0,1]$}}
We decompose the numerator 
\begin{equation}
E_{b-a+x}^{k}(Q_{a}):=\set{Q_{b-a+x}\bullet Q_{a}\in I_{k}:=\spara{a_{k+1},a_{k}}\subset [0,\rho_{b-a+x}]},    
\end{equation}
for arbitrarily small $\rho_{b-a+x}\in (0,1)$ and $a_{k}:=\rho_{b-a+x}k^{-\lambda_{b-a}}$ as before, to obtain \begin{eqalign}
&\Expe{\para{\frac{Q_{b-a+x}\bullet Q_{a}}{Q(a,a+x)}}^{pp_{1}}\ind{Q_{b-a}\bullet Q_{a}\in [0,\rho_{b-a+x}],Q(a,a+x)\in [0,1]}}\\
\leq& 1+\sum_{k}\int_{1}^{\infty}\Proba{\eta\para{Q_{a},Q_{a}+ a_{k}t^{-1/pp_{1}} }\geq x,  \eta\para{Q_{a},Q_{a}+ a_{k+1}}\leq b-a+x\leq \eta\para{Q_{a},Q_{a}+ a_{k}}  }\dt.
\end{eqalign}
Here we again need to apply the scaling law and so we write it as a ratio
\begin{eqalign}
\Proba{\frac{\eta\para{Q_{a},Q_{a}+ a_{k}t^{-1/pp_{1}} }}{\eta\para{Q_{a},Q_{a}+ a_{k+1}}}\geq(c_{b-a}+1)^{-1},  \eta\para{Q_{a},Q_{a}+ a_{k+1}}\leq(c_{b-a}+1)x \leq \eta\para{Q_{a},Q_{a}+ a_{k}}  }.    
\end{eqalign}
This is a slightly easier version of the singular case that we already studied in \Cref{eq:maineventosumsingularcase}. It is a bit easier because here the ratio has the same shift $Q_{a}$. So from here we again get the same singular power of $x^{-\e}$.
\newpage
 \proofparagraph{Inverse on the unit circle}
Here we study the ratios
\begin{equation}
\Expe{\para{\frac{Q_{H}(a,a+x)}{Q_{H}(b,b+x)}}^{p}}   \tand \Expe{\para{\frac{Q_{H}(b,b+x)}{Q_{H}(a,a+x)}}^{p}}.     
\end{equation}
We will go over all the possibilities of $Q_{H}\para{b,b+x},Q_{H}\para{a,a+x}$ depending on them being less or greater than one. The strategy here is to switch back from $\eta_{H}$ to the measure $\eta=\eta_{U}$ as described in \Cref{it:doubleboundinv} via $\eta_{H}=G\eta$ and then use the results for $\eta$. 
\subsection{Case \sectm{$Q_{H}(b,b+x)\in [1,\infty)$}}
This is again the easiest case as before.
\subsection{Case \sectm{$Q_{H}(b,b+x)\in [0,1]$ and $Q_{H}(b-a)\bullet Q_{H}(a)\geq \rho_{b-a}$}}
Again we use $x\leq b-a$ to bound
\begin{eqalign}\label{eq:casecirculebiggerrho}
&\Expe{\para{\frac{Q_{H}(a,a+x)}{Q_{H}(b,b+x)}}^{p}\ind{E_{b,b+x}^{[0,1]}}}\leq \Expe{\para{\frac{Q_{H}(b-a)\bullet Q_{H}(a)}{Q_{H}(b,b+x)}}^{p}\ind{E_{b,b+x}^{[0,1]}}}. 
\end{eqalign}
Here we will decompose the numerator event and $Q_{H}(a)$
\begin{eqalign}
E_{b-a}^{k}:=&\set{Q_{H}(b-a)\bullet Q_{H}(a)\in \spara{b_{k},b_{k+1}}=:I_{b-a}^{k}}, k\geq 1\\
D_{\ell}^{a}:=&\set{  Q_{H}(a)\in [a_{\ell},a_{\ell+1}]=:I_{\ell}},\ell\geq 0,
\end{eqalign}
for the same sequence $b_{k}\to +\infty,a_{\ell}\to +\infty$ as before $b_{k}:=\rho_{b-a}k^{\lambda_{2}}$ for some $\lambda_{2}\in (0,1)$ and  $a_{\ell}:=\rho_{1}\ell^{\lambda_{1}},\ell\geq 0$ and $\rho_{1},\lambda_{1}\in (0,1)$. Thus as before, we obtain an analogous upper bound to \Cref{eq:maineventoboundratioone}
\begin{eqalign}\label{eq:maineventoboundratioonecircle}
&\eqref{eq:casecirculebiggerrho}\leq\int_{[1,\infty)}\sum_{k}\sum_{\ell}b_{k+1}^{p}\\
&\minp{\Proba{\max_{u\in [0,\abs{I_{\ell}}],\tilde{u}\in [0,\abs{I_{b-a}^{k}}]}\eta_{H}(\tilde{u}+u,\tilde{u}+u+t^{-1/p})\geq x}\Proba{c_{b-a}x\geq \min_{u\in [0,\abs{I_{\ell}}]}\eta_{H}(u,u+b_{k})}, \Proba{D_{\ell}^{a}}},
\end{eqalign}
where we also applied the periodicity and the translation invariance. Here the only difference with \Cref{eq:maineventoboundratioone} is that we switch back to the measure $\eta$ as described in \Cref{sec:transitionformuals} via $\eta_{H}=G\eta$ and then use the results for $\eta$. So for the first event depending on $t$, we first apply Markov for $p_{0}>0$
\begin{eqalign}\label{eq:unitcirclemaxxi}
&\Proba{\max_{u\in [0,\abs{I_{\ell}}],\tilde{u}\in [0,\abs{I_{b-a}^{k}}]}\eta_{H}(\tilde{u}+u,\tilde{u}+u+t^{-1/p})\geq x}\\
\leq & x^{-p_{0}}\Expe{\max_{T\in [0,\rho_{b-a}+\rho_{1}]}\para{\eta_{H}(T,T+t^{-1/p})}^{p_{0}}} \\
\leq & x^{-p_{0}}\Expe{e^{p_{0}\sup_{s\in [0,1]}\xi(s)}\max_{T\in [0,\rho_{b-a}+\rho_{1}]}\para{\eta(T,T+t^{-1/p})}^{p_{0}}} 
\end{eqalign}
then apply \Holder for pair $p_{0,1},p_{0,2}$ to separate the $\xi$-factor and finally use the positive moments in \Cref{cor:shiftedGMCmoments} 
\begin{equation}
\eqref{eq:unitcirclemaxxi}\leq  x^{-p_{0}}\para{\Expe{e^{p_{0}p_{0,1}\sup_{s\in [0,1]}\xi(s)}}}^{1/p_{0,1}} t^{-\frac{\zeta(p_{0}p_{0,2})-1}{p_{0,2}}}.
\end{equation}
We use that the supremum of the $\xi$-field from \Cref{rem:finiteexponentialmomentscomparisonfield} has finite exponential moments over any closed interval $I$
\begin{equation}
\Expe{\expo{\alpha\sup_{s\in I}\xi(s)}}<\infty \tfor \alpha>0.   
\end{equation}
So here we can take $p_{0,2}=1+\e$ by pushing its \Holder conjugate $p_{0,1}$ to be arbitrarily large. We do the same for the second factor with the $b_{k}$. So for the first two factors we return to the previous setting in \cref{eq:reallinelinemaxbound}. It remains to study the $\Proba{D_{\ell}^{a}}$.
\proofparagraph{Event $D_{\ell}^{a}$ for the $Q_{H}(a)$}
Here we have the same result as before because as shown in \Cref{prop:momentsunitcirclerealine}, the exponents for the q-moments of $\eta_{H}(t)$ and $t\in [0,\infty)$ are the same those of $\eta(t)$ as long as they are in $q\in (-\infty,0)\cup [1,\beta^{-1})$ with the only difference of bounding by $\floor{t}^{q}$ when $t\geq 1$ and $q<0$.
\subsection{Case \sectm{$Q_{H}(b-a)\bullet Q_{H}(a)\in [0,\rho_{b-a}]$ and $Q_{H}(b,b+x)\in [0,1]$}}
Here we again study the singular case. We follow the same overall strategy. We mainly switch back to $\eta$ at the cost of having to control a $\xi$-factor as above.
\proofparagraph{Step 1: Decomposing $Q_{H}(b-a)\bullet Q_{H}(a)$ and $Q_{H}(a)$} We repeat similar steps to get to the analogue of \Cref{eq:maineventrealline}
\begin{eqalign}\label{eq:maineventcircle}
&\sum_{\ell}\sum_{k}\para{\Proba{D_{\ell,k}^{a}} }^{1/b_{12}} \\ \cdot&\int_{1}^{\infty}\para{\Expe{\maxls{\set{u\in I_{k},\tilde{u}\in \spara{0,\abs{I_{\ell,k}}}}}\ind{\frac{\eta_{H}\para{\tilde{u}+u,\tilde{u}+u+ a_{k}t^{-1/p}}}{\eta_{H}\para{\tilde{u},\tilde{u}+ a_{k+1}}}\geq c_{b-a}^{-1} ,E_{b-a}^{k}(\tilde{u})} }}^{1/b_{11} }\dt,  
\end{eqalign}
where 
\begin{eqalign}
&D_{\ell,k}^{a}:=\set{  Q_{H}(a)\in [c_{\ell,k},c_{\ell+1,k}]=:I_{\ell,k}},  \\
\tand &E_{b-a}^{k}(\tilde{u}):=\set{Q_{H}(b-a)\bullet\tilde{u}\in I_{k}:=\spara{a_{k+1},a_{k}}\subset [0,\rho_{b-a}]}.    
\end{eqalign}
Now we make the transition to $\eta$. In the event $E_{b-a}^{k}(\tilde{u})$ we use \Cref{it:doubleboundinv} to write
\begin{eqalign}
&E_{(b-a)G_{\tilde{u}}^{1}}^{k}(\tilde{u}):=\set{Q_{U}\para{(b-a)e^{G_{\tilde{u}}^{1}}}\bullet\tilde{u}\in I_{k}:=\spara{a_{k+1},a_{k}}\subset [0,\rho_{b-a}]}.    
\end{eqalign}
where 
\begin{equation}
G_{\tilde{u}}^{1}= \xi(\tilde{u}+\theta_{[0,b-a]}) , 
 \end{equation}
 and $\theta_{[0,b-a]}\in [0,Q_{H}(b-a)\bullet T]\subseteq [0,\rho_{b-a}]$. For the ratio event we pull a $\xi$ field from numerator and denominator and combine them
\begin{eqalign}
\frac{\eta_{H}\para{\tilde{u}+u,\tilde{u}+u+ a_{k}t^{-1/p}}}{\eta_{H}\para{\tilde{u},\tilde{u}+ a_{k+1}}}\leq e^{G_{u,\tilde{u}}^{2}-G_{\tilde{u}}^{3}}    \frac{\eta\para{\tilde{u}+u,\tilde{u}+u+ a_{k}t^{-1/p}}}{\eta\para{\tilde{u},\tilde{u}+ a_{k+1}}},
\end{eqalign}
where 
\begin{eqalign}
G_{u,\tilde{u}}^{2}&:=\sup_{\theta\in [0,\rho_{b-a}]}\xi(\tilde{u}+u+\theta) ,\\
G_{\tilde{u}}^{3}&:=\sup_{\theta\in [0,\rho_{b-a}]}\xi(\tilde{u}+\theta).
\end{eqalign}
 In order to progress to the scaling law, we need to remove these two suprema. Since they have all their exponential moments, we decompose them and separate them out by \Holder. We use
\begin{eqalign}
D^{G^{1}}_{m_{1}}(\tilde{u}):&=  \set{  G_{\tilde{u}}^{1}\in [d_{m_{1}},d_{m_{1}+1}]=:I_{m}^{G^{1}}}\tfor m_{1}\in \Z_0,\\
D^{G^{2}}_{m_{2}}(u,\tilde{u}):&=  \set{  G_{u,\tilde{u}}^{2}\in [d_{m_{2}},d_{m_{2}+1}]=:I_{m_{2}}^{G^{2}}}\tfor m_{2}\in \Z_0,\\
D^{G^{3}}_{m_{3}}(\tilde{u}):&=  \set{  G_{\tilde{u}}^{3}\in [d_{m_{3}},d_{m_{3}+1}]=:I_{m_{3}}^{G^{3}}}\tfor m_{3}\in \Z_0,
\end{eqalign}
where $\Z_0:=\Z\setminus \set{0}$ and $d_{m}$ is a bi-infinite sequence going to minus and plus infinity to cover the image of those constants defined in \Cref{eq:summarizedbound}: we set
\begin{equation}\label{eq:sequencesdefinition}
d_{k}:=\branchmat{\ln k^{\lambda}& k\geq 1 \\ \ln \abs{k}^{-\lambda}& k\leq -1 \\},   
\end{equation}
 for $k\in \Z_0$ and $\lambda>0$. So finally, we take yet another supremum and apply triple \Holder to separate these events
\begin{eqalign}\label{eq:maineventcircletwo}
\eqref{eq:maineventcircle}&\leq \sum_{\ell,k,m_{i}}\para{\Proba{D_{\ell,k}^{a}} }^{1/b_{12}}\int_{1}^{\infty}\dt\\
&\para{\Expe{\maxls{S}\ind{e^{u_{2}-u_{3}}\frac{\eta\para{\tilde{u}+u,\tilde{u}+u+ a_{k}t^{-1/p}}}{\eta\para{\tilde{u},\tilde{u}+ a_{k+1}}}\geq c_{b-a}^{-1} ,E_{b-a}^{k}(\tilde{u},u_{1})} }}^{1/b_{11}r_{ratio} }\\
&\cdot\prod_{i=1,3}\para{\Proba{\max_{\tilde{u}\in \spara{0,\rho_{a}}}\ind{D^{G^{i}}_{m_{i}}(\tilde{u})}}}^{1/r_{G^{i}}} \para{\Proba{\max_{\set{u\in [0,\rho_{b-a}],\tilde{u}\in \spara{0,\rho_{a}}}}\ind{D^{G^{2}}_{m_{2}}(u,\tilde{u})}}}^{1/r_{G^{2}}},  
\end{eqalign}
 where $r_{ratio}^{-1}+\sum_i r_{G^{i}}^{-1}=1$ and
 \begin{equation}
S:=\set{u\in I_{k},\tilde{u}\in \spara{0,\abs{I_{\ell,k}}} ,\set{u_{i}\in I_{m_i}^{G^i}}_{1,2,3}}     
 \end{equation}
 and we also used the bounds on $\abs{I_{k}},\abs{I_{\ell,k}}$ from the previous section to decouple the factors $G^{i}$ from the $k,\ell$-sums. Here we take $r_{ratio}=1+\e$ for arbitrarily small $\e>0$, at the cost of taking arbitrarily large $r_{G^{i}}$.
\proofparagraph{Step 2: Scaling law step} This step proceed as before in \Cref{eq:summarizedbound} with the only difference that we now include the $e^{u_{i}}$ factors
\begin{eqalign}\label{eq:maineventcirclethree}
&\para{\Proba{\bigcup_{\tilde{S}}\set{e^{u_{2}-u_{3}}\frac{\eta\para{\tilde{u}+u,\tilde{u}+u+ L_{2}t^{-1/p}}}{\eta\para{\tilde{u},\tilde{u}+ \rho}}}\geq c_{b-a}^{-1}}}^{1/b_{21}b_{11}r_{ratio}}\\
\cdot&\bigg\{\int_{\R}\para{\Proba{\bigcup_{\tilde{S} }E_{b-a}^{k}(\tilde{u},y,u_{1})}}^{1/b_{22}} g_{N(0,1)}(y)\dy \bigg\}^{1/b_{11}r_{ratio}},
\end{eqalign}
where the maximum-domain changed to
 \begin{equation}
\tilde{S}:=\set{u\in \frac{\rho}{a_{k+1}}I_{k},\tilde{u}\in\spara{0, \frac{\rho}{a_{k+1}}\abs{I_{\ell,k}}} ,\set{u_{i}\in I_{m_i}^{G^i}}_{1,2,3} }.     
 \end{equation}
and the decomposition event changed to 
\begin{eqalign}
E_{b-a}^{k}(\tilde{u},y,u_{1})&=\set{\ln\frac{1}{\eta\para{\tilde{u},\tilde{u}+ L_{2}}}\leq \sigma_{k}N(0,1)-\frac{\sigma_{k}^{2}}{2} +\ln{ \frac{a_{k+1}}{e^{u_{1}}\rho(b-a)}}\leq  \ln\frac{1}{\eta\para{\tilde{u},\tilde{u}+ \rho}}},        
\end{eqalign}
with the difference being that we added the $e^{u_{1}}$-factor.
\proofparagraph{Step 3: Ratio modulus}
Now, we apply Markov inequality for $q_{0}\in [1,\beta^{-1})$ and \Holder inequality for $q_{1}^{-1}+q_{2}^{-1}=1$ with $q_{0}q_{1}\in \para{1,\beta^{-1}}$ to separate numerator and denominator
\begin{eqalign}
&\para{\Proba{\bigcup_{\tilde{S}}\set{e^{u_{2}-u_{3}}\frac{\eta\para{\tilde{u}+u,\tilde{u}+u+ L_{2}t^{-1/p}}}{\eta\para{\tilde{u},\tilde{u}+ \rho}}}\geq c_{b-a}^{-1}}}^{1/b_{21}b_{11}r_{ratio}}   \\
\leq &\para{\Expe{\max_{\tilde{S}}e^{q_{0}\para{d_{m_{2}+1}-d_{m_{3}}}}\para{\eta\para{u+\tilde{u},u+\tilde{u}+\frac{L_{2}}{t^{1/p} } }}^{q_{0}q_{1}}}}^{1/q_{1}b_{21}b_{11}r_{ratio}}\\
&\cdot \para{\Expe{\max_{\tilde{S}}\para{\eta\para{u+\tilde{u},u+\tilde{u}+\rho }}^{-q_{0}q_{2}}}}^{1/q_{2}b_{21}b_{11}r_{ratio}}
\end{eqalign}
We then use \Cref{prop:maxmoduluseta} to get
\begin{eqalign}
&\para{\Expe{\max_{\tilde{S}}e^{q_{0}\para{d_{m_{2}+1}-d_{m_{3}}}}\para{\eta\para{u+\tilde{u},u+\tilde{u}+\frac{L_{2}}{t^{1/p} } }}^{q_{0}q_{1}}}}^{1/q_{1}b_{21}b_{11}r_{ratio}}\\
\lessapprox& e^{q_{0}\para{d_{m_{2}+1}-d_{m_{3}}}}\para{\frac{L_{2}}{t^{1/p} }}^{\frac{\zeta(q_{0}q_{1})-1}{q_{1}b_{21}b_{11}r_{ratio}}}\\
\lessapprox &e^{q_{0}\para{d_{m_{2}+1}-d_{m_{3}}}}t^{-\frac{\zeta(q_{0}q_{1})-1}{pq_{1}b_{21}b_{11}r_{ratio}}}
\end{eqalign}
and we use the \Cref{prop:minmodeta} to get
\begin{eqalign}
\para{\Expe{\max_{\tilde{S}}\para{\eta\para{u+\tilde{u},u+\tilde{u}+\rho }}^{-q_{0}q_{2}}}}^{1/q_{2}}\lessapprox~\rho^{\frac{\zeta(-q_{0}q_{2})-1}{q_{2}b_{21}b_{11}r_{ratio}}}.
\end{eqalign}
\proofparagraph{Step 4: Integral over $t$ and $\para{\frac{Q_{b-a}\bullet Q_{a}}{Q(b,b+x)}}^{p}$} Here the only difference in the exponent of $t$ in \Cref{eq:boundexponentfort} is that we divide by $r_{ratio}$, which we can take it to be $r_{ratio}=1+\e$ by taking $r_{G},r_{G^{2}}$ arbitrarily large. So we get the same constraint
\begin{equation}
\beta^{-1}>2+o(\e_{i})\doncl \gamma<\sqrt{\frac{2}{2+o(\e_{i})}}  \doncl \gamma<1.
\end{equation}
\proofparagraph{Step 5: Sum over $\ell$ and event $D_{\ell,k}^{a}$}
Here the only difference in \Cref{eq:akfromtheQa} is that we restrict $p_{2}\in [1,\beta^{-1})$ in order that we have the convexity-condition that we need in \Cref{prop:momentsunitcirclerealine}. So we again get the singular factor $c_{a}a_{k}^{-\frac{1}{b_{12}\lambda_{a} } }.$
\proofparagraph{Step 6: Sum over $k$ and event $E^{k}_{a,a+x}$}
 Here we are now studying
\begin{eqalign}
&\int_{\R}\para{\Proba{\bigcup_{u_{1}\in I_{m_{1}}^{G_{1}} ,u\in [0,\rho]}\set{ \ln\frac{1}{\eta\para{\tilde{u},\tilde{u}+ L_{2}}}\leq \sigma_{k}y-\frac{\sigma_{k}^{2}}{2} +\ln{ \frac{a_{k+1}}{e^{u_{1}}\rho(b-a)}}\leq  \ln\frac{1}{\eta\para{\tilde{u},\tilde{u}+ \rho}} }}}^{1/b_{22}}\\
&\cdot g_{N(0,1)}(y)\dy,
\end{eqalign}
where now we have an extra maximum for $u_{1}$. For the probability term, one upper bound follows by taking a minimum and using the \Cref{prop:minmodeta},\Cref{prop:maxmoduluseta}
\begin{eqalign}
&\para{\Proba{\bigcup_{u_{1}\in I_{m_{1}}^{G_{1}} ,u\in [0,\rho]}\set{ \ln\frac{1}{\eta\para{u,u+ L_{2}}}\leq y-u_{1}\leq  \ln\frac{1}{\eta\para{u,u+ \rho}} }}}^{1/b_{22}}\\
&\leq c_{\rho,L_{2}} \minp{e^{-yz_{2}^{0}/b_{22}}e^{d_{m_{1}+1}z_{2}^{0}/b_{22}},e^{yz_{1}^{0}/b_{22}}e^{-d_{m_{1}}z_{1}^{0}/b_{22}}},    
\end{eqalign}
for $z_{2}:=\frac{z_{2}^{0}}{b_{22}}>0$ and $z_{1}:=\frac{z_{1}^{0}}{b_{22}}\twith z_{1}^{0}\in (0,\beta^{-1})$. Here reverse the change of variables again to be left with
\begin{eqalign}
\int_{\R} \minp{e^{-(\sigma_{k}y-M_{k,x})z_{2}}e^{d_{m_{1}+1}z_{2}},e^{(\sigma_{k}y-M_{k,x})z_{1}}e^{-d_{m_{1}}z_{1}},1}   g_{N(0,1)}(y)\dy. 
\end{eqalign}
Here we returned to a similar integral as in \Cref{eq:mainsumintegralGaussiandensity}. The only difference is that we picked up the factors $e^{d_{m_{1}+1}z_{2}},e^{-d_{m_{1}}z_{1}}$. We will need to extract the $e^{d_{m_{1}+1}}$ factors for the summations. We again only study
\begin{eqalign}
\minp{e^{(\sigma_{k}y-M_{k,x})z_{1}}e^{-d_{m_{1}}z_{1}},1}.    
\end{eqalign}
So we instead split over
\begin{eqalign}
&M_{k,x}+d_{m_{1}}-\sigma_{k}y>0\tand M_{k,x}+d_{m_{1}}-\sigma_{k}y\leq 0\\
\doncl&    \ln (k+1)+ \frac{\ln (xe^{d_{m_{1}}})++C_{1}(k)}{(1+\beta)\lambda_{b-a}}-\frac{y}{(1+\beta)\lambda_{b-a}}\sqrt{2\beta\lambda_{b-a}\ln (k+1)+C_{2}(k)}>0,
\end{eqalign}
where for convenience we grouped together the $x$ and $e^{d_{m_{1}}}$ and let $\tilde{x}:=xe^{d_{m_{1}}}$.  As defined in \Cref{eq:sequencesdefinition}, this $\tilde{x}$ can be arbitrarily small and arbitrarily large and so we have to slightly modify the proof. In the case of $m_{1}\leq -1$, we get that $\tilde{x}\leq x$ and we again satisfy \Cref{eq:assumptionx} i.e.
\begin{eqalign}\label{eq:assumptionxem}
&\frac{(1+\beta)\para{1-\frac{a_{k+1}}{\rho_{b-a}}}}{\ln\frac{1}{xe^{d_{m_{1}}}}-C_{1}(k)}<1.
\end{eqalign}
In this case we proceed as before writing $\tilde{x}$ instead of $x$, in order to be left with 
\begin{eqalign}
    \tilde{x}^{-\e}=x^{-\e}e^{-\e d_{m_{1}}}.
\end{eqalign}
So it remains to study the case $m_{1}\geq 1$ where we have $\tilde{x}\geq x$ and arbitrarily large as $m_{1}$ grows. For the opposite inequality 
\begin{eqalign}
&\frac{(1+\beta)\para{1-\frac{a_{k+1}}{\rho_{b-a}}}}{\ln\frac{1}{xe^{d_{m_{1}}}}-C_{1}(k)}\geq 1
\end{eqalign}
we get similar lower bounds. For all $y\in (-\infty,\infty)$, we get a solution if we require the same $N_{x,k}$ as before but now replacing $x$ by $\tilde{x}$
\begin{eqalign}
\ln(k+1)\geq&  \frac{\ln\frac{1}{\tilde{x}}-C_{1}(k)}{(1+\beta)\lambda_{b-a}}+\frac{1}{2}\para{\frac{y}{(1+\beta)\lambda_{b-a}}}^{2}\beta\lambda_{b-a}\\
&+\frac{1}{2}\abs{\frac{y}{(1+\beta)\lambda_{b-a}}}\sqrt{4 \frac{\ln\frac{1}{x}-C_{1}(k)}{(1+\beta)}2\beta+\para{\frac{y}{(1+\beta)}2\beta}^{2}+4C_{2}(k)}\\
=:&N_{\tilde{x},k}+g_{y}+p(\tilde{x},y,k),\\
\tfor N_{\tilde{x},k}:=&\frac{\ln\frac{1}{\tilde{x}}-C_{1}(k)}{(1+\beta)\lambda_{b-a}}, \qquad g_{y}:=\para{\frac{y}{(1+\beta)\lambda_{b-a}}}^{2}\beta\lambda_{b-a},\\
p(\tilde{x},y,k):=&\frac{1}{2}\abs{\frac{y}{(1+\beta)\lambda_{b-a}}}\sqrt{4 \frac{\ln\frac{1}{\tilde{x}}-C_{1}(k)}{(1+\beta)}2\beta+\para{\frac{y}{(1+\beta)}2\beta}^{2}+4C_{2}(k)}.
\end{eqalign}
If we follow the same steps, we again get the singular factor
\begin{eqalign}
    \tilde{x}^{-\e}=x^{-\e}e^{-\e d_{m_{1}}}.
\end{eqalign}
Now it finally remains to study the decomposition events for $\xi$.
\proofparagraph{Step 7: Events $D^{G^{i}}$}
Here we study
\begin{eqalign}
\sum_{m_{i}}  & e^{q_{0}\para{d_{m_{2}+1}-d_{m_{3}}}}e^{-\e d_{m_{1}}z_{1}}\\&\cdot\prod_{i=1,3}\para{\Proba{\max_{\tilde{u}\in \spara{0,\rho_{a}}}\ind{D^{G^{i}}_{m_{i}}(\tilde{u})}}}^{1/r_{G^{i}}} \para{\Proba{\max_{\set{u\in [0,\rho_{b-a}],\tilde{u}\in \spara{0,\rho_{a}}}}\ind{D^{G^{2}}_{m_{2}}(u,\tilde{u})}}}^{1/r_{G^{2}}},
\end{eqalign}
where 
\begin{eqalign}
D^{G^{1}}_{m_{1}}(\tilde{u}):&=  \set{  G_{\tilde{u}}^{1}\in [d_{m_{1}},d_{m_{1}+1}]=:I_{m_{1}}^{G^{1}}}\tfor m_{1}\in \Z_{0},\\
D^{G^{2}}_{m_{2}}(u,\tilde{u}):&=  \set{  G_{u,\tilde{u}}^{2}\in [d_{m_{2}},d_{m_{2}+1}]=:I_{m_{2}}^{G^{2}}}\tfor m_{2}\in \Z_{0},\\
D^{G^{3}}_{m_{3}}(\tilde{u}):&=  \set{  G_{\tilde{u}}^{3}\in [d_{m_{3}},d_{m_{3}+1}]=:I_{m_{3}}^{G^{3}}}\tfor m_{3}\in \Z_{0},
\end{eqalign}
and 
\begin{eqalign}
G_{\tilde{u}}^{1}&:=\xi(\tilde{u}+\theta_{[0,b-a]}), \\
G_{u,\tilde{u}}^{2}&:=\sup_{\theta\in [0,\rho_{b-a}]}\xi(\tilde{u}+u+\theta),\\
G_{\tilde{u}}^{3}&:=\sup_{\theta\in [0,\rho_{b-a}]}\xi(\tilde{u}+\theta).
\end{eqalign}
These sums are all decoupled and so we study them separately. We mainly use the finiteness of exponential moments \Cref{rem:finiteexponentialmomentscomparisonfield}. Since the three events are handled similarly, we just focus on the on the event $D^{G^{2}}$.
\proofparagraph{Step 8: Event $D^{G^{2}}$}
We set
\begin{equation}
e_{k}=\branchmat{\ln k^{\lambda}& k\geq 1 \\ \ln \abs{k}^{-\lambda}& k\leq -1 \\},   
\end{equation}
 for $k\in \Z_0$ and $\lambda>0$. When we have $d_{m_{2}}<0$, we keep only the upper bound and apply Chernoff inequality for $\alpha_{2,1}>0$
\begin{eqalign}\label{eq:q0alpha21}
&\para{\Proba{\max_{\set{u\in [0,\rho_{b-a}],\tilde{u}\in \spara{0,\rho_{a}}}}\ind{D^{G^{2}}_{m_{2}}(u,\tilde{u})}}}^{1/r_{G^{2}}}\\
\leq & e^{\alpha_{2,1} d_{m_{2}+1}/r_{G^{2}} }\para{\Expe{\supl{s\in [0,\rho_{a}+2\rho_{b-a}]}\expo{\alpha_{2,1}\abs{\gamma \para{H_{\e}(s)-U_{\e}(s)}}}}}^{1/r_{G^{2}}}.
\end{eqalign}
Here we take large enough $\alpha_{2,1}$ to satisfy the finiteness constraint $(q_{0}+\frac{\alpha_{2,1}}{r_{G^{2}}})\lambda>1$ for the $k$-sum. Similarly, when $d_{m_{2}}>0$, we keep the lower bound and apply Chernoff inequality for $\alpha_{2,2}>0$
\begin{eqalign}
&\para{\Proba{\max_{\set{u\in [0,\rho_{b-a}],\tilde{u}\in \spara{0,\rho_{a}}}}\ind{D^{G^{2}}_{m_{2}}(u,\tilde{u})}}}^{1/r_{G^{2}}}\\
\leq & e^{-\alpha_{2,2} d_{m_{2}} /r_{G^{2}}}\para{\Expe{\supl{s\in [0,\rho_{a}+2\rho_{b-a}]}\expo{\alpha_{2,2}\abs{\gamma \para{H_{\e}(s)-U_{\e}(s)}}}}}^{1/r_{G^{2}}}.
\end{eqalign}
\subsection{Case \sectm{$\abs{J}=\abs{I}<\delta$ and ratio of the form $\frac{Q_{H}(b,b+x)}{Q_{H}(a,a+x)}$ for $a<b$}}
Bounding this ratio follows the same strategy as for the case on the real line. In particular for the singular cases, we have the same situation.  The $q_{0}$ being arbitrarily small didn't affect it either since in \Cref{eq:q0alpha21} we can again take $\alpha_{2,1}$ large enough.


\newpage \part{Multipoint estimates }\label{partmultipointestimates}
 In this part we study multipoint estimates that appear in the \cite{AJKS}-framework (eg. \cite[eq. (89)]{AJKS}).

\section{Multipoint moments}
We fix $N\geq 1$ and generic intervals $[a_{k},b_{k}]$ and $J_{k}:=[c_{k},c_{k}+x_{k}]\subseteq [a_{k},b_{k}]$ for $k\in [N]$. We consider the gap event for some subset $S:=\set{i_{1},\cdots,i_{M}}\subset \set{1,\cdots,N}$ with $M\geq \alpha N$, for some $\alpha>0$, such that we have large enough gaps
\begin{equation}
G_{S}:=G_{i_{1},\cdots,i_{M}}:=\bigcap_{k\in S}\set{Q_{a_{i_{k}}}^{i_{k}}-Q_{b_{i_{k+1}}}^{i_{k+1}}\geq \delta_{i_{k+1}}}.
\end{equation}
If we intersect with this event, we get the following estimate on the multipoint-moments.
\begin{proposition}
We fix $p_{k},k\in S$ then we have
\begin{eqalign}
\Expe{\prod\limits_{k\in S} \para{Q^{k}(J_{k})}^{p_{k}} \ind{G_{S}}}\leq c^{\abs{S}} \prod_{k\in S} x_{k}^{q_{k}},
\end{eqalign}
for $q_{k}>0$ satisfying
\begin{eqalign}
p_{k}>-\zeta(-q_{k})+1, k\in S.
\end{eqalign}
\end{proposition}
\begin{remark}
The natural follow-up question is whether the gap event $G_{S}$ has high enough probability. If we start with the joint tail of inverses $Q^{k}(J_{k})$ with $J_{k}:=[c_{k},c_{k}+x_{k}]\subseteq [a_{k},b_{k}]$ , we upper bound by the decoupled ratio plus the error $g_{Gap}(N)$
\begin{eqalign}
\Proba{\bigcap\limits_{k\in [N]}  \set{Q^{k}(J_{k})> R_{k}}} \leq &\Proba{\bigcap\limits_{k\in [N]}  \set{Q^{k}(J_{k})> R_{k}}\cap \set{\alpha(G)\geq \alpha N}} +g_{Gap}(N)\\
\leq &\sumls{S\subset [N]\\\abs{S}\geq \alpha N}\Proba{\bigcap\limits_{k\in S}  \set{Q^{k}(J_{k})> R_{k}}\cap G_{S} }+g_{Gap}(N).
\end{eqalign}
So here to get decay $g_{Gap}(N)\to 0$, this will impose constraints on $a_{k},b_{k},\delta_{k}$ as described in \cref{indnumbertheorem}.    
\end{remark}

\begin{proof}
To ease on the notation we simply write $1,....,M$ as opposed to $i_{1},...,i_{M}$ since for now the argument only requires them to have gaps and it is not number-specific. The strategy is to peel-off one moment at a time by successively decomposing the $\para{Q^1(a_{1}),Q^{2}(b_{2})},\para{Q^{2}(a_{2}),Q^{3}(b_{3})},...$. We partially follow the proof-logic in the single-point moments of $Q([a,b])$ from \cite{binder2023inverse}.
\proofparagraph{Step 1: $\delta$-Strong Markov}
We start with the largest scale $\para{Q^{1}(J_{1})}^{p_{1}}$ for $J_{1}:=[c_{1},c_{1}+x_{1}]$ and denote the product of other scales as 
\begin{equation}
Y_{[2, M]}:= \prod\limits_{k\in [2,M]} \para{Q^{k}(J_{k})}^{p_{k}}   \ind{ G_{[3,M]}}    
\end{equation}
We write
\begin{eqalign}\label{eq:multipointmoments1}
\Expe{\prod\limits_{k\in S} \para{Q^{k}(J_{k})}^{p_{k}} \ind{G_{S}}}=&\int_{0}^{B_{\delta}}\Expe{\ind{x_{1}\geq \eta^{1}\para{Q_{c_{1}}^{1},Q_{c_{1}}^{1}+ t }} \ind{G_{1,2}}Y_{[2, M]} }p_{1}t^{p_{1}-1}\dt\\
&+\int_{B_{\delta}}^{\infty}\Expe{\ind{x_{1}\geq \eta^{1}\para{Q_{c_{1}}^{1},Q_{c_{1}}^{1}+ t }} \ind{G_{1,2}}Y_{[2, M]} }p_{1}t^{p_{1}-1}\dt,
\end{eqalign}
for $B_{\delta}:=\minp{2,\maxp{2\delta,1}}\in [1,2]$ which satisfies $B_{\delta}-\delta\geq 1/2$. For the second integral, we simply lower bound
\begin{eqalign}
\eta^{1}\para{Q_{c_{1}}^{1},Q_{c_{1}}^{1}+ t }  \geq   \eta^{1}\para{Q_{c_{1}}^{1}+\delta_{1},Q_{c_{1}}^{1}+ t}
\end{eqalign}
and use \cref{deltaSMP}. This gives the bound
\begin{eqalign}
&\int_{B_{\delta}}^{\infty}\Expe{\ind{x_{1}\geq \eta^{1}\para{Q_{c_{1}}^{1},Q_{c_{1}}^{1}+ t }} \ind{G_{1,2}}Y_{[2, M]} }p_{1}t^{p_{1}-1}\dt\leq cx^{q_{1,2}}\Expe{Y_{[2, M]}}
\end{eqalign}
for $q_{1,2}>p_{1}$. 
\proofparagraph{Step 2: Decomposition}
For the first integral we have to do the decomposition argument as done in \cite{binder2023inverse}.  We fix sequence $a_{\ell}:=\rho\delta_{1} \ell^{\lambda},\ell\geq 0$ for $\rho,\lambda\in (0,1)$. We see that $a_{\ell+1}-a_{\ell}\leq \rho\delta_{1}$. Using the gap event $G_{1,2}$, we consider the decomposition
\begin{eqalign}\label{eq:Q1c1decompo}
D^{1}_{\ell}:=\set{a_{\ell}+Q^{2}(b_{2})+\delta_{2}  \leq Q^{1}(c_{1})\leq a_{\ell+1}+Q^{2}(b_{2})+\delta_{2}}, \ell\geq 0.    
\end{eqalign}
Therefore, we decompose and bound using an infimum
\begin{eqalign}
\ind{x_{1}\geq \eta^{1}\para{Q_{c_{1}}^{1},Q_{c_{1}}^{1}+ t }}
=&\sum_{\ell\geq 0}\ind{x_{1}\geq \eta^{1}\para{Q_{c_{1}}^{1},Q_{c_{1}}^{1}+ t },D^{1}_{\ell}}\\
\leq&\sum_{\ell\geq 0}\ind{x_{1}\geq \inf_{T\in I_{\ell}}\eta^{1}\para{T,T+ t },D^{1}_{\ell}},
\end{eqalign}
where 
\begin{eqalign}
I_{\ell}:=[c_{\ell}+Q^{2}(b_{2})+\delta_{2},c_{\ell+1}+Q^{2}(b_{2})+\delta_{2}]=[c_{\ell},c_{\ell+1}]+Q^{2}(b_{2})+\delta_{2}.    
\end{eqalign}
\proofparagraph{Step 3: Tower property}
Now here for each $\ell$-term we upper bound to remove the gap event and apply the tower property for $\CF\para{[0,Q^{2}(b_{2})]}$ 
\begin{eqalign}
&\Expe{\ind{x_{1}\geq \inf_{T\in I_{\ell}}\eta^{1}\para{T,T+ t },D^{1}_{\ell}} \ind{G_{1,2}}Y_{[2, M]} }\\
\leq &\Expe{\ind{x_{1}\geq \inf_{T\in I_{\ell}}\eta^{1}\para{T,T+ t },D^{1}_{\ell}} Y_{[2, M]} }\\
= &\Expe{\Expe{\ind{x_{1}\geq \inf_{T\in I_{\ell}}\eta^{1}\para{T,T+ t },D^{1}_{\ell}}\mid \CF\para{[0,Q^{2}(b_{2})]}} Y_{[2, M]} },
\end{eqalign}
where we used that $Y_{[2, M]}\in \CF\para{[0,Q^{2}(b_{2})]}$. Then we apply conditional-\Holder to separate the decomposition event
\begin{eqalign}
&\Expe{\ind{x_{1}\geq \inf_{T\in I_{\ell}}\eta^{1}\para{T,T+ t },D^{1}_{\ell}}\mid \CF\para{[0,Q^{2}(b_{2})]}}\\\leq&\para{\Expe{\ind{x_{1}\geq \inf_{T\in I_{\ell}}\eta^{1}\para{T,T+ t }}\conditional\CF\para{[0,Q^{2}(b_{2})]}}}^{1/b_{11}}\\
\cdot &\para{\Expe{\ind{D^{1}_{\ell}} \conditional\CF\para{[0,Q^{2}(b_{2})]}}}^{1/b_{12}},
\end{eqalign}
for $1/b_{11}+1/b_{12}=1$.
\proofparagraph{Step 4: Decoupling}
For the first factor
\begin{eqalign}\label{eq:decoupq1q21}
\para{\Expe{\ind{x_{1}\geq \inf_{T\in I_{\ell}}\eta^{1}\para{T,T+ t }}\conditional\CF\para{[0,Q^{2}(b_{2})]}}}^{1/b_{11}}    
\end{eqalign}
the supremum is over a shifted interval $[a_{\ell},a_{\ell+1}]$, so we study
\begin{eqalign}
\inf_{T\in [a_{\ell},a_{\ell+1}]}\eta^{1}\para{T+Q^{2}(b_{2})+\delta_{2},T+Q^{2}(b_{2})+\delta_{2}+ t }.    
\end{eqalign}
So due to \cref{deltaSMP}, this variable is independent of events in $\CF\para{[0,Q^{2}(b_{2})]}$
\begin{eqalign}\label{eq:decoupq1q22}
\eqref{eq:decoupq1q21}=\para{\Expe{\ind{x_{1}\geq \inf_{T\in [0,\rho\delta_{1}]}\eta^{1}\para{T,T+ t }}}}^{1/b_{11}}. \end{eqalign}
Now that we decoupled, we use \cref{prop:minmodeta} for $q_{1,1}>0$
\begin{eqalign}\label{eq:decoupq1q23}
\eqref{eq:decoupq1q22}\leq x_{1}^{q_{1,1}} t^{\alpha_{2\delta_{1}}(q_{1,1})/b_{11}},
\end{eqalign}
where $\alpha_{2\delta}(p)=\zeta(-p)-1$ if $\frac{x}{2}\leq \delta$ and $\alpha_{2\delta}(p)=-p$ if $\frac{x}{2}\geq \delta$. Returning to the $t-$integral in \cref{eq:multipointmoments1} over $[0,B_{\delta_{1}}]$, we obtain the constraint
\begin{eqalign}
&\frac{\zeta(-q_{1,1})-1}{b_{11}}+p_{1}-1>-1   \doncl p_{1}> \frac{-\zeta(-q_{1,1})+1}{b_{11}}.   
\end{eqalign}
\proofparagraph{Step 5: Event $D^{1}_{\ell}$}
Here we study
\begin{eqalign}\label{eq:eventDl11}
D^{1}_{\ell}=&\set{\eta^{1}\para{0,a_{\ell}+Q^{2}(b_{2})+\delta_{2}} \leq c_{1}   \leq \eta^{1}\para{0,a_{\ell+1}+Q^{2}(b_{2})+\delta_{2}} }\\
=&\set{\eta^{1}\para{Q^{2}(b_{2})+\delta_{2},a_{\ell}+Q^{2}(b_{2})+\delta_{2}} \leq c_{1}-\eta^{1}\para{0,Q^{2}(b_{2})+\delta_{2}}   \leq \eta^{1}\para{Q^{2}(b_{2})+\delta_{2},a_{\ell+1}+Q^{2}(b_{2})+\delta_{2}} }.
\end{eqalign}
We again apply \cref{deltaSMP} to decouple
\begin{eqalign}
\eqref{eq:eventDl11}\eqdis\set{\wt{\eta}^{1}\para{0,a_{\ell}} \leq c_{1}-\eta^{1}\para{0,Q^{2}(b_{2})+\delta_{2}}   \leq \wt{\eta}^{1}\para{0,a_{\ell+1}} },    
\end{eqalign}
where $\wt{\eta}^{1}$ is an independent copy of $\eta^{1}$. To avoid the coupled term $\eta^{1}\para{0,Q^{2}(b_{2})+\delta_{2}}$
, we keep only the lower bound
\begin{eqalign}
\ind{D^{1}_{\ell}}\leq \ind{\wt{\eta}^{1}\para{0,a_{\ell}} \leq c_{1} }    
\end{eqalign}
and so
\begin{eqalign}
\Expe{\ind{D^{1}_{\ell}} \conditional\CF\para{[0,Q^{2}(b_{2})]}}\leq  \Proba{\wt{\eta}^{1}\para{0,a_{\ell}} \leq c_{1} }.   
\end{eqalign}
For $\ell=0$, we bound by zero.  When $c_{\ell}\leq \delta_{1}\tor c_{\ell}> \delta_{1}$ we use Markov with $r_{1}>0\tand r_{2}>0$ respectively,to upper bound by
\begin{eqalign}\label{eq:singularfactorfromscalinglaw}
\sum_{\ell\geq 1 }\para{\Expe{\ind{D^{1}_{\ell}} \conditional\CF\para{[0,Q^{2}(b_{2})]}}}^{1/b_{12}}\leq & \sum_{\ell\geq 1 } \para{\Proba{\wt{\eta}^{1}\para{0,a_{\ell}} \leq c_{1} }}^{1/b_{12}}  \\
\leq &\sum_{\ell\geq 1 }  c_{1}^{r_{1}/b_{12}}\para{a_{\ell}}^{\zeta(-r_{1})/b_{12}}+ c_{1}^{r_{2}/b_{12}}\para{a_{\ell}}^{-r_{2}/b_{12}}.
\end{eqalign}
So to get finiteness we require both $-\zeta(-r_{1})\lambda>b_{12},r_{2}\lambda>b_{12}$, which is possible since GMC has all its negative moments and so we take arbitrarily large $r_{1},r_{2}$. This allows to take $b_{12}$ arbitrarily large too so that we get $b_{11}=1+\e$. This procedure didn't leave any terms that are coupled with the other scales and so we proceed the same for them. 

\end{proof}
 
\section{Moments of multipoint ratios  }\label{conditionalmoments}
In the following proposition we study the first term to get moment estimates as in the single-ratio case in \cref{prop:inverse_ratio_moments}.
\begin{proposition}[Product of ratios]\label{prop:productconditionalratio}
We fix set $S\subset [N]$.
\begin{itemize}
 \item (Equal length)  Fix $\beta<\frac{2}{3}$. Suppose we have equal length intervals $x_{k}=\abs{J_k}=\abs{I_k}<\delta_k$ for each $k\in S$. Fix $\delta_k\leq 1$ and intervals $J=(c_k,c_k+x_k),I=(d_k,d_k+x_k)\subset [0,1]$ with $d_k-c_k=c_{d-c,k}x_k$ for $c_{d-c,k}>1$  and $c_k=\lambda_{1,k}\delta_k,d_k=\lambda_{2,k}\delta_k$. Then we have for all $p_k\in [1,1+\e_{k}]$ with small enough $\e_{k}>0$, a bound of the form
\begin{eqalign}\label{eq:decaynumerdeno}
\Expe{\prod\limits_{k\in S} \para{\frac{Q^{k}(J_{k})}{Q^{k}(I_{k})}}^{p_{k}} \ind{G_{S}}}\leq c^{\abs{S}}\prod\limits_{k\in S} \para{\frac{x_{k} }{\delta_{k}}}^{-\e_{ratio}(p_k)},  
\end{eqalign}
where $\e_{ratio}(p_k)>0$ can be made arbitrarily small at the cost of a larger proportionality constant $c$. The constants are uniformly bounded in $x_k$ and also in $\delta_k$.

    \item (Decreasing numerator) Fix $\beta\in \para{0,0.152}.$
    Suppose that $\abs{I_{k}}=r_{k}\delta_{k}$ for $r_{k}>0$ and $\abs{J_{k}}\to 0$ for each $k\in S$. Then we have for each $p_{k}\in [1,1+\e_{ratio}]$ some  $q_{k}>1$ such that
\begin{eqalign}\label{eq:decaynumerator}
\Expe{\prod\limits_{k\in S} \para{\frac{Q^{k}(J_{k})}{Q^{k}(I_{k})}}^{p_{k}} \ind{G_{S}}}\leq \prod\limits_{k\in S} \para{\frac{\abs{J_{k}}}{\delta_{k}}}^{q_{k}}.
\end{eqalign}

\end{itemize}

\end{proposition}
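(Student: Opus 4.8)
The plan is to peel the ratios off one scale at a time, from the coarsest scale $i_1$ down to $i_M$, feeding each peeled factor into the single-point estimate \Cref{prop:inverse_ratio_moments}. The device that makes this work is exactly the gap event $G_S$: on $G_S$ the increments $Q^{i_k}(J_{i_k}), Q^{i_k}(I_{i_k})$ involve the field $U^{i_k}$ only past the level $Q^{i_k}(a_{i_k})\ge Q^{i_{k+1}}(b_{i_{k+1}})+\delta_{i_{k+1}}$, so by the $\delta$-strong Markov property \Cref{deltaSMP} they are independent of $\CF([0,Q^{i_{k+1}}(b_{i_{k+1}})])$ (the coarsest field in this filtration being $U^{i_{k+1}}$, whose correlation length is $\delta_{i_{k+1}}$, precisely the room the gap provides). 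Conditioning on this filtration detaches the scale-$i_k$ ratio from all finer scales, and one recurses.

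Concretely, relabel $S=\{1,\dots,M\}$, isolate $\para{Q^{1}(J_1)/Q^{1}(I_1)}^{p_1}$, and write the rest as $Y:=\prod_{k\ge 2}\para{Q^{k}(J_k)/Q^{k}(I_k)}^{p_k}$ times the remaining gap indicators, which is $\CF([0,Q^{2}(b_2)])$-measurable. I would split according to whether the base increments are large ($Q^{1}(I_1)\ge 1$, where one lower-bounds the denominator by $1$, or $Q^{1}(J_1)\ge 1$, etc.), treated as in the single-ratio proof by a trivial bound plus \Cref{deltaSMP}, or small, which is the singular case. For the singular case I would decompose the base level $Q^{1}(c_1)$ — and, in the configurations requiring the scaling law, the further level $Q^{1}_{d_1-c_1}\bullet Q^{1}(c_1)$ — along dyadic-type grids anchored at $Q^{2}(b_2)+\delta_2$, via events $D^1_\ell:=\{a_\ell+Q^{2}(b_2)+\delta_2\le Q^{1}(c_1)\le a_{\ell+1}+Q^{2}(b_2)+\delta_2\}$ with $a_\ell:=\rho\delta_1\ell^{\lambda}$, so that on each cell the ratio $Q^{1}(J_1)/Q^{1}(I_1)$ is sandwiched by quantities built from $\eta^{1}$ over interval configurations that, once $\CF([0,Q^{2}(b_2)])$ is fixed, are deterministic and lie past the gap. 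Taking the tower property with respect to $\CF([0,Q^{2}(b_2)])$ (pulling out $Y$), a conditional \Holder inequality to split off $\ind{D^1_\ell}$, and \Cref{deltaSMP} to decouple the sandwiching quantity, translation invariance of $U^{1}$ identifies the resulting conditional moment with the scale-$\delta_1$ single-ratio moment already bounded in \Cref{prop:inverse_ratio_moments}; the leftover sum over $\ell$ of $\para{\Proba{\wt{\eta}^{1}(0,a_\ell)\le c_1}}^{1/b_{12}}$ converges by the negative moments of GMC with the Markov exponents taken arbitrarily large, which simultaneously permits $b_{12}$ large and $b_{11}=1+\e$, so the exponent $\e_{ratio}(p_1)$ (resp. $q_1$) produced by \Cref{prop:inverse_ratio_moments} survives the division by $b_{11}$ and the numerator $t$-integration.

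Iterating the peeling over $k=1,\dots,M$ leaves the product $\prod_{k\in S}$ of the single-ratio bounds, together with one bounded constant per scale absorbing the numerator-size split, the conditional \Holder losses and the $\ell$-sums; their product is the claimed $c^{|S|}$, and uniformity in $\delta_k$ and $x_k$ is inherited from the corresponding uniformity in \Cref{prop:inverse_ratio_moments}. The two regimes of the proposition inherit the two regimes of \Cref{prop:inverse_ratio_moments}, so their $\beta$-constraints carry over verbatim — $\beta<2/3$ for equal length and $\beta\in(0,0.152)$ for the decreasing numerator; in the latter case one rescales each factor by $\delta_k$ at the outset and applies \Holder and the increment moments \Cref{prop:momentsofshiftedinverse} exactly as in the single-point proof to land on $\para{|J_k|/\delta_k}^{q_k}$ with $q_k>1$ constrained only by inequalities of the form $p_k>-\zeta(-q_k)+1$, which are compatible with $p_k\in[1,1+\e_{ratio}]$.

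The main obstacle is the triad recorded in \Cref{rem:tricky}: the inverse increments are evaluated after the random shift $Q^{2}(b_2)$, and the lognormal in the scaling law appears as a prefactor rather than cancelling. What makes the argument close here is that the shift is \emph{common} to the numerator and denominator of each ratio, so once the cells $D^k_\ell$ pin the base level to a deterministic grid the $k$-th factor is a genuine scale-$\delta_k$ single-ratio object and \Cref{prop:inverse_ratio_moments} applies without change; the price is the decomposition and its $\ell$-sums, which must be controlled uniformly in $x_k,\delta_k,c_k,d_k$ for all $k$ at once. The delicate bookkeeping is therefore that the Markov exponents used for the $D^k_\ell$, the extracted ratio exponent $\e_{ratio}(p_k)$, and the \Holder conjugates $b_{11},b_{12}$ can all be chosen compatibly at every scale simultaneously — together with checking the analogue of the bounded-length condition $(*)$ of the single-ratio proof, so that the modulus estimates \Cref{prop:maxmoduluseta} and \Cref{prop:minmodeta} apply with $k$-independent constants.
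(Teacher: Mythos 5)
Your proposal follows essentially the same route as the paper's proof: peel one scale at a time using the gap event and the $\delta$-strong Markov property, decompose the base level $Q^{1}(c_{1})$ (and, in the singular regime, the intermediate increment) on grids anchored at $Q^{2}(b_{2})+\delta_{2}$, apply the tower property and conditional \Holder, decouple via \Cref{deltaSMP}, reduce to the single-ratio estimate of \Cref{prop:inverse_ratio_moments}, and control the leftover $\ell$-sums by negative GMC moments with $b_{12}$ large and $b_{11}=1+\e$. The only cosmetic difference is in the decreasing-numerator case, where the conditional decoupling leaves a maximum over the decomposition cell, so the paper invokes the shifted max/min lemmas (\Cref{lem:maxdshiftedGMC}, \Cref{lem:minshiftedGMC}) rather than \Cref{prop:momentsofshiftedinverse} directly, but with the same exponents and the same constraint $\beta\in(0,0.152)$.
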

\begin{proof}
\proofparagraph{Equal length intervals}
To ease on the notation we simply write $1,....,M$ as opposed to $i_{1},...,i_{M}$ since for now the argument only requires them to have gaps and it is not number-specific. The strategy is to peel-off one ratio at a time by successively decomposing the $\para{Q^1(a_{1}),Q^{2}(b_{2})},\para{Q^{2}(a_{2}),Q^{3}(b_{3})},...$. We will follow some of the steps from the proof of \cref{prop:inverse_ratio_moments}. Here we only cover the singular case involving the scaling law and the other cases are handled similarly. 
\proofparagraph{Step 1: Decomposition}
We start with the largest scale $\para{\frac{Q^{1}(J_{1})}{Q^{1}(I_1)}}^{p_{1}}$ for $J_{1}:=[c_{1},c_{1}+x_{1}],I_{1}:=[d_{1},d_{1}+y_{1}]$ and denote the product of other scales as 
\begin{equation}
Y_{[2, M]}:= \prod\limits_{k\in [2,M]} \para{\frac{Q^{k}(J_{k})}{Q^{k}(I_{k})}}^{p_{k}}   \ind{ G_{[3,M]}}    
\end{equation}
We decouple this first ratio using the gap $G_{1,2}$. We only cover the details of the particular case $c_{1}< d_{1}$ and $x_{1}=d_{1}-c_{1}$ and 
\begin{equation}
Q^{1}(J_{1})\in [0,\rho] \tand Q^{1}(I_{1})\in [0,1]    
\end{equation}
and so we study
\begin{eqalign}\label{eq:multipointratiossingularcase}
\Expe{\para{\frac{Q^{1}(J_{1})}{Q^{1}(I_1)}}^{p_{1}}\ind{Q^{1}(J_{1})\in [0,\rho] \tand Q^{1}(I_{1})\in [0,1]}\ind{G_{1,2}}Y_{[2, M]}}.
\end{eqalign}
This corresponds to the singular case and the other cases are easier to handle, for details see the proof of \cref{prop:inverse_ratio_moments}. As done there we first decompose the numerator 
\begin{equation}
D^{x_{1}}_{k}:=\set{ Q^{1}(J_{1})\in [r_{k+1},r_{k}]},  \end{equation}
for the same decaying sequence $r_{k}:=\rho k^{-\lambda}$ for $k\geq 1$ and $\lambda>0$. We then apply layercake to upper bound by
\begin{eqalign}\label{eq:multipointratiossingularcase2}
\eqref{eq:multipointratiossingularcase}\leq &\Expe{\ind{G_{1,2}}Y_{[2, M]}}\\
&+\sum_{k}\int_{[1,\infty)}\Expe{\ind{\eta^{1}\para{Q_{d_{1}}^{1},Q_{d_{1}}^{1}+ r_{k}t^{-1/p} }\geq y_{1}, D^{x_{1}}_{k}} \ind{G_{1,2}}Y_{[2, M]} }\dt.
\end{eqalign}
Next for each k-term we decompose $Q^{1}(c_{1})$.  We set $c_{\ell,k}:=\rho_{a}\delta_{1}r_{k}c_{\ell}$ for some small enough $\rho_{a}\in (0,1)$ and $c_{\ell}:=\ell^{\lambda_{a}}$ for $\lambda_{a}\in (0,1)$ so that indeed $c_{\ell+1}-c_{\ell}\to 0$ as $\ell\to +\infty$. So we consider
\begin{eqalign}\label{eq:Q1c1}
D^{c_{1}}_{\ell,k}:=\set{  Q^{1}(c_{1})\in [c_{\ell,k},c_{\ell+1,k}]+Q^{2}(b_{2})+\delta_{2}=:I_{\ell,k}^{c_{1}}+Q^{2}(b_{2})+\delta_{2}} ,    
\end{eqalign}
where we included the information from the gap event $G_{1,2}$ and so for the probability term in \cref{eq:multipointratiossingularcase2} we write
\begin{eqalign}
&\Expe{\ind{\eta^{1}\para{Q_{d_{1}}^{1},Q_{d_{1}}^{1}+ r_{k}t^{-1/p} }\geq y_{1}, D^{x_{1}}_{k}} \ind{G_{1,2}}Y_{[2, M]} }\\
=&\sum_{\ell}\Expe{\ind{\eta^{1}\para{Q_{d_{1}}^{1},Q_{d_{1}}^{1}+ r_{k}t^{-1/p} }\geq y_{1}, D^{x_{1}}_{k} ,D^{c_{1}}_{\ell,k}} \ind{G_{1,2}}Y_{[2, M]} }\\
\leq&\sum_{\ell}\Expe{\ind{\eta^{1}\para{Q_{d_{1}}^{1},Q_{d_{1}}^{1}+ r_{k}t^{-1/p} }\geq y_{1}, D^{x_{1}}_{k} ,D^{c_{1}}_{\ell,k}} Y_{[2, M]} },
\end{eqalign}
where in the last step we upper bounded to remove the gap event $\ind{G_{1,2}}$ now that we used it. 
\proofparagraph{Step 2: Tower property}
Now here for each $\ell$-term we apply the tower property for $\CF\para{[0,Q^{2}(b_{2})]}$ 
\begin{eqalign}
&\Expe{\ind{\eta^{1}\para{Q_{d_{1}}^{1},Q_{d_{1}}^{1}+ r_{k}t^{-1/p} }\geq y_{1}, D^{x_{1}}_{k} ,D^{c_{1}}_{\ell,k}} Y_{[2, M]} }\\
=&\Expe{\Expe{\ind{\eta^{1}\para{Q_{d_{1}}^{1},Q_{d_{1}}^{1}+ r_{k}t^{-1/p} }\geq y_{1}, D^{x_{1}}_{k} ,D^{c_{1}}_{\ell,k}}\conditional\CF\para{[0,Q^{2}(b_{2})]}} Y_{[2, M]} }.
\end{eqalign}
Next we proceed as in \cref{prop:inverse_ratio_moments} inside the conditional expectation, we take supremum over $u\in I_{\ell,k}^{c_{1}}+Q^{2}(b_{2})+\delta_{2}$
\begin{eqalign}\label{eqcondifilt1}
\Exp\Bigg[&\maxls{u\in I_{\ell,k}^{c_{1}}+Q^{2}(b_{2})+\delta_{2}}\para{\ind{\eta^{1}\para{u+Q_{d_{1}-c_{1}}^{1}\bullet u,u+Q_{d_{1}-c_{1}}^{1}\bullet u+ r_{k}t^{-1/p} }\geq y_{1}, D^{x_{1}}_{k}(u) }}\\
&\cdot \ind{D^{c_{1}}_{\ell,k}} \conditional\CF\para{[0,Q^{2}(b_{2})]}\Bigg],
\end{eqalign}
where 
\begin{equation}
D^{x_{1}}_{k}(u):= \set{ Q^{1}_{x_{1}}\bullet u\in [r_{k+1},r_{k}]}.   
\end{equation}
So now that we used the event $D^{c_{1}}_{\ell,k}$, as in the proof of \cref{prop:inverse_ratio_moments}  we apply  conditional-\Holder to remove it out
\begin{eqalign}
&\eqref{eqcondifilt1}\leq\\
&\Bigg(\Expe{\maxls{u\in I_{\ell,k}^{c_{1}}+Q^{2}(b_{2})+\delta_{2}}\para{\ind{\eta^{1}\para{u+Q_{d_{1}-c_{1}}^{1}\bullet u,u+Q_{d_{1}-c_{1}}^{1}\bullet u+ r_{k}t^{-1/p} }\geq y_{1},D^{x_{1}}_{k}(u) }}\conditional\CF\para{[0,Q^{2}(b_{2})]}}\Bigg)^{1/b_{11}}\\
\cdot &\para{\Expe{\ind{D^{c_{1}}_{\ell,k}} \conditional\CF\para{[0,Q^{2}(b_{2})]}}}^{1/b_{12}},
\end{eqalign}
for $b_{11}^{-1}+b_{12}^{-1}=1$.
\proofparagraph{Step 3: Decoupling}
For the first factor, we simply use \cref{deltaSMP} to decouple
\begin{eqalign}\label{eq:smpquantity}
&\Expe{\maxls{u\in I_{\ell,k}^{c_{1}}+Q^{2}(b_{2})+\delta_{2}}\para{\ind{\eta^{1}\para{u+Q_{d_{1}-c_{1}}^{1}\bullet u,u+Q_{d_{1}-c_{1}}^{1}\bullet u+ r_{k}t^{-1/p} }\geq y_{1},D^{x_{1}}_{k}(u) }}\conditional\CF\para{[0,Q^{2}(b_{2})]}}\\
=&\Expe{\maxls{u\in I_{\ell,k}^{c_{1}}}\para{\ind{\eta^{1}\para{u+Q_{d_{1}-c_{1}}^{1}\bullet u,u+Q_{d_{1}-c_{1}}^{1}\bullet u+ r_{k}t^{-1/p} }\geq y_{1},D^{x_{1}}_{k}(u) } }}.
\end{eqalign}
Now that we decoupled this ratio, we proceed as in the proof of the unconditional version \cref{prop:inverse_ratio_moments}. Next we study the event $D^{c_{1}}_{\ell,k}$.
\proofparagraph{Step 4: The event $D^{c_{1}}_{\ell,k}$}
Here we study
\begin{eqalign}\label{eq:eventclk1}
&D^{c_{1}}_{\ell,k}\\
=&\set{\eta^{1}\para{0,c_{\ell,k}+Q^{2}(b_{2})+\delta_{2}} \leq c_{1}   \leq \eta^{1}\para{0,c_{\ell+1,k}+Q^{2}(b_{2})+\delta_{2}} }\\
=&\set{\eta^{1}\para{Q^{2}(b_{2})+\delta_{2},c_{\ell,k}+Q^{2}(b_{2})+\delta_{2}} \leq c_{1}-\eta^{1}\para{0,Q^{2}(b_{2})+\delta_{2}}   \leq \eta^{1}\para{Q^{2}(b_{2})+\delta_{2},c_{\ell+1,k}+Q^{2}(b_{2})+\delta_{2}} }.
\end{eqalign}
We again apply \cref{deltaSMP} to decouple
\begin{eqalign}
\eqref{eq:eventclk1}\eqdis\set{\wt{\eta}^{1}\para{0,c_{\ell,k}} \leq c_{1}-\eta^{1}\para{0,Q^{2}(b_{2})+\delta_{2}}   \leq \wt{\eta}^{1}\para{0,c_{\ell+1,k}} },    
\end{eqalign}
where $\wt{\eta}^{1}$ is an independent copy of $\eta^{1}$. To avoid the coupled term $\eta^{1}\para{0,Q^{2}(b_{2})+\delta_{2}}$, we keep only the lower bound
\begin{eqalign}
\Expe{\ind{D^{c_{1}}_{\ell,k}} \conditional\CF\para{[0,Q^{2}(b_{2})]}}\leq  \Proba{\wt{\eta}^{1}\para{0,c_{\ell,k}} \leq c_{1} }.   
\end{eqalign}
Now that we decoupled we proceed as in proof of \cref{prop:inverse_ratio_moments}. For $\ell=0$, we bound by zero.  When $c_{\ell,k}\leq \delta_{i_{1}}\tor c_{\ell,k}> \delta_{i_{1}}$ we use Markov with $p_{1}>0\tand p_{2}>0$ respectively,to upper bound by
\begin{eqalign}\label{eq:singularfactorfromscalinglawratio}
&\sum_{\ell\geq 1 }\para{\Expe{\ind{D^{c_{1}}_{\ell,k}} \conditional\CF\para{[0,Q^{2}(b_{2})]}}}^{1/b_{12}}\\
\leq & \sum_{\ell\geq 1 } \para{\Proba{\wt{\eta}^{1}\para{0,c_{\ell,k}} \leq c_{1} }}^{1/b_{12}}  \\
\leq &\sum_{\ell\geq 1 }  c_{1}^{p_{1}/b_{12}}\para{c_{\ell,k}}^{\zeta(-p_{1})/b_{12}}+ c_{1}^{p_{2}/b_{12}}\para{c_{\ell,k}}^{-p_{2}/b_{12}}.
\end{eqalign}
So from here to get finiteness we require both $-\zeta(-p_{1})\lambda_{a}>b_{12},p_{2}\lambda_{a}>b_{12}$. Therefore, we get the singular power $r_{k}^{-\frac{1+\e}{\lambda_{a}}}$ for arbitrarily small $\e>0$. Here we are completely back to the unconditional setting of \cref{prop:inverse_ratio_moments}. This procedure didn't leave any terms that are coupled with the other scales and so we proceed the same for them. So we bootstrap the same for the next scales. 
\proofparagraph{Decreasing numerator case}
We assume $c_{1}<d_{1}$ because the other case is symmetric. Here we use the semigroup formula to write
\begin{equation}
 \frac{Q^{1}(J_{1})}{Q^{1}(I_1)}=\frac{Q_{x_{1}}^{1}\bullet Q_{c_{1}}^{1}}{Q^{1}_{y_{1}}\bullet \para{Q_{d_{1}-c_{1}}^{1}\bullet Q_{c_{1}}^{1}+Q_{c_{1}}^{1}}}.   
\end{equation}
Here the decay only comes from the numerator. So there is no need for scaling law and so in the decomposition event \cref{eq:Q1c1}, we only index over $\ell\geq 0$ without any presence of $r_{k}$ 
\begin{eqalign}
D^{c}_{\ell}:=\set{  Q^{1}(c_{1})\in [\ell^{\lambda},(\ell+1)^{\lambda}]+Q^{2}(b_{2})+\delta_{2}}.    
\end{eqalign}
That means there won't be any singular factor $r_{k}^{-1/\lambda}$. As above we take the maximum and use the conditional-\Holder  $b_{11}^{-1}+b_{12}^{-1}=1$ to separate out the event $D^{c}_{\ell}$
\begin{eqalign}
&\sum_{\ell}\Expe{\para{\frac{Q^{1}(J_{1})}{Q^{1}(I_1)}}^{p_{1}}\ind{D^{c_{1}}_{\ell}}\conditional\CF\para{[0,Q^{2}(b_{2})]}}\\
\leq &\sum_{\ell}\para{ \Expe{\max_{u\in [0,2]}\para{\frac{Q_{x_{1}}^{1}\bullet u}{Q^{1}_{y_{1}}\bullet \para{Q_{d_{1}-c_{1}}^{1}\bullet u+u}}}^{b_{11}p}}}^{1/b_{11}}\cdot \para{\Expe{\ind{D^{c_{1}}_{\ell}} \conditional\CF\para{[0,Q^{2}(b_{2})]}}}^{1/b_{12}}.
\end{eqalign}
We deal with the second term as we dealt with $D^{c_{1}}_{\ell,k}$in the proof of the first part. So we are left with studying
\begin{equation}\label{eq:maximumratiodecnum}
\para{ \Expe{\max_{u\in [0,2]}\para{\frac{Q_{x_{1}}^{1}\bullet u}{Q^{1}_{y_{1}}\bullet \para{Q_{d_{1}-c_{1}}^{1}\bullet u+u}}}^{b_{11}p}}}^{1/b_{11}}.   
\end{equation}
The denominator is just a constant since we already scaled by $\delta_{i_{1}}$ and we can use the \Cref{lem:maxdshiftedGMC}. So we simply apply \Holder again to separate numerator and denominator
\begin{equation}\label{eq:holderseparatnumde}
 \para{\Expe{\max_{u\in [0,2]}\para{Q_{x_{1}}^{1}\bullet u}^{b_{11}pp_{1}}}}^{1/b_{11}p_{1}}\para{\Expe{\max_{u\in [0,2]}\para{\frac{1}{Q^{1}_{y_{1}}\bullet \para{Q_{d_{1}-c_{1}}^{1}\bullet u+u}}}^{b_{11}pp_{2}}}}^{1/b_{11}p_{2}},  
\end{equation}
for $p_{1}^{-1}+p_{2}^{-1}=1$. For the first factor we apply layercake, Markov for for $q_{1},q_{2}>0$ and then \Cref{lem:minshiftedGMC}
\begin{eqalign}\label{eq:integralconstraintsnumersmall}
\Expe{\max_{u\in [0,2]}\para{Q_{x_{1}}^{1}\bullet u}^{pb_{11}p_{1}}}&=\int_{0}^{\infty}\Proba{x_{1}\geq \max_{u\in [0,2]}\eta\spara{u,u+t^{1/pb_{11}p_{1}}}}\dt\\
\leq&x_{1}^{q_{1}}\int_{0}^{1}t^{\frac{\zeta(-q_{1})-1}{b_{11}pp_{1}}} \dt+x_{1}^{q_{2}}\int_{1}^{\infty}t^{\frac{-q_{2}}{b_{11}pp_{1}}}\dt.
\end{eqalign}
For finiteness we require
\begin{eqalign}
 \frac{-\zeta(-q_{1})+1}{b_{11}pp_{1}}<1\tand  \frac{q_{2}}{b_{11}pp_{1}}>1.  
\end{eqalign}
Next we get the exponents for $x$ to be $\frac{q_{1}}{p_{1}}>1+\e$ and $\frac{q_{2}}{p_{1}}>1+\e$ for small $\e>0$. The second inequality is cleared by taking large $q_{2}>0$. For the first inequality we have that an arbitrarily large $q_{1}$ will break the first integral constraint. In the unconditional work we took $b_{11}=1+\e$ and we do the same here. So for the exponent of we ask $\frac{q_{1}}{p_{1}}=1+\e$, which translates the integral constraint to
\begin{equation}\label{eq:integralconstraint}
1<p_{1}\tand \frac{(1+\e)(1+\beta(p_{1}(1+\e)+1))+\frac{1}{p_{1}}}{p}<1.
\end{equation}
Since $pp_{2}\in [1,\beta^{-1})$, we further take $p=\beta^{-1}\frac{p_{1}-1}{p_{1}}-o(\e)$, which makes the above into a single variable problem
\begin{equation}
1<p_{1}\tand \frac{(1+\e)(1+\beta(p_{1}(1+\e)+1))+\frac{1}{p_{1}}}{\beta^{-1}\frac{p_{1}-1}{p_{1}}}<1.
\end{equation}
This has a solution $p_{1}>1$ when we further require 
\begin{equation}
    \beta\in \para{0,0.152}.
\end{equation}
If we have $c_{1}>d_{1}$, we would have to use \Cref{lem:maxdshiftedGMC} instead of \Cref{prop:minmodeta} and it has the same exponent and the same constraint for $\beta$.
\end{proof}
\newpage \section{Multipoint and unit circle}\label{sec:multipointmaximum}
 This is the relevant part for the Lehto-welding. Namely we study the multipoint estimate for the inverse $Q_{H}$ on the unit circle. In this section we will use the following notations. We consider the events
\begin{eqalign}
G_{[1,M]}:=&\set{Q^{i}(a_{i})\geq Q^{n_{i+1}}(b_{n_{i+1}})+\delta_{n_{i+1}}, i\in [1,M]},\\
\Xi_{[1,M]}:=&\set{\supl{(s,t)\in [0,Q^{i}(b_{i})]^{2}}\abs{\xi(t)-\xi(s)}\leq u_{i,\xi} ,i\in [1,M]},\\
U_{[1,M]}:=&\set{\supl{(s,t)\in [0,Q^{i}(b_{i})]^{2}}\abs{U^{1}_{i}(t)-U^{1}_{i}(s)}\leq u_{i,us}, i\in [1,M]},
\end{eqalign}
where we use the shorthand notation $Q^{i}=Q^{n_{i}}$ and consider any sequences $u_{i,\xi},u_{i,us}\in (0,1)$ strictly decreasing to zero as $i\to+\infty$ and interval
\begin{eqalign}
P_{i}:=[-\para{u_{i,\xi}+u_{i,s}},u_{i,\xi}+u_{i,s}].   \end{eqalign}
We will bound the following product
\begin{eqalign}\label{eq:maineventmultiunitcircle}
&\prod\limits_{k\in [1,M]}\para{\frac{Q_{H}(d_{k},d_{k}+y_{k})}{Q_{H}(c_{k},c_{k}+x_{k})}}^{p_{k}}   \ind{ G_{[1,M]} ,\Xi_{[1,M]}, U_{[1,M]}},
\end{eqalign}
where for set $S:=\set{n_{1},...,n_{M}}\subset [1,N]$. In our ensuing work on Lehto-welding we obtained the existence of this set $S$ for these events using deviation estimates. 
\begin{proposition}[Product of ratios]\label{prop:Multipointunitcircleandmaximum}
We fix set $S:=\set{n_{1},...,n_{M}}\subset [N]$.
\begin{itemize}
 \item (Equal length)  Fix $\gamma<\frac{2}{\sqrt{3}}$. Suppose we have equal length intervals $x_{k}=\abs{J_k}=\abs{I_k}\leq \delta_k$ for each $k\in S$. Fix $\delta_k\leq 1$ and intervals $J=(c_k,c_k+x_k),I=(d_k,d_k+x_k)\subset [0,1]$ with $d_k-c_k=c_{d-c,k}x_k$ or $c_k-d_{k}=c_{d-c,k}x_k$ for some $c_{d-c,k}>1$  and $c_k=\lambda_{1,k}\delta_k,d_k=\lambda_{2,k}\delta_k$ for some $\lambda_{i,k}>0$.  Then we have for all $p_k\in [1,1+\e_{k}]$ with small enough $\e_{k}>0$, a bound of the form
\begin{eqalign}\label{eq:decaynumerdenounic}
\Expe{\prod\limits_{k\in [1,M]}\para{\frac{Q_{H}(d_{k},d_{k}+y_{k})}{Q_{H}(c_{k},c_{k}+x_{k})}}^{p_{k}}   \ind{ G_{[1,M]} ,\Xi_{[1,M]}, U_{[1,M]}}}\leq c^{\abs{S}}\prod\limits_{k\in S} \para{\frac{x_{k} }{\delta_{k}}}^{-\e_{ratio}(p_k)},  
\end{eqalign}
where $\e_{ratio}(p_k)>0$ can be made arbitrarily small at the cost of a larger proportionality constant $c$. The constants are uniformly bounded in $x_k$ and also in $\delta_k$.

    \item (Decreasing numerator) Fix $\beta\in  (0,0.152).$ Suppose that $\abs{I_{k}}=r_{k}\delta_{k}$ for $r_{k}>0$ and $\abs{J_{k}}\to 0$ for each $k\in S$. Then we have for each $p_{k}\in [1,1+\e_{ratio,1}]$ some  $q_{k}=1+\e_{ratio,2}>1$ such that
\begin{eqalign}\label{eq:decaynumeratorunic}
\Expe{\prod\limits_{k\in [1,M]}\para{\frac{Q_{H}(d_{k},d_{k}+y_{k})}{Q_{H}(c_{k},c_{k}+x_{k})}}^{p_{k}}   \ind{ G_{[1,M]} ,\Xi_{[1,M]}, U_{[1,M]}}}\leq \prod\limits_{k\in S} \para{\frac{\abs{J_{k}}}{\delta_{k}}}^{q_{k}}.
\end{eqalign}

\end{itemize}

\end{proposition}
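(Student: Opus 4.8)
The plan is to merge the multipoint peeling scheme of \cref{prop:productconditionalratio} with the $\eta_H=G\eta$ transition used in the unit-circle part of \cref{prop:inverse_ratio_moments}, with the auxiliary events $\Xi_{[1,M]}$ and $U_{[1,M]}$ serving precisely to keep the transition factors controlled along random times. As in \cref{prop:productconditionalratio} I would peel off one ratio at a time, starting from the coarsest scale: write $Y_{[2,M]}:=\prod_{k\in[2,M]}\para{Q_H(d_k,d_k+y_k)/Q_H(c_k,c_k+x_k)}^{p_k}\ind{G_{[3,M]},\Xi_{[2,M]},U_{[2,M]}}$, isolate the factor indexed by $k=1$, and decouple it from $Y_{[2,M]}$ using the gap event $G_{1,2}$ together with the tower property for $\CF([0,Q^2(b_2)])$ (since $Y_{[2,M]}\in\CF([0,Q^2(b_2)])$). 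Inside the conditional expectation I would run the single-point argument: layercake in the numerator $Q^1_H(d_1,d_1+y_1)$, decompose $Q^1_H(c_1)$ along the shifted sequence $c_{\ell,k}+Q^2(b_2)+\delta_2$ so that the $\delta$-strong Markov property (\cref{deltaSMP}) makes the relevant increments independent of $\CF([0,Q^2(b_2)])$, and conditional \Holder to peel off the decomposition events $D^{x_1}_k$, $D^{c_1}_{\ell,k}$ exactly as in \cref{eq:singularfactorfromscalinglawratio}.

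The new ingredient relative to \cref{prop:productconditionalratio} is the passage from $\eta_H$ to $\eta=\eta_U$. Following the unit-circle case of \cref{prop:inverse_ratio_moments}, after decomposing I would use $\eta_H=G\eta$ to replace each $\eta_H$-increment at scale $1$ by $e^{G^i}\eta$-increments, where the $G^i$ are the $\xi$-suprema $G^1_{\tilde u}$, $G^2_{u,\tilde u}$, $G^3_{\tilde u}$ of \cref{eq:maineventcircletwo}; on the event $\Xi_{[1,M]}$ these suprema over the window $[0,Q^1(b_1)]$ are bounded by the deterministic summable constant $u_{1,\xi}$ (and likewise $U_{[1,M]}$ controls the $U$-oscillation by $u_{1,us}$), so one may pull out the deterministic factor $e^{p_1(u_{1,\xi}+u_{1,us})}$ and later absorb the convergent product $\prod_{i}e^{p_i(u_{i,\xi}+u_{i,us})}<\infty$ into $c^{|S|}$. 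Where the events only give the interval $P_i$ rather than a one-sided bound, one instead decomposes $G^1_{\tilde u},G^2_{u,\tilde u},G^3_{\tilde u}$ along the bi-infinite sequence $d_m$ of \cref{eq:maineventcircletwo}, separates them by \Holder, and uses the finite exponential moments of $\sup\xi$ over a fixed window, the events $\Xi,U$ making the resulting $m$-sums converge geometrically. Once the transition factors are removed, the decoupled single ratio is exactly the object bounded in \cref{prop:inverse_ratio_moments}: in the equal-length case this gives $\para{x_1/\delta_1}^{-\e_{ratio}(p_1)}$ with a constant uniform in $\delta_1$ and $x_1$, and in the decreasing-numerator case $\para{|J_1|/\delta_1}^{q_1}$ with $q_1=1+\e_{ratio,2}>1$ under $\beta\in(0,0.152)$.

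Having removed the first ratio together with all of its coupled pieces — the $\ell$- and $k$-sums coming from $D^{x_1}_k$ and $D^{c_1}_{\ell,k}$ collapse to a harmless $r_k^{-(1+\e)/\lambda_a}$ and then a finite geometric $k$-sum, and the $\xi$-$m$-sums are geometric — nothing correlated with the smaller scales is left, so the same step applies verbatim at scale $2$, and by induction at all $M$ scales. Multiplying the per-scale constants gives $c^{|S|}$, multiplying the per-scale factors gives the stated right-hand side, and the constraint $\gamma<2/\sqrt3$ (resp. $\beta\in(0,0.152)$) is inherited unchanged from \cref{prop:inverse_ratio_moments}.

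The main obstacle is the interaction between the stopping-time filtration $\CF([0,Q^k(b_k)])$ and the $\xi$- and $U$-field oscillations entering the $\eta_H\to\eta$ transition: those oscillations are evaluated over random windows $[0,Q^k(b_k)]$, so a priori they are neither deterministic nor independent of the coarser scales, and this is exactly why the events $\Xi_{[1,M]}$, $U_{[1,M]}$ are imposed. The delicate point is to verify that after intersecting with these events the transition factors either become genuine deterministic constants or remain $\CF([0,Q^k(b_k)])$-measurable, so that the $\delta$-strong Markov decoupling of \cref{deltaSMP} still applies cleanly scale by scale; a secondary, purely bookkeeping difficulty is keeping all comparison constants simultaneously uniform in $\delta_k$ and $x_k$ across the $M$ peeling steps, which however is already guaranteed per scale by \cref{prop:inverse_ratio_moments}.
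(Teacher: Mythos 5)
Your high-level plan (peel one scale at a time, use the gap event plus the tower property for $\CF([0,Q^{2}(b_{2})])$, use $\Xi_{[1,M]},U_{[1,M]}$ to tame the $\eta_{H}\to\eta_{U}$ transition factors, and quote the single-point ratio bound) is the right family of ideas, but the order in which you do things creates a genuine gap. You condition on $\CF([0,Q^{2}(b_{2})])$ \emph{before} converting anything to $Q_{U}$, and you justify pulling $Y_{[2,M]}$ out of the conditional expectation by asserting $Y_{[2,M]}\in\CF([0,Q^{2}(b_{2})])$. That is not justified: $Y_{[2,M]}$ is built from $Q_{H}$-increments, and the field $H$ contains the $\xi$-part and all coarse scales, so $Q_{H}$-quantities are not measurable with respect to the stopping-time filtration $\CF([0,Q^{2}(b_{2})])$, which is generated by the \emph{upper-truncated} $U$-fields. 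This is exactly why the paper's proof first divides and multiplies by the intermediate $Q_{H}$-intervals $M_{k}$ (split into the groups $C_{1},C_{2}$), decomposes $M_{k}$ and $Q_{H}(c_{k})$ with gap-shifted bi-infinite sequences, and then uses $\Xi_{[1,M]},U_{[1,M]}$ to replace \emph{every} $Q_{H}$-quantity at \emph{every} scale by maxima/minima of $Q_{U}^{k}$-functionals over the deterministic perturbation windows $P_{k}$; only after that wholesale conversion is the tower property applied and \cref{deltaSMP} invoked. Converting only the scale-$1$ factor inside the conditional expectation, as you propose, leaves the remaining product in a form for which the conditioning step does not go through, and you flag this yourself as "the main obstacle" without resolving it.

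A second, related gap is your claim that after removing the transition factors the decoupled object is "exactly" the one bounded in \cref{prop:inverse_ratio_moments}. The paper opens its proof by explaining why this fails: once maxima over the perturbation parameters $u\in P_{k}$ and over the random shift windows are taken, the denominator becomes a minimum over a family $Q_{e^{u}(d_{k}-c_{k})}\bullet Q_{e^{u}c_{k}}$, so the single-point trick of dividing and multiplying by $Q_{b-a}\bullet Q_{a}$ and applying the lognormal scaling law to its deterministic approximation no longer identifies a single factor to decompose. The paper's remedy is structural, not cosmetic: the intermediate interval is introduced at the $Q_{H}$ level before any maxima are taken, the case analysis of Steps 11--15 is redone with the perturbed extremal lemmas (\cref{lem:maxshiftedGMCpertubation}, \cref{lem:minshiftedGMCpertubation}) replacing \cref{prop:maxmoduluseta}/\cref{prop:minmodeta}, and only the scaling-law core is reused from \cref{prop:inverse_ratio_moments}. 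Your proposal would need to supply this restructuring (or an alternative handling of the perturbed maxima) before the per-scale bound, and hence the bootstrap giving $c^{\abs{S}}\prod_{k}(x_{k}/\delta_{k})^{-\e_{ratio}(p_{k})}$, can be claimed.
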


\subsection{Proof of \sectm{\cref{prop:Multipointunitcircleandmaximum}}: Equal lengths}
In order to achieve any decoupling the $\xi$ field and upper scales need to be removed. If we take a maximum over their domains $P_{k}$,
\begin{eqalign}
\prod_{k\in [1,M]}\para{\frac{\maxls{u_{1},u_{2}\in P_{k}}Q^{k}\para{e^{u_{1} }\spara{c_{k},c_{k}+x_{k}e^{u_{2}}}}}{\minls{u_{3},u_{4}\in P_{k}}Q^{k}\para{e^{u_{3}}\spara{d_{k},d_{k}+y_{k}e^{u_{4}}}}}}^{p_{k}}  
\end{eqalign}
then we can no longer use the trick of \cref{prop:inverse_ratio_moments} where we divided and multiplied by $Q_{b-a}\bullet Q_{a}$ and applied scaling law with respect to its deterministic approximation because now the denominator has "multiple" transitions $Q_{e^{u_{3}}(d_{k}-c_{k})}\bullet Q_{e^{u_{3}}c_{k}}$ for each $u_{3}$ and so it's no longer clear which factor to decompose and apply the scaling law on.\\
So instead we move back to the level of $Q_{H}$. Let $C_{1},C_{2}$ contain all the $k$ such that $d_{k}\geq c_{k}$ and $d_{k}<c_{k}$ respectively. We divide and multiply by the intermediate-intervals 
\begin{eqalign}
M_{k}:=\branchmat{Q_{H}(c_{k},c_{k}+d_{k}-c_{k}) &\tcwhen k\in C_{1}\\ Q_{H}(c_{k},d_{k}+y_{k}) &\tcwhen k\in C_{2}}.   
\end{eqalign}
For the $C_{1}$-group we use that $x_{k}<d_{k}-c_{k}$ to bound the first ratios by one:
\begin{eqalign}
&\prod\limits_{k\in C_1} \para{\frac{Q_{H}(c_{k},c_{k}+x_{k})}{Q_{H}(d_{k},d_{k}+y_{k})}}^{p_{k}}\leq \prod\limits_{k\in C_{1}}\para{\frac{Q_{H}(c_{k},c_{k}+d_{k}-c_{k})}{Q_{H}(d_{k},d_{k}+y_{k})}}^{p_{k}}
\end{eqalign}
and for the $C_{2}-$group we use $Q_{H}(d_{k},d_{k}+y_{k})\leq Q_{H}(c_{k},d_{k}+y_{k})$ to bound
\begin{equation}
\prod\limits_{k\in C_{2}} \para{\frac{Q_{H}(d_{k},d_{k}+y_{k})}{Q_{H}(c_{k},c_{k}+x_{k})}}^{p_{k}}    \leq \prod\limits_{k\in C_{2}} \para{\frac{Q_{H}(c_{k},d_{k}+y_{k})}{Q_{H}(c_{k},c_{k}+x_{k})}}^{p_{k}}.    
\end{equation}
The strategy will be to first transition to $Q_{U}$, then decouple to reduce single-point estimates and then study the first one and bootstrap for the rest.
\proofparagraph{Decoupling part}
\proofparagraph{Step 1: Decomposing the intermediate intervals for all the scales}
As we did before in the single-point case \cref{prop:inverse_ratio_moments}, we decompose the intermediate interval $Q_{H}(c_{k},c_{k}+d_{k}-c_{k})$.\\
So we consider the following decompositions for $k\in [1,M]$
\begin{eqalign}\label{eq:decompositionevents}
D^{k}_{\ell_{k},M}:=&\branchmat{\set{M_{k}=Q_{H}(c_{k},c_{k}+d_{k}-c_{k})\in [a_{\ell_{k}},a_{\ell_{k}+1}]=:I_{\ell_{k}}^{k}}& \tcwhen k\in C_{1} \\ \set{M_{k}=Q_{H}(c_{k},d_{k}+y_{k})\in [a_{\ell_{k}},a_{\ell_{k}+1}]=:I_{\ell_{k}}^{k}}& \tcwhen k\in C_{2} }    \tfor \ell_{k}\in \Z,\\
S^{k}_{m_{k},c}(\ell_{k}):=&\set{Q_{H}(c_{k})\in [b_{m_{k}}(\ell_{k}),b_{m_{k}+1}(\ell_{k})]=:I_{m_{k},c}(\ell_{k})}     \tfor m_{k}\in \Z,
\end{eqalign}
where the $a_{\ell_{k}},b_{m_{k}}(\ell_{k})$ are bi-directional sequences \textit{possibly} going to zero and infinity; and the notation $b_{m_{k}}(\ell_{k})$ means that this sequence will also depend on $\ell_{k}$ (as required in the singular case of the proof in \cref{prop:inverse_ratio_moments}). In particular we let
 \begin{eqalign}
a_{k}:=\branchmat{\rho_{b-a}\abs{k}^{-\lambda_{neg}} &k\leq -1\\
\rho_{b-a}k^{\lambda_{pos}}& k\geq 1}
\end{eqalign}
for $\lambda_{neg}>0$ and $\lambda_{pos}\in (0,1)$, and we will see $b_{m_{k}}(\ell_{k})$ in \eqref{eq:usinggapQhQu3}.\\
In the next paragraph, we will also incorporate the gap-events in their definition so as to achieve decoupling. We will also split them over $\rho_{b-a}$ to get the various cases as in \cref{prop:inverse_ratio_moments}. So we write
\begin{eqalign}\label{eq:maineventmultiunitcirclepreremovalxiu}
\eqref{eq:maineventmultiunitcircle}=\sumls{\ell_{i},m_{i}\\ i\in [1,M]}
&\prod\limits_{k\in C_{1}} \ind{D^{k}_{\ell_{k},M},S^{k}_{m_{k},c}(\ell_{k})}\para{\frac{M_{k}}{B_{k}}}^{p_{k}}\\
&\cdot\prod\limits_{k\in C_{2}}\ind{D^{k}_{\ell_{k},M},S^{k}_{m_{k},c}(\ell_{k})}\para{\frac{M_{k}}{A_{k}}}^{p_{k}}\\&\cdot \ind{ G_{[1,M]},\Xi_{[1,M]}, U_{[1,M]}}.
\end{eqalign}
Next we study the relation to $Q_{U}^{k}$.
\proofparagraph{Step 2: Relation with $Q_{U}^{k}$}
 We first transition to $Q=Q_{U}$ over general intervals $(z_{n,M},z_{n,M}+r_{n,M})$ for 
\begin{eqalign}
z_{n,M}:=z_{n}\expo{\xi(b_{n})+\thickbar{U}^{1}_{n}(\theta_{b_{n}})} \tand r_{n,M}:=  r_{n}\expo{\xi(b_{n})+\thickbar{U}^{1}_{n}(\theta_{b_{n}})}, 
\end{eqalign}
for  $\theta_{b_{n}}\in [0,Q^{n}(b_{n})].$  We write
\begin{eqalign}
&Q_{H}(z_{n,M},z_{n,M}+r_{n,M})\\
=& Q\para{\frac{e^{\xi(b_{n})}}{e^{\xi(z_{n,M})}}e^{\thickbar{U}^{1}_{n}(\theta_{b_{n}})}\cdot\spara{p_{n},p_{n}+r_{n}\expo{\xi(z_{n,M})-\xi(z_{n,M},z_{n,M}+r_{n,M})}}}\\
=:&Q\para{\spara{A,A+B}},\\
\tfor &A:=\frac{e^{\xi(b_{n})}}{e^{\xi(z_{n,M})}}e^{\thickbar{U}^{1}_{n}(\theta_{b_{n}})}p_{n},\\
\tand &B:=\frac{e^{\xi(b_{n})}}{e^{\xi(z_{n,M},z_{n,M}+r_{n,M})}}e^{\thickbar{U}^{1}_{n}(\theta_{b_{n}})}r_{n}.
\end{eqalign}
Then we transition to $Q^{n}$
\begin{eqalign}
&Q\para{\spara{A,A+B}}=Q^{n}\para{\frac{1}{e^{\bar{U}_{n}^{1}(\theta_A)}}\spara{A,A+B\frac{e^{\bar{U}_{n}^{1}(\theta_A)}}{e^{\bar{U}_{n}^{1}(\theta_{A,A+B})}}}},
\end{eqalign}
where
\begin{eqalign}
&\theta_{A}\in \spara{0,Q\para{A}}=\spara{0,Q\para{\frac{e^{\xi(b_{n})}}{e^{\xi(z_{n,M})}}e^{\thickbar{U}^{1}_{n}(\theta_{b_{n}})}p_{n}}},\\    
&\theta_{A,A+B}\in \spara{Q\para{A},Q\para{A+B}}=\spara{Q(A),Q\para{A+\frac{e^{\xi(b_{n})}}{e^{\xi(z_{n,M},z_{n,M}+r_{n,M})}}e^{\thickbar{U}^{1}_{n}(\theta_{b_{n}})}r_{n}}}.    
\end{eqalign}
We next insert the gap information into the event $D^{k}_{\ell_{k},M}$. By using the events $\Xi_{[1,M]}, U_{[1,M]}$, we identify the supersets for the mean-values
\begin{eqalign}
Q\para{A+B}=&Q\para{e^{\thickbar{U}^{1}_{n}(\theta_{b_{n}})}\para{\frac{e^{\xi(b_{n})}}{e^{\xi(z_{n,M})}}p_{n}+\frac{e^{\xi(b_{n})}}{e^{\xi(z_{n,M},z_{n,M}+r_{n,M})}}r_{n}}}\leq Q\para{e^{\thickbar{U}^{1}_{n}(\theta_{b_{n}})}b_{n}}
= Q^{n}(b_{n}),    
\end{eqalign}
and so $\theta_{A},\theta_{A+B}\in [0,Q^{n}(b_{n})]$. Whereas for lower bound we have
\begin{eqalign}
&Q^{n}\para{\frac{e^{\thickbar{U}^{1}_{n}(\theta_{b_{n}})}}{e^{\bar{U}_{n}^{1}(\theta_A)}}\frac{e^{\xi(b_{n})}}{e^{\xi(z_{n,M})}}p_{n}}\geq Q^{n}(a_{n})    
\end{eqalign}
and so we have the interval inclusion
\begin{eqalign}\label{eq:pertubationinclusion}
\spara{Q_{U}^{n}(e^{\bar{U}_{n}^{1}(\theta_A)}A),Q_{U}^{n}(e^{\bar{U}_{n}^{1}(\theta_A)}(A+B)}\subseteq [Q^{n}(a_{n}) ,Q^{n}(b_{n}) ].   
\end{eqalign}
\proofparagraph{Step 3: Using the Gap event}
 It is convenient to rewrite the decomposition \cref{eq:decompositionevents}
\begin{eqalign}\label{eq:usinggapQhQu1}
 Q_{H}(c_{n})\in [b_{m_{n}}(\ell_{n}),b_{m_{n}+1}^{n}(\ell_{n})]=I_{m_{n},a}^{n}(\ell_{n}).
\end{eqalign}
We have $Q_{H}(c_{n})\geq Q^{n}(a_{n})$ and so from the gap event 
\begin{eqalign}\label{eq:usinggapQhQu2}
G_{n,n+1}:=\set{Q^{n}(a_{n})>Q^{n+1}(b_{n+1})+\delta_{n+1}},    
\end{eqalign}
we have $Q_{H}(c_{n})>Q^{n+1}(b_{n+1})+\delta_{n+1}$. That means we rewrite the decomposition using the sequence
\begin{equation}\label{eq:usinggapQhQu3}
b_{m_{n}}(\ell_{n}):=\wt{b}_{m_{n}}(\ell_{n})+Q^{n+1}(b_{n+1})+\delta_{n+1}    
\end{equation}
for adeterministic bi-infinite sequence $\wt{b}_{m_{n}}(\ell_{n})$ going both to zero and plus infinity. In particular, we let 
\begin{eqalign}\label{eq:divergingsequencebtilde}
\wt{b}_{m}(\ell):=\rho_{a}a_{m}\ell^{\lambda_{a}},    
\end{eqalign}
for some small enough $\rho_{a}\in (0,1)$ and $\lambda_{a}\in (0,1)$. We further denote their intervals as $\wt{I}_{m_{n},a}^{n}(\ell_{n}):=[\wt{b}_{m_{n}}(\ell_{n}),\wt{b}_{m_{n}+1}^{n}(\ell_{n})]$.
\proofparagraph{Removing the $\Xi_{[1,M]}, U_{[1,M]}$ via maximum}
Now we are finally in the position to remove the $\xi$ and upper-scale $U^{1}_{n}$ fields. All the quantities in \cref{eq:maineventmultiunitcirclepreremovalxiu} that involve $Q_{H}$ need to get switched to $Q^{k}_{U}$ in order for the decoupling argument to go through. \\
We start from the outside i.e. we temporarily leave $Q_{H}(c_{k}),M_{k}=Q_{H}(c_{k},c_{k}+d_{k}-c_{k})$ untouched. We also return to the scaled notation 
\begin{equation}
x_{k,U,\xi}=x_{k}e^{\thickbar{U}^{1}_{k}(\theta_{b_{k}})}e^{\xi(b_{k})}.    
\end{equation}
\proofparagraph{Step 4: Switching the $Q_{H}(x_{k})$ to $Q_{U}^{k}$}
For the term $A_{k}=Q_{H}(x_{k})\bullet Q_{H}(c_{k})=Q_{H}(x_{k})\bullet T$, with $T:= Q_{H}(c_{k})$, using the transition formulas in \cref{sec:transitionformuals} we bound
\begin{eqalign}
\min_{u_{1},u_{2}\in I_{T}} Q_{U}^{k}\para{\frac{e^{\thickbar{U}^{1}_{k}(\theta_{b_{k}})}}{e^{\bar{U}_{k}^{1}(u_{1})}}\frac{e^{\xi(b_{k})}}{e^{ \xi(u_{2})}}x_{k}}\bullet T  \leq Q_{H}(x_{k,U,\xi})\bullet T\leq \max_{u_{1},u_{2}\in I_{T}} Q_{U}^{k}\para{\frac{e^{\thickbar{U}^{1}_{k}(\theta_{b_{k}})}}{e^{\bar{U}_{k}^{1}(u_{1})}}\frac{e^{\xi(b_{k})}}{e^{ \xi(u_{2})}}x_{k}}\bullet T,   
\end{eqalign}
where $I_{T}:= [T,T+Q_{H}(p)\bullet T]$. We now examine the relation of those the extremizers to $Q^{k}_{U}(b_{k})$. We have
\begin{eqalign}
 u_{i}\leq T+ Q_{H}(p)\bullet T=Q_{H}(c_{k,U,\xi}+x_{k,U,\xi})\leq  Q^{k}_{U}(b_{k}),  
\end{eqalign}
where we applied the events $\Xi_{[1,M]}, U_{[1,M]}$.
\proofparagraph{Step 5: Switching the $Q_{H}(y_{k})$ to $Q_{U}^{k}$}
For the term $B_{k}=Q_{H}(y_{k})\bullet \para{M_{k}+Q_{H}(c_{k})}=Q_{H}(y_{k})\bullet S_{k}$, with $S_{k}:=M_{k}+Q_{H}(c_{k})$, we again bound
\begin{eqalign}
 &\min_{u_{1},u_{2}\in I_{S}}Q_{U}^{k}\para{\frac{e^{\thickbar{U}^{1}_{k}(\theta_{b_{k}})}}{e^{\bar{U}_{k}^{1}(u_{1})}}\frac{e^{\xi(b_{k})}}{e^{\xi(u_{2})}}y_{k}}\bullet S_{k}\leq Q_{H}(y_{k})\bullet S_{k}\leq \max_{u_{1},u_{2}\in I_{S}}Q_{U}^{k}\para{\frac{e^{\thickbar{U}^{1}_{k}(\theta_{b_{k}})}}{e^{\bar{U}_{k}^{1}(u_{1})}}\frac{e^{\xi(b_{k})}}{e^{\xi(u_{2})}}y_{k}}\bullet S_{k},
\end{eqalign}
where $I_{S}:= [S,S+Q_{H}(\tilde{p})\bullet S]\subset [0,Q^{k}_{U}(b_{k})]$ due to the events $\Xi_{[1,M]}, U_{[1,M]}$. As a result, we can take the following  maximum for the outer inverses over interval the $P_{k}$
\begin{eqalign}\label{eq:productands}
&\prod\limits_{k\in C_{1}}\para{\frac{M_{k}}{B_{k}}}^{p_{k}}\cdot\prod\limits_{k\in C_{2}}\para{\frac{M_{k}}{A_{k}}}^{p_{k}}\leq\prod\limits_{k\in C_{1}}\maxls{u_{k}\in P_{k} }\para{\frac{M_{k}}{B_{k}(u_{k})}}^{p_{k}}\cdot\prod\limits_{k\in C_{2}}\maxls{u_{k}\in P_{k} }\para{\frac{M_{k}}{A_{k}(u_{k})}}^{p_{k}},
\end{eqalign}
for
\begin{eqalign}
A_{k}(u_{k}):=Q_{U}^{k}\para{e^{2u_{k}}x_{k}}\bullet Q_{H}(c_{k})\tand B_{k}(u_{k}):= &Q_{U}^{k}\para{e^{2u_{k}}y_{k}}\bullet\para{M_{k}+Q_{H}(c_{k})}.   
\end{eqalign}
\proofparagraph{Step 6: Switching the $M_{k}=Q_{H}(c_{k},c_{k}+d_{k}-c_{k})$ to $Q^{k}_{U}$}
Next we repeat for $M_{k}$. Before we modify it we take a maximum over $M_{k}$ using the event $D^{k}_{\ell_{k},M}$
\begin{eqalign}\label{eq:mainsummandunitcircle}
\eqref{eq:productands}\leq &\prod\limits_{k\in C_{1}} \maxls{u_{k}\in P_{k} \\T_{k}\in I_{\ell_{k}}^{k}}\para{\frac{a_{\ell_{k}+1}}{B_{k}(u_{k},T_{k})}}^{p_{k}}\prod\limits_{k\in C_{2}}\para{\frac{a_{\ell_{k}+1}}{A_{k}(u_{k})}}^{p_{k}}
\end{eqalign}
for $B_{k}(u_{k},T_{k}):=Q_{U}^{k}\para{e^{2u_{k}}y_{k}}\bullet\para{T_{k}+Q_{H}(c_{k})}$. We repeat as above to upper bound the decomposition event $D^{k}_{\ell_{k},M}$ using the events $\Xi_{[1,M]}, U_{[1,M]}$ and maximum
\begin{eqalign}
&\ind{D^{k}_{\ell_{k},M}}\leq  \maxls{u_{k}\in P_{k} }\ind{  D^{k}_{\ell_{k},M}(u_{k})},\\
\tfor D^{k}_{\ell_{k},M}(u_{k}):=&\set{Q_{U}^{k}\para{e^{2u_{k}}z_{k}}\bullet Q_{H}(c_{k})\in [a_{\ell_{k}},a_{\ell_{k}+1}]=I_{\ell_{k}}^{k}},\\ 
z_{k}:=&\branchmat{d_{k}-c_{k}&\tcwhen k\in C_{1}\\ d_{k}-c_{k}+y_{k} &\tcwhen k\in C_{2}}.
\end{eqalign}
\proofparagraph{Step 7: Switching the $ Q_{H}(c_{k}) $ to $ Q_{U}^{k}$}
Finally, we repeat for $ Q_{H}(c_{k}) $. We take maximum using the event $S^{k}_{m_{k},c}(\ell_{k})$ and bound that event by maximum too using the events $\Xi_{[1,M]}, U_{[1,M]}$ to get the following overall bound
\begin{eqalign}\label{eq:maineventmultiunitcirclepreremovalxiupredec}
&\eqref{eq:maineventmultiunitcirclepreremovalxiu}\leq
 \sumls{(\ell_{1},m_{1},\cdots,\ell_{M},m_{M})}\prod_{k\in [1,M]}E_{k}^{D}E_{k}^{S} R_{k}\ind{ G_{[1,M]}}\\ =&\sumls{(\ell_{1},m_{1},\cdots,\ell_{M},m_{M})}\prod_{k}\para{\maxls{u_{k}\in P_{k}\\\wt{T}_{k}\in\wt{I}_{m_{k},c}^{k}(\ell_{k})  }\ind{D^{k}_{\ell_{k},M}(u_{k},\wt{T}_{k})}\maxls{u_{k}\in P_{k} }\ind{S^{k}_{m_{k},c}(\ell_{k},u_{k})} }\cdot\\
 \cdot&\prod\limits_{k\in C_{1}} \para{\maxls{u_{k}\in P_{k} \\T_{k}\in I_{\ell_{k}}^{k}\\\wt{T}_{k}\in\wt{I}_{m_{k},c}^{k}(\ell_{k}) }\para{\frac{a_{\ell_{k}+1}}{Q_{U}^{k}\para{e^{2u_{k}}y_{k}}\bullet \para{T_{k}+\wt{T}_{k}+Q^{k+1}(b_{k+1})+\delta_{k+1} }}}^{p_{k}}}\\
 \cdot &\prod\limits_{k\in C_{2}}\para{\maxls{u_{k}\in P_{k}\\\wt{T}_{k}\in\wt{I}_{m_{k},c}^{k}(\ell_{k})  }\para{\frac{a_{\ell_{k}+1}}{Q_{U}^{k}\para{e^{2u_{k}}x_{k}}\bullet \para{\wt{T}_{k}+Q^{k+1}(b_{k+1})+\delta_{k+1}  }}}^{p_{k}} }\cdot \ind{ G_{[1,M]}}\\ =:&\sumls{(\ell_{1},m_{1},\cdots,\ell_{M},m_{M})}\prod_{k}E_{k}^{D}E_{k}^{S}R_{k}\ind{ G_{[1,M]}},
\end{eqalign}
for 
\begin{eqalign}\label{eq:alltheventsdecompositions}
&R_{k}:=\branchmat{\maxls{u_{k}\in P_{k} \\T_{k}\in I_{\ell_{k}}^{k}\\\wt{T}_{k}\in\wt{I}_{m_{k},c}^{k}(\ell_{k}) }\para{\frac{a_{\ell_{k}+1}}{Q_{U}^{k}\para{e^{2u_{k}}y_{k}}\bullet \para{T_{k}+\wt{T}_{k}+Q^{k+1}(b_{k+1})+\delta_{k+1} }}}^{p_{k}}&\tcwhen k\in C_{1}\\ \maxls{u_{k}\in P_{k}\\\wt{T}_{k}\in\wt{I}_{m_{k},c}^{k}(\ell_{k})  }\para{\frac{a_{\ell_{k}+1}}{Q_{U}^{k}\para{e^{2u_{k}}x_{k}}\bullet \para{\wt{T}_{k}+Q^{k+1}(b_{k+1})+\delta_{k+1}  }}}^{p_{k}} &\tcwhen k\in C_{2}}\\
&E_{k}^{D}:=\maxls{u_{k}\in P_{k}\\\wt{T}_{k}\in\wt{I}_{m_{k},c}^{k}(\ell_{k})  }\ind{D^{k}_{\ell_{k},M}(u_{k},\wt{T}_{k})}\\
&E_{k}^{S}:=\maxls{u_{k}\in P_{k} }\ind{S^{k}_{m_{k},c}(\ell_{k},u_{k})} \\
D^{k}_{\ell_{k},M}(u_{k},\wt{T}_{k}):=&\set{Q_{U}^{k}\para{e^{2u_{k}}(d_{k}-c_{k})}\bullet\para{\wt{T}_{k}+Q^{k+1}(b_{k+1})+\delta_{k+1}}\in [a_{\ell_{k}},a_{\ell_{k}+1}]=I_{\ell_{k}}^{k}},
\end{eqalign}
\begin{equation*}
S^{k}_{m_{k},c}(\ell_{k},u_{k}):=\{Q_{U}^{k}(e^{2u_{k}}c_{k})\in  [\wt{b}_{m_{k}}^{k}(\ell_{k}),\wt{b}_{m_{k}+1}^{k}(\ell_{k})]+Q^{k+1}(b_{k+1})+\delta_{k+1}=\wt{I}_{m_{k},c}^{k}(\ell_{k})+Q^{k+1}(b_{k+1})+\delta_{k+1}\}.    
\end{equation*}
\proofparagraph{Step 8: Decoupling} Now that we turned everything into $Q^{k}_{U}$, we can decouple. We will apply \cref{deltaSMP} in order to remove the shifts $Q^{k+1}(b_{k+1})+\delta_{k+1}$ and so we will return to the single-point estimates. We will do this by bootsrapping starting from the first scale $R_{1}$. First as we did in the proof of \cref{prop:productconditionalratio}, we scale all the terms by $\delta_{1}$ which means that the first scale is now $Q^{1}$ (i.e. $U$ of height one) but for the other scales to ease on notation we still just write $Q^{2},Q^{3}$ (as opposed to $Q^{\delta_{2}/\delta_{1}}$). We start with applying the tower property for the filtration $\CF(Q^{2}(b_{2}))$
\begin{eqalign}\label{maineventdecouplingsums}
&\sumls{(\ell_{1},m_{1},\cdots,\ell_{M},m_{M})}\Expe{\prod_{k\in [1,M]} E_{k}^{D}E_{k}^{S}R_{k}\ind{ G_{[1,M]}}}\\
=&\sumls{(\ell_{1},m_{1},\cdots,\ell_{M},m_{M})}\Expe{\Expe{E_{1}^{D}E_{1}^{S}R_{1}\conditional \CF(Q^{2}(b_{2})}\prod_{k\in [2,M]}E_{k}^{D}E_{k}^{S} R_{k}\ind{ G_{[1,M]}}}.    
\end{eqalign}
 We separate the decomposition event $E_{1}^{S}$ by applying conditional \Holder
\begin{equation}
\Expe{E_{1}^{D}E_{1}^{S}R_{1}\conditional \CF(Q^{2}(b_{2}))}\leq \para{\Expe{\para{E_{1}^{D}R_{1}}^{b_{11}}\conditional \CF(Q^{2}(b_{2}))}}^{1/b_{11}}\para{\Proba{E_{1}^{S}\conditional \CF(Q^{2}(b_{2}))}}^{1/b_{12}},        
\end{equation}
for $b_{11}^{-1}+b_{12}^{-1}=1$. So now we study those two factors. As explained in the previous section, for the first event we have a delta-gap and so we can apply \cref{deltaSMP}
\begin{eqalign}\label{eq:mainchangeeventfirstscale}
&\Expe{\para{E_{1}^{D}R_{1}}^{b_{11}}\conditional \CF(Q^{2}(b_{2})}=\Expe{\para{\tilde{E}_{1}^{D}\tilde{R}_{1}}^{b_{11}}},
\end{eqalign}
where we removed the shift from \cref{eq:alltheventsdecompositions}
\begin{eqalign}\label{eq:alltheventsdecompositions2}
\tilde{R}_{k}:=&\branchmat{\maxls{u_{k}\in P_{k} \\T_{k}\in I_{\ell_{k}}^{k}\\\wt{T}_{k}\in\wt{I}_{m_{k},c}^{k}(\ell_{k}) }\para{\frac{a_{\ell_{k}+1}}{Q_{U}^{k}\para{e^{2u_{k}}y_{k}}\bullet \para{T_{k}+\wt{T}_{k} }}}^{p_{k}}&\tcwhen k\in C_{1}\\ \maxls{u_{k}\in P_{k}\\\wt{T}_{k}\in\wt{I}_{m_{k},c}^{k}(\ell_{k})  }\para{\frac{a_{\ell_{k}+1}}{Q_{U}^{k}\para{e^{2u_{k}}x_{k}}\bullet \para{\wt{T}_{k}  }}}^{p_{k}} &\tcwhen k\in C_{2}}\\
\tilde{E}_{k}^{D}:=&\maxls{u_{k}\in P_{k}\\\wt{T}_{k}\in\wt{I}_{m_{k},c}^{k}(\ell_{k})  }\ind{D^{k}_{\ell_{k},M}(u_{k},\wt{T}_{k})}\\
\tilde{E}_{k}^{S}:=&\maxls{u_{k}\in P_{k} }\ind{S^{k}_{m_{k},c}(\ell_{k},u_{k})} \\
\tilde{D}^{k}_{\ell_{k},M}(u_{k},\wt{T}_{k}):=&\set{Q_{U}^{k}\para{e^{2u_{k}}(d_{k}-c_{k})}\bullet\para{\wt{T}_{k}}\in [a_{\ell_{k}},a_{\ell_{k}+1}]=I_{\ell_{k}}^{k}},\\
\tilde{S}^{k}_{m_{k},c}(\ell_{k},u_{k}):=&\set{Q_{U}^{k}(e^{2u_{k}}c_{k})\in [\wt{b}_{m_{k}}^{k}(\ell_{k}),\wt{b}_{m_{k}+1}^{k}(\ell_{k})]=\wt{I}_{m_{k},c}^{k}(\ell_{k})}.
\end{eqalign}
Since we decoupled the decomposition event $E_{1}^{S}$, we also translate
\begin{eqalign}
\wt{T}_{1}\in\wt{I}_{m_{1},a}(\ell_{1})\mapsto \wt{T}_{1}\in\spara{0,\abs{\wt{I}_{m_{1},a}(\ell_{1})}}\subset [0,\rho_{a}a_{m_{1}}]=:I_{\rho,m_{1}},    
\end{eqalign}
in order to remove the $\ell_1$-dependence and thus bring in the $\ell_{1}$-sum and we used the bound
\begin{equation}
 \abs{\tilde{I}_{k,a}(\ell)}=\rho_{a}a_{k}((\ell+1)^{\lambda_{a}}-\ell^{\lambda_{a}})\leq    \rho_{a}a_{k}.
 \end{equation}
\proofparagraph{Step 9: Event $E_{1}^{S}$}
 Here we study the event
\begin{eqalign}\label{eq:eventclk}
&\set{Q_{U}^{1}(e^{2u_{1}}c_{1})\in \spara{c_{\ell,k}+Q^{2}(b_{2})+\delta_{2},c_{\ell+1,k}+Q^{2}(b_{2})+\delta_{2}}}\\
\subset &\set{\eta^{1}\para{0,c_{\ell,k}+Q^{2}(b_{2})+\delta_{2}} \leq e^{2u_{1}}c_{1}   }\\
=&\set{\eta^{1}\para{Q^{2}(b_{2})+\delta_{2},c_{\ell,k}+Q^{2}(b_{2})+\delta_{2}} \leq e^{2u_{1}}c_{1}-\eta^{1}\para{0,Q^{2}(b_{2})+\delta_{2}}   }\\
\subset& \set{\eta^{1}\para{Q^{2}(b_{2})+\delta_{2},c_{\ell,k}+Q^{2}(b_{2})+\delta_{2}} \leq e^{2u_{1}}c_{1}} ,
\end{eqalign}
where in the context of \cref{eq:alltheventsdecompositions}, we have $c_{\ell_{1},m_{1}}=\wt{b}_{m_{1}}(\ell_{1})$ 
and then we again apply \cref{deltaSMP} to decouple
\begin{eqalign}
\set{\eta^{1}\para{Q^{2}(b_{2})+\delta_{2},c_{\ell,k}+Q^{2}(b_{2})+\delta_{2}} \leq e^{2u_{1}}c_{1}}\eqdis\set{\wt{\eta}^{1}\para{0,c_{\ell,k}} \leq c_{1} e^{2u_{1}} },    
\end{eqalign}
where $\wt{\eta}^{1}$ is an independent copy of $\eta^{1}$. So we have
\begin{equation}
\Expe{E_{1}^{S}\conditional \CF\para{[0,Q^{2}(b_{2})]}}\leq \Expe{\maxls{u_{1}\in P_{1} }\ind{\wt{\eta}^{1}\para{0,c_{\ell,k}} \leq e^{2u_{1}}c_{1}}}= \Proba{\wt{\eta}^{1}\para{0,c_{\ell,k}} \leq e^{4}c_{1}},   
\end{equation}
where we use the $P_{1}\subset (-2,2)$ from \cref{def:exponentialchoiceparam}.
Now that we decoupled we proceed as in \cref{prop:inverse_ratio_moments}.  Recall that $c_{\ell,k}=\rho_{a}a_{k}c_{\ell}$ for some small enough $\rho_{a}\in (0,1)$ and $c_{\ell}:=\ell^{\lambda_{a}}$ for $\lambda_{a}\in (0,1)$ so that indeed $c_{\ell+1}-c_{\ell}\to 0$ as $\ell\to +\infty$. For $\ell=0$, we bound by zero.  When $c_{\ell,k}\leq \delta_{1}\tor c_{\ell,k}> \delta_{1}$ we use Markov with $p_{1}>0\tand p_{2}>0$ respectively,to upper bound by
\begin{eqalign}\label{eq:singularfactorfromscalinglaw0}
&\sum_{\ell\geq 1 }\para{\Expe{E_{1}^{S} \conditional\CF\para{[0,Q^{2}(b_{2})]}}}^{1/b_{12}}\leq  \sum_{\ell\geq 1 } \para{\Proba{\wt{\eta}^{1}\para{0,c_{\ell,k}} \leq e^{4}c_{1} }}^{1/b_{12}}  \\
\leq &\sum_{\ell\geq 1 }  c_{1}^{p_{1}/b_{12}}\para{c_{\ell,k}}^{\zeta(-p_{1})/b_{12}}+ c_{1}^{p_{2}/b_{12}}\para{c_{\ell,k}}^{-p_{2}/b_{12}}\\
\leq &c a_{k}^{-\frac{1+\e}{\lambda_{a}}}.
\end{eqalign}
So from here to get finiteness we require both $-\zeta(-p_{1})\lambda_{a},p_{2}\lambda_{a}>b_{12}$. Therefore, we get the singular power $a_{k}^{-\frac{1+\e}{\lambda_{a}}}$ for arbitrarily small $\e>0$.
\proofparagraph{Step 10: Bootstrap}
Therefore, we removed all the conditional expectations and are left with
\begin{eqalign}\label{maineventdecouplingsums2}
\eqref{maineventdecouplingsums}\leq & \para{\sumls{\ell_{1},m_{1}}a_{m_{1}}^{-\frac{1+\e}{\lambda_{a}}} \para{\Expe{\para{\tilde{E}_{1}^{D}\tilde{R}_{1}}^{b_{11}}}}^{1/b_{11}}} \cdot\para{ \sumls{\ell_{i},m_{i}\\ i\in [2,M]}\Expe{\prod_{k\in [2,M]}E_{k}^{D}E_{k}^{S} R_{k}\ind{ G_{[1,M]}}}}.    
\end{eqalign}
So that means that we decoupled the first scale at the cost of a summation that will give a singular factor in $x^{-\e_{ratio}}$. In particular, we next prove that
\begin{eqalign}\label{maineventdecouplingsums3}
\eqref{maineventdecouplingsums2}\leq & cx^{-\e_{ratio}}  \cdot\para{ \sumls{\ell_{i},m_{i}\\ i\in [2,M]}\Expe{\prod_{k\in [2,M]}E_{k}^{D}E_{k}^{S} R_{k}\ind{ G_{[1,M]}}}}.    
\end{eqalign}
Repeating for the rest of the scales in the order given by the gap-event, will give us $M$ such singular factors. 
\proofparagraph{Step 11: Ratio part for $1\in C_{1}$}
Here we study the first factor
\begin{equation}
\para{\Expe{\para{\tilde{E}_{1}^{D}\tilde{R}_{1}}^{b_{11}}}}^{1/b_{11}}   
\end{equation}
where recall from \cref{eq:alltheventsdecompositions2}
\begin{eqalign}
\tilde{E}_{1}^{D}=&\maxls{u_{1}\in P_{1}\\\wt{T}_{1}\in I_{\rho,\ell_{1}}  }\ind{D^{1}_{\ell_{1},M}(u_{1},\wt{T}_{1})},    \\
D^{1}_{\ell_{1},M}(u_{1},\wt{T}_{1})=&\set{Q_{U}^{1}\para{e^{2u_{1}}(d_{1}-c_{1})}\bullet\para{\wt{T}_{1}}\in [a_{\ell_{1}}^{1},a_{\ell_{1}+1}^{1}]=I_{\ell_{1}}^{1}}
\end{eqalign}
and
\begin{eqalign}
\tilde{R}_{1}=&\maxls{u_{1}\in P_{1} \\T_{1}\in I_{\ell_{1}}^{1}\\\wt{T}_{1}\in I_{\rho,\ell_{1}}}\para{\frac{a_{\ell_{1}+1}^{1}}{B_{1}(u_{1},T_{1},\wt{T}_{1})}}^{p_{1}}.
\end{eqalign}
We let
\begin{eqalign}
  B^{min}_{1}:=\minls{u_{1}\in P_{1} \\T_{1}\in I_{\ell_{1}}^{1}\\\wt{T}_{1}\in I_{\rho,\ell_{1}}}\para{B_{1}(u_{1},T_{1},\wt{T}_{1})}.  
\end{eqalign}
We go over each case as in the proof of \cref{prop:inverse_ratio_moments} for
\begin{eqalign}
&B^{min}_{1}\in [0,1]\cup[1,\infty],\\
&\tand a_{\ell_{1}}^{1}\in [0,\rho_{b-a}]\cup[\rho_{b-a},\infty).
\end{eqalign}

\proofparagraph{Step 12: Case $B^{min}_{1}\in [1,\infty)$}
In this case we are left with studying the sum
\begin{equation}
\sumls{\ell_{1}\in \Z\setminus\set{0}}a_{\ell_{1}}^{p_{1}-\frac{1+\e}{\lambda_{a}b_{12} }}\para{\Expe{\maxls{u_{1}\in P_{1}\\\wt{T}_{1}\in I_{\rho,\ell_{1}}  }\ind{Q_{U}^{1}\para{e^{2u_{1}}(d_{1}-c_{1})}\bullet\para{\wt{T}_{1}}\in [a_{\ell_{1}}^{1},a_{\ell_{1}+1}^{1}]}}}^{1/b_{11}}    
\end{equation}
for bi-infinite sequence $a_{\ell_{1}}$ going to zero and infinity. In particular, we set
\begin{eqalign}
a_{k}:=\branchmat{\rho_{b-a}\abs{k}^{-\lambda_{neg}} &k\leq -1\\
\rho_{b-a}k^{\lambda_{pos}}& k\geq 1}
\end{eqalign}
for $\lambda_{neg}>0$ and $\lambda_{pos}\in (0,1)$. For $k\geq 1$, we only keep the lower bound
\begin{eqalign}
&\Expe{\maxls{u_{1}\in P_{1}\\\wt{T}_{1}\in I_{\rho,\ell_{1}}  }\ind{Q_{U}^{1}\para{e^{2u_{1}}(d_{1}-c_{1})}\bullet\para{\wt{T}_{1}}\in [a_{\ell_{1}}^{1},a_{\ell_{1}+1}^{1}]}}\\
\leq &\Proba{ \minls{\wt{T}_{1}\in I_{\rho,\ell_{1}}  }\eta_{U}^{1}\para{\wt{T}_{1},\wt{T}_{1}+\rho_{b-a}k^{\lambda_{pos}}} \leq e^{2R_{1}^{pert}}(d_{1}-c_{1}) }    
\end{eqalign}
Here we use \cref{prop:minmodeta} for $p_{min}>0$ to get the sum
\begin{eqalign}
\sum_{k\geq 1}k^{\lambda_{pos}\para{p_{1}-\frac{1+\e}{\lambda_{a}b_{12} }+\zeta(-p_{min})-1}/b_{11}}.    
\end{eqalign}
This is finite by taking large enough $p_{min}$. For $k\leq -1$ we bound the probability by one and just study
\begin{eqalign}
\sumls{k\geq 1}k^{-\lambda_{neg}\para{p_{1}-\frac{1+\e}{\lambda_{a}b_{12} }}}.    
\end{eqalign}
Here as in the proof \cref{prop:inverse_ratio_moments} we take $p_{1}=1+\e$ and $b_{12}$ large enough so that the overall bracketed-exponent is positive $p_{1}-\frac{1+\e}{\lambda_{a}b_{12} }>0$ and then we just take $\lambda_{neg}$ arbitrarily large.
\proofparagraph{Step 13: Case $B^{min}_{1}\in [0,1]$ and $a_{\ell_{1}}\in [\rho_{b-a},\infty)$}
Here we study the sum
\begin{eqalign}
\sumls{\ell_{1}\geq 1}a_{\ell_{1}}^{p_{1}-\frac{1+\e}{\lambda_{a}b_{12} }}\para{\Expe{\para{\tilde{E}_{1}^{D}\ind{B^{min}_{1}\in [0,1]}\maxls{u_{1}\in P_{1} \\T_{1}\in I_{\ell_{1}}^{1}\\\wt{T}_{1}\in I_{\rho,\ell_{1}}}\para{\frac{1}{B_{1}(u_{1},T_{1},\wt{T}_{1})}}^{p_{1}}}^{b_{11}}}}^{1/b_{11}}.   
\end{eqalign}
As in the proof of \cref{prop:inverse_ratio_moments}, we start with applying layercake to write the expectation term as
\begin{eqalign}
&\int_{1}^{\infty}\Expe{\tilde{E}_{1}^{D}\ind{\frac{1}{B^{min}_{1}} \geq t^{1/p_{1}b_{11}} }} \dt   \\
=&\int_{1}^{\infty}\Expe{\tilde{E}_{1}^{D}\max_{u_{1},T_{1},\wt{T}_{1}}\ind{ \eta\para{T_{1}+\wt{T}_{1} ,T_{1}+\wt{T}_{1} +t^{-1/p_{1}b_{11}}}\geq e^{2u_{1}}y_{1} }} \dt  \\
\leq &\int_{1}^{\infty}\Proba{\minls{\wt{T}_{1}\in I_{\rho,\ell_{1}}  }\eta_{U}^{1}\para{\wt{T}_{1},\wt{T}_{1}+\rho_{b-a}k^{\lambda_{pos}}} \leq e^{2R_{1}^{pert}}(d_{1}-c_{1})}\\
&\cdot\Proba{  \max_{T_{1},\wt{T}_{1}}\eta\para{T_{1}+\wt{T}_{1} ,T_{1}+\wt{T}_{1} +t^{-1/p_{1}b_{11}}}\geq e^{-2R_{1}^{pert}}y_{1} } \dt,  
\end{eqalign}
where in the last inequality we only kept the lower bound in $\tilde{E}_{1}^{D}$ and then applied \nameref{FKGineq}. From here the proof proceeds as in the section "\textit{Case $Q(b,b+x)\in [0,1]$ and $Q_{b-a}\bullet Q_{a}\geq \rho_{b-a}$}" in the proof of \cref{prop:inverse_ratio_moments}.
\proofparagraph{Step 14: Case $B^{min}_{1}\in [0,1]$ and $a_{\ell_{1}}\in [0,\rho_{b-a}]$}
Here we study the sum
\begin{eqalign}
\sumls{\ell_{1}\leq -1}a_{\ell_{1}}^{-\frac{1+\e}{\lambda_{a}b_{12} }}\para{\Expe{\para{\tilde{E}_{1}^{D}\ind{B^{min}_{1}\in [0,1]}\maxls{u_{1}\in P_{1} \\T_{1}\in I_{\ell_{1}}^{1}\\\wt{T}_{1}\in I_{\rho,\ell_{1}}}\para{\frac{a_{\ell_{1}}}{B_{1}(u_{1},T_{1},\wt{T}_{1})}}^{p_{1}}}^{b_{11}}}}^{1/b_{11}}.   
\end{eqalign}
This is again the singular case that requires the use of scaling law. In the expectation we have
\begin{eqalign}
\Expe{\tilde{E}_{1}^{D}\ind{\frac{\max_{T_{1},\wt{T}_{1}}\eta\para{T_{1}+\wt{T}_{1} ,T_{1}+\wt{T}_{1} +a_{\ell_{1}}t^{-1/p_{1}b_{11}}}}{\minls{\wt{T}_{1}\in I_{\rho,\ell_{1}}  }\eta_{U}^{1}\para{\wt{T}_{1},\wt{T}_{1}+\rho_{b-a}a_{\ell_{1}+1}} }\geq e^{-2R_{1}^{pert}}c_{d-c,y}} },   
\end{eqalign}
where we did the same trick as in the singular case of \cref{prop:inverse_ratio_moments} of using the lower bound. From here we proceed the same because we have the same constraints on the shifts. The only difference is the bound $ e^{-2R_{1}^{pert}}$ which we treat as a constant. 
\proofparagraph{Step 15: Ratio part for $1\in C_{2}$}
Here we study 
\begin{equation}
\para{\Expe{\para{\tilde{E}_{1}^{D}\tilde{R}_{1}}^{b_{11}}}}^{1/b_{11}},   
\end{equation}
where recakk
\begin{eqalign}
\tilde{E}_{1}^{D}=&\maxls{u_{1}\in P_{1}\\\wt{T}_{1}\in I_{\rho,\ell_{1}}  }\ind{D^{1}_{\ell_{1},M}(u_{1},\wt{T}_{1})},    \\
D^{1}_{\ell_{1},M}(u_{1},\wt{T}_{1})=&\set{Q_{U}^{1}\para{e^{2u_{1}}(d_{1}-c_{1}+y_{1})}\bullet\para{\wt{T}_{1}}\in [a_{\ell_{1}}^{1},a_{\ell_{1}+1}^{1}]=I_{\ell_{1}}^{1}},
\end{eqalign}
and
\begin{eqalign}
\tilde{R}_{1}=&\maxls{u_{1}\in P_{1}\\\wt{T}_{1}\in I_{\rho,\ell_{1}}  }\para{\frac{a_{\ell_{1}+1}^{1}}{A_{1}(u_{1},\wt{T}_{1})}}^{p_{1}}=\para{\frac{a_{\ell_{1}+1}^{1}}{A^{min}}}^{p_{1}}\tand A^{min}:=\minls{u_{1}\in P_{1}\\\wt{T}_{1}\in I_{\rho,\ell_{1}}  }A_{1}(u_{1},\wt{T}_{1})
\end{eqalign}
As explained in the proof of \cref{prop:inverse_ratio_moments} this is an easier case of the previous because now they have matching shifts $Q_{c_{1}}$. So we leave it.
\subsection{Proof of \sectm{\cref{prop:Multipointunitcircleandmaximum}}: Decreasing numerator}
 Here we again study the product
\begin{eqalign}\label{eq:maxproductunitcircledecre}
&\Expe{\prod\limits_{k} \para{\frac{Q_{H}(c_{k},c_{k}+x_{k})}{Q_{H}(d_{k},d_{k}+y_{k})}}^{p_{k}}\ind{ G_{[1,M]},\Xi_{[1,M]}, U_{[1,M]}}}.
\end{eqalign}
After we scale by $\delta$, the denominators will be  nonsingular and so there is no need for the log-normal scaling law. We assume $c_{k}<d_{k}$ because the other case is symmetric. We start with decomposing $Q_{H}(c_{k})$ using the analogous decomposition events from \cref{eq:alltheventsdecompositions}
\begin{eqalign}
S_{m_{k},c}(u_{k}):=&\set{Q_{U}^{k}(e^{2u_{k}}c_{k})\in [\wt{b}_{m_{k}},\wt{b}_{m_{k}+1}]+Q^{k+1}(b_{k+1})+\delta_{k+1}},\\
\wt{I}_{m_{k},c}:=&[\wt{b}_{m_{k}},\wt{b}_{m_{k}+1}],
\end{eqalign}
for the diverging sequence from \cref{eq:divergingsequencebtilde} but for $\ell=1$, i.e. $\wt{b}_{m}:=\wt{b}_{m}(1)=\rho_{a}a_{m}$. So using the the semigroup formula we upper bound by a maximum
\begin{eqalign}\label{eq:maxproductunitcircledecre2}
\eqref{eq:maxproductunitcircledecre}\leq \Exp\Bigg[&\prod\limits_{k}\sum_{m_{k}}\max_{u_{k},T_{k}}\ind{S^{k}_{m_{k},c}(\ell_{k},u_{k})}\para{\frac{Q_{H}(x_{k})\bullet T_{k}}{Q_{H}(y_{k})\bullet \para{Q_{H}(d_{k}-c_{k})\bullet T_{k}+T_{k}}}}^{p_{k}}\\
&\cdot\ind{ G_{[1,M]},\Xi_{[1,M]}, U_{[1,M]}}\Bigg],   
\end{eqalign}
where the maximum is over $u_{k}\in P_{k}$ and $T_{k}\in \wt{I}_{m_{k},c}^{k}(\ell_{k})+Q^{k+1}(b_{k+1})+\delta_{k+1}$. Here we need to further switch to $Q_{U}^{k}$ by taking maximum again
\begin{eqalign}\label{eq:maxproductunitcircledecre3}
&\max_{T_{k}}\para{\frac{Q_{H}(x_{k})\bullet T_{k}}{Q_{H}(y_{k})\bullet \para{Q_{H}(d_{k}-c_{k})\bullet T_{k}+T_{k}}}}^{p_{k}}\\    
\leq &\max_{T_{k}}\max_{u^{i}_{k}\in P_{k}}\para{\frac{Q_{U}^{k}(e^{u^{1}_{k}}x_{k})\bullet T_{k}}{Q_{U}^{k}(e^{u^{2}_{k}}y_{k})\bullet \para{Q_{U}^{k}(e^{u^{3}_{k}}(d_{k}-c_{k})\bullet T_{k}+T_{k}}}}^{p_{k}}=:R_{k}.
\end{eqalign}
\proofparagraph{Decoupling}
As we did in the proof of \cref{prop:productconditionalratio}, we scale all the terms by $\delta_{1}$ which means that the first scale is now $Q^{1}$ (i.e. $U$ of height one) but for the other scales to ease on notation we still just write $Q^{2},Q^{3}$ (as opposed to $Q^{\delta_{2}/\delta_{1}}$). That also means that the denominator for $k=1$ is a constant term that does not depend on $\delta_{1}$ (since we set $y_{1}=r_{1}\delta_{1}$).\\
Next as done in the equal-length case, we apply the tower-property
\begin{eqalign}
\sum_{m_{1}}\Expe{ E_{1}^{S}  R_{1}Y_{[2,M]}}    
=\sum_{m_{1}}\Expe{\Expe{ E_{1}^{S}  R_{1}\conditional \CF(Q^{2}(b_{2}))} Y_{[2,M]}},    
\end{eqalign}
where $E_{1}^{S}:=\max_{u_{1},T_{1}}\ind{S^{1}_{m_{1},a}(u_{k})}$ and $Y_{[2,M]}$ are the remaining factors. Since we used the decomposition event, we separate it via conditional-\Holder
\begin{equation}
\Expe{E_{1}^{S} R_{1}\conditional \CF(Q^{2}(b_{2})}\leq \para{\Expe{\para{R_{1}}^{b_{11}}\conditional \CF(Q^{2}(b_{2})}}^{1/b_{11}}\para{\Proba{E_{1}^{S}\conditional \CF(Q^{2}(b_{2})}}^{1/b_{12}},        
\end{equation}
for $b_{11}^{-1}+b_{12}^{-1}=1$. For the event $E_{1}^{S}$ the situation is simpler than before since there is no $\ell$-dependence and so we can use the same method as before in \cref{eq:eventclk}. For the first factor, we can apply \cref{deltaSMP} to be left with studying
\begin{eqalign}\label{eq:ratiowithmaximum}
\para{\Expe{\max_{T_{1}\in [0,2]}\max_{u^{i}_{1}\in P_{1}}\para{\frac{Q_{U}^{1}(e^{u^{1}_{1}}x_{1})\bullet T_{1}}{Q_{U}^{1}(e^{u^{2}_{1}}y_{1})\bullet \para{Q_{U}^{1}(e^{u^{3}_{1}}(d_{1}-c_{1})\bullet T_{1}+T_{1}}}}^{b_{11}p_{1}}}}^{1/b_{11}}.
\end{eqalign}
By applying \Holder and bounding $e^{u^{1}_{1}}x_{1}\leq e^{2R_{1}^{pert}}x_{1}$, we almost return to the setting of \cref{eq:maximumratiodecnum}. For the numerator we simply substitute $x_{1}$ by $e^{2R_{1}^{pert}}x_{1}$ to get same exponent for $x_{1}$. For the denominator we instead use a version with perturbation in \cref{lem:minshiftedGMCpertubation}. But in the above specific situation it gives the same exponent as the un-perturbed version in \cref{lem:minshiftedGMC}. So by boostrapping we get the desired bound in \cref{eq:decaynumeratorunic}.

\part{ Appendix}

\section{Further research directions }\label{furtherresearchdirections}
\begin{enumerate}

\item\textbf{Conditional independence} Starting with variables $Q(b,c),Q_{a}$ and event $E:=\set{Q(b)\geq Q(a)+1}$, is there some sigma-algebra $\mathcal{G}$ that gives conditional independence
\begin{eqalign}
\Proba{Q(a)\geq t_{1},Q(b,c)\geq t_{2},E }=\Expe{\Proba{Q(a)\geq t_{1}\mid \mathcal{G}}\Proba{Q(b,c)\geq t_{2}\mid \mathcal{G}}}?    
\end{eqalign}
It seems unlikely because contrary to the case of random walks's conditional independence \cref{eq:randomwalkcondinde}, we don't have any independent increments. So if we decompose $Q_{b}=\ell$, then this event is still highly correlated with the increment $Q_{c-b}\bullet Q_{b}=Q_{c-b}\bullet \ell$.

  \item \textbf{Independence number}\\
 The independent interval graph with \iid endpoints has been studied in \cite{justicz1990random}. It would be interesting if a similar construction of some correlated Poisson process could give exact growth of the independence number.

    \item \textbf{Better bounds on the ratio moments.} In the similar spirit for \cref{prop:inverse_ratio_moments}, it would be interesting to possible experiment getting sharper exponents for the factor $x$ (in the decreasing numerator case) or even bounds that are uniform in $x$ (in equal length case).

    \item \textbf{Convergence to Lebesgue:} The extension to
\begin{eqalign}
\Proba{\abs{Q^{n}(a,b)-(b-a)}>\Delta} \tand \Proba{\eta(Q_{a},Q_{a}+x)-x\geq \Delta}
\end{eqalign}
is unclear. This is because the current proof using decoupling doesn't work as evaluating GMC over two distant subintervals $I_{1},I_{3}\subset \spara{Q_{a},Q_{a}+x} $ is still correlated.

\end{enumerate}

\begin{appendices}
\section{Comparison inequalities}
 We will use the Kahane's inequality (eg. \cite[corollary A.2]{robert2010gaussian}).
\begin{theorem}(Kahane Inequality)\label{Kahanesinequality}
Let $\rho$ be a Radon measure on a domain $D\subset\R^{n}$, $X(\cdot)$ and $Y(\cdot)$ be two continuous centred Gaussian fields on $D$, and $F: \Rplus \to \R$ be some smooth function with at most polynomial growth at infinity. For $t \in [0,1]$, define $Z_t(x) = \sqrt{t}X(x) + \sqrt{1-t}Y_t(x)$ and
\begin{eqalign}
\varphi(t) := \EE \left[ F(W_t)\right], \qquad
W_t := \int_D e^{Z_t(x) - \frac{1}{2}\EE[Z_t(x)^2]} \rho(dx).
\end{eqalign}

 Then the derivative of $\varphi$ is given by
\begin{equation}\label{eq:Kahane_int}
\begin{split}
\varphi'(t) & = \frac{1}{2} \int_D \int_D \left(\EE[X(x) X(y)] - \EE[Y(x) Y(y)]\right) \\
& \qquad \qquad \times \EE \left[e^{Z_t(x) + Z_t(y) - \frac{1}{2}\EE[Z_t(x)^2] - \frac{1}{2}\EE[Z_t(y)^2]} F''(W_t) \right] \rho(dx) \rho(dy).
\end{split}
\end{equation}

 In particular, if
\begin{eqalign}
\EE[X(x) X(y)] \le \EE[Y(x) Y(y)] \qquad \forall x, y \in D,
\end{eqalign}

 then for any convex $F: \Rplus \to \R$
\begin{eqalign}\label{eq:Gcomp}
\EE \left[F\left(\int_D e^{X(x) - \frac{1}{2} \EE[X(x)^2]}\rho(dx)\right)\right]
\le
\EE \left[F\left(\int_D e^{Y(x) - \frac{1}{2} \EE[Y(x)^2]}\rho(dx)\right)\right].
\end{eqalign}

 and the inequality is reversed if $F$ is concave instead. In the case of distributional fields $X,Y$, we use this result for the mollifications $X_{\e},Y_{\e}$ and pass to the limit due to the continuity of $F$.
\end{theorem}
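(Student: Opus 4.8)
The plan is to run the classical Gaussian interpolation argument. First I would handle the case of distributional fields: if $X,Y$ are only random Schwartz distributions, mollify them into continuous fields $X_\e,Y_\e$ whose covariances inherit the same pointwise ordering, apply the inequality to the pair $(X_\e,Y_\e)$, and pass to the limit $\e\to0$ using the continuity of $F$ together with the $L^1$-convergence of the mollified exponential functionals $W_t^\e\to W_t$ (the standard GMC approximation, for which the finite mass of $\rho$ and the polynomial growth of $F$ supply the needed uniform integrability). So from now on assume $X,Y$ continuous on the compact closure of $D$ and realized as independent processes on a common probability space.

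Set $Z_t(x)=\sqrt t\,X(x)+\sqrt{1-t}\,Y(x)$, so that $Z_0=Y$, $Z_1=X$, and crucially the covariance is affine in $t$:
\begin{eqalign}
\EE[Z_t(x)Z_t(y)]=t\,\EE[X(x)X(y)]+(1-t)\,\EE[Y(x)Y(y)].
\end{eqalign}
The first technical step is to verify that $\varphi(t)=\EE[F(W_t)]$ is $C^1$ on $(0,1)$ and that one may interchange $\tfrac{d}{dt}$ with the expectation and with the $\rho$-integral. This follows from dominated convergence once one has the moment bounds $\EE\big[\sup_{t\in[0,1]}W_t^{\,p}\big]<\infty$ for every $p\ge1$ — finite since $\rho(D)<\infty$ and continuous Gaussian fields on a compact set have finite exponential moments of their suprema — together with the at-most-polynomial growth of $F,F',F''$.

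The heart of the argument is the differentiation followed by Gaussian integration by parts. Writing $v_t(x)=\EE[Z_t(x)^2]$ and $\dot Z_t(x)=\tfrac{1}{2\sqrt t}X(x)-\tfrac{1}{2\sqrt{1-t}}Y(x)$, one gets
\begin{eqalign}
\varphi'(t)=\int_D\EE\spara{F'(W_t)\para{\dot Z_t(x)-\tfrac12\dot v_t(x)}e^{Z_t(x)-\frac12 v_t(x)}}\rho(dx).
\end{eqalign}
Applying Stein's identity to the centred Gaussian variable $\dot Z_t(x)$, jointly Gaussian with the family $\{Z_t(y)\}_{y\in D}$, produces two terms: the diagonal contribution $\EE[\dot Z_t(x)Z_t(x)]=\tfrac12\dot v_t(x)$ exactly cancels the drift $-\tfrac12\dot v_t(x)$, while the off-diagonal contribution uses the key cancellation
\begin{eqalign}
\EE[\dot Z_t(x)Z_t(y)]=\tfrac12\EE[X(x)X(y)]-\tfrac12\EE[Y(x)Y(y)]=\tfrac12\tfrac{d}{dt}\EE[Z_t(x)Z_t(y)],
\end{eqalign}
where the independence of $X,Y$ and the $\sqrt t,\sqrt{1-t}$ weights conspire. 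The outcome is precisely formula \eqref{eq:Kahane_int}. Integrating over $t\in[0,1]$ yields $\varphi(1)-\varphi(0)=\int_0^1\varphi'(t)\,dt$; under $\EE[X(x)X(y)]\le\EE[Y(x)Y(y)]$ and $F$ convex (so $F''\ge0$) the integrand is $\le0$ pointwise, whence $\varphi(1)\le\varphi(0)$, which is \eqref{eq:Gcomp}. The concave case is identical with the inequality reversed.

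The main obstacle is not conceptual but lies in the justification near the endpoints $t=0,1$, where $\dot Z_t$ carries a $t^{-1/2}$ (respectively $(1-t)^{-1/2}$) singularity so that the formal expression for $\varphi'(t)$ is not obviously integrable. The resolution is that after the integration-by-parts cancellation the surviving expression no longer contains $\dot Z_t$ at all — only the bounded covariance differences $\EE[X(x)X(y)]-\EE[Y(x)Y(y)]$ and the functional $\EE[F''(W_t)\,e^{\cdots}]$ — so $\varphi'$ is in fact bounded on $[0,1]$; one makes this rigorous by first working on $[\e,1-\e]$ with a truncation of $F''$, invoking the uniform integrability above, and then letting the truncation parameters go to their limits.
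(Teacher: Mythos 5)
The paper does not actually prove this theorem; it is quoted from the literature (\cite[corollary A.2]{robert2010gaussian}, going back to Kahane), and your interpolation argument is exactly the standard proof given there: affine interpolation of covariances via $Z_t=\sqrt{t}X+\sqrt{1-t}Y$ with $X,Y$ independent, Gaussian integration by parts in which the diagonal term $\EE[\dot Z_t(x)Z_t(x)]=\tfrac12\dot v_t(x)$ cancels the drift and the off-diagonal term produces the covariance difference, and then a sign analysis of $\varphi'$ under convexity. Your handling of the technical points (mollification and passage to the limit, differentiation under the expectation, and the removable $t^{-1/2}$ endpoint singularity, noting also that the statement's ``$Y_t(x)$'' should read $Y(x)$) is correct, so the proposal is sound and coincides with the cited proof.
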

 The interpolation has implications for the max/min of Gaussian fields.
\begin{theorem}\label{th:slepian}(Slepian's lemma)\cite[Theorem 7.2.1]{vershynin2018high}
Let $\set{X_t}_{t\in T}$ and $\set{Y_t}_{t\in T}$ be two mean zero Gaussian processes. Assume that for all $t, s \in T$, in some index set $T$, we have
\begin{equation}
\Expe{X_{t}^{2}}=\Expe{Y_{t}^{2}}    \tand \Expe{X_{t}X_{s}}\leq \Expe{Y_{t}Y_{s}}. 
\end{equation}
In particular, for every $\tau\in \R$, we have
\begin{equation}
\Proba{\sup_{t\in T}X_{t}\leq \tau }\leq \Proba{\sup_{t\in T}Y_{t}\leq \tau }\tand \Proba{\inf_{t\in T}X_{t}\geq -\tau }\leq \Proba{\inf_{t\in T}Y_{t}\geq -\tau }.    
\end{equation}
\end{theorem}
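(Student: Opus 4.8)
\medskip\noindent\textit{Proof idea for \Cref{th:slepian}.}
The plan is to prove Slepian's lemma by the classical Gaussian interpolation argument, i.e. the same mechanism that produces the derivative formula \eqref{eq:Kahane_int} in \Cref{Kahanesinequality}: interpolate (in the square-root sense) between the covariance structures of $X$ and $Y$, differentiate the relevant expectation in the interpolation parameter, and use Gaussian integration by parts to rewrite that derivative as a sum of second derivatives weighted by differences of covariances. First I would reduce to a finite index set. Both probabilities in the statement are monotone decreasing under enlargement of $T$, and the processes are (as always in this context) separable, so it suffices to prove the inequality for every finite subset $T_{0}=\{t_{1},\dots,t_{n}\}\subset T$ and then let $T_{0}$ increase to a countable dense subset of $T$, using that $\Proba{\sup_{t\in T}X_{t}\le\tau}=\lim_{m}\Proba{\max_{t\in T_{0}^{m}}X_{t}\le\tau}$ along such an exhaustion. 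So assume $T=\{1,\dots,n\}$ and write $X=(X_{1},\dots,X_{n})$, $Y=(Y_{1},\dots,Y_{n})$ as centred Gaussian vectors in $\R^{n}$, realized independently on one probability space, with covariances $\Sigma^{X},\Sigma^{Y}$ satisfying $\Sigma^{X}_{ii}=\Sigma^{Y}_{ii}$ and $\Sigma^{X}_{ij}\le\Sigma^{Y}_{ij}$.

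Next I would set up the interpolation together with a smoothing of the indicator. For $\theta\in[0,1]$ put $Z^{(\theta)}:=\sqrt{\theta}\,X+\sqrt{1-\theta}\,Y$, a centred Gaussian vector with covariance $\theta\Sigma^{X}+(1-\theta)\Sigma^{Y}$, so $Z^{(1)}=X$ and $Z^{(0)}=Y$. Fix $\tau\in\R$ and, for $\e>0$, choose a smooth non-increasing $\psi_{\e}\colon\R\to[0,1]$ with $\psi_{\e}\equiv 1$ on $(-\infty,\tau]$, $\psi_{\e}\equiv 0$ on $[\tau+\e,\infty)$, and $\psi_{\e}\uparrow\mathbf 1_{(-\infty,\tau]}$ pointwise as $\e\downarrow 0$. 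Set $F_{\e}(z):=\prod_{i=1}^{n}\psi_{\e}(z_{i})$ and $\varphi(\theta):=\Expe{F_{\e}(Z^{(\theta)})}$. The product form of $F_{\e}$ is the crucial choice, since it makes all mixed second derivatives sign-definite.

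Then I would differentiate. By the Gaussian integration-by-parts identity, applied exactly as in the derivation of \eqref{eq:Kahane_int},
\[
\varphi'(\theta)=\tfrac12\sum_{i,j=1}^{n}\big(\Sigma^{X}_{ij}-\Sigma^{Y}_{ij}\big)\,\Expe{\partial_{i}\partial_{j}F_{\e}\big(Z^{(\theta)}\big)}.
\]
The diagonal terms drop out because $\Sigma^{X}_{ii}=\Sigma^{Y}_{ii}$. For $i\ne j$ one has $\partial_{i}\partial_{j}F_{\e}(z)=\psi_{\e}'(z_{i})\,\psi_{\e}'(z_{j})\prod_{k\ne i,j}\psi_{\e}(z_{k})\ge 0$, since $\psi_{\e}'\le 0$ and $\psi_{\e}\ge 0$, while $\Sigma^{X}_{ij}-\Sigma^{Y}_{ij}\le 0$ by hypothesis; hence every off-diagonal summand is $\le 0$ and $\varphi'\le 0$ on $[0,1]$. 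Therefore $\Expe{F_{\e}(X)}=\varphi(1)\le\varphi(0)=\Expe{F_{\e}(Y)}$, and monotone convergence as $\e\downarrow 0$ gives $\Proba{\max_{i}X_{i}\le\tau}\le\Proba{\max_{i}Y_{i}\le\tau}$; passing to the limit over finite $T_{0}$ yields the first inequality of the theorem. The second inequality follows by applying the first to $-X,-Y$ (same variances and covariances) and using $\Proba{\inf_{t\in T}X_{t}\ge-\tau}=\Proba{\sup_{t\in T}(-X_{t})\le\tau}$.

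The one point needing care is the legitimacy of differentiating under the expectation and the integration-by-parts step when the interpolated covariance $\theta\Sigma^{X}+(1-\theta)\Sigma^{Y}$ is singular. The clean fix is to first replace $X,Y$ by $X+\sqrt{\delta}\,G$ and $Y+\sqrt{\delta}\,G'$ with $G,G'$ independent standard Gaussians in $\R^{n}$, so that the interpolated covariance is strictly positive definite, its density is jointly smooth in $(\theta,z)$ with dominated $\theta$-derivative, run the argument above, and then let $\delta\downarrow 0$; alternatively one invokes the distributional version of the interpolation identity recorded at the end of \Cref{Kahanesinequality}. Apart from this and the routine separability reduction there is no serious obstacle, and the proof is short.
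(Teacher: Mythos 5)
Your proof is correct and is essentially the standard Gaussian-interpolation argument used in the cited source \cite[Theorem 7.2.1]{vershynin2018high} (the paper itself states this lemma without proof); the interpolation $Z^{(\theta)}=\sqrt{\theta}X+\sqrt{1-\theta}Y$, the product form $F_{\e}(z)=\prod_i\psi_{\e}(z_i)$ making the off-diagonal second derivatives sign-definite, the cancellation of diagonal terms via equal variances, and the reduction of the infimum statement to the supremum one via $-X,-Y$ are all exactly right. The only nitpick is that $\psi_{\e}\downarrow\mathbf 1_{(-\infty,\tau]}$ (not $\uparrow$) as $\e\downarrow 0$, which is harmless since bounded convergence suffices for the limit.
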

 We will also consider another comparison inequality for increasing/decreasing functionals which generalizes the Harris-inequality \cite[Theorem 2.15]{boucheron2013concentration} that is called FKG inequality for Gaussian fields \cite{pitt1982positively},\cite[Theorem 3.36]{berestycki2021gaussian}.
\begin{theorem}[FKG inequality]\label{FKGineq}
Let $\set{Z(x)}_{x\in U}$ be an a.s. continuous centred Gaussian field on $U \subset \R^d$ with $\Expe{Z(x)Z(y)} \geq 0$ for all $x, y \in U$. Then, if $f, g$ are two bounded, tincreasing measurable functions,
\begin{equation}
\Expe{f\para{\set{Z(x)}_{x\in U}}g\para{\set{Z(x)}_{x\in U}}}    \geq \Expe{f\para{\set{Z(x)}_{x\in U}}}\Expe{g\para{\set{Z(x)}_{x\in U}}}    
\end{equation}
and the opposite inequality if one function is increasing and the other one is decreasing
\begin{equation}
\Expe{f\para{\set{Z(x)}_{x\in U}}g\para{\set{Z(x)}_{x\in U}}}    \leq \Expe{f\para{\set{Z(x)}_{x\in U}}}\Expe{g\para{\set{Z(x)}_{x\in U}}}.    
\end{equation}
\end{theorem}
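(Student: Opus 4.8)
The statement is Pitt's theorem on the association of nonnegatively correlated Gaussian variables, lifted from $\R^{n}$ to an a.s.\ continuous Gaussian field; the plan is to reduce to finitely many coordinates and then run a Gaussian interpolation in the covariance — the same mechanism that produces the formula \eqref{eq:Kahane_int} in \Cref{Kahanesinequality}. First, since $\set{Z(x)}_{x\in U}$ has a.s.\ continuous sample paths, fix a countable dense set $\set{x_{1},x_{2},\dots}\subset U$; a continuous path is determined by its values there, and a bounded increasing functional $f$ (resp.\ $g$) is an a.s.\ limit of bounded increasing \emph{cylinder} functionals depending only on $Z(x_{1}),\dots,Z(x_{n})$ — for instance, compose $f$ with a piecewise-linear interpolation through the sampled values, which is monotone in those values and converges uniformly to the path, and for merely measurable increasing $f$ first approximate the underlying increasing event from inside/outside by open/closed increasing sets using regularity of the Gaussian law. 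By dominated convergence it therefore suffices to prove both inequalities when $f(\set{Z(x)})=\tilde f\para{Z(x_{1}),\dots,Z(x_{n})}$ and $g(\set{Z(x)})=\tilde g\para{Z(x_{1}),\dots,Z(x_{n})}$ with $\tilde f,\tilde g\colon\R^{n}\to\R$ bounded and coordinatewise increasing; convolving $\tilde f,\tilde g$ with a nonnegative smooth compactly supported mollifier (which preserves monotonicity and boundedness and converges a.e.) lets us further assume $\tilde f,\tilde g\in C^{\infty}$ with bounded derivatives, so that $\partial_{i}\tilde f\geq 0$ and $\partial_{j}\tilde g\geq 0$ pointwise.

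Reduced to the finite-dimensional smooth case, set $\Sigma:=\para{\Expe{Z(x_{i})Z(x_{j})}}_{i,j\leq n}$, so $\Sigma_{ij}\geq 0$ by hypothesis, and let $X\sim N(0,\Sigma)$. For $\rho\in[0,1]$ introduce a centred Gaussian vector $(U,V)$ in $\R^{2n}$ each of whose $n$-dimensional marginals is $N(0,\Sigma)$ and with cross-covariance $\Expe{U_{i}V_{j}}=\rho\,\Sigma_{ij}$; arranged as $(U_{1},\dots,U_{n},V_{1},\dots,V_{n})$ its covariance is $\left(\begin{smallmatrix}1&\rho\\\rho&1\end{smallmatrix}\right)\otimes\Sigma$, a Kronecker product of positive semidefinite matrices, hence positive semidefinite. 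Put $\psi(\rho):=\Expe{\tilde f(U)\tilde g(V)}$, so $\psi(0)=\Expe{\tilde f(X)}\Expe{\tilde g(X)}$ by independence and $\psi(1)=\Expe{\tilde f(X)\tilde g(X)}$ since the two blocks are then perfectly coupled. Only the off-diagonal entries $\rho\Sigma_{ij}$ of the $2n\times 2n$ covariance depend on $\rho$, so the Gaussian heat-equation (Price) identity — differentiation in the covariance, exactly as in \eqref{eq:Kahane_int} — gives
\begin{equation}
\psi'(\rho)=\sum_{i,j=1}^{n}\Sigma_{ij}\,\Expe{\partial_{i}\tilde f(U)\,\partial_{j}\tilde g(V)},
\end{equation}
the interchange of $\tfrac{d}{d\rho}$ with $\Expe{\cdot}$ being legitimate because $\tilde f,\tilde g$ are smooth with bounded derivatives and the Gaussian density is smooth with all moments finite. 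Every summand is a product of the nonnegative factors $\Sigma_{ij}$, $\partial_{i}\tilde f\geq 0$, $\partial_{j}\tilde g\geq 0$, so $\psi'\geq 0$ on $[0,1]$, whence $\psi(1)\geq\psi(0)$; undoing the mollification and the cylinder approximation yields the first inequality for the original $f,g$.

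For the reversed inequality the only change is that $g$ is decreasing, so the smoothed $\tilde g$ has $\partial_{j}\tilde g\leq 0$ (while $\partial_{i}\tilde f\geq 0$ as before), and the same formula forces $\psi'\leq 0$, i.e.\ $\Expe{\tilde f(X)\tilde g(X)}\leq\Expe{\tilde f(X)}\Expe{\tilde g(X)}$, again passing to the limit through the approximation. The routine-but-genuinely-fiddly step is the first one: verifying that a bounded \emph{measurable} increasing functional of the continuous path is an a.s.\ monotonicity-preserving limit of smooth increasing cylinder functionals (this is where sample-path continuity, separability of $U$, and regularity of the Gaussian measure are used); the interpolation computation and the semidefiniteness check are short. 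A self-contained alternative is to quote Pitt's theorem that nonnegatively correlated Gaussian vectors are associated, together with the standard fact that association of all finite-dimensional marginals of a separable process upgrades to association of the process.
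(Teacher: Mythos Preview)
The paper does not actually prove \Cref{FKGineq}: it is stated in the appendix as a quoted result, with references to Pitt \cite{pitt1982positively} and \cite[Theorem 3.36]{berestycki2021gaussian}, and no argument is given. So there is no ``paper's own proof'' to compare against.

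Your proposal is a correct sketch of the standard proof --- precisely Pitt's original Gaussian-interpolation argument. The reduction to finitely many coordinates via sample-path continuity and a dense set, the mollification to smooth coordinatewise-monotone cylinder functions, the interpolation family $\left(\begin{smallmatrix}1&\rho\\\rho&1\end{smallmatrix}\right)\otimes\Sigma$, and the Price/heat-equation identity yielding $\psi'(\rho)=\sum_{i,j}\Sigma_{ij}\,\Expe{\partial_{i}\tilde f(U)\,\partial_{j}\tilde g(V)}\geq 0$ are all exactly right, and the sign flip for the decreasing case is immediate. Your honest flag on the measurable-to-cylinder approximation step is appropriate: that is the only place where any real care is needed, and your outline (monotone-class/regularity of the Gaussian law on path space) is the standard way to handle it. In short, your argument is the proof the cited references give, so there is nothing to contrast --- you have supplied what the paper elected to outsource.
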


\section{Moment estimates for GMC }
\subsection{Moments bounds of GMC}
 For positive continuous bounded function $g_{\delta}:\Rplus\to \Rplus$ with $g_{\delta}(x)=0$ for all $x\geq \delta$ and uniform bound $M_{g}:=\sup_{\delta\geq 0}\norm{g_{\delta}}_{\infty}$,  we will need the following moment estimates for $\eta^{\delta}=\eta_{g}^{\delta}$ in order to compute the moments of the inverse $Q_{g}^{\delta}$. As mentioned in the notations we need to study more general fields such as $U^{\delta,\lambda}$ in \cref{eq:truncatedscaled}
\begin{equation}
\Expe{U_{ \varepsilon}^{\delta,g }(x_{1} )U_{ \varepsilon}^{  \delta,g }(x_{2} )  }=\left\{\begin{matrix}
\ln(\frac{\delta }{\varepsilon} )-\para{\frac{1 }{\e}-\frac{1}{\delta}}\abs{x_{2}-x_{1}}+g_{\delta}(\abs{x_{2}-x_{1}})&\tifc \abs{x_{2}-x_{1}}\leq \varepsilon\\ 
 \ln(\frac{\delta}{\abs{x_{2}-x_{1}}})-1+\frac{\abs{x_{2}-x_{1}}}{\delta}+g_{\delta}(\abs{x_{2}-x_{1}}) &\tifc \e\leq \abs{x_{2}-x_{1}}\leq \delta\\
  0&\tifc \delta\leq \abs{x_{2}-x_{1}}
\end{matrix}\right.    .
\end{equation}
The following proposition shown in \cite{binder2023inverse} is studying the moment bounds for this field.
\begin{proposition}\label{momentseta}
We have the following estimates: 
\begin{itemize}
    \item  For $q\in (-\infty,0)\cup [1,\frac{2}{\gamma^{2}})$ we have
\begin{equation}
\Expe{\para{\eta^{\delta}[0,  t]}^{q}}\leq c_{1}\branchmat{t^{\zeta(q)}\delta^{\frac{\gamma^{2}}{2}(q^{2}-q)} &t\in [0,\delta]\\ t^{q}& t\in [\delta,\infty] },
\end{equation}
where $\zeta(q):=q-\frac{\gamma^{2}}{2}(q^{2}-q)$ is the multifractal exponent and $c_{1}:=e^{\frac{\gamma^{2}}{2}(q^{2}-q)M_{g}}\Expe{\para{\int_{0}^{1}e^{ \overline{\omega}^{1}(x)}\dx}^{q}}$ and $\omega^1$ is the exact-scaling field of height $1$.

\item  When $t,\delta\in [0,1]$ then we have a similar bound with no $\delta$
\begin{equation}
\Expe{\para{\eta^{\delta}[0,  t]}^{q}}\leq c_{1} t^{\zeta(q)},
\end{equation}
but this exponent is not that sharp: if $\delta\leq t\leq 1$ and $q>1$, we have $q>\zeta(q)$ and so the previous bound is better. 

\item Finally, for all $t\in [0,\infty)$ and $q\in [0,1]$, we have
\begin{equation}
\Expe{\para{\eta^{\delta}[0,  t]}^{q}}\leq c_{2,\delta } t^{\zeta(q)}\delta^{\frac{\gamma^{2}}{2}(q^{2}-q)},
\end{equation}
where $c_{2,\delta}:=e^{\frac{\gamma^{2}}{2}(q^{2}-q)M_{g}}\Expe{\para{\int_{0}^{1}e^{ \overline{\omega}^{\delta}(x)}\dx}^{q}}$ where $\omega^{\delta}$ the exact-scaling field of height $\delta$.
\end{itemize}
Furthermore, for the positive moments in $p\in (0,1)$ of lower truncated GMC $\eta_{n}(0,t):=\int_{0}^{t}e^{U_{n}(s)}\ds$, we have the  bound
\begin{equation}\label{lowertrunp01}
\Expe{\para{\eta_{n}(0,t)}^{p}}\lessapprox~t^{\zeta(p)}, \forall t\geq 0.    
\end{equation}
When $p\in(-\infty,0)\cup (1,\frac{2}{\gamma^{2}})$, we have
\begin{equation}
\Expe{\para{\eta_{n}(0,t)}^{p}}\leq \Expe{\para{\eta(0,t)}^{p}} , \forall t\geq 0.       
\end{equation}
\end{proposition}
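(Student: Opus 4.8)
The plan is to reduce each estimate, for a fixed exponent $q$, to the exact--scaling field $\omega$, for which the scaling law of \Cref{prop:GMClogscalinglaw} produces the powers of $t$ and $\delta$ explicitly, and then to patch the resulting single--block bound into the regime $t\ge\delta$ by (sub)additivity and by independence of well--separated increments; throughout one uses the elementary identity $\zeta(q)+\tfrac{\gamma^{2}}{2}(q^{2}-q)=q$, which is what makes the two regimes match at $t=\delta$. The first step is a Kahane comparison: by \Cref{lem:scalinglawudelta} it suffices to treat $\delta$ fixed (after scaling so that the block has unit length), and on any interval of length $\le\delta$ the covariance of $U^{\delta,g}_{\e}$, and of the scaled field $U^{\delta,\lambda}_{\e}$ of \cref{eq:truncatedscaled}, differs from that of $\omega^{\delta}_{\e}$ by a quantity lying in a fixed bounded interval $[-1,M_{g}]$, uniformly in $\lambda\in(0,1)$. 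Applying \Cref{Kahanesinequality} with $F(x)=x^{q}$ --- convex for $q\ge1$ or $q<0$, concave for $q\in[0,1]$, which fixes the direction of the comparison --- against $\omega^{\delta}_{\e}$ shifted by an independent Gaussian of fixed variance, the shift factors out of the chaos and contributes, after taking $q$--th moments, precisely the lognormal constant appearing in $c_{1}$ (resp.\ $c_{2,\delta}$); this reduces all three displayed bounds to the corresponding statements for $\eta^{\delta}_{\omega}$.

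For $t\le\delta$ I would then invoke \Cref{prop:GMClogscalinglaw} with $\lambda=t/\delta$ and $A$ an interval of length $\le\delta$ on which $\omega^{\delta,\lambda}$ and $\omega^{\delta}$ have the same covariance, so that $\eta^{\delta}_{\omega}[0,t]\eqdis\lambda e^{\overline{\Omega_{\lambda}}}\,\widetilde{\eta}$ with $\widetilde{\eta}\eqdis\eta^{\delta}_{\omega}[0,\delta]$ and $\Expe{e^{q\overline{\Omega_{\lambda}}}}=\lambda^{-\frac{\gamma^{2}}{2}(q^{2}-q)}$; taking moments gives $\Expe{(\eta^{\delta}_{\omega}[0,t])^{q}}=(t/\delta)^{\zeta(q)}\,\Expe{(\eta^{\delta}_{\omega}[0,\delta])^{q}}$, and a further rescaling to unit length turns $\Expe{(\eta^{\delta}_{\omega}[0,\delta])^{q}}$ into $\delta^{q}$ times the reference moment $\Expe{\bigl(\int_{0}^{1}e^{\overline{\omega}^{1}(x)}\,dx\bigr)^{q}}$ (resp.\ with $\overline{\omega}^{\delta}$). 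This reference moment is finite exactly on $q\in(-\infty,0)\cup[0,2/\gamma^{2})$ by the classical moment theory of one--dimensional GMC (all negative moments exist, positive moments up to $2/\gamma^{2}$), and combined with the first step and the identity $\zeta(q)+\tfrac{\gamma^{2}}{2}(q^{2}-q)=q$ this yields the first and third bullets for $t\le\delta$; when moreover $t,\delta\le1$ the $\delta$--free refinement follows because $\zeta(q)\le q$ on $[0,1]$ while $\delta^{\frac{\gamma^{2}}{2}(q^{2}-q)}\ge1$ there.

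For $t\ge\delta$ I would split $[0,t]$ into $\lceil t/\delta\rceil$ blocks of length $\le\delta$. When $q\ge1$, Minkowski's inequality together with the block bound $\|\eta^{\delta}[0,\delta]\|_{q}\le c^{1/q}\delta$ gives $\Expe{(\eta^{\delta}[0,t])^{q}}\le c't^{q}$; when $q\in[0,1]$, Jensen gives $\Expe{(\eta^{\delta}[0,t])^{q}}\le(\Expe{\eta^{\delta}[0,t]})^{q}=t^{q}\le t^{\zeta(q)}\delta^{\frac{\gamma^{2}}{2}(q^{2}-q)}$ since that last exponent is $\le0$; and when $q<0$ one keeps only $\asymp t/\delta$ blocks that are pairwise separated by more than $\delta$ --- hence independent, because the covariance of $U^{\delta}$ vanishes at distance $\ge\delta$ --- so that $\eta^{\delta}[0,t]$ dominates a sum $S_{N}$ of $N\asymp t/\delta$ i.i.d.\ copies of $\eta^{\delta}[0,\delta]$ and convexity of $x\mapsto x^{q}$ gives $\Expe{(\eta^{\delta}[0,t])^{q}}\le\Expe{S_{N}^{q}}\le N^{q}\Expe{(\eta^{\delta}[0,\delta])^{q}}\le c't^{q}$. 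For the lower--truncated field, $\Expe{\eta_{n}(0,t)^{p}}\le\Expe{\eta(0,t)^{p}}$ on $p\in(-\infty,0)\cup(1,2/\gamma^{2})$ is immediate from \Cref{Kahanesinequality} with the convex $F(x)=x^{p}$, since $\Expe{U_{\e}(x)U_{\e}(y)}\le\Expe{U(x)U(y)}$ --- the region defining $U_{\e}$ being contained in $\mathcal{S}$, by monotonicity of the white--noise covariance in the index set --- and $\Expe{\eta_{n}(0,t)^{p}}\lesssim t^{\zeta(p)}$ for $p\in(0,1)$ follows by re--running the first two steps with the truncated exact--scaling field (which obeys the same scaling law, now relating level $\e$ to level $\lambda\e$), the absence of $\delta$ again coming from $\zeta(p)\le p$ on $[0,1]$.

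The main obstacle I anticipate is the gluing of the two regimes for $q<0$: the naive monotonicity bound $\eta^{\delta}[0,t]\ge\eta^{\delta}[0,\delta]$ only yields $\delta^{q}$, which \emph{exceeds} the required $t^{q}$, so the independence--of--distant--increments step is genuinely needed, and keeping the exact identity $\zeta(q)+\tfrac{\gamma^{2}}{2}(q^{2}-q)=q$ throughout is essential for the powers to line up at $t=\delta$. The concave range $q\in[0,1]$ is the other delicate point, since the direction of Kahane's inequality flips there and one must instead fall back on the exact value $\Expe{\eta^{\delta}[0,t]}=t$ of the first GMC moment.
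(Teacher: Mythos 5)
The paper itself contains no proof of this proposition --- it is quoted from \cite{binder2023inverse} --- so there is nothing internal to compare against; judged on its own terms, your reconstruction is sound and follows the route one would expect the cited proof to take: a Kahane comparison of $U^{\delta,g}$ with the exact-scaling field (the bounded covariance discrepancy, lying in $[-1,M_{g}]$, absorbed into an independent Gaussian, which is exactly what produces the lognormal factor in $c_{1}$), the scaling law of \Cref{prop:GMClogscalinglaw} to generate $t^{\zeta(q)}\delta^{\frac{\gamma^{2}}{2}(q^{2}-q)}$ for $t\le\delta$, and elementary gluing for $t\ge\delta$: Minkowski for $q\ge 1$, Jensen for $q\in[0,1]$, and, for $q<0$, the genuinely necessary step of keeping only $\asymp t/\delta$ blocks separated by more than $\delta$ (independent, since the covariance \eqref{eq:Ucovariance} vanishes at that distance) and using convexity of $x\mapsto x^{q}$. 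The lower-truncation claims, via monotonicity of the set-indexed covariance (equivalently the martingale/conditional-Jensen argument the paper uses in \Cref{ratetoLebesgue}) and via re-running the scaling argument for the truncated exact-scaling field with a Jensen bound on the reference moment, are also correct.

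Two points should be tightened. First, in the concave range $q\in[0,1]$ the comparison field of smaller covariance cannot be $\omega^{\delta}$ itself (its covariance is not pointwise below that of $U^{\delta,g}$): you must attach the compensating Gaussian to $U^{\delta,g}$, i.e.\ compare $e^{\overline{N}}\eta_{U}$ with $\eta_{\omega^{\delta}}$ and then divide out the lognormal moment; moreover your closing remark about falling back on $\Expe{\eta^{\delta}[0,t]}=t$ can only refer to the regime $t\ge\delta$, since for $t\le\delta$ Jensen yields only $t^{q}$, which is strictly weaker than $t^{\zeta(q)}\delta^{\frac{\gamma^{2}}{2}(q^{2}-q)}$ there, so the Kahane-plus-scaling argument is indispensable for the third bullet. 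Second, several inequality directions are written backwards: $\zeta(q)\ge q$ on $[0,1]$ (one has $\zeta(q)\le q$ off $(0,1)$); the reduction of the second bullet uses $\delta^{\frac{\gamma^{2}}{2}(q^{2}-q)}\le 1$ for the convex range together with $t^{q}\le t^{\zeta(q)}$ for $t\le 1$; and in \eqref{lowertrunp01} the absence of $\delta$ comes from the unit correlation length of $U_{n}$ (any fixed-height factor is absorbed into the implicit constant) plus $t^{p}\le t^{\zeta(p)}$ for $t\ge 1$. These are slips of wording rather than gaps --- the exponents and mechanisms are right --- though note that your $t\ge\delta$ bounds come with constants of the form $c\,c_{1}$ rather than the exact $c_{1}$ displayed in the statement.
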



\subsection{Comparison of unit circle and real line}
Here we study the moments for $Q_{H}(a)$ because we need them in the ratio computations \cref{prop:inverse_ratio_moments}. We only study the convex case.
\begin{proposition}\label{prop:momentsunitcirclerealine}
Fix $q\in (-\infty,0]\cup [1,\frac{2}{\gamma^{2}})$. For $t\in [0,1)$ we have the bound
\begin{equation}
\Expe{\para{\eta_{H}(0,t)}^{q}}\leq c t^{\zeta(q)}    
\end{equation}
and for $t\in [1,\infty)$ we have the bound
\begin{equation}
\Expe{\para{\eta_{H}(0,t)}^{q}}\lessapprox \branchmat{\ceil{t}^{q}  &q>0\\\floor{t}^{q}& q<0  }.
\end{equation}    
\end{proposition}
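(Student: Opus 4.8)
The plan is to reduce everything to the real-line estimates of \Cref{momentseta} by a Kahane comparison, and, for $t\geq 1$, to exploit the periodicity of the circle field $H$.

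\textbf{Small $t$.} First I would treat $t\in[0,1/2]$. On $[0,1/2]^2$ the regularized covariance $\Expe{H_{\e}(x)H_{\e}(y)}$ and the regularized covariance $\Expe{U^{1}_{\e}(x)U^{1}_{\e}(y)}$ of the height-$1$ real-line field both equal $\ln\frac{1}{|x-y|}$ plus a function that is bounded uniformly in $\e$ (this is read off from \eqref{eq:covarianceunitcircle} and \eqref{eq:Ucovariance}, using that $u\mapsto \frac{u}{\sin\pi u}$ is bounded on $(0,1/2]$ and handling the near-diagonal truncation regimes directly). Hence the difference of the two covariances is bounded by a constant $M$; writing the field with the larger covariance as the other one plus an independent Gaussian of variance proportional to $M$, \Cref{Kahanesinequality} applied with the convex function $F(x)=x^{q}$ (convex on $(0,\infty)$ for $q\in(-\infty,0]\cup[1,\infty)$, using the standard truncation near $0$ in the negative range) gives, up to a multiplicative constant depending only on $q,\gamma,M$,
\begin{eqalign}
\Expe{\para{\eta_{H}(0,t)}^{q}}\lesssim \Expe{\para{\eta^{1}(0,t)}^{q}}\lesssim t^{\zeta(q)},
\end{eqalign}
the last inequality by the second bullet of \Cref{momentseta} with $\delta=1$. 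In particular $\Expe{\para{\eta_{H}([0,1/2))}^{q}}<\infty$, and by rotation invariance of $H$ together with $q$-th power (sub)additivity this upgrades to $\Expe{\para{\eta_{H}([0,1))}^{q}}=:C_{q}<\infty$ for every $q$ in the stated range.

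\textbf{The window $t\in(1/2,1)$.} Here I would only use monotonicity of $t\mapsto\eta_{H}(0,t)$ and the bound just obtained. For $q\geq 1$, $\eta_{H}(0,t)\leq \eta_{H}([0,1))$, so $\Expe{\para{\eta_{H}(0,t)}^{q}}\leq C_{q}\lesssim t^{\zeta(q)}$ since $t^{\zeta(q)}$ is bounded below on $[1/2,1)$. For $q<0$, $\eta_{H}(0,t)\geq \eta_{H}(0,1/2)$, so $\Expe{\para{\eta_{H}(0,t)}^{q}}\leq \Expe{\para{\eta_{H}(0,1/2)}^{q}}\lesssim (1/2)^{\zeta(q)}\lesssim t^{\zeta(q)}$, using $\zeta(q)<0$ for $q<0$ so that $t^{\zeta(q)}\geq 1$ on $(1/2,1)$.

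\textbf{Periodic extension, $t\geq 1$.} Let $m=\floor{t}$. Splitting $[0,t)$ into unit cells and using periodicity, $\eta_{H}([j,j+1))\eqdis \eta_{H}([0,1))$ for every integer $j$. For $q\geq 1$ write $\eta_{H}(0,t)\leq \sum_{j=0}^{m}\eta_{H}([j,j+1))$; the power-mean inequality gives $\para{\sum_{j=0}^{m}a_{j}}^{q}\leq (m+1)^{q-1}\sum_{j=0}^{m}a_{j}^{q}$, so
\begin{eqalign}
\Expe{\para{\eta_{H}(0,t)}^{q}}\leq (m+1)^{q}\,\Expe{\para{\eta_{H}([0,1))}^{q}}=(\floor{t}+1)^{q}C_{q}\lesssim \ceil{t}^{\,q}.
\end{eqalign}
For $q<0$ write $\eta_{H}(0,t)\geq \sum_{j=0}^{m-1}\eta_{H}([j,j+1))$; Jensen's inequality for the convex map $x\mapsto x^{q}$ gives $\para{\sum_{j=0}^{m-1}a_{j}}^{q}\leq m^{q-1}\sum_{j=0}^{m-1}a_{j}^{q}$ pointwise, hence
\begin{eqalign}
\Expe{\para{\eta_{H}(0,t)}^{q}}\leq m^{q}\,\Expe{\para{\eta_{H}([0,1))}^{q}}=\floor{t}^{\,q}C_{q}.
\end{eqalign}
This yields both displayed bounds of the proposition.

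\textbf{Main obstacle.} The only genuinely non-mechanical point is the uniform-in-$\e$ bound on the covariance difference on $[0,1/2]^{2}$ in the first step: one must check it across all the regimes of \eqref{eq:covarianceunitcircle}, in particular near the diagonal where the mollification modifies the kernel. Everything afterwards is bookkeeping with monotonicity, periodicity, and Jensen; one should also record that the convention $Q_{H}:\Rplus\to\Rplus$ means $\eta_{H}$ on $[0,\infty)$ is the periodic extension of the circle measure, which is immediate from the definition of $W_{per}$.
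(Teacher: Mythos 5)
Your proposal is correct and follows essentially the same route as the paper: a Kahane comparison with a translation-invariant log-correlated reference field to get the $t^{\zeta(q)}$ bound for small $t$ (the paper compares with the exact-scaling field $\omega$ via the quoted bound \eqref{eq:covarianceunitcirclerealline} from [SJ15, Lemma 6.5], while you compare with $U^{1}$ on $[0,1/2]$ by hand and patch $t\in(1/2,1)$ by monotonicity), and the periodic unit-cell decomposition for $t\geq 1$. The only cosmetic difference is that for $t\geq 1$ the paper applies Kahane once more to pass to $n\eta_{\omega}(0,1)+\eta_{\omega}(0,r)$, whereas you stay with $\eta_{H}$ and use power-mean/Jensen directly, which is equally valid.
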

\begin{proof}
As explained in \cref{sec:fieldonunitcircle}, we have the covariance: for points $x,\xi\in [0,1)$, with 0 and 1 identified,  and $\varepsilon\leq r$ we find for $y:=|x-\xi|$:
\begin{eqalign}
\Exp[H_{\varepsilon}(x)H_{r}(\xi)]:=&\branchmat{2\log(2)+\log\frac{1}{2sin(\pi y)}  &\tifc y> \frac{2}{\pi} arctan(\frac{\pi}{2}\e)\\ \log(1/\e)+(1/2)\log(\pi^{2}\varepsilon^{2}+4)+\frac{2}{\pi}\frac{arctan(\frac{\pi}{2}\e)}{\varepsilon}&\tifc y\leq \frac{2}{\pi} arctan(\frac{\pi}{2}\e)\\
-\log(\pi)- y \varepsilon^{-1}-\log(cos(\frac{\pi}{2}y))&}.
\end{eqalign}
For this covariance it is shown in \cite[Lemma 6.5]{SJ15}, that it is comparable to the exact-logarithmic field $\omega$ on any compact metric space  i.e. 
\begin{equation}\label{eq:covarianceunitcirclerealline}
 \sup_{n\geq 1}\sup_{x,\xi\in [0,1),0\sim 1}\abs{\Exp[H_{\e_{n}}(x)H_{\e_{n}}(\xi)]-\Exp[\omega_{\e_{n}}(x)\omega_{\e_{n}}(\xi)]}<K   
\end{equation}
for $\e_{n}\to 0$.
\proofparagraph{Case $t\in [0,1$ and $q\in (-\infty,0]\cup [1,\frac{2}{\gamma^{2}})$}
 For $q\in (-\infty,0]\cup [1,\frac{2}{\gamma^{2}})$ we have that $F(x)=x^{q}$ is convex, and so we use the \cref{eq:covarianceunitcirclerealline} and Kahane inequality, to dominate $H$ by the field $\omega+N(0,K)$ where $N(0,K)$ is an independent Gaussian. So from here we proceed as before to get the bound $t^{\zeta(q)}$.
\proofparagraph{Case $t\in [1,\infty)$ and $q\in (-\infty,0]\cup [1,\frac{2}{\gamma^{2}})$}
Here we have to be careful of the inherent periodicity of $H$. If we have $t=n+r$ for $n\in \mathbb{N},r\in [0,1]$, we write
\begin{equation}
\eta_{H}(0,t)=n\eta_{H}(0,1)+\eta_{H}(0,r).    
\end{equation}
We again apply Kahane inequality, to dominate by 
\begin{equation}
n\eta_{\omega}(0,1)+\eta_{\omega}(0,r).    
\end{equation}
So from here we get the bound $n^{q}$ for $q<0$ and the bound $(n+1)^{q}$ for $q>0$.
\end{proof}


\section{Small ball estimates for \sectm{$\eta$ }}
For small ball we can use the following theorem from \cite[theorem 1.1]{nikula2013small} and its application to GMC in \cite[theorem 3.2]{nikula2013small} for the truncated field with $\delta=1$. See also the work in \cite{lacoin2018path}, \cite{garban2018negative} for more small deviation estimates. We will use the notion of stochastic domination:
\begin{eqalign}
X\succeq Y \doncl \Proba{X\geq x}\geq \Proba{Y\geq x}.
\end{eqalign}
\begin{theorem}\cite[theorem 1.1]{nikula2013small}\label{smalldeviationeta}
Let $W$ and $Y$ be positive random variables satisfying stochastic dominating relations
\begin{eqalign}\label{eq:smoothing-inequality}
Y \succeq W_0 Y_0 + W_1 Y_1,
\end{eqalign}
where $(Y_0,Y_1)$ is an independent pair of copies of $Y$, independent of $(W_0,W_1)$, and $W_0 \eqdis W_1 \eqdis W$. Suppose further that there exist $\gamma > 1$ and $x' \in ]0,1[$ such that
\begin{eqalign}\label{eq:w-small-tail}
\Prob(W \leq x) \leq \exp\left( - c (-\log x)^\gamma \right) \quad \textrm{for all } x \leq x'.
\end{eqalign}
Then for any $\alpha \in [1,\gamma[$ there exists a constant $t_\alpha > 0$ such that for all $t \geq t_\alpha$ we have
\begin{eqalign}\label{eq:main-estimate}
\E e^{-t Y} \leq \exp\left(- c_\alpha (\log t)^\alpha \right) \quad \textrm{for all } t \geq t_\alpha.
\end{eqalign}
Therefore, we have the following small deviation estimate for $t\geq t_{\alpha}$ and $u=\frac{M}{t}$ for some $M>0$
\begin{eqalign}
    \Proba{u\geq Y}\leq e^{M} \exp\left(- c_\alpha (\log \frac{M}{u})^\alpha \right).
\end{eqalign}
\end{theorem}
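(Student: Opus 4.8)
The plan is to follow the self-improving-inequality strategy behind \cite[Theorem~1.1]{nikula2013small}: reduce the assertion to a functional recursion for the Laplace transform of $Y$ and then bootstrap it. Set $\psi(s):=\E e^{-e^{s}Y}$ for $s\in\R$. Since $Y>0$ almost surely, $\psi$ is continuous and strictly decreasing with $0<\psi(s)<1$ and $\psi(s)\to 0$ as $s\to+\infty$; in these variables the target \eqref{eq:main-estimate} reads $\psi(s)\le \exp(-c_{\alpha}s^{\alpha})$ for $s\ge s_{\alpha}$. Because $Y\succeq W_{0}Y_{0}+W_{1}Y_{1}$ and $x\mapsto e^{-tx}$ is decreasing, stochastic domination gives $\E e^{-tY}\le\E e^{-t(W_{0}Y_{0}+W_{1}Y_{1})}$; conditioning on $(W_{0},W_{1})$ and using that $Y_{0},Y_{1}$ are independent copies of $Y$ independent of $(W_{0},W_{1})$ turns this into
\begin{equation}
\psi(s)\ \le\ \E\big[\psi(s+\log W_{0})\,\psi(s+\log W_{1})\big],\qquad s\in\R .
\end{equation}

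Next I would convert this into a one-step recursion. Fix $\kappa\in(0,1)$ and split the last expectation over $B:=\{\log W_{0}\ge-\kappa s\}\cap\{\log W_{1}\ge-\kappa s\}$ and its complement. On $B$, monotonicity of $\psi$ yields $\psi(s+\log W_{i})\le\psi((1-\kappa)s)$, while on $B^{c}$ we bound the product by $1$; a union bound together with hypothesis \eqref{eq:w-small-tail} applied at $x=e^{-\kappa s}\le x'$ (valid once $s\ge-\kappa^{-1}\log x'$) gives
\begin{equation}
\psi(s)\ \le\ \psi\big((1-\kappa)s\big)^{2}\ +\ 2\exp\!\big(-c\,\kappa^{\gamma}s^{\gamma}\big).
\end{equation}

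Then I would close the induction. Choose $\kappa$ so small that $2(1-\kappa)^{\alpha}>1$ (possible for any $\alpha\ge1$ since $1-2^{-1/\alpha}>0$); this fixes $\kappa$ in terms of $\alpha$, and $\alpha<\gamma$ is precisely what makes the $W$-error $2\exp(-c\kappa^{\gamma}s^{\gamma})$ lower-order than $\exp(-c_{\alpha}s^{\alpha})$ for large $s$. Pick $s_{\alpha}$ large (so that $s_{\alpha}\ge-\kappa^{-1}\log x'$ and $\tfrac12 c\kappa^{\gamma}s^{\gamma}\ge\log 4$ for $s\ge s_{\alpha}$) and then $c_{\alpha}>0$ small enough that both $c_{\alpha}\le\tfrac12 c\kappa^{\gamma}s_{\alpha}^{\gamma-\alpha}$ and $\psi(s_{\alpha})\le\exp(-c_{\alpha}(1-\kappa)^{-\alpha}s_{\alpha}^{\alpha})$. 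The latter handles the base interval: for $s\in[s_{\alpha},(1-\kappa)^{-1}s_{\alpha}]$ one has $\psi(s)\le\psi(s_{\alpha})\le\exp(-c_{\alpha}s^{\alpha})$. For the inductive step, if $\psi\le\exp(-c_{\alpha}(\cdot)^{\alpha})$ on $[s_{\alpha},(1-\kappa)^{-n}s_{\alpha}]$, then for $s$ in the next interval $(1-\kappa)s$ lies in the previous one, so the recursion gives $\psi(s)\le\exp(-2c_{\alpha}(1-\kappa)^{\alpha}s^{\alpha})+2\exp(-c\kappa^{\gamma}s^{\gamma})\le\tfrac12\exp(-c_{\alpha}s^{\alpha})+\tfrac12\exp(-c_{\alpha}s^{\alpha})=\exp(-c_{\alpha}s^{\alpha})$, using $2(1-\kappa)^{\alpha}\ge1$ for the first term and the choices of $s_{\alpha},c_{\alpha}$ for the second. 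Induction over $n$ gives \eqref{eq:main-estimate}, i.e.\ $\E e^{-tY}\le\exp(-c_{\alpha}(\log t)^{\alpha})$ for $t\ge t_{\alpha}:=e^{s_{\alpha}}$.

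Finally, the stated small-deviation bound is a Chernoff estimate: for $t\ge t_{\alpha}$ and $u=M/t$,
\begin{equation}
\Proba{u\ge Y}=\Proba{e^{-tY}\ge e^{-tu}}\le e^{tu}\,\E e^{-tY}\le e^{M}\exp\!\big(-c_{\alpha}(\log t)^{\alpha}\big)=e^{M}\exp\!\Big(-c_{\alpha}\big(\log\tfrac{M}{u}\big)^{\alpha}\Big),
\end{equation}
since $tu=M$ and $t=M/u$. The main obstacle is the second step: one must notice that the smoothing relation alone is useless without the monotonicity of $\psi$, engineer the split on $B$ so that the self-improving factor $\psi((1-\kappa)s)^{2}$ appears with an exponent ($2$) beating the argument contraction ($(1-\kappa)^{\alpha}$), and check that the three parameter constraints — $2(1-\kappa)^{\alpha}>1$, $\alpha<\gamma$, and the smallness of $c_{\alpha}$ relative to $s_{\alpha}$ — can all be satisfied simultaneously; everything else is routine bookkeeping, and in the text we simply invoke \cite[Theorem~1.1]{nikula2013small} for it.
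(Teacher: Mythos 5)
Your proposal is correct in substance, but note that the paper does not reprove the Laplace-transform estimate at all: the bound $\mathbb{E}\,e^{-tY}\leq\exp(-c_{\alpha}(\log t)^{\alpha})$ is quoted verbatim from \cite[Theorem 1.1]{nikula2013small}, and the only step the paper itself supplies is the final Chernoff/Markov inequality $\mathbb{P}(Y\leq u)=\mathbb{P}(e^{-tY}\geq e^{-tu})\leq e^{tu}\,\mathbb{E}\,e^{-tY}$ with $tu=M$, $t=M/u$, which you carry out identically. Your reconstruction of the cited theorem follows the same self-improving scheme as the original argument (pass to $\psi(s)=\mathbb{E}\,e^{-e^{s}Y}$, use the smoothing relation plus the independence assumptions to get $\psi(s)\leq\mathbb{E}[\psi(s+\log W_{0})\psi(s+\log W_{1})]$, split on $\{W_{i}\geq e^{-\kappa s}\}$ via the small-tail hypothesis on $W$, and bootstrap over the intervals $[(1-\kappa)^{-n}s_{\alpha},(1-\kappa)^{-n-1}s_{\alpha}]$), so nothing conceptual is missing. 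One bookkeeping slip: in the inductive step you bound the first term $\exp(-2c_{\alpha}(1-\kappa)^{\alpha}s^{\alpha})$ by $\tfrac12\exp(-c_{\alpha}s^{\alpha})$ ``using $2(1-\kappa)^{\alpha}\geq 1$'', but that inequality alone only yields $\exp(-c_{\alpha}s^{\alpha})$, and then your two contributions sum to $\tfrac32\exp(-c_{\alpha}s^{\alpha})$, which does not close the induction. You need the strict margin $2(1-\kappa)^{\alpha}>1$ (which you did impose when choosing $\kappa$) together with the lower bound $(2(1-\kappa)^{\alpha}-1)\,c_{\alpha}s_{\alpha}^{\alpha}\geq\log 2$; this is a lower constraint on $c_{\alpha}$ that your list of conditions (which only asks $c_{\alpha}$ to be small) omits. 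It is, however, compatible with your two upper constraints, since the required lower bound on $c_{\alpha}$ scales like $s_{\alpha}^{-\alpha}$ while $\psi(s_{\alpha})\to 0$ as $s_{\alpha}\to\infty$, so fixing $s_{\alpha}$ large and then $c_{\alpha}$ in the resulting nonempty window repairs the step; with that adjustment the argument is sound.
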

Then in \cite[theorem 3.2]{nikula2013small}, it is shown that the above theorem applies to the total mass of the exact-scaling case $Y=\eta_{\omega}^{1}(0,1)$ by checking the two assumptions \Cref{eq:smoothing-inequality},\Cref{eq:w-small-tail} for $\alpha\in [1,2)$. But here we need it for $Y=\eta_{\omega}^{1}(0,t)$ for $t\geq 0$. Combining this with Kahane's inequality we have the following bound for the Laplace transform.
\begin{proposition}\label{laptraeta}
Fix $\alpha\in [1,\beta^{-1})$. We have the following estimate for the Laplace transform of $\eta^{\delta}$ with field $U^{\delta,g}$. Let $r=r(t)>0$ satisfying $rt\geq t_{\alpha}\geq 1$ from above \Cref{smalldeviationeta}. When $t\in  [0,\delta]$ then
\begin{eqalign}
\Expe{\expo{-r\eta^{\delta}(t)}}\lessapprox     \expo{-\frac{1}{2}\para{\frac{\e m_{2}(r,t)}{\gamma \sigma_{t}}}^{2}}\vee  \expo{-c_{\alpha}\para{m_{2}(r,t)(1-\e)}^{\alpha}},
\end{eqalign}
and when $t\in [\delta,\infty)$ then
\begin{eqalign}
\Expe{\expo{-r\eta^{\delta}(t)}}\lessapprox   \expo{-c_{\alpha}\para{\wt{m}_{2}(r,t)(1-\e)}^{\alpha}},
\end{eqalign}
where $\e\in (0,1)$ can be optimized,
\begin{eqalign}
\sigma_{t}^{2}:=\gamma^{2}(\ln\frac{\delta}{t}+M_{g})&\tand\wt{\sigma}^{2}:=\gamma^{2}M_{g},\\
m_{2}(r,t):=\ln{rt}-\frac{\gamma^{2}}{2}(\ln\frac{\delta}{t}+M_{g})>0&\tand \wt{m}_{2}(r,t):=\ln{rt}-\frac{\gamma^{2}}{2}M_{g}>0.
\end{eqalign}
\end{proposition}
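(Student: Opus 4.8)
The plan is to compare the Laplace transform of $\eta^{\delta}$ against that of an exact-scaling GMC via \Cref{Kahanesinequality}, to use the exact scaling law \Cref{exactscaling} to reduce everything to the total mass $Y:=\eta^{1}_{\omega}(0,1)$ of the height-one exact-scaling field, and then to feed this into Nikula's double-exponential small-ball bound \Cref{smalldeviationeta}. For the comparison step, the key observation is that the covariance of $U^{\delta,g}_{\varepsilon}$ equals that of $\omega^{\delta}_{\varepsilon}$ plus the correction $-1+\tfrac{u}{\delta}+g_{\delta}(u)$ for $u\le\delta$, and both vanish for $u\ge\delta$; since $-1+\tfrac{u}{\delta}\le 0$ there and $0\le g_{\delta}\le M_{g}$, this correction is $\le M_{g}$, so $\mathrm{Cov}(U^{\delta,g}_{\varepsilon})\le\mathrm{Cov}(\omega^{\delta}_{\varepsilon})+M_{g}$ pointwise, and the same computation gives $\mathrm{Cov}(U^{\delta,g}_{\varepsilon})\le\mathrm{Cov}(\omega^{t}_{\varepsilon})+M_{g}$ on $[0,t]$ when $t\ge\delta$. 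As $x\mapsto e^{-rx}$ is convex, \Cref{Kahanesinequality} applied to the $\varepsilon$-mollifications and passed to the limit yields
\[
\Expe{e^{-r\eta^{\delta}(t)}}\ \le\ \Expe{e^{-rW_{g}\,\eta^{\kappa}_{\omega}(0,t)}},\qquad W_{g}:=e^{\gamma\sqrt{M_{g}}N-\frac{\gamma^{2}}{2}M_{g}},
\]
with $N$ an independent standard Gaussian and $\kappa=\delta$ if $t\le\delta$, $\kappa=t$ if $t\ge\delta$ (adding the constant $M_{g}$ to a covariance amounts to adding the independent Gaussian $\sqrt{M_{g}}N$, whose exponential GMC weight is $W_{g}$).

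Next I would apply the scaling law. By the elementary identity $\omega^{\kappa}(\kappa v)\eqdis\omega^{1}(v)$ (and, for $t\le\delta$, additionally \Cref{exactscaling}) one gets $\eta^{t}_{\omega}(0,t)\eqdis tY$ when $t\ge\delta$, and $\eta^{\delta}_{\omega}(0,t)\eqdis t\,e^{\overline{\Omega_{t/\delta}}}Y$ when $t\le\delta$, with $Y=\eta^{1}_{\omega}(0,1)$ independent of the lognormal prefactors. Merging $W_{g}$ with $e^{\overline{\Omega_{t/\delta}}}$ (resp. with nothing) the two independent Gaussians add, so in both regimes
\[
\Expe{e^{-r\eta^{\delta}(t)}}\ \le\ \Expe{e^{-rt\,e^{\sigma_{*}Z-\sigma_{*}^{2}/2}\,Y}},\qquad Z\sim N(0,1)\text{ independent of }Y,
\]
where $\sigma_{*}^{2}=\sigma_{t}^{2}=\gamma^{2}(\ln\tfrac{\delta}{t}+M_{g})$ if $t\le\delta$ and $\sigma_{*}^{2}=\widetilde\sigma^{2}=\gamma^{2}M_{g}$ if $t\ge\delta$.

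Then I would condition on $Z=y$: the Laplace argument becomes $s(y)=rt\,e^{\sigma_{*}y-\sigma_{*}^{2}/2}$ with $\log s(y)=m(r,t)+\sigma_{*}y$, where $m(r,t)$ is precisely $m_{2}(r,t)$ (resp. $\widetilde m_{2}(r,t)$). Whenever $s(y)\ge t_{\alpha}$, i.e. $m(r,t)+\sigma_{*}y\ge\log t_{\alpha}$, \Cref{smalldeviationeta} applied to $Y=\eta^{1}_{\omega}(0,1)$ — whose hypotheses are verified in \cite[theorem 3.2]{nikula2013small} — gives $\Expe{e^{-s(y)Y}}\le\exp(-c_{\alpha}(m(r,t)+\sigma_{*}y)^{\alpha})$, and for the remaining $y$ I would bound the Laplace transform by $1$. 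Integrating against the density of $Z$ and splitting the $y$-integral at a suitable threshold $y\asymp-m(r,t)/\sigma_{*}$: on the side where $m(r,t)+\sigma_{*}y\ge(1-\varepsilon)m(r,t)$ the integrand is at most $\exp(-c_{\alpha}((1-\varepsilon)m(r,t))^{\alpha})$ times the Gaussian density, contributing that quantity; on the other side a Gaussian tail estimate contributes $\exp(-\tfrac12(\varepsilon m(r,t)/(\gamma\sigma_{*}))^{2})$. Absorbing the subexponential prefactors (and noting that when $m(r,t)\lesssim\log t_{\alpha}$ it is bounded, so the bound is trivial) gives the stated $\exp(-\tfrac12(\varepsilon m_{2}/(\gamma\sigma_{t}))^{2})\vee\exp(-c_{\alpha}(m_{2}(1-\varepsilon))^{\alpha})$ in the case $t\le\delta$; in the case $t\ge\delta$, $\widetilde\sigma=\gamma\sqrt{M_{g}}$ is a fixed constant and $\alpha<2$, so the Gaussian-tail term $\exp(-c\,\widetilde m_{2}^{2})$ is dominated by $\exp(-c_{\alpha}(\widetilde m_{2}(1-\varepsilon))^{\alpha})$ for $\widetilde m_{2}$ large, leaving only the latter.

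The hard part, I expect, will be the comparison step: one must choose the comparison field (height $\delta$ for short intervals, height $t$ for long ones) so that, after the scaling law, one lands precisely on the single random variable $\eta^{1}_{\omega}(0,1)$ for which the Nikula smoothing and tail hypotheses have been checked, and simultaneously keep the covariance inequality valid for the $\varepsilon$-regularizations so that the Kahane limit is legitimate. Everything after that is bookkeeping of lognormal prefactors together with a routine Gaussian split; the quantitative core — the $\exp(-c_{\alpha}(\log\cdot)^{\alpha})$ decay — is imported wholesale from \Cref{smalldeviationeta}.
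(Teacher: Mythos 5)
Your proposal is correct and takes essentially the same route as the paper's proof: a Kahane comparison of $U^{\delta,g}$ against an exact-scaling field plus an independent Gaussian absorbing $\ln\frac{\delta}{t}+M_{g}$, reduction to $rt\,e^{\sigma_{*}Z-\sigma_{*}^{2}/2}\,Y$ with $Y=\eta^{1}_{\omega}(0,1)$, conditional use of \Cref{smalldeviationeta} on the event that the argument exceeds $t_{\alpha}$, and a Gaussian bulk/tail split yielding the two stated exponents (with the tail term dominating only in the $t\le\delta$ regime). The one difference — rescaling by substitution before applying Kahane, as in the paper, versus applying Kahane at height $\delta$ (resp.\ $t$) and then invoking the scaling law — is purely cosmetic.
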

\begin{proof}
\proofparagraph{Case $t\in [0,\delta]$}
We start with $\eta(  t)=  t\int_{0}^{1}e^{ \overline{U}^{\delta,g}(  tx)}\dx$. The field's $U^{\delta,g}(  tx)$ covariance is upper bounded by:
\begin{eqalign}
\para{\ln\frac{\delta}{t\abs{x-y}}-1+    \frac{t\abs{x-y}}{\delta}+g_{\delta}\para{t\abs{x-y}}}\one_{\abs{x-y}\leq \minp{1,\frac{\delta}{t}}}\leq \ln\frac{\delta}{t}+ \ln\frac{1}{\abs{x-y}}+M_{g}
\end{eqalign}
for all $\abs{x-y}\leq 1$. When $\delta, t\leq 1$, then we can simply ignore the nonpositive term $\ln\delta\leq 0$. (For the field $\omega^{\delta}$ we have the same covariance bound). Therefore, we apply Kahane's inequality for the fields $U^{\delta,g}(tx)$ and $N(0,\ln\frac{\delta}{t}+M_{g} )+\omega^{1}(x)$ to upper bound by
\begin{eqalign}\label{LTKahanebound}
  \Expe{\expo{-rt\int_{0}^{1}e^{ \overline{U}^{\delta}(tx)}\dx}}\leq& \Expe{\expo{-rte^{\gamma\overline{N}(0,\ln\frac{\delta}{t}+M_{g}) }\int_{0}^{1}e^{ \overline{\omega}^{1}(x)}\dx}}.
\end{eqalign}
Here one can possibly try to use the results on the Laplace transform of lognormal, in particular the asymptotic estimate \cite{asmussen2016laplace} to get better estimates. For now we just use the above estimate \cite[theorem 3.2]{nikula2013small} for $Y:=\int_{0}^{1}e^{ \overline{\omega}^{1}(x)}\dx$ and some $t_{\alpha}\geq 1$ to upper bound by
\begin{eqalign}
&\Expe{\expo{- c_\alpha \para{\log\para{rte^{\gamma\overline{N}(0,\ln\frac{\delta}{t}+M_{g}) }} }^\alpha}\ind{rte^{\gamma\overline{N}(0,\ln\frac{\delta}{t}+M_{g}) }\geq t_{\alpha}\geq 1}}\\
+&\Proba{\gamma\overline{N}(0,\ln\frac{\delta}{t}+M_{g})\leq -\ln\frac{rt}{t_{\alpha}}}.
\end{eqalign}
We study the first term. Using layercake and change of variables, the first expectation is
\begin{eqalign}
c_{\alpha}\int_{0}^{\infty}\Proba{\frac{x-m_{2}}{\gamma \sigma_{t}}\geq N(0,1)\geq -\frac{\abs{m_{1}}}{\gamma \sigma_{t}}}e^{-c_{\alpha}x^{\alpha}}x^{\alpha-1}\dx,
\end{eqalign}
where  $\sigma_{t}^{2}:=\gamma^{2}(\ln\frac{\delta}{t}+M_{g})$, $m_{1}:=\frac{1}{\gamma}\ln\frac{t_{\alpha}}{rt}+\frac{\gamma}{2}(\ln\frac{\delta}{t}+M_{g})<0$, $\tand     m_{2}:=\ln{rt}-\frac{\gamma^{2}}{2}(\ln\frac{\delta}{t}+M_{g})=\abs{m_{1}}+\ln t_{\alpha}>0$.  We split into ranges $[0,\infty)=[0,m_{2}(1-\e)]\cup [m_{2}(1-\e),\infty),$ for some $\e>0$. For the first range we estimate by
\begin{eqalign}
\Proba{\frac{-\e m_{2}}{\gamma \sigma_{t}}\geq N(0,1)}\int_{0}^{m_{2}(1-\e)} e^{-c_{\alpha}x^{\alpha}}x^{\alpha-1}\dx\leq c\expo{-\frac{1}{2}\para{\frac{\e m_{2}}{\gamma \sigma_{t}}}^{2}},
\end{eqalign}
and for the second range we estimate by
\begin{eqalign}
\int_{m_{2}(1-\e)}^{\infty} e^{-c_{\alpha}x^{\alpha}}x^{\alpha-1}\dx=\frac{1}{\alpha}    \int_{\para{m_{2}(1-\e)}^{\alpha}}^{\infty} e^{-c_{\alpha}x}\dx\lessapprox~e^{-c_{\alpha}\para{m_{2}(1-\e)}^{\alpha}}.
\end{eqalign}
For the probability term, we use the usual Gaussian tail estimate
\begin{eqalign}
&\Proba{\gamma\overline{N}(0,\ln\frac{\delta}{t}+M_{g})\leq -\ln\frac{rt}{t_{\alpha}}}\leq  c\expo{-\frac{m_{3}^{2}}{2}},
\end{eqalign}
where $m_{3}:=\frac{1}{\gamma \sigma_{t}}\para{\ln\frac{rt}{t_{\alpha}}-\frac{\gamma^{2}}{2}(\ln\frac{\delta}{t}+M_{g})}=\frac{1}{\gamma \sigma_{t}}\para{m_{2}-\ln t_{\alpha}}$. Due to the constraint $rt\geq t_{\alpha}\geq 1$, we see that as $t\to 0$, we have $r\to +\infty$. In this asymptotic, we see that $m_{3}^{2}$ and $\para{\frac{m_{2}}{\sigma_{t}}}^{2}$ have similar growths whereas $m_{2}^{\alpha}$ for $\alpha$ close to 2 has much faster growth due to the absence of $\frac{1}{\sigma_{t}}$. Therefore, for small $t\approx 0$ we can upper bound by the slowest bound, $3\cdot c\expo{-\frac{1}{2}\para{\frac{\e m_{2}}{\gamma \sigma_{t}}}^{2}}$. But for general $t\in [0,\delta]$, we simply take maximum.
\proofparagraph{Case $t\in [\delta,\infty)$}
Here for the field $U^{\delta,g}( tx)$ we upper bound its covariance by:
\begin{eqalign}
\para{\ln\frac{\delta}{t\abs{x-y}}-1+    \frac{t\abs{x-y}}{\delta}+g_{\delta}\para{t\abs{x-y}}}\one_{\abs{x-y}\leq \minp{1,\frac{\delta}{t}}}\leq \ln\frac{1}{\abs{x-y}}+M_{g}
\end{eqalign}
for all $\abs{x-y}\leq 1$.  (For the field $\omega^{\delta}$ we have the same covariance bound). So we will apply Kahane's inequality for the fields $U^{ \delta}(tx)$ and $N(0,M_{g})+\omega^{1}(x)$ to upper bound by:
\begin{eqalign}
  \Expe{\expo{-rt\int_{0}^{1}e^{ \overline{U}^{\delta}(tx)}\dx}}\leq& \Expe{\expo{-rte^{\gamma\overline{N}(0,M_{g}) }\int_{0}^{1}e^{ \overline{\omega}^{1}(x)}\dx}}.
\end{eqalign}
This expression is similar to \Cref{LTKahanebound} with the only difference of replacing $\ln\frac{\delta}{t}+M_{g}$ by $M_{g}$. So we have
\begin{eqalign}
\Expe{\expo{- c_\alpha \para{\log\para{rte^{\gamma\overline{N}(0,M_{g}) }} }^\alpha}\ind{rte^{\gamma\overline{N}(0,M_{g}) }\geq t_{\alpha}\geq 1}}+\Proba{\gamma\overline{N}(0,M_{g})\leq -\ln\frac{rt}{t_{\alpha}}}.
\end{eqalign}
Here we are not dividing $\frac{1}{\sigma_{t}}$ and so the slowest bound will be the $\alpha<2$ and so we bound by
\begin{eqalign}
ce^{-c_{\alpha}\para{\wt{m}_{2}(1-\e)}^{\alpha}},
\end{eqalign}
where $\wt{m}_{2}:=\ln{rt}-\frac{\gamma^{2}}{2}M_{g}>0$.
\end{proof}
We need corollaries here that simplifies that above bounds for the case of exponential-choices for the parameters and $g=0$. First, we study a simple case for $\delta=\rho_{*}$-scale.
\begin{corollary}\label{cor:smallballestimateexpocho0}
We fix $\rho\in (0,1)$. We also fix integer $\ell\geq 1$. We set
\begin{eqalign}
r:=\rho_{1}^{-\ell},t:=\rho_{2}^{\ell}    \tand \delta:=\rho,
\end{eqalign}
for $ \rho\geq \rho_{2}>\rho_{1}$ defined as
\begin{equation}
\rho_{1}=\rho^{a_{1} }\tand \rho_{2}=\rho^{a_{2}}.
\end{equation}
The exponents $a_{i}$ need to satisfy $\frac{1}{\ell}<a_{2}<a_{1}$. For small enough $\rho$ we have the bound
\begin{eqalign}
  \Expe{\expo{-r\eta^{\delta}(t)}}\leq  c\rho^{c_{1}\ell},
\end{eqalign}
where
\begin{eqalign}
c_{1}:= \frac{1}{32}\para{\frac{(a_{1}-a_{2}) }{\sqrt{\beta\para{a_{2}-\frac{1}{\ell}}  }   }-\sqrt{\beta\para{a_{2}-\frac{1}{\ell}}  }   }^{2}.
\end{eqalign}
\end{corollary}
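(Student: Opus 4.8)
The plan is to apply \Cref{laptraeta} directly with $g=0$ (so $M_g=0$) and the particular choices $r=\rho_1^{-\ell}$, $t=\rho_2^{\ell}$, $\delta=\rho$, and then to simplify the resulting Gaussian-type bounds under the stated constraint $\tfrac{1}{\ell}<a_2<a_1$ on the exponents. First I would verify the hypotheses of \Cref{laptraeta}: we need $rt=\rho_1^{-\ell}\rho_2^{\ell}=(\rho_2/\rho_1)^{\ell}=\rho^{(a_2-a_1)\ell}$, and since $a_1>a_2$ this is $\rho$ raised to a negative power, hence $\geq t_\alpha\geq 1$ once $\rho$ is small enough; and we need $t\leq\delta$, i.e. $\rho_2^\ell=\rho^{a_2\ell}\leq\rho$, which holds since $a_2\ell>1$. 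So we are in the first regime of \Cref{laptraeta}, giving
\begin{eqalign}
\Expe{\expo{-r\eta^{\delta}(t)}}\lessapprox \expo{-\tfrac{1}{2}\para{\tfrac{\e m_{2}(r,t)}{\gamma\sigma_{t}}}^{2}}\vee \expo{-c_{\alpha}\para{m_{2}(r,t)(1-\e)}^{\alpha}}.
\end{eqalign}

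Next I would compute the two relevant quantities explicitly for these choices, using $M_g=0$. We have $\sigma_t^2=\gamma^2\ln\tfrac{\delta}{t}=\gamma^2\ln\rho^{1-a_2\ell}=\gamma^2(a_2\ell-1)\ln\tfrac{1}{\rho}$, and $m_2(r,t)=\ln(rt)-\tfrac{\gamma^2}{2}\ln\tfrac{\delta}{t}=(a_1-a_2)\ell\ln\tfrac{1}{\rho}-\tfrac{\gamma^2}{2}(a_2\ell-1)\ln\tfrac{1}{\rho}$. Recalling $\beta=\tfrac{\gamma^2}{2}$, this is $m_2=\big[(a_1-a_2)\ell-\beta(a_2\ell-1)\big]\ln\tfrac{1}{\rho}$. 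The key point is that, since both $m_2$ and $\sigma_t$ scale linearly in $\ln\tfrac1\rho$, the first exponent $\tfrac12(\e m_2/(\gamma\sigma_t))^2$ is a constant multiple of $\big(\ln\tfrac1\rho\big)\cdot\ell\cdot(\text{const in }\rho)$ — more precisely it equals $\tfrac{\e^2}{2}\tfrac{m_2^2}{\gamma^2\sigma_t^2}=\tfrac{\e^2}{2}\tfrac{\big((a_1-a_2)\ell-\beta(a_2\ell-1)\big)^2}{2\beta(a_2\ell-1)}\ln\tfrac1\rho$, while the second exponent $c_\alpha(m_2(1-\e))^\alpha$ is of order $(\ln\tfrac1\rho)^\alpha\ell^\alpha$, which is strictly larger for $\rho$ small. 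Hence the minimum (largest bound) is governed by the first term, and I would take $\e$ close to $1$ and extract from $(a_2\ell-1)=\ell(a_2-\tfrac1\ell)$ and $(a_1-a_2)\ell-\beta(a_2\ell-1)=\ell\big[(a_1-a_2)-\beta(a_2-\tfrac1\ell)\big]$ a clean factor $\ell\ln\tfrac1\rho$. Simplifying,
\begin{eqalign}
\tfrac{1}{2}\para{\tfrac{\e m_{2}}{\gamma\sigma_{t}}}^{2}=\tfrac{\e^2}{4}\cdot\frac{\para{(a_1-a_2)-\beta(a_2-\tfrac1\ell)}^2}{\beta(a_2-\tfrac1\ell)}\,\ell\ln\tfrac1\rho,
\end{eqalign}
and dividing out one power of $\beta(a_2-\tfrac1\ell)$ from numerator and denominator rewrites the coefficient as $\tfrac{\e^2}{4}\big(\tfrac{a_1-a_2}{\sqrt{\beta(a_2-1/\ell)}}-\sqrt{\beta(a_2-1/\ell)}\big)^2$; taking $\e$ slightly less than $1$ absorbs the constant into the prefactor and yields the stated $c_1=\tfrac{1}{32}\big(\tfrac{a_1-a_2}{\sqrt{\beta(a_2-1/\ell)}}-\sqrt{\beta(a_2-1/\ell)}\big)^2$ (the $\tfrac{1}{32}$ rather than $\tfrac14$ leaving room for the $\e^2$ loss and for bounding the subdominant exponential term by the dominant one). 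This gives $\Expe{\expo{-r\eta^\delta(t)}}\lessapprox \expo{-c_1\ell\ln\tfrac1\rho}=\rho^{c_1\ell}$, as claimed.

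The main obstacle — or rather the main thing requiring care — is the bookkeeping of which of the two exponential bounds dominates as $\rho\to 0$ and making sure the constant $c_1$ is exactly the one in the statement after the $\e\to 1$ optimization; one must check that $\alpha\in[1,\beta^{-1})$ can indeed be chosen (this needs $\beta<1$, which is implied by $\gamma<\sqrt2$, consistent with the running hypotheses) and that $(a_1-a_2)-\beta(a_2-\tfrac1\ell)>0$ so that $m_2>0$ as \Cref{laptraeta} requires — this positivity is exactly what is needed for $c_1>0$ and should be recorded as an implicit constraint (guaranteed for $\rho$ small by the requirement that $a_1$ is taken sufficiently larger than $a_2$, or absorbed by noting the corollary is only used in regimes where it holds). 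The rest is the routine substitution and simplification sketched above, plus absorbing all $\rho$-independent multiplicative constants (including $c_1:=e^{\frac{\gamma^2}{2}(q^2-q)M_g}\mathbb{E}[\cdots]$-type constants from \Cref{momentseta}, here trivial since $M_g=0$) into the symbol $c$.
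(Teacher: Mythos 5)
Your proposal is correct and follows essentially the same route as the paper's proof: verify $rt\geq t_{\alpha}$ and $t\leq\delta$ for small $\rho$, apply \Cref{laptraeta} with $M_{g}=0$, note that both exponents scale like $\ell\ln\frac{1}{\rho}$ so that for small $\rho$ the Gaussian term is the slower-decaying one and hence governs the maximum, and rewrite $m_{2}/\sigma_{t}$ via the identity $\frac{A-B}{\sqrt{B}}=\frac{A}{\sqrt{B}}-\sqrt{B}$ to produce exactly the squared difference in $c_{1}$, with the $\frac{1}{32}$ absorbing the $\e$- and $\beta$-dependent constant losses (your intermediate constants are slightly off, but the true coefficient $\frac{\e^{2}}{8\beta}$ still exceeds $\frac{1}{32}$ for $\e\geq\frac12$ and $\beta<1$, so the conclusion stands). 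Your observation that $m_{2}>0$, i.e. $a_{1}-a_{2}>\beta\para{a_{2}-\frac{1}{\ell}}$, is an implicit constraint is fair, but the paper's own proof makes the same tacit assumption, so this is not a deviation from its argument.
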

\begin{proof}
The condition $rt\geq t_{\alpha}$ translates to
\begin{eqalign}
\ell(a_{1}-a_{2})\ln\frac{1}{\rho}\geq \ln t_{\alpha},
\end{eqalign}
which is true for small enough $\rho$ since $\ell\geq 1$. Here we have the case $\delta\geq t$ and so we study the exponents $\frac{ m_{2}(r,t)}{\sigma_{t}}$ and $m_{2}(r,t)$. We write explicitly the first exponent
\begin{eqalign}
\frac{ m_{2}(r,t)}{\sigma_{t}}=\frac{\ln{rt}}{\sqrt{\beta\ln\frac{\delta}{t}}   }-\sqrt{\beta\ln\frac{\delta}{t}}   =&\sqrt{\ell} \para{\frac{\ln{\frac{\rho_{2}}{\rho_{1}}}}{\sqrt{\beta\ln\para{\frac{\rho^{\frac{1}{\ell}}}{\rho_{2}} }}   }-\sqrt{\beta\ln\para{\frac{\rho^{\frac{1}{\ell}}}{\rho_{2}} }}  }\\
=&\sqrt{\ell} \sqrt{\ln{\frac{1}{\rho}}}\para{\frac{(a_{1}-a_{2})}{\sqrt{\beta\para{a_{2}-\frac{1}{\ell}}  }   }-\sqrt{\beta\para{a_{2}-\frac{1}{\ell}}  }   }\\
\geq &\sqrt{\ln{\frac{1}{\rho}}}\sqrt{\ell}b_{1,1},
\end{eqalign}
where
\begin{eqalign}
b_{1,1}:=\frac{(a_{1}-a_{2}) }{\sqrt{\beta\para{a_{2}-\frac{1}{\ell}}  }   }-\sqrt{\beta\para{a_{2}-\frac{1}{\ell}}  } .
\end{eqalign}
The second exponent is
\begin{eqalign}
m_{2}(r,t)=\ln{rt}-\beta\ln\frac{\delta}{t}    =&\ell\para{\ln{\frac{\rho_{2}}{\rho_{1}}}-\beta\ln\para{\frac{\rho^{\frac{1}{\ell}}}{\rho_{2}} }}\\
=&\ell\ln{\frac{1}{\rho}}\para{a_{1}-a_{2}-\beta\para{a_{2}-\frac{1}{\ell}}}\\
\geq&\ell\ln{\frac{1}{\rho}} b_{1,2},
\end{eqalign}
where
\begin{eqalign}
b_{1,2}:=a_{1}-a_{2}(1+\beta)+\beta\frac{1}{\ell}.
\end{eqalign}
Therefore, we are comparing
\begin{eqalign}\label{eq:twoexponentstocompare0}
\expo{-\frac{1}{32}\ell\ln{\frac{1}{\rho}}b_{1,1}^{2}}\tand  \expo{-c_{\alpha}\para{b_{1,2}0.5\ell\ln{\frac{1}{\rho}}}^{\alpha}}.
\end{eqalign}
To simplify the bounds we take $\alpha>1$ and small enough $\rho$ so that
\begin{eqalign}
\frac{1}{32}b_{1,1}^{2}<c_{\alpha}\para{b_{1,2}0.5}^{\alpha} \para{\ln{\frac{1}{\rho}}}^{\alpha-1}.
\end{eqalign}
That way we only keep the first exponential term in \Cref{eq:twoexponentstocompare0}.

\end{proof}

Next we study the more complicated situation of varying scale $\delta=\rho_{*}^{n}$.
\begin{corollary}\label{cor:smallballestimateexpocho}
We fix $\rho\in (0,1),\e\geq 0$. We also fix integers $n\geq k\geq 1$ and $z,\ell\geq 0$. We set
\begin{eqalign}
r:=\rho_{2}^{-n-z}\rho_{1}^{-\ell},t:=\rho_{2}^{k+z+\ell+\e}\rho_{3}^{n-k}    \tand \delta:=\rho_{2}^{n},
\end{eqalign}
for $ \rho_{3}>\rho_{2}>\rho_{1}$ defined as
\begin{equation}
\rho_{3}=\rho^{a_{3}},\rho_{2}=\rho^{a_{2}},\rho_{1}=\rho^{a_{1} }.
\end{equation}
The exponents $a_{i}$ need to satisfy
\begin{eqalign}
&a_{3}<a_{2}<a_{1}\\
&a_{1}>a_{2}(1+\e)(1+\beta)+\beta z.
\end{eqalign}
\proofparagraph{Case $n=k$ and $\ell\geq 1$}
For small enough $\rho$ we have the bound
\begin{eqalign}
  \Expe{\expo{-r\eta^{\delta}(t)}}\leq  c\rho^{c_{1}\ell},
\end{eqalign}
where
\begin{eqalign}
c_{1}:= \frac{1}{32}\para{\frac{(a_{1}-a_{2}(1+\e) }{\sqrt{\beta\para{a_{2}(1+\e)+z}  }   }-\sqrt{\beta\para{a_{2}(1+\e)+z}  }   }^{2}.
\end{eqalign}
\proofparagraph{Case $n>k$ and $\ell\geq 0$ and $z=0$}
We have
\begin{eqalign}
\Expe{\expo{-r\eta^{\delta}(t)}}\leq c\branchmat{\rho^{\ell c_{2} } & \tcwhen\ell> \frac{a_{2}-a_{3}}{a_{2}}(n-k)\\
\rho^{\ell c_{3,1}+(n-k) c_{3,2} } & \tcwhen 0\leq\ell\leq  \frac{a_{2}-a_{3}}{a_{2}}(n-k)},
\end{eqalign}
and
\begin{eqalign}
c_{2}:=&\frac{1}{32}\para{\frac{a_{1}-a_{2}(1+\e)(1+\beta )}{\sqrt{\beta a_{2}\para{1+\e}  }   }   }^{2}\\
c_{3,1}:=&(a_{1}-a_{2}(1+\e))\para{\ln\frac{1}{\rho}}^{\alpha-1}\tand c_{3,2}:=(a_{2}-a_{3})\para{\ln\frac{1}{\rho}}^{\alpha-1}.
\end{eqalign}
\end{corollary}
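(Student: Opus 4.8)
The plan is to obtain \cref{cor:smallballestimateexpocho} by substituting the exponential parametrization into the Laplace-transform bound of \cref{laptraeta} and matching powers of $\rho$, exactly in the spirit of the proof of \cref{cor:smallballestimateexpocho0} but now carrying the extra scale $\delta=\rho_{2}^{n}$. First I would record, writing each quantity as a power of $\rho$ via $\rho_{i}=\rho^{a_{i}}$ and using $g\equiv0$ (so $M_{g}=0$, $\sigma_{t}^{2}=\gamma^{2}\ln\frac{\delta}{t}=2\beta\ln\frac{\delta}{t}$): $\ln(rt)=\big[\ell(a_{1}-a_{2})-a_{2}\e+(a_{2}-a_{3})(n-k)\big]\ln\frac1\rho$ and $\ln\frac{\delta}{t}=\big[a_{2}(z+\ell+\e)-(a_{2}-a_{3})(n-k)\big]\ln\frac1\rho$. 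In particular the sign of $\ln\frac{\delta}{t}$, i.e. which of the two regimes of \cref{laptraeta} applies, is controlled by whether $\ell+\e\gtrless\frac{a_{2}-a_{3}}{a_{2}}(n-k)$, which up to the arbitrarily small $\e$ is exactly the dichotomy appearing in the $n>k$ part of the claim. Then I would check the hypotheses of \cref{laptraeta}/\cref{smalldeviationeta}: $rt\ge t_{\alpha}\ge1$ and positivity of $m_{2}(r,t)$ (resp.\ $\widetilde m_{2}(r,t)$). After dividing by $\ln\frac1\rho$, each reduces to a linear inequality in the $a_{i}$ that, using $\ell\ge1$ in the $n=k$ case and $n>k$ in the other, follows from $a_{3}<a_{2}<a_{1}$ together with $a_{1}>a_{2}(1+\e)(1+\beta)+\beta z$; since the finite threshold $\ln t_{\alpha}/\ln\frac1\rho\to0$ as $\rho\to0$, these hold for all $\rho$ small enough.

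\textbf{Case $n=k$, $\ell\ge1$.} Here $t=\rho_{2}^{k+z+\ell+\e}\le\rho_{2}^{k}=\delta$, so I apply the first bound of \cref{laptraeta}, $\Expe{\expo{-r\eta^{\delta}(t)}}\lessapprox\expo{-\frac12(\e m_{2}/\gamma\sigma_{t})^{2}}\vee\expo{-c_{\alpha}(m_{2}(1-\e))^{\alpha}}$. As in \cref{cor:smallballestimateexpocho0} one rewrites $\frac{m_{2}}{\gamma\sigma_{t}}=\frac{1}{2\sqrt\beta}\big(\frac{\ln(rt)}{\sqrt{\beta\ln(\delta/t)}}-\sqrt{\beta\ln(\delta/t)}\big)$, in which $\ln\frac1\rho$ occurs only as an overall factor $\sqrt{\ln\frac1\rho}$; substituting the exponents above and fixing $\e$ as in that corollary gives $\frac12(\e m_{2}/\gamma\sigma_{t})^{2}\gtrsim c_{1}\ell\ln\frac1\rho$ with $c_{1}$ the stated constant (for $\ell=1$ the bracket is precisely $\frac{a_{1}-a_{2}(1+\e)}{\sqrt{\beta(a_{2}(1+\e)+z)}}-\sqrt{\beta(a_{2}(1+\e)+z)}$, and $\ell\ge1$ lets one keep at least a factor $\ell$ in the exponent). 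The $\alpha$-term carries an extra power $(\ln\frac1\rho)^{\alpha-1}$ and so is uniformly dominated once $\rho$ is small, whence $\Expe{\expo{-r\eta^{\delta}(t)}}\le c\rho^{c_{1}\ell}$.

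\textbf{Case $n>k$, $z=0$.} I split on the sign of $\ln\frac{\delta}{t}$. If $\ell>\frac{a_{2}-a_{3}}{a_{2}}(n-k)$ then $t<\delta$ and the previous argument applies verbatim, now with $\ln\frac{\delta}{t}=[a_{2}(\ell+\e)-(a_{2}-a_{3})(n-k)]\ln\frac1\rho$; the sub-Gaussian term again wins and yields $\rho^{\ell c_{2}}$ with $c_{2}=\frac{1}{32}\big(\frac{a_{1}-a_{2}(1+\e)(1+\beta)}{\sqrt{\beta a_{2}(1+\e)}}\big)^{2}$. If $0\le\ell\le\frac{a_{2}-a_{3}}{a_{2}}(n-k)$ then $t\ge\delta$, and I use the second bound $\expo{-c_{\alpha}(\widetilde m_{2}(1-\e))^{\alpha}}$ with $\widetilde m_{2}=\ln(rt)$ (as $M_{g}=0$): writing $\ln(rt)=E\ln\frac1\rho$ with $E=\ell(a_{1}-a_{2})-a_{2}\e+(n-k)(a_{2}-a_{3})\ge\ell(a_{1}-a_{2}(1+\e))+(n-k)(a_{2}-a_{3})$ (using $\ell\ge1$, and $\e=0$ in the degenerate corner $\ell=0$), then choosing $\alpha>1$ so that $E^{\alpha}\ge E\,(\ln\frac1\rho)^{\alpha-1}\cdot\mathrm{const}$ after extracting one power of $\ln\frac1\rho$ and splitting $E$ into its $\ell$- and $(n-k)$-parts, gives $\rho^{\ell c_{3,1}+(n-k)c_{3,2}}$ with $c_{3,1}=(a_{1}-a_{2}(1+\e))(\ln\frac1\rho)^{\alpha-1}$ and $c_{3,2}=(a_{2}-a_{3})(\ln\frac1\rho)^{\alpha-1}$.

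The only genuine difficulty is organizational: keeping the $\e$-shifts consistent when passing between the $t\le\delta$ and $t\ge\delta$ regimes, handling the corner $\ell=0$ in the $n>k$ case where positivity of $\widetilde m_{2}$ needs care (again secured by $a_{2}>a_{3}$ and $a_{1}>a_{2}(1+\e)(1+\beta)+\beta z$, essentially forcing $\e=0$ there), and verifying that in the first regime the $\alpha$-term of \cref{laptraeta} is uniformly beaten by the sub-Gaussian term for $\rho$ small — all of which are mild variants of the computation already carried out in the proof of \cref{cor:smallballestimateexpocho0}.
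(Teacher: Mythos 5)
Your proposal is correct and follows essentially the same route as the paper's own proof: substitute the exponential parametrization into the Laplace-transform bound of \cref{laptraeta}, check $rt\geq t_{\alpha}$, split according to the sign of $\ln(\delta/t)$ (which is exactly the stated dichotomy between $\ell$ and $\frac{a_{2}-a_{3}}{a_{2}}(n-k)$), keep the sub-Gaussian term in the $t\leq\delta$ regime by letting the $\alpha$-term pick up an extra $(\ln\frac{1}{\rho})^{\alpha-1}$, and use $\wt{m}_{2}=\ln(rt)$ in the $t\geq\delta$ regime to produce $c_{3,1},c_{3,2}$. Your treatment of the $\ell=0$ corner and of the $\e$-shifts is no looser than the paper's, so there is no substantive gap.
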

\begin{proof}
For ease of notation we let
\begin{eqalign}
&\rho_{2,\e}:=\rho_{2}^{1+\frac{\e}{\ell}} \tand a_{2,\e}:=a_{2}(1+\frac{\e}{\ell}),\\
&\rho_{2,\e,z}:=\rho_{2}^{1+\frac{\e}{\ell}+\frac{z}{a_{2}}} \tand a_{2,\e}:=a_{2}(1+\frac{\e}{\ell}+\frac{z}{a_{2}}).
\end{eqalign}
\proofparagraph{Case $n=k$ and $\ell\geq 1$}
The condition $rt\geq t_{\alpha}$ translates to
\begin{eqalign}
(\ell+\e)(a_{1}-a_{2})\ln\frac{1}{\rho}\geq \ln t_{\alpha},
\end{eqalign}
which is true for small enough $\rho$. Here we have the case $\delta\geq t$
\begin{eqalign}
\delta=\rho_{2}^{n}\geq \rho_{2}^{n+z+\ell+\e}=t,
\end{eqalign}
and so we study the exponents $\frac{ m_{2}(r,t)}{\sigma_{t}}$ and $m_{2}(r,t)$. We write explicitly the first exponent
\begin{eqalign}
\frac{ m_{2}(r,t)}{\sigma_{t}}=\frac{\ln{rt}}{\sqrt{\beta\ln\frac{\delta}{t}}   }-\sqrt{\beta\ln\frac{\delta}{t}}   =&\sqrt{\ell} \para{\frac{\ln{\frac{\rho_{2,\e}}{\rho_{1}}}}{\sqrt{\beta\ln\para{\frac{1}{\rho_{2,\e,z}} }}   }-\sqrt{\beta\ln\para{\frac{1}{\rho_{2,\e,z}} }}  }\\
=&\sqrt{\ell} \sqrt{\ln{\frac{1}{\rho}}}\para{\frac{(a_{1}-a_{2,\e})}{\sqrt{\beta\para{a_{2,\e,z}}  }   }-\sqrt{\beta\para{a_{2,\e,z}}  }   }\\
\geq &\sqrt{\ln{\frac{1}{\rho}}}\sqrt{\ell}b_{1,1},
\end{eqalign}
where
\begin{eqalign}
b_{1,1}:=\frac{a_{1}-a_{2}(1+\e) }{\sqrt{\beta\para{a_{2}(1+\e+\frac{z}{a_{2}})}  }   }-\sqrt{\beta\para{a_{2}(1+\e+\frac{z}{a_{2}})}  } .
\end{eqalign}
The second exponent is
\begin{eqalign}
m_{2}(r,t)=\ln{rt}-\beta\ln\frac{\delta}{t}    =&\ell\para{\ln{\frac{\rho_{2,\e}}{\rho_{1}}}-\beta\ln\para{\frac{1}{\rho_{2,\e,z}} }}\\
=&\ell\ln{\frac{1}{\rho}}\para{a_{1}-a_{2,\e}-\beta\para{a_{2,\e,z}}}\\
\geq&\ell\ln{\frac{1}{\rho}} b_{1,2},
\end{eqalign}
where
\begin{eqalign}
b_{1,2}:=a_{1}-a_{2}(1+\e)(1+\beta)-\beta z.
\end{eqalign}
Therefore, we are comparing
\begin{eqalign}\label{eq:twoexponentstocompare}
\expo{-\frac{1}{32}\ell\ln{\frac{1}{\rho}}b_{1,1}^{2}}\tand  \expo{-c_{\alpha}\para{b_{1,2}0.5\ell\ln{\frac{1}{\rho}}}^{\alpha}}.
\end{eqalign}
To simplify the bounds we take $\alpha>1$ and small enough $\rho$ so that
\begin{eqalign}
\frac{1}{32}b_{1,1}^{2}<c_{\alpha}\para{b_{1,2}0.5}^{\alpha} \para{\ln{\frac{1}{\rho}}}^{\alpha-1}.
\end{eqalign}
That way we only keep the first exponential term in \Cref{eq:twoexponentstocompare} because it has smaller decay.
\proofparagraph{Case $n>k$ and $\ell\geq 0$ and $z=0$}
The condition $rt\geq t_{\alpha}$ translates to
\begin{eqalign}
(n-k)(a_{2}-a_{3})\ln\frac{1}{\rho}\geq \ln t_{\alpha},
\end{eqalign}
which is true for small enough $\rho$.
\proofparagraph{Subcase $t< \delta$}
This subcase happens when
\begin{eqalign}
\frac{\delta}{t}> 1\doncl \ell+\e> \para{ \para{\ln\frac{1}{\rho_{2}} }^{-1}\ln\para{\frac{\rho_{3}}{\rho_{2}}}}  (n-k)=\frac{a_{2}-a_{3}}{a_{2}}(n-k)
\end{eqalign} and so we have $\ell>n- k\geq 1$. The first exponent now is more complicated because it contains $n-k$ too
\begin{eqalign}
\frac{ m_{2}(r,t)}{\sigma_{t}}=&\frac{\ln{rt}}{\sqrt{\beta\ln\frac{\delta}{t}}   }-\sqrt{\beta\ln\frac{\delta}{t}}   \\
=&\sqrt{\ell} \para{\frac{\ln{\frac{\rho_{2,\e}}{\rho_{1}}}}{\sqrt{\beta\ln\para{\para{\frac{\rho_{2}}{\rho_{3}}}^{(n-k)/\ell}\frac{1}{\rho_{2,\e}} }}   }-\sqrt{\beta\ln\para{\para{\frac{\rho_{2}}{\rho_{3}}}^{(n-k)/\ell}\frac{1}{\rho_{2,\e}} }}  }\\
&+\frac{n-k}{\sqrt{\ell}}\para{\frac{\ln{\frac{\rho_{3}}{\rho_{2}}}}{\sqrt{\beta\ln\para{\para{\frac{\rho_{2}}{\rho_{3}}}^{(n-k)/\ell}\frac{1}{\rho_{2,\e}} }}    }}\\
=&\sqrt{\ell} \sqrt{\ln{\frac{1}{\rho}}}\para{\frac{(a_{1}-a_{2,\e})}{\sqrt{\beta\para{a_{2,\e}-\frac{n-k}{\ell}(a_{2}-a_{3})}  }   }-\sqrt{\beta\para{a_{2,\e}-\frac{n-k}{\ell}(a_{2}-a_{3})}  }   }\\
&+\frac{n-k}{\sqrt{\ell}}\sqrt{\ln{\frac{1}{\rho}}}\para{\frac{(a_{2}-a_{3})}{\sqrt{\beta\para{a_{2,\e}-\frac{n-k}{\ell}(a_{2}-a_{3})}  } }}\\
\geq &\sqrt{\ln{\frac{1}{\rho}}}\para{\sqrt{\ell}b_{2,1}+\frac{n-k}{\sqrt{\ell}}b_{2,2}},
\end{eqalign}
where
\begin{eqalign}
b_{2,1}:=&\para{\frac{a_{1}-a_{2}(1+\e)(1+\beta )}{\sqrt{\beta a_{2}\para{1+\e}  }   }   }\\
b_{2,2}:=&\para{\frac{(a_{2}-a_{3})}{\sqrt{\beta a_{2}\para{1+\e}  } }}.
\end{eqalign}
The second exponent is
\begin{eqalign}
m_{2}(r,t)=&\ln{rt}-\beta\ln\frac{\delta}{t}    \\
=&\ell\para{\ln{\frac{\rho_{2,\e}}{\rho_{1}}}-\beta\ln\para{\para{\frac{\rho_{2}}{\rho_{3}}}^{(n-k)/\ell}\frac{1}{\rho_{2,\e}} }}+(n-k)\ln{\frac{\rho_{3}}{\rho_{2}}}\\
=&\ell\ln{\frac{1}{\rho}}\para{a_{1}-a_{2,\e}-\beta\para{a_{2,\e}-\frac{n-k}{\ell}(a_{2}-a_{3})}}+(n-k)\ln{\frac{1}{\rho}}(a_{2}-a_{3})\\
\geq&\ln{\frac{1}{\rho}}\para{\ell b_{3,1}+(n-k)b_{3,2}},
\end{eqalign}
where
\begin{eqalign}
b_{3,1}:=&a_{1}-a_{2}(1+\e)(1+\beta )\tand b_{3,2}:=a_{2}-a_{3}.
\end{eqalign}
As before to simplify the bounds we take $\alpha>1$ and $\rho$ small enough so that we keep the first bound. And we only keep the first part in terms of $\ell$ and let $c_{2}:=b_{2,1}.$
\proofparagraph{Subcase $t\geq \delta$}
In this case the exponent is simply
\begin{eqalign}\label{eq:tgeqdeltaexpo}
\para{\wt{m}_{2}(r,t)}^{\alpha}=\para{\ln{rt}}^{\alpha}=& \para{\ell(a_{1}-a_{2,\e})+(n-k)(a_{2}-a_{3})}^{\alpha}\para{\ln\frac{1}{\rho}}^{\alpha}.
\end{eqalign}
To assist with having large coefficients we take $\alpha>1$ and lower bound by
\begin{eqalign}
\eqref{eq:tgeqdeltaexpo}\geq  &\para{\ell c_{3,1}+(n-k)c_{3,2}}\para{\ln\frac{1}{\rho}},
\end{eqalign}
where
\begin{eqalign}
c_{3,1}:=&(a_{1}-a_{2}(1+\e))\para{\ln\frac{1}{\rho}}^{\alpha-1}\tand c_{3,2}:=(a_{2}-a_{3})\para{\ln\frac{1}{\rho}}^{\alpha-1}.
\end{eqalign}

\end{proof}
\subsection{ Rate of convergence to Lebesgue measure}\label{rateofconvergence}
This section is not used in this article but our above Laplace-transform estimate gives a small ball estimate for 1d-GMC which might be of independent interest. In particular, we use ideas from \cite[Proposition 6.2]{lacoin2018path} to estimate the rate of convergence 
\begin{eqalign}
\Proba{ \abs{\eta^{n}(A)-\abs{A}}>\Delta}\to 0,
\end{eqalign}
where the height of the field $U^{n}$ is $\delta_{n}$, as $\Delta\to 0$ or $n\to+\infty$, and $A=[M_{1},M_{2}]$ (or by translation invariance just $[0,M]$). Then we transfer that to the convergence of the inverse $Q^{n}(x)\to x$.
\begin{proposition}\label{ratetoLebesgue} Fix $A:=\spara{0,M}~\tforsome M>0$ and $\alpha\in [1,\beta^{-1})$. For $m>n$ and $\Delta>0$, we have the following estimates
\begin{eqalign}
\Proba{ \eta_{m}^{n}(A)<\abs{A}-\Delta}&\leq c_{1} e^{r\para{\abs{A}-\Delta}} \expo{-\frac{\abs{A}}{4\delta_{n}}\para{\ln r \delta_{n}}^{\alpha}}=:B_{n,m}^{1}(\Delta,A),\\
\tand \Proba{ \eta_{m}^{n}(A)>\abs{A}+\Delta}&\lessapprox
\frac{1}{\Delta^{p}} \abs{A}\delta_{n}^{p-1}=:B_{n}^{2}(\Delta,A),
\end{eqalign}
where $p\in (0,2]\cap [0,\frac{2}{\gamma^{2}})$, $c_{i}>0,i=1,2,3$ and the  $r=r(\delta_{n})>0$ is required from \Cref{laptraeta} to satisfy
\begin{eqalign}
r\delta_{n}\geq t_{\alpha}>1.
\end{eqalign}
If we allow $\gamma<1$, we have
\begin{eqalign}
\Proba{ \abs{\eta_{m}^{n}(A)-\abs{A}}>\Delta} \lessapprox &\branchmat{\frac{1}{\Delta^{2}}\abs{A}^{2-\gamma^{2}}\delta_{n}^{\gamma^{2}} &\tcwhen \abs{A}<\delta_{n}\\ \frac{1}{\Delta^{2}}\para{\delta_{n}^{2}+(\abs{A}-\delta_{n})\delta_{n} }&\tcwhen \delta_{n}\leq \abs{A} }.
\end{eqalign}
\end{proposition}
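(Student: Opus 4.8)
The plan is to bound the two one-sided deviations separately and then combine. For the lower deviation $\{\eta_m^n(A)<|A|-\Delta\}$ I would use the exponential Chebyshev/Markov inequality together with the Laplace-transform bound from \Cref{laptraeta}. Precisely, for any $r>0$,
\begin{eqalign}
\Proba{\eta_m^n(A)<|A|-\Delta}
=\Proba{e^{-r\eta_m^n(A)}>e^{-r(|A|-\Delta)}}
\leq e^{r(|A|-\Delta)}\Expe{e^{-r\eta_m^n(A)}}.
\end{eqalign}
Since the lower-truncated measure $\eta_m^n$ dominates (in the Laplace sense, via Kahane applied to the extra scales, or simply using that truncating at scale $\varepsilon=\delta_m$ only decreases the Laplace transform of $\eta^n$ compared to the fully-truncated one — cf.\ the comparison in \Cref{momentseta}) we may apply \Cref{laptraeta} with $t=|A|=M$. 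Taking $r=r(\delta_n)$ with $r\delta_n\geq t_\alpha$ puts us in the regime $t=|A|$; for $|A|\leq\delta_n$ the dominant term is $\exp(-\tfrac12(\varepsilon m_2(r,|A|)/(\gamma\sigma_{|A|}))^2)$, and the scaling $\sigma_{|A|}^2=\gamma^2(\ln\tfrac{\delta_n}{|A|}+M_g)$ with $m_2(r,|A|)\approx \ln(r\delta_n)$ gives, after absorbing constants and writing $|A|$ in for the number of independent "boxes" of size $\delta_n$ (i.e.\ splitting $A$ into $\lceil |A|/\delta_n\rceil$ pieces and using the smoothing/FKG structure as in \cite{lacoin2018path}), the stated bound $c_1 e^{r(|A|-\Delta)}\exp(-\tfrac{|A|}{4\delta_n}(\ln r\delta_n)^\alpha)$.

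\textbf{Upper deviation.} For $\{\eta_m^n(A)>|A|+\Delta\}$ I would instead use an $L^p$ estimate with $p\in(0,2]\cap[0,2/\gamma^2)$. Writing $\eta_m^n(A)-|A|$ as a centered quantity — note $\Expe{\eta_m^n(A)}=|A|$ by the normalization — Markov gives
\begin{eqalign}
\Proba{\eta_m^n(A)>|A|+\Delta}\leq \frac{1}{\Delta^p}\Expe{\para{\eta_m^n(A)-|A|}_+^p}\leq \frac{1}{\Delta^p}\Expe{\abs{\eta_m^n(A)-|A|}^p}.
\end{eqalign}
For $p\in(1,2]$ one estimates the centered $p$-th moment by splitting $A$ into $N:=\lceil|A|/\delta_n\rceil$ subintervals $A_1,\dots,A_N$ of length $\leq\delta_n$; the increments $\eta_m^n(A_i)-|A_i|$ are only short-range correlated (correlations vanish for intervals at distance $\geq\delta_n$), so a block/martingale argument (or directly Rosenthal-type / the $L^2$ computation when $p=2$) yields $\Expe{\abs{\eta_m^n(A)-|A|}^p}\lesssim N\sup_i\Expe{\abs{\eta_m^n(A_i)-|A_i|}^p}$. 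On a single block of size $\delta_n$, \Cref{momentseta} gives $\Expe{(\eta_m^n(A_i))^p}\lesssim \delta_n^{\zeta(p)}=\delta_n^{\,p-\frac{\gamma^2}{2}(p^2-p)}$, and since $|A_i|\leq\delta_n$ the centered moment is bounded by the same order; combining, $\Expe{\abs{\eta_m^n(A)-|A|}^p}\lesssim \tfrac{|A|}{\delta_n}\delta_n^{\zeta(p)}=|A|\,\delta_n^{\zeta(p)-1}\leq |A|\,\delta_n^{p-1}$ (for the stated range of $p$, $\zeta(p)\leq p$ when $\delta_n\le 1$). This gives $B_n^2(\Delta,A)$.

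\textbf{Combining and the $\gamma<1$ case.} The two-sided estimate follows by adding $B_{n,m}^1$ and $B_n^2$. For the final display (valid when $\gamma<1$, so that $p=2$ is admissible), I take $p=2$ throughout and carry out the $L^2$ computation explicitly: by the covariance formula \eqref{eq:Ucovariance},
\begin{eqalign}
\Expe{\abs{\eta_m^n(A)-|A|}^2}=\int_A\int_A \para{e^{\gamma^2 R^{\delta_n}(|x-y|)}-1}\,dx\,dy,
\end{eqalign}
and since $R^{\delta_n}(u)\lesssim \ln_+(\delta_n/u)$ is supported on $|x-y|\leq\delta_n$, the integrand is $\lesssim (\delta_n/|x-y|)^{\gamma^2}$ there (for $\gamma^2<1$ this is integrable), giving $\lesssim |A|\,\delta_n^{\gamma^2}\int_0^{\delta_n} u^{-\gamma^2}du\asymp |A|\,\delta_n$ when $|A|\geq\delta_n$, and $\lesssim |A|^{2-\gamma^2}\delta_n^{\gamma^2}$ when $|A|<\delta_n$ (integrating over the full square $A\times A$ of side $|A|<\delta_n$). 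Splitting the $|A|\geq\delta_n$ case as $\delta_n^2+(|A|-\delta_n)\delta_n$ just separates the "diagonal block" contribution from the rest. The expected main obstacle is the lower-deviation bound: one must be careful that \Cref{laptraeta} is genuinely applicable to the lower-truncated $\eta_m^n$ (not just the idealized $\eta^n$) — this needs the Kahane comparison for the additional scales between $\delta_m$ and $0$ — and that the single-box Laplace bound tensorizes correctly over the $\lceil|A|/\delta_n\rceil$ blocks to produce the factor $|A|/\delta_n$ in the exponent rather than merely a constant; this is exactly where the smoothing inequality \eqref{eq:smoothing-inequality}/FKG input of \cite{lacoin2018path} is used.
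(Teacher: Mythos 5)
Your overall architecture --- Chernoff plus the Laplace-transform bound of \Cref{laptraeta} for the lower tail, an $L^p$ Markov bound with a $\delta_n$-block decomposition for the upper tail, and a direct covariance computation for the $\gamma<1$ case --- matches the paper's, and your reduction from $\eta^n_m$ to $\eta^n$ via Kahane is a legitimate substitute for the paper's martingale-plus-Jensen step; the $\gamma<1$ computation is also essentially the paper's. However, the step you yourself flag as the ``main obstacle'' is a genuine gap, and the tool you propose for it fails. To convert the single-block Laplace estimate into the factor $\expo{-\frac{|A|}{4\delta_n}(\ln r\delta_n)^{\alpha}}$ you must bound $\Expe{\prod_I e^{-rZ_I}}$ \emph{from above} by a product of per-block Laplace transforms. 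FKG cannot do this: each $e^{-rZ_I}$ is a decreasing functional of the field, so \Cref{FKGineq} gives $\Expe{\prod_I e^{-rZ_I}} \geq \prod_I \Expe{e^{-rZ_I}}$, the wrong direction; and the smoothing inequality of \cite{nikula2013small} enters only in proving \Cref{laptraeta} for a single block, not in tensorizing over blocks. The paper's mechanism is elementary but essential: split the $\delta_n$-blocks covering $A$ into odd and even parity classes, apply Cauchy--Schwarz to separate the two classes (which doubles $r$), and use that within one parity class the block masses are genuinely independent because the covariance of $U^{\delta_n}$ vanishes at distances $\geq \delta_n$; counting at least $|A|/(2\delta_n)$ blocks fully inside $A$ and taking the square root yields the exponent $|A|/(4\delta_n)$, after which \Cref{laptraeta} is applied per block with $t=\delta_n$ (not with $t=|A|$, as your opening application suggests).

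In the upper-tail estimate your final inequality is reversed: for $\delta_n\leq 1$ and $p\in(1,2]$ one has $\zeta(p)\leq p$, hence $\delta_n^{\zeta(p)-1}\geq \delta_n^{p-1}$, so the bound $|A|\,\delta_n^{\zeta(p)-1}$ you derive does not imply the stated $B^2_n(\Delta,A)=\Delta^{-p}|A|\,\delta_n^{p-1}$. The fix is to use the sharper per-block bound $\Expe{(\eta^n_m(A_i))^{p}}\lesssim \delta_n^{p}$, which the paper gets from the scaling comparison $\Expe{(\eta^{n}(A\cap I))^{p}}\leq \Expe{(\delta_n\,\eta^{1}(0,1))^{p}}$ (equivalently, the first bullet of \Cref{momentseta} at $t=\delta_n$, keeping the factor $\delta^{\frac{\gamma^2}{2}(p^2-p)}$), rather than the $t^{\zeta(p)}$ bound without that factor. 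Also, your appeal to a Rosenthal-type inequality presupposes independence, which adjacent blocks do not have; the paper again uses the odd/even split, with the Bahr--Esseen inequality inside each parity class for $p\in[1,2]$ and subadditivity for $p\in(0,1)$, and this should be made explicit rather than left as a generic ``block argument.''
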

\begin{proof}
The case of measure zero sets $\abs{A}=0$ can be ignored because GMC is bi-\Holder and so the estimates are true immediately.
~\proofparagraph{Estimating \sectm{$\Proba{ \eta^{n}(A)<\abs{A}-\Delta}$}}
Here we will use the Laplace transform function $e^{-rX}$, a convex function of $X$. We use \cite[Lemma 6.]{bacry2003log} which says that $X_{m}:=\eta_{m}$ is a martingale in $m$. So its convex function is a submartingale and hence by Jensen's
\begin{eqalign}
 \Expe{-r\eta^{n}_{m}(A)}\leq \Expe{-r\eta^{n}(A)}.
\end{eqalign}
 In order to utilize the independence of $\eta^{n}(A)$ for distant intervals, we consider the collection
\begin{eqalign}
C:=\set{ [\delta_{n}k, \delta_{n}(k+1)]: k=0,...,\ceil{\frac{M}{\delta_{n}}}    }
\end{eqalign}
which will cover the set $A$ and use $C_{odd},C_{even}$ for odd and even $k$  respectively. Therefore, first we apply Chernoff inequality and then let $Z_{I}:=\eta^{n}\para{A\cap I}$ to obtain an upper bound
\begin{eqalign}
\Proba{ \eta^{n}_{m}(A)<\abs{A}-\Delta}\leq& e^{r\para{\abs{A}-\Delta}}\Expe{-r\eta^{n}(A)}\\
=&e^{r\para{\abs{A}-\Delta}}\Expe{\expo{-r\sum_{I\in C_{odd}  } Z_{I}-r\sum_{I\in C_{even}  } Z_{I} }}\\
\stackrel{Cauchy-Schwartz}{\leq} &e^{r\para{\abs{A}-\Delta}}\sqrt{\Expe{\expo{-2r\sum_{I\in C_{odd}  } Z_{I}}}     }\sqrt{\Expe{\expo{-2r\sum_{I\in C_{even}  } Z_{I}}}     }\\
\stackrel{ind}{=}& e^{r\para{\abs{A}-\Delta}} \sqrt{\prod_{I\in C_{odd}  }\Expe{\expo{-2r Z_{I}}}     }\sqrt{\prod_{I\in C_{even}  }\Expe{\expo{-2rZ_{I}}}     }\\
=&e^{r\para{\abs{A}-\Delta}} \sqrt{\prod_{I\in C  }\Expe{\expo{-2r Z_{I}}}     }.
\end{eqalign}
Here we only keep the intervals $I$ that are fully contained in $I\subset [0,M]$ and there are about
\begin{eqalign}
\#\set{ I\subset A}\geq \frac{1}{2}\frac{\abs{A}}{\delta_{n}}.
\end{eqalign}
These $Z_{I}$ are \iid and so using that we have at least $\frac{1}{2}\ceil{\frac{\abs{A}}{\delta_{n}}}$ intervals, we bound the above by
\begin{eqalign}
 e^{r\para{\abs{A}-\Delta}} \para{\Expe{\expo{-2r \etamu{0,\delta_{n}}{\delta_{n}} }}}^{\frac{\abs{A}}{4\delta_{n}}}.
\end{eqalign}
 We apply the exponential bound \Cref{laptraeta} for $t=\delta_{n}$ and $r=r(\delta_{n})$ satisfying $r\delta_{n}\geq t_{\alpha}\geq 1$ for $\alpha\in [1,\beta^{-1})$
\begin{align*}
&e^{r\para{\abs{A}-\Delta}} c   \expo{-c_{n}\frac{\abs{A}}{\delta_{n}}\frac{1}{4}\para{m_{2}}^{\alpha}}=e^{r\para{\abs{A}-\Delta}} \expo{-\frac{\abs{A}}{4\delta_{n}}\para{\ln r \delta_{n}}^{\alpha}},
\end{align*}
where in \Cref{laptraeta} we set $M_{g}=0$ and $t=\delta_{n}$ and $\e=\frac{1}{2}$.
\proofparagraph{Estimating \sectm{$\Proba{ \eta^{n}(A)>\abs{A}+\Delta}$}}
We start with a Markov inequality for  $p\in (0,\frac{2}{\gamma^{2}}\wedge 2)$ (but allowing $p=2$ when $\frac{2}{\gamma^{2}}> 2$)
\begin{eqalign}
\Proba{ \eta^{n}_m(A)>\abs{A}+\Delta}\leq \frac{1}{\para{\Delta}^{p}} \Expe{\abs{\eta_m^{n}(A)-\abs{A}}^{p}}.
\end{eqalign}
Here we will use the same decomposition with $Z_{I}:=\abs{\eta_m^{n}(A\cap I)-\abs{A\cap I}}$
\begin{eqalign}
\eta_m^{n}(A)-\abs{A}=\sum_{I\in C_{odd}  } Z_{I}+\sum_{I\in C_{even}  } Z_{I}
\end{eqalign}
to get the upper bound
\begin{eqalign}
\Expe{\para{\eta_m^{n}(A)-\abs{A}}^{p}}\leq 2^{p-1}\Expe{\para{\sum_{I\in C_{odd}  } Z_{I}}^{p}} +2^{p-1}\Expe{\para{\sum_{I\in C_{even}  } Z_{I} }^{p}}.
\end{eqalign}
Then we use that for independent centered random variables and $p\in[1,2]$, we have the upper bound by Bahr-Esseen   \cite{von1965inequalities,fazekas2017general}
\begin{eqalign}
2^{p}\sum_{I\in C_{odd}  }\Expe{ Z_{I}^{p}} +2^{p}\sum_{I\in C_{even}  }\Expe{ Z_{I}^{p}}
\end{eqalign}
and for $p\in (0,1)$ this immediately follows by subaddivity of concave functions. We use the scaling law $\Expe{(\eta^{n}(A\cap I))^{p}}\leq \Expe{(\delta_{n} \eta^{1}(0,1))^{p}}$ and bound by the sum:
\begin{eqalign}
\Expe{Z_{I}^{p}}\leq  2^{p}\delta_{n}^{p}\para{\Expe{\para{\eta^{1}_{m}(0,1)}^{p}}+1^{p}},
\end{eqalign}
When $p>1$, we can again use submartingale  \cite[Lemma 6.]{bacry2003log} to bound by
\begin{eqalign}
\Expe{\para{\eta^{1}_{m}(0,1)}^{p}}\leq \Expe{\para{\eta^{1}(0,1)}^{p}}
\end{eqalign}
and when $p\in (0,1)$, we use \Cref{lowertrunp01} in \Cref{momentseta}
\begin{eqalign}
\Expe{\para{\eta^{1}_{m}(0,1)}^{p}}\leq c1^{p}.
\end{eqalign}
So all together since there are $\frac{\abs{A}}{\delta_{n}}$ elements in the sum, we have for $p>1$
\begin{eqalign}
\Proba{ \eta_{m}^{n}(A)>\abs{A}+\Delta}&\leq \frac{c_{p}}{\Delta^{p}} \abs{A}\delta_{n}^{p-1}\para{\Expe{\para{\eta^{1}(0,1)}^{p}}+1   }
\end{eqalign}
and for $p\in (0,1)$
\begin{eqalign}
\Proba{ \eta_{m}^{n}(A)>\abs{A}+\Delta}&\leq\frac{c_{p}}{\Delta^{p}} \abs{A}\delta_{n}^{p-1}2.
\end{eqalign}
\proofparagraph{Case $\gamma<1$}
Here we simply take the $L^{2}$-moment
\begin{eqalign}
\Expe{\para{\eta^{n}(M)-M}^{2}}=&\Expe{\para{\eta^{n}(M)}^{2}}-M^{2}\\
=&\iint_{[0,M]^{2}}\Expe{e^{\bar{U}^{n}(s)+\bar{U}^{n}(t)}}\ds\dt-M^{2}\\
\leq &\iint_{[0,M]^{2}}\Expe{e^{\bar{U}^{n}(s)+\bar{U}^{n}(t)}}\ind{\abs{s-t}\leq \delta_{n}}\ds\dt\\
\leq &\iint_{[0,M]^{2}}\para{\frac{\delta_{n}}{\abs{s-t}}}^{\gamma^{2}}\ind{\abs{s-t}\leq \delta_{n}}\ds\dt   \\
\lessapprox &\branchmat{M^{2-\gamma^{2}}\delta_{n}^{\gamma^{2}} &\tcwhen M<\delta_{n}\\ \delta_{n}^{2}+(M-\delta_{n})\delta_{n} &\tcwhen \delta_{n}\leq M }.
\end{eqalign}
In the second inequality, we only kept the logarithmic part of the covariance by bounding the other part $\frac{\abs{s-t}}{\delta_{n}}-1\leq 0$.
\end{proof}

\begin{remark}\label{negativetailconstant}
The second estimate goes to zero as $n\to +\infty$ for all $\Delta>0$ and $p>1$. In the first estimate, we have to be careful on the choice of $\Delta>0$. To get a decaying bound in the first estimate $B_{n,m}^{1}$, we would need to make a careful choice of $\Delta$.  Because $\frac{1}{\delta_{n}}$ beats the $(\ln\delta_{n})^{2}$ growth, we can make choices of $\Delta$ and $r_{n}$ so that we have the assumption $r_{n}\delta_{n}\geq t_{a}\alpha\geq 1$ and the above bound goes to zero as $n\to +\infty$. For example, we take $r_{n}:=b\frac{t_{\alpha}}{\delta_{n}}$ for some fixed $b>1$, then the bound becomes
\begin{eqalign}\label{eq:largedevia}
 \expo{-\frac{1}{\delta_{n}}\para{\frac{1}{4}\abs{A}\para{\ln b t_{\alpha} }^{2}- b t_{\alpha} \para{\abs{A}-\Delta}}   }=:\expo{-\frac{1}{\delta_{n}} c_{\Delta}   }.
 \end{eqalign}
For fixed $\abs{A}$, we can make a choice of $\Delta$ close enough to $\abs{A}$ so that
\begin{eqalign}
\frac{1}{4}\abs{A}\para{\ln b t_{\alpha} }^{2}- b t_{\alpha} \para{\abs{A}-\Delta}>0
\end{eqalign}
and so the overall exponent decays in $n$ for all $\Delta\in [\abs{A}-\lambda,\abs{A})$ for some small enough $\lambda>0$.
\end{remark}
 By converting to the analogous estimate for $\eta^{n}$, we similarly have the rate of convergence for the inverse.
\begin{corollary}\label{rateofconverinverse}
For the inverse $Q^{n}$ we have the rates
\begin{eqalign}
\Proba{Q^{n}(0,x)-x>\Delta}=\Proba{\para{x+\Delta}-\etam{x+\Delta}>\Delta}\leq B_{n,m}^{1}(\Delta,x+\Delta)
\end{eqalign}
and
\begin{eqalign}
\Proba{x-Q^{n}_{m}(0,x)>\Delta}=\Proba{\etam{x-\Delta}-\para{x-\Delta}>\Delta}\leq B_{n}^{2}(\Delta,x-\Delta),
\end{eqalign}
where in this second estimate $x>\Delta$ is forced otherwise the first probability is zero.
\end{corollary}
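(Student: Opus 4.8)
The plan is to convert both statements about the inverse into statements about the truncated chaos $\eta^{n}_{m}$ and then quote \Cref{ratetoLebesgue} directly; there is essentially no new analytic content, only a careful translation through the inverse-function identity. Since the lower-truncated field $U^{n}_{m}$ is a genuine continuous Gaussian field, $\eta^{n}_{m}(0,t)=\int_{0}^{t}e^{\overline{U}^{n}_{m}(s)}\ds$ is $C^{1}$, strictly increasing, and maps $[0,\infty)$ onto $[0,\infty)$, so $Q^{n}_{m}=\eta^{n}_{m}(0,\cdot)^{-1}$ is a bona fide continuous strictly increasing inverse with $Q^{n}_{m}(0,x)=Q^{n}_{m}(x)$ and $\eta^{n}_{m}\!\left(0,Q^{n}_{m}(x)\right)=x$ for $x>0$. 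First I would record the two equivalences, valid for every $s\geq 0$ without any almost-sure caveat at the truncated level:
\[
Q^{n}_{m}(x)>s \iff \eta^{n}_{m}(0,s)<x, \qquad Q^{n}_{m}(x)<s \iff \eta^{n}_{m}(0,s)>x,
\]
both immediate from strict monotonicity of $\eta^{n}_{m}(0,\cdot)$.

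For the first rate, apply the left-hand equivalence with $s=x+\Delta$:
\[
\set{Q^{n}_{m}(0,x)-x>\Delta}=\set{Q^{n}_{m}(x)>x+\Delta}=\set{\eta^{n}_{m}(0,x+\Delta)<x}=\set{(x+\Delta)-\eta^{n}_{m}(0,x+\Delta)>\Delta}.
\]
This is exactly the event estimated in \Cref{ratetoLebesgue} with $A=[0,x+\Delta]$, so the probability is at most $B^{1}_{n,m}(\Delta,x+\Delta)$. For the second rate, apply the right-hand equivalence with $s=x-\Delta$. If $x\leq\Delta$ then $\set{Q^{n}_{m}(x)<x-\Delta}\subseteq\set{Q^{n}_{m}(x)<0}=\varnothing$, which is the degenerate clause in the statement. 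If $x>\Delta$, then
\[
\set{x-Q^{n}_{m}(0,x)>\Delta}=\set{Q^{n}_{m}(x)<x-\Delta}=\set{\eta^{n}_{m}(0,x-\Delta)>x}=\set{\eta^{n}_{m}(0,x-\Delta)-(x-\Delta)>\Delta},
\]
and \Cref{ratetoLebesgue} with $A=[0,x-\Delta]$ bounds this by $B^{2}_{n}(\Delta,x-\Delta)$.

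The only points needing attention are bookkeeping ones rather than real obstacles: getting the direction of the inequality right in the two inverse identities (so that ``$Q$ too large'' is matched with ``$\eta$ too small'' and conversely), keeping everything at the lower-truncated level $\eta^{n}_{m}$ so that \Cref{ratetoLebesgue} applies verbatim rather than to the (not yet known to converge) limiting object, and isolating the regime $x\leq\Delta$ where the second event is empty. If one wants the $m\to\infty$ version of the first bound, the natural route is to pass to the limit using the martingale convergence of $\eta^{n}_{m}$ together with the monotone behaviour of the bound $B^{1}_{n,m}$; but as stated the corollary is simply a reformulation of \Cref{ratetoLebesgue}.
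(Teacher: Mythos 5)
Your proof is correct and is exactly the intended argument: the corollary is just the inversion identities $\set{Q(x)>s}=\set{\eta(0,s)<x}$ and $\set{Q(x)<s}=\set{\eta(0,s)>x}$ applied with $s=x+\Delta$ and $s=x-\Delta$, followed by \Cref{ratetoLebesgue} with $A=[0,x+\Delta]$ and $A=[0,x-\Delta]$, including the empty-event case $x\leq\Delta$. The only cosmetic point is that the first bound in the corollary is stated for the untruncated inverse $Q^{n}$, and no limit in $m$ is actually needed there: the proof of \Cref{ratetoLebesgue} bounds $\Expe{e^{-r\eta^{n}(A)}}$ itself (the submartingale step only transfers the bound to $\eta^{n}_{m}$), and $B^{1}_{n,m}$ does not in fact depend on $m$, so your inversion argument applies verbatim to $Q^{n}$ as well.
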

\begin{remark}
The extension of \cref{rateofconverinverse} to intervals
\begin{eqalign}
\Proba{\abs{Q^{n}(a,b)-(b-a)}>\Delta} 
\end{eqalign}
is unclear. This is because the current proof using decoupling doesn't work as evaluating GMCs over two distant subintervals $I,J\subset \spara{Q_{a},Q_{a}+x} $ are still correlated.
\end{remark}

\section{Moments of the maximum and minimum of modulus of GMC }\label{maxminmodGMC}
In this section we study tail estimates and small ball estimates of the maximum/minimum of shifted GMC.  
We first recall the positive/negative moments of the supremum of GMC from \cite[appendix]{binder2023inverse}.
\begin{proposition}\label{prop:maxmoduluseta}
\proofparagraph{Moments $p\in [1,\frac{2}{\gamma^{2}})$ using union bound }
For $L,\delta,x\geq 0$  and $\delta\leq 1$ we have
\begin{equation}\label{eq:maxmodulusetapone}
\Expe{\para{\supl{T\in[0,L] }\etamu{T,T+x}{\delta}}^{p}}\leq c\ceil{\frac{L}{x}}\Expe{\para{\etamu{0,x}{\delta}}^{p} }.    
\end{equation}
\proofparagraph{Moments $p\in [1,\frac{2}{\gamma^{2}})$ using decoupling}
We have the bound
\begin{eqalign}
&\Expe{\para{\int_{0}^{x}\sup_{T\in [0,L]}e^{\overline{U(T+s)\cap U(T)}} \deta_{+}(s)}^{p}  }\leq \branchmat{c x^{p(1-\alpha)} & \frac{L}{x}\geq e\\c x^{p(1-\tilde{\alpha})} & \frac{L}{x}\leq e},    
\end{eqalign}
for $\alpha:=12\sqrt{2}\gamma+\beta (p-1)+\epsilon<1$ and $\tilde{\alpha}:=\beta (p-1)+\epsilon<1$.
\proofparagraph{Moments $p\in (0,1)$}
We have the bound
\begin{eqalign}
\Expe{\para{\int_{0}^{x}\sup_{T\in [0,L]}e^{\overline{U(T+s)\cap U(T)}} \deta_{+}(s)}^{p}  }\leq & \branchmat{c x^{p(1-12\sqrt{2}\gamma)}\para{\ln\frac{1}{x}}^{p/2},     & \frac{L}{x}\geq e\\c \para{\ln\frac{1}{x}}^{p/2},     & \frac{L}{x}\leq e},
\end{eqalign}
where $c$ is a constant that diverges as $12\sqrt{2}\gamma\to 1$.
\end{proposition}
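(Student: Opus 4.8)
The plan is to handle the three regimes separately, in increasing order of delicacy, each time reducing to the GMC moment bounds of Proposition~\ref{momentseta} together with a comparison/chaining estimate for the short--range Gaussian field.

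For the first bound (union bound, $p\in[1,\tfrac{2}{\gamma^{2}})$), I would cover $[0,L]$ by the partition $\{[kx,(k+1)x]\}_{k=0}^{\lceil L/x\rceil}$ and use the monotonicity $\etamu{T,T+x}{\delta}\le \etamu{kx,(k+2)x}{\delta}$ for $T\in[kx,(k+1)x]$, so that $\supl{T\in[0,L]}\etamu{T,T+x}{\delta}\le\maxl{k}\etamu{kx,(k+2)x}{\delta}$. Since $p\ge1$, $\Expe{(\maxl{k}Z_{k})^{p}}\le\sum_{k}\Expe{Z_{k}^{p}}$; translation invariance of $U^{\delta}$ and a doubling estimate for moments of GMC on an interval of length $2x$ versus $x$ (obtained by splitting $[0,2x]=[0,x]\cup[x,2x]$, using $(a+b)^{p}\le 2^{p-1}(a^{p}+b^{p})$ and translation invariance, or directly from Proposition~\ref{momentseta}) give $\Expe{(\etamu{kx,(k+2)x}{\delta})^{p}}\le c\,\Expe{(\etamu{0,x}{\delta})^{p}}$. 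Summing over the $\lceil L/x\rceil$ indices yields \eqref{eq:maxmodulusetapone}.

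For the second bound (decoupling, $p\in[1,\tfrac{2}{\gamma^{2}})$), the point is to avoid the $\lceil L/x\rceil$ loss. I would write $\etamu{T,T+x}{\delta}=\int_{0}^{x}e^{\overline{U^{\delta}(T+s)}}\ds$ and split $U^{\delta}(T+s)=[U^{\delta}(T+s)\cap U^{\delta}(T)]+[U^{\delta}(T+s)\setminus U^{\delta}(T)]$ into independent short--range (overlap) and long--range parts; the long--range contribution over $s\in[0,x]$ produces the measure $\eta_{+}$, and after conditioning one is left exactly with $\int_{0}^{x}\supl{T\in[0,L]}e^{\overline{U(T+s)\cap U(T)}}\,\deta_{+}(s)$. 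Now $(T,s)\mapsto \overline{U(T+s)\cap U(T)}$ is a Gaussian field whose $L^{2}$--increments are uniformly controlled because the overlap regions $(\mathcal S+T+s)\cap(\mathcal S+T)$ have bounded hyperbolic area, so a Dudley/chaining bound gives $\Expe{\expo{\lambda\supl{T\in[0,L]}\overline{U(T+s)\cap U(T)}}}\le c\,x^{-\lambda\cdot 12\sqrt2\,\gamma}$ in the relevant range of $\lambda$ (the numerical constant $12\sqrt2$ coming from the chaining integral over scales down to $x$). Combining this via H\"older with the GMC moment scaling $\Expe{(\eta_{+}(0,x))^{p}}\le c\,x^{\zeta(p)}=c\,x^{p(1-\beta(p-1))}$ from Proposition~\ref{momentseta} gives $c\,x^{p(1-\alpha)}$ with $\alpha=12\sqrt2\,\gamma+\beta(p-1)+\epsilon$; when $L/x\le e$ the chaining is over an $O(1)$ range of scales and contributes no $x$--power, so only $\tilde\alpha=\beta(p-1)+\epsilon$ remains.

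For the third bound ($p\in(0,1)$), subadditivity of $t\mapsto t^{p}$ replaces the H\"older/Minkowski steps: after the same reduction I would estimate $\Expe{\big(\int_{0}^{x}\supl{T\in[0,L]}e^{\overline{U(T+s)\cap U(T)}}\deta_{+}(s)\big)^{p}}\le\Expe{(\supl{T,s}e^{\overline{U(T+s)\cap U(T)}})^{p}(\eta_{+}(0,x))^{p}}$ and then Cauchy--Schwarz to decouple. The supremum of the overlap Gaussian field over $s\in[0,x]$ has standard deviation of order $\sqrt{\ln(1/x)}$, so the Borell--TIS estimate gives $\Expe{\expo{p\supl{T,s}\overline{U(T+s)\cap U(T)}}}\le c\,x^{-12\sqrt2\,\gamma p}(\ln\tfrac1x)^{p/2}$, while $\Expe{(\eta_{+}(0,x))^{p}}\le c\,x^{\zeta(p)}$ with $\zeta(p)\ge p$ for $p<1$ so no negative $x$--power survives from that factor; the case $L/x\le e$ again removes the $x^{-12\sqrt2\,\gamma p}$ factor. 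The main obstacle is the decoupling step in part two: one must make precise that, after conditioning on the long--range field, the supremum over the full range $T\in[0,L]$ genuinely reduces to the supremum of the bounded--complexity overlap field (so that chaining yields a clean $x^{-12\sqrt2\,\gamma\lambda}$ bound) without reintroducing a factor growing in $L/x$, and the chaining constant must be good enough that $\alpha<1$ throughout the stated parameter range.
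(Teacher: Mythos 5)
First, a structural remark: this proposition is not proved in the present paper at all --- it is recalled verbatim from the appendix of \cite{binder2023inverse} --- so your proposal has to stand on its own rather than be compared with an in-paper argument. Your first part is fine: covering $[0,L]$ by $x$-blocks, the monotonicity $\etamu{T,T+x}{\delta}\leq\etamu{kx,(k+2)x}{\delta}$, the bound $\Expe{\max_k Z_k^{p}}\leq\sum_k\Expe{Z_k^{p}}$, stationarity and a doubling estimate give exactly \eqref{eq:maxmodulusetapone}; this is the standard argument.

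The second and third bullets, however, contain genuine gaps. (i) Your key input --- that $(T,s)\mapsto \overline{U(T+s)\cap U(T)}$ has ``uniformly controlled'' $L^{2}$-increments because the overlap regions have \emph{bounded} hyperbolic area --- is false: the area of $(\mathcal S+T+s)\cap(\mathcal S+T)$ is $\Expe{U(T+s)U(T)}=\ln(1/|s|)$, which diverges as $s\to 0$, and in $T$ the overlap field is itself log-correlated (decorrelating only at unit scale, not at scale $x$). Consequently the assertion that ``Dudley/chaining gives $\Expe{\exp(\lambda\sup_{T\in[0,L]}\overline{U(T+s)\cap U(T)})}\leq c\,x^{-12\sqrt2\,\gamma\lambda}$'' is not a routine consequence of what you wrote; producing that uniform-in-$s$ maximum estimate with the explicit constant $12\sqrt2\,\gamma$ \emph{is} the substantive content of the proposition, and you have only restated it. (ii) Your decoupling of the supremum factor from $\eta_{+}$ loses powers of $x$ that the statement does not permit. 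For $p\in[1,2/\gamma^{2})$, a H\"older split forces a finite conjugate exponent on the lognormal supremum, whose $q$-th moments grow like a power of $x^{-1}$ increasing in $q$ (the fluctuations are Gaussian of size $\sqrt{\ln(1/x)}$), so the loss is a fixed power of $x$, not something absorbed into the $\epsilon$ in $\alpha$ without a quantitative argument. For $p\in(0,1)$ the problem is sharper: after pulling out $\sup_{T,s}e^{\overline{U(T+s)\cap U(T)}}$ and applying Cauchy--Schwarz you are left with $\Expe{\para{\eta_{+}(0,x)}^{2p}}^{1/2}\lessapprox x^{\zeta(2p)/2}=x^{p(1-\beta(2p-1))}$, which for $p\in(\tfrac12,1)$ carries an extra singular factor $x^{-p\beta(2p-1)}$ absent from the stated bound $c\,x^{p(1-12\sqrt2\gamma)}\para{\ln\tfrac1x}^{p/2}$; so this route provably cannot reach the claimed exponent. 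What is needed instead is the conditional-independence structure between the overlap field and $\eta_{+}$ (the decoupling actually carried out in \cite{binder2023inverse}), together with an honest maximum computation for the overlap field over $T\in[0,L]$, $s\in[0,x]$, which is where the constant $12\sqrt2$ and the $\para{\ln\tfrac1x}^{p/2}$ correction come from. You flagged this reduction yourself as the main obstacle; it is, and the proposal does not close it.
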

\begin{proposition}\label{prop:minmodeta}
We have for $p>0$
\begin{eqalign}
\Expe{\para{\infl{T\in[0,L] }\etamu{T,T+x}{\delta}}^{-p}}\leq &c\ceil{\para{\frac{L}{x}}}\Expe{\para{\eta^{\delta}\spara{0,\frac{x}{2}}}^{-p} }\\
\leq& c\frac{L}{\delta} x^{\alpha_{2\delta}(p)},
\end{eqalign}
where $\alpha_{2\delta}(p)=\zeta(-p)-1$ if $\frac{x}{2}\leq \delta$ and $\alpha_{2\delta}(p)=-p$ if $\frac{x}{2}\geq \delta$.
\end{proposition}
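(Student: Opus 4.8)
The plan is a covering (grid) argument that reduces the infimum over $T$ to a minimum over finitely many \emph{fixed} shifted intervals, followed by the elementary bound $(\min_j Y_j)^{-p}=\max_j Y_j^{-p}\le\sum_j Y_j^{-p}$ and the negative‑moment estimates for $\eta^{\delta}$ recorded in \Cref{momentseta}. Since the field $U^{\delta}$ is stationary, every fixed interval of a given length contributes the same expectation, so the sum collapses to a single GMC moment times the number of grid points.

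\textbf{Step 1 (grid lower bound).} Set $T_j:=j\,x/2$ for integers $0\le j\le \lceil 2L/x\rceil$, so that the intervals $[T_j,T_{j+1}]$ cover $[0,L]$. For any $T\in[T_j,T_{j+1}]$ one has $[T_{j+1},\,T_j+x]\subseteq[T,T+x]$, and $[T_{j+1},T_j+x]=[(j+1)x/2,(j+2)x/2]$ has length exactly $x/2$. Using positivity of $\eta^{\delta}$ and the a.s.\ continuity of $T\mapsto\etamu{T,T+x}{\delta}$ (so the infimum is attained and measurable), this gives
\begin{equation}
\infl{T\in[0,L]}\etamu{T,T+x}{\delta}\ \geq\ \minl{0\le j\le \lceil 2L/x\rceil}\etamu{(j+1)x/2,(j+2)x/2}{\delta}.
\end{equation}
Since $z\mapsto z^{-p}$ is decreasing, raising to the power $-p$ and bounding the maximum by the sum yields
\begin{equation}
\para{\infl{T\in[0,L]}\etamu{T,T+x}{\delta}}^{-p}\ \leq\ \suml{0\le j\le \lceil 2L/x\rceil}\para{\etamu{(j+1)x/2,(j+2)x/2}{\delta}}^{-p}.
\end{equation}
Taking expectations and invoking stationarity of $U^{\delta}$, each summand equals $\Expe{\para{\etamu{0,x/2}{\delta}}^{-p}}$, and the number of terms is at most $c\lceil L/x\rceil$; this is precisely the first displayed inequality of the proposition.

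\textbf{Step 2 (insert the GMC negative moments).} Apply \Cref{momentseta} with $q=-p<0$ and $t=x/2$. If $x/2\le\delta$ (recall $\delta\le1$) the $\delta$‑free bound gives $\Expe{\para{\etamu{0,x/2}{\delta}}^{-p}}\lessapprox x^{\zeta(-p)}$, while if $x/2\ge\delta$ the bound for $t\ge\delta$ gives $\lessapprox x^{-p}$. Combining with Step 1 and $\lceil L/x\rceil\le c\,L/x$ for $x\le L$ (the case $x>L$ being trivial), one gets $c\,\frac{L}{x}\,x^{\zeta(-p)}$, resp.\ $c\,\frac{L}{x}\,x^{-p}$. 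Finally, since $\delta\le1$ one has $\frac{L}{x}x^{\zeta(-p)}=\delta\cdot\frac{L}{\delta}x^{\zeta(-p)-1}\le\frac{L}{\delta}x^{\zeta(-p)-1}$, and in the regime $x\ge2\delta$ one has $\frac{L}{x}x^{-p}\le\frac{L}{\delta}x^{-p}$. In both cases this is $c\,\frac{L}{\delta}\,x^{\alpha_{2\delta}(p)}$, as claimed.

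\textbf{Main obstacle.} There is no deep difficulty here; the argument is essentially bookkeeping. The two points that need care are: (i) the geometry of the grid — shifting by $x/2$ rather than $x$ is exactly what forces every window $[T,T+x]$ to contain a full fixed grid interval of length $x/2$, which is why the GMC moments are evaluated at $x/2$ and why the exponent $\zeta(-p)$ (rather than $-p$ directly) appears in the sub‑$\delta$ regime; and (ii) reconciling the two regimes $x/2\le\delta$ and $x/2\ge\delta$ into the single clean bound $\frac{L}{\delta}x^{\alpha_{2\delta}(p)}$, which uses $\delta\le1$ together with, in the large‑$x$ regime, $x\ge2\delta$.
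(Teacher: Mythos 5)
The paper does not actually prove this proposition in the text you were given (it is recalled from the companion paper \cite{binder2023inverse}), so there is no internal proof to compare against; but your argument is exactly the natural one suggested by the analogous union-bound form of the supremum estimate \eqref{eq:maxmodulusetapone}: cover $[0,L]$ by a grid of spacing $x/2$ so that every window $[T,T+x]$ contains a full fixed grid interval of length $x/2$, bound $(\min_j Y_j)^{-p}\le\sum_j Y_j^{-p}$, use stationarity of $U^{\delta}$ to collapse the sum to $c\ceil{L/x}\,\Expe{\para{\eta^{\delta}[0,x/2]}^{-p}}$, and then invoke the negative moments of \Cref{momentseta}. This is correct, and it also explains why the statement features the half-length $x/2$ and the exponent $\zeta(-p)$ in the sub-$\delta$ regime.

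One caveat: your dismissal of the case $x>L$ as ``trivial'' is not accurate for the \emph{second} displayed inequality. The first inequality does hold unconditionally (your grid argument works verbatim with $\ceil{L/x}=1$), but the passage from $c\ceil{L/x}\Expe{\para{\eta^{\delta}[0,x/2]}^{-p}}$ to $c\frac{L}{\delta}x^{\alpha_{2\delta}(p)}$ genuinely uses $\ceil{L/x}\le cL/x$, i.e.\ $L\gtrsim x$. For $L\ll x$ the final bound cannot hold with a universal constant: by Jensen, $\Expe{\para{\inf_{T\in[0,L]}\eta^{\delta}[T,T+x]}^{-p}}\ge \para{\Expe{\eta^{\delta}[0,x]}}^{-p}=x^{-p}$, which does not tend to $0$ as $L\to0$ while $\frac{L}{\delta}x^{\alpha_{2\delta}(p)}$ does. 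So the last inequality of the proposition should be read under the implicit assumption $L\gtrsim x$ (as in every application in the paper, where $L$ is of order $\delta$ or larger), or with a constant absorbing this; you should state that restriction rather than call the remaining case trivial. With that qualification, your proof is complete.
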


\subsection{Maximum/Minimum of shifted GMC}
 Here we study the maximum/minimum of the doubly-shifted GMC. These estimates show up when studying the inverse in the context of the semigroup formula. Recall from \Cref{not:semigroupformula} the semigroup notation
\begin{equation}
\set{Q_{b}\bullet T\geq t}=\set{b\geq \eta(T,T+t)}.    
\end{equation}
\begin{lemma}\label{lem:maxdshiftedGMC}
For the moments $p\in [1,\frac{2}{\gamma^{2}})\tand \delta \leq 1$ we have for $L,x,b>0$
\begin{eqalign}
 &\Expe{\para{\maxl{0\leq T\leq L}\etamu{Q_{b}\bullet T+T,Q_{b}\bullet T+T+x}{\delta}}^{p}}\lessapprox \para{1+(L+2x)^{1/p}b^{q/p}}^{p} (2+L+x) x^{\alpha(p)},    
\end{eqalign}
where the exponents satisfy the constraints:
\begin{itemize}
    \item $\alpha(p)=\zeta(p)-1$ when $x\leq 1$ and $\alpha(p)=p$ when $x\geq 1$.
    \item $q$ is large enough so that  $\lambda q/p>1$ for some fixed $\lambda\in (0,1)$.
\end{itemize}
\end{lemma}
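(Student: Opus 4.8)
\textbf{Proof plan for \Cref{lem:maxdshiftedGMC}.} The plan is to reduce the doubly-shifted maximum to the ordinary (singly-shifted) maximum-modulus estimate of \Cref{prop:maxmoduluseta} by controlling the random shift $Q_{b}\bullet T$. First I would record that $Q_{b}\bullet T = \infp{t\geq 0:\eta^{\delta}(T,T+t)\geq b}$, so on the event $\set{Q_{b}\bullet T\leq R}$ uniformly in $T\in [0,L]$ the location $Q_{b}\bullet T+T$ lies in $[0,L+R]$, and hence
\begin{eqalign}
\maxl{0\leq T\leq L}\etamu{Q_{b}\bullet T+T,Q_{b}\bullet T+T+x}{\delta}\leq \maxl{0\leq S\leq L+R}\etamu{S,S+x}{\delta}.
\end{eqalign}
So I would split according to whether $\maxl{T}Q_{b}\bullet T\leq R$ or not, for a threshold $R$ to be chosen (in the end $R\approx L+2x$ up to the factor $b^{q/p}$, matching the prefactor in the claim). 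On the good event I apply the union-bound form of \Cref{prop:maxmoduluseta}, getting $c\ceil{\frac{L+R}{x}}\Expe{(\eta^{\delta}(0,x))^{p}}$, and then the single-interval GMC moments from \Cref{momentseta} give $\Expe{(\eta^{\delta}(0,x))^{p}}\leq c x^{\zeta(p)}$ for $x\leq \delta\leq 1$ and $\leq cx^{p}$ for $x\geq \delta$; absorbing the intermediate regime $\delta\leq x\leq 1$ into the bound $x^{\zeta(p)}$ (which is worse, hence valid) yields the stated exponent $\alpha(p)=\zeta(p)-1$ for $x\leq 1$ after dividing the ceiling's $1/x$, and $\alpha(p)=p$ for $x\geq 1$.

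Next I would handle the bad event $\set{\maxl{0\leq T\leq L}Q_{b}\bullet T> R}$. By \Holder's inequality with a large auxiliary exponent,
\begin{eqalign}
\Expe{\para{\maxl{T}\etamu{Q_{b}\bullet T+T,Q_{b}\bullet T+T+x}{\delta}}^{p}\ind{\maxl{T}Q_{b}\bullet T>R}}\leq \para{\Expe{\para{\maxl{T}\etamu{\cdots}{\delta}}^{pp_1}}}^{1/p_1}\Proba{\maxl{T}Q_{b}\bullet T>R}^{1/p_2},
\end{eqalign}
and for the probability I would note $\set{Q_{b}\bullet T>R}=\set{\eta^{\delta}(T,T+R)<b}=\set{\min_{T}\eta^{\delta}(T,T+R)<b}$ (after a further maximum over $T$), which is exactly a small-ball/negative-moment event handled by Markov together with \Cref{prop:minmodeta}: $\Proba{\min_{0\leq T\leq L}\eta^{\delta}(T,T+R)<b}\leq b^{q}\Expe{(\min_{T}\eta^{\delta}(T,T+R))^{-q}}\lesssim b^{q}\frac{L}{\delta}R^{\alpha_{2\delta}(q)}$. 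Choosing $q$ large enough (the constraint $\lambda q/p>1$ in the statement is precisely what makes the resulting $R$-sum / single power acceptable) and then choosing $R$ of order $L+2x$ makes this contribution at most $(L+2x)^{1/p}b^{q/p}$ times the good-event bound, which is how the prefactor $\para{1+(L+2x)^{1/p}b^{q/p}}^{p}$ in the claim arises. The first \Holder factor $\Expe{(\max_{T}\eta^{\delta}(\cdots))^{pp_1}}$ is bounded crudely by enlarging the range: since the shifted interval always sits inside $[0,\,\eta^{\delta}\text{-image bound}]$ which is a.s.\ finite, one can bound it by $\Expe{(\max_{0\leq S\leq L+\text{(large)}}\eta^{\delta}(S,S+x))^{pp_1}}$ and again apply \Cref{prop:maxmoduluseta}; any polynomial loss here is absorbed into the $(2+L+x)$ factor.

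The main obstacle I expect is the bookkeeping of the threshold $R$ and the exponent $q$: one must choose $R$ large enough that the small-ball probability $\Proba{Q_b\bullet T>R}$ is genuinely small, yet small enough that the enlarged range $[0,L+R]$ only costs the advertised polynomial prefactor $(L+2x)^{1/p}b^{q/p}$ and not more; simultaneously $q$ must be large (to get summability / a favorable power of $R$ from $\alpha_{2\delta}(q)=-q$ when $R\geq\delta$, which is the generic case since $R\gtrsim x$ need not be small) while $\lambda q/p>1$ keeps the final bound finite. A secondary technical point is that $Q_b\bullet T$ is not monotone or continuous in $T$ in an obviously convenient way, so the reduction $\set{\max_T Q_b\bullet T\leq R}\subseteq\set{\text{interval}\subseteq[0,L+R]}$ must be justified pointwise in $T$ and then the maximum over $T$ pulled outside — this is routine but is where a careless argument could go wrong. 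Once $R$ and $q$ are pinned down, the remaining estimates are direct applications of \Cref{prop:maxmoduluseta}, \Cref{prop:minmodeta} and \Cref{momentseta}.
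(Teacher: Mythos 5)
Your reduction is fine on the good event, but it breaks on the bad event $\set{\maxl{0\leq T\leq L}Q_{b}\bullet T>R}$, and this is a genuine gap rather than bookkeeping. After your Hölder step you must control $\Expe{\para{\maxl{0\leq T\leq L}\etamu{Q_{b}\bullet T+T,Q_{b}\bullet T+T+x}{\delta}}^{pp_{1}}}$, and the proposed bound --- replacing the random location by a deterministic range $[0,L+\text{(large)}]$ because the shift is ``a.s.\ finite'' --- is not available: $Q_{b}\bullet T$ has unbounded support (the GMC mass of an arbitrarily long interval is below $b$ with positive probability), so no deterministic enlargement of the range dominates the shifted interval, and \Cref{prop:maxmoduluseta} cannot be invoked. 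Worse, the quantity you would need to bound is exactly the left-hand side of the lemma with the higher moment $pp_{1}$, so as written the step is circular.

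The paper repairs precisely this point by replacing your single threshold $R$ with a full decomposition of the shift: events $O_{k,T}:=\set{Q_{b}\bullet T\in[a_{k},a_{k+1}]}$ with $a_{k}=k^{\lambda}$, $\lambda\in(0,1)$. On each slab the shift is confined to an interval $I_{k}$ of length at most $1$, so by translation invariance the maximum runs over a deterministic range of length $L+\abs{I_{k}}\leq L+1$, uniformly in $k$, where \Cref{prop:maxmoduluseta} applies and yields the factor $(1+L+\abs{I_k}+x)x^{\alpha(p)}$. The indicator of $O_{k,T}$ is relaxed to $\ind{b\geq\eta^{\delta}(T,T+a_{k})}$ and decoupled from the maximum not by Hölder but by FKG (\Cref{FKGineq}), the two factors being respectively increasing and decreasing functionals of the field; Markov together with \Cref{prop:minmodeta} then bounds the second factor by $\lessapprox b^{q}\para{L/a_{k}+2}a_{k}^{-q}$ for $k\geq 1$ (the $k=0$ term being bounded by $1$), and the $L^{p}$-triangle inequality over the slabs produces the sum $\sum_{k\geq 1}k^{-\lambda q/p}$. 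This is where the hypothesis $\lambda q/p>1$ actually enters --- it is the summability condition of the slab decomposition, not a condition tied to a choice $R\approx L+2x$ --- and the prefactor $\para{1+(L+2x)^{1/p}b^{q/p}}^{p}$ is the value of that convergent sum, not the cost of enlarging the range on a single bad event. If you insist on your two-event structure you would have to iterate the splitting over polynomial scales of the shift to tame the unbounded location, which lands you back at the paper's decomposition.
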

\begin{proof}
As in the proof of \Cref{cor:shiftedGMCmoments} for the increasing-case, we start with a decomposition $I_{k}:= [a_{k},a_{k+1}]$ ,for the diverging sequence $a_{k}:=k^{\lambda}$ for $\lambda\in (0,1)$ and $k\geq 0$,
\begin{equation}
O_{k,T}:=\set{Q_{b}\bullet T\in I_{k}}=\set{\eta^{\delta}(T,T+a_{k+1})\geq b\geq \eta^{\delta}(T,T+a_{k})}.
\end{equation}
Here we see that $a_{k}=0$ gives 
We temporarily take $1/p$ power in order to apply $L_{p}$-triangle inequality. We decompose for each $T$ and upper bound
\begin{equation}
\Expe{\para{\maxl{0\leq T\leq L}\sum_{k\geq 0}\maxl{Y\in I_{k}}\etamu{Y+T,Y+T+x}{\delta}\ind{O_{k,T}}  }^{p}}^{1/p}.    
\end{equation}
Then we apply triangle inequality for the $\max$ and the Lp-triangle inequality for $p\geq 1$
\begin{equation}
\sum_{k\geq 0}\para{ \Expe{\para{\maxl{Y\in I_{k}}\maxl{0\leq T\leq L}\etamu{Y+T,Y+T+x}{\delta}\maxl{0\leq T\leq L}\ind{O_{k,T}}}^{p} } }^{1/p}    
\end{equation}
and we also we used that for non-negative functions $\maxl{x} f_{1}(x)f_{2}(x)\leq \maxl{x}f_{1}(x)\maxl{x}f_{2}(x)$. Since $a_{k}\to +\infty$, in the event $O_{k,T}$ we keep only the lower bound and apply the FKG inequality as in \Cref{cor:shiftedGMCmoments} to bound by their product
\begin{equation}
\Expe{\para{\maxl{Y\in I_{k}}\maxl{0\leq T\leq L}\etamu{Y+T,Y+T+x}{\delta} }^{p} }\Expe{\maxl{0\leq T\leq L}\ind{b\geq \eta^{\delta}(T,T+a_{k})}}.
\end{equation}
The first factor was estimated in \Cref{prop:maxmoduluseta} for $p\in [1,\frac{2}{\gamma^{2}})$
\begin{equation}
\Expe{\para{\supl{\tilde{T}\in[0,L+\abs{I_{k}}] }\etamu{\tilde{T},\tilde{T}+x}{\delta}}^{p}}\lessapprox  c(1+L+\abs{I_{k}}+x)x^{\alpha(p)},    
\end{equation}
where $\alpha(p)=\zeta(p)-1$ when $x\leq 1$ and $\alpha(p)=p$ when $x\geq 1$.\\
Next we study the second factor and its sum in $k\geq 0$. The expression is
\begin{equation}
\sum_{k\geq 0}\para{\Expe{\maxl{0\leq T\leq L}\ind{b\geq \eta^{\delta}(T,T+a_{k})}}}^{1/p}=\sum_{k\geq 0}\para{\Proba{b\geq \minl{0\leq T\leq L}\eta^{\delta}(T,T+a_{k})}}^{1/p} .    
\end{equation}
This minimum was studied in \Cref{prop:minmodeta}
\begin{equation}
\Expe{\para{\infl{T\in[0,L] }\etamu{T,T+x}{\delta}}^{-q}}\lessapprox~x^{a_{\delta}(-q)}\para{\frac{L}{x}+2},
\end{equation}
where $a(q):=\zeta(-q)-1$ when $x\leq \delta$ and $a(q):=-q$ when $x\geq \delta$. \\
We set $a_{k}:=k^{\lambda}$ for $\lambda\in (0,1)$ and $k\geq 0$. Then by subadditivity we have $\abs{I_{k}}=(k+1)^{\lambda}-k^{\lambda}\leq 1$.  Since $\delta\leq 1$, for the $k=0$ term we just bound by $1$. Whereas for $k\geq 1$ we have $a_{k}\geq 1$ and so we can use the $a_{\delta}(-q):=-q$ exponent and in turn require
\begin{equation}
\sum_{k\geq 1} \frac{1}{k^{\lambda (q-\e_{q})/p}}<\infty, 
\end{equation}
which means $\lambda (q-\e_{q})/p>1$. This is possible by taking $q>0$ large enough and small enough $\e_{q}$.
\end{proof}
\begin{remark}
One could also possibly study the tail of the above maximum using disjoints unions:
\begin{eqalign}
&\Proba{\maxl{0\leq T\leq L}\etamu{Q_{b}\bullet T+T,Q_{b}\bullet T+T+x}{\delta}\geq t }\\
=&\Proba{\bigsqcup_{k\geq 0}\bigcup_{0\leq T\leq L}\set{\etamu{Q_{b}\bullet T+T,Q_{b}\bullet T+T+x}{\delta}\geq t}\cap O_{k,T} }\\
=&\sum_{k\geq 0}\Proba{\bigcup_{0\leq T\leq L}\set{\etamu{Q_{b}\bullet T+T,Q_{b}\bullet T+T+x}{\delta}\geq t}\cap O_{k,T} }.
\end{eqalign}
\end{remark}

 Next we study the negative moments of the minimum of shifted GMC. 
\begin{lemma}\label{lem:minshiftedGMC}
For $p,b>0$ and $\delta\leq 1$ we have
\begin{eqalign}
\Expe{\para{\min_{0\leq T\leq L}\etamu{Q_{b}\bullet T+T,Q_{b}\bullet T+T+x}{\delta}    }^{-p} }&\lessapprox \para{2+(L+2)^{1/pp_{2}}b^{q/pp_{2}}}^{p}(4+L+x)x^{(\alpha(p))}\\
&= c(L,x,b) x^{(\alpha(-p))},
\end{eqalign}
where $c(L,x,b)<\infty$ when uniformly bounded $L,x,b\leq B$ and $\alpha(-p)=\zeta(-p)-1$ when $x\leq \delta$ and $\alpha(-p)=-p$ when $x\geq \delta$.
\end{lemma}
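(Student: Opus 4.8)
The plan is to mirror, almost line for line, the proof of \Cref{lem:maxdshiftedGMC} given just above, replacing the maximum-modulus input by the minimum-modulus input and working with negative powers throughout. First I would introduce the same decomposition of the random shift $Q_{b}\bullet T$: fix a diverging sequence $a_{k}:=k^{\lambda}$ with $\lambda\in(0,1)$ and $k\geq 0$, and set $I_{k}:=[a_{k},a_{k+1}]$, and decompose over the events $O_{k,T}:=\set{Q_{b}\bullet T\in I_{k}}=\set{\eta^{\delta}(T,T+a_{k+1})\geq b\geq \eta^{\delta}(T,T+a_{k})}$. Since we want to bound a negative moment (a decreasing convex function of the quantity), I would take a $1/p$ power, use that $\min_{0\le T\le L}$ of a sum over $k$ of indicator-weighted terms is bounded below by the contribution of the relevant $k$, and hence the negative power is bounded above by the sum over $k$ of the corresponding negative powers; the $L^{p}$-triangle inequality (for $p\ge 1$ after raising to $1/p$; for $0<p<1$ by subadditivity of concave functions) lets me pull the sum outside.

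The key step is then, for each $k$, to bound
\begin{eqalign}
\Expe{\para{\min_{Y\in I_{k}}\min_{0\leq T\leq L}\etamu{Y+T,Y+T+x}{\delta}\,\ind{O_{k,T}}}^{-p}}.
\end{eqalign}
In the event $O_{k,T}$ I keep only the lower bound $b\geq \eta^{\delta}(T,T+a_{k})$ (so I am really controlling the event $\min_{0\le T\le L}\eta^{\delta}(T,T+a_{k})\le b$), and then I invoke the FKG inequality \Cref{FKGineq} exactly as in the proof of \Cref{lem:maxdshiftedGMC} and \Cref{cor:shiftedGMCmoments} to factor the expectation into a product of (i) the negative $p$-moment of $\min_{\tilde T\in[0,L+|I_k|]}\eta^{\delta}(\tilde T,\tilde T+x)$, which is estimated by \Cref{prop:minmodeta} as $\lessapprox (2+ (L+|I_k|)/x)\,x^{\alpha_{2\delta}(p)}$ with $\alpha_{2\delta}(p)=\zeta(-p)-1$ if $x\le\delta$ and $=-p$ if $x\ge\delta$; and (ii) a factor $\para{\Proba{b\geq \min_{0\leq T\leq L}\eta^{\delta}(T,T+a_{k})}}^{1/p}$. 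For the second factor I apply Markov in a high negative moment $q$ and again \Cref{prop:minmodeta}:
\begin{eqalign}
\Proba{b\geq \min_{0\leq T\leq L}\eta^{\delta}(T,T+a_{k})}\leq b^{q}\,\Expe{\para{\min_{0\leq T\leq L}\eta^{\delta}(T,T+a_{k})}^{-q}}\lessapprox b^{q}\,a_{k}^{-q}\para{2+\tfrac{L}{a_{k}}},
\end{eqalign}
using that $a_{k}\geq 1$ for $k\geq 1$ and $\delta\leq 1$, so that the exponent is $-q$. The $k=0$ term I bound trivially by $1$, absorbing it into the constant. Summing $\sum_{k\ge1}(b^{q}a_k^{-q}(2+L/a_k))^{1/p}\lessapprox b^{q/p}\sum_{k\ge1}k^{-\lambda q/p}(2+L)^{1/p}$, convergence requires $\lambda q/p>1$, which I secure by taking $q$ large (legitimate since GMC has all negative moments). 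Collecting the two factors over $k$ gives the claimed $c(L,x,b)\,x^{\alpha(-p)}$ with $c(L,x,b)=\para{2+(L+2)^{1/pp_{2}}b^{q/pp_{2}}}^{p}(4+L+x)$ after the same bookkeeping (the auxiliary H\"older exponent $p_{2}$ enters if one prefers to split the FKG product via H\"older rather than use it directly, exactly as in \Cref{lem:maxdshiftedGMC}), finite whenever $L,x,b$ are uniformly bounded.

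I expect the only genuinely delicate point — though still routine — to be the $x$-dependence of the first factor versus the $a_{k}$-dependence of the second: one must make sure the $\abs{I_k}$ appearing inside $\Cref{prop:minmodeta}$ (via $L+|I_k|$) stays bounded, which holds because $\abs{I_k}=(k+1)^{\lambda}-k^{\lambda}\le 1$ by subadditivity, and that the $x$-exponent comes out uniformly as $\alpha(-p)$ independent of $k$. Everything else is a transcription of the already-proven \Cref{lem:maxdshiftedGMC}, with $\Cref{prop:maxmoduluseta}$ replaced by $\Cref{prop:minmodeta}$ and positive powers replaced by negative powers; no new ideas are needed, so I would present it briefly and refer back to that proof for the shared steps.
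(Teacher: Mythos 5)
Your skeleton (decomposition over $O_{k,T}$ with $a_{k}=k^{\lambda}$, the $L^{p}$-triangle inequality, \Cref{prop:minmodeta} for the shifted minimum, Markov with a large negative moment $q$, and the summability condition $\lambda q/(pp_{2})>1$) matches the paper's proof. But the decoupling step, which is the one place where this lemma genuinely differs from \Cref{lem:maxdshiftedGMC}, is wrong as you state it. In the maximum lemma FKG applies because $\bigl(\max\eta\bigr)^{p}$ is an increasing functional of the field while the indicator $\ind{b\geq\eta^{\delta}(T,T+a_{k})}$ is decreasing, so \Cref{FKGineq} gives $\Expe{fg}\leq\Expe{f}\Expe{g}$, the direction you need. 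Here, however, $\bigl(\min\eta\bigr)^{-p}$ is a \emph{decreasing} functional of the field (increasing $\min$, negative power), and the indicator is also decreasing; FKG for two decreasing functionals gives $\Expe{fg}\geq\Expe{f}\Expe{g}$, i.e.\ the product of expectations is a \emph{lower} bound, so you cannot "invoke FKG exactly as in \Cref{lem:maxdshiftedGMC}" to get the factorization you write.

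The paper sidesteps exactly this: since the two factors are positively correlated, it decouples them by conditional H\"older with exponents $p_{1},p_{2}$, taking $p_{1}=1+\e$ (legitimate because GMC has all negative moments, so the first factor's $-pp_{1}$-moment is still controlled by \Cref{prop:minmodeta}) and $p_{2}$ large, which then forces $q$ large to keep $\lambda q/(pp_{2})>1$. This is not an optional "preference," as your parenthetical suggests, and your aside also reverses the roles: in \Cref{lem:maxdshiftedGMC} FKG is used directly with no H\"older (hence exponents $q/p$ in the constant there), while here H\"older is mandatory and is precisely what produces the exponents $1/(pp_{2})$ and $q/(pp_{2})$ appearing in the stated constant $c(L,x,b)$. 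Once you replace your FKG step by this H\"older step, the rest of your argument goes through and coincides with the paper's.
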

\begin{proof}
The proof will be the analogue of the infimum case in \Cref{cor:shiftedGMCmoments}. We start with a decomposition
\begin{equation}
O_{k,T}:=\set{Q_{b}\bullet T\in [a_{k},a_{k+1}]=:I_{k}}=\set{\eta^{\delta}(T,T+a_{k+1})\geq b\geq \eta^{\delta}(T,T+a_{k})},
\end{equation}
for some diverging sequence $a_{k}$ to be chosen latter. We decompose for each $T$ and upper bound
\begin{eqalign}
&\Expe{\para{\min_{0\leq T\leq L}\etamu{Q_{b}\bullet T+T,Q_{b}\bullet T+T+x}{\delta}    }^{-p} }\\
&\leq \Expe{\para{\maxl{Y\in I_{k}}\maxl{0\leq T\leq L}\sum_{k\geq 0}\para{\etamu{Y+T,Y+T+x}{\delta}}^{-1} \ind{O_{k,T}}  }^{p}}.    
\end{eqalign}
We temporarily take power $1/p$ and we apply triangle inequality for the $\max$ and the $L_p$-triangle inequality for $p\geq 1$
\begin{eqalign}
&\Expe{\para{\maxl{Y\in I_{k}}\maxl{0\leq T\leq L}\sum_{k\geq 0}\para{\etamu{Y+T,Y+T+x}{\delta}}^{-1} \ind{O_{k,T}}  }^{p}}^{1/p}\\
&\leq \sum_{k\geq 0}\para{ \Expe{\para{\maxl{Y\in I_{k}}\maxl{0\leq T\leq L}\para{\etamu{Y+T,Y+T+x}{\delta}}^{-1} \maxl{0\leq T\leq L}\ind{O_{k,T}}}^{p} } }^{1/p}.    
\end{eqalign}
We set $a_{k}:=k^{\lambda}$ for $\lambda\in (0,1)$ and $k\geq 0$. Because the two factors are highly correlated we decouple them via \Holder
\begin{eqalign}
 &\para{\Expe{\para{\maxl{Y\in I_{k}}\maxl{0\leq T\leq L}\para{\etamu{Y+T,Y+T+x}{\delta}}^{-1} \maxl{0\leq T\leq L}\ind{O_{k,T}}}^{p} }}^{1/p} \\
\leq &\para{\Expe{\para{\minl{Y\in I_{k}}\minl{0\leq T\leq L}\etamu{Y+T,Y+T+x}{\delta} }^{-pp_{1}} }}^{1/pp_{1}}\para{\Expe{\maxl{0\leq T\leq L}\ind{b\geq \eta^{\delta}(T,T+a_{k})}}}^{1/pp_{2}},
\end{eqalign}
for $\frac{1}{p_{1}}+\frac{1}{p_{2}}=1$. The first factor was estimated in \Cref{prop:minmodeta}
\begin{equation}
\Expe{\para{\infl{T\in[0,L+\abs{I_{k}}] }\etamu{T,T+x}{\delta}}^{-pp_{1}}}\lessapprox~x^{a_{\delta}(-pp_{1})}\para{\frac{L+\abs{I_{k}}}{x}+2},
\end{equation}
where $a(q):=\zeta(-q)$ when $x\leq \delta$ and $a(q):=-q$ when $x\geq \delta$. \\
For the second factor, when $k\leq 2^{1/\lambda}$, we bound  by $1$. When $k\geq 2^{1/\lambda}$, we obtain sum as in the proof of the maximum-case in \Cref{lem:maxdshiftedGMC}. Namely we apply \Cref{prop:minmodeta} for $q>0$
\begin{equation}
\para{\Expe{\maxl{0\leq T\leq L}\ind{b\geq \eta^{\delta}(T,T+a_{k})}}}^{1/pp_{2}}\leq \sum_{k\geq 1} \frac{1}{k^{\lambda (q-1)/pp_{2}}}, 
\end{equation}
and for finiteness we require $\lambda (q-1)/pp_{2}>1$. This is possible by taking $q>0$ large enough. Finally, we are free to take $p_{1}=1+\e$ for arbitrarily small $\e>0$ since $\eta$ has all its negative moments. That will force $p_{2}=\frac{1+\e}{\e}$ to be arbitrarily large and so again we need to ensure that we take $q$ large enough.
\end{proof}

\subsection{Version with perturbation}
 Next we go over the versions we need for \Cref{sec:multipointmaximum}. We have the maximum version.

\begin{lemma}\label{lem:maxshiftedGMCpertubation}
Fix $L\geq 0$, $P:=[e^{-r},e^{r}]$ for $r<2$ and $c<1$. For $p>0$ we have 
\begin{eqalign}
&\Expe{\para{\max_{T\in [0,L]}\maxls{u\in P}e^{-u}\eta\spara{Q\para{\para{e^{u}-e^{-r}}c}\bullet T+T,Q\para{\para{e^{u}-e^{-r}}c}\bullet T+T+t}   }^{-p} }\lessapprox c(L,t) t^{\alpha(p)},
\end{eqalign}
for the same exponents as in \Cref{lem:maxdshiftedGMC} and $c(L,t)$ is uniformly bounded for bounded $t,L<B$.
\end{lemma}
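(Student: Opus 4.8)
The plan is to reduce \Cref{lem:maxshiftedGMCpertubation} to the unperturbed statement \Cref{lem:maxdshiftedGMC} by absorbing the finite perturbation factors $e^{-u}$ and the modified shift $Q((e^{u}-e^{-r})c)\bullet T$ into uniform constants. First I would use that $u\in P=[-r,r]$ with $r<2$, so $e^{-u}\in [e^{-2},e^{2}]$ is bounded above and below by absolute constants; hence
\begin{eqalign}
\maxls{u\in P}e^{-u}\eta\spara{Q\para{\para{e^{u}-e^{-r}}c}\bullet T+T, \,\cdot\,+t} \geq e^{-2}\maxls{u\in P}\eta\spara{Q\para{\para{e^{u}-e^{-r}}c}\bullet T+T, \,\cdot\,+t},
\end{eqalign}
so the $e^{-u}$ prefactor only costs a constant $e^{2p}$ after taking the $-p$ power. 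The remaining task is to control the shift $Q((e^{u}-e^{-r})c)\bullet T$, which ranges over $u\in P$; since $(e^{u}-e^{-r})c \leq (e^{2}-e^{-2})c =: c'$ is a fixed bounded quantity, the shifted starting point $Q((e^{u}-e^{-r})c)\bullet T + T$ lies in the interval $[T, T + Q(c')\bullet T]$.

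The key step is then to decompose over the location of this shift exactly as in the proof of \Cref{lem:maxdshiftedGMC}: for the diverging sequence $a_{k}:=k^{\lambda}$, $\lambda\in(0,1)$, set $O_{k,T}:=\set{Q(c')\bullet T\in [a_{k},a_{k+1}]}=\set{\eta^{\delta}(T,T+a_{k+1})\geq c'\geq \eta^{\delta}(T,T+a_{k})}$. On $O_{k,T}$ the perturbed shift is at most $a_{k+1}$, so the minimum over $u\in P$ of the GMC increments is bounded below by $\min_{Y\in[0,a_{k+1}]}\min_{0\le T\le L}\eta^{\delta}(Y+T,Y+T+t)$, i.e. a minimum of unshifted GMC over an interval of length $L+\abs{I_{k}}\leq L+1$. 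After taking $1/p$ powers, applying the $L_p$-triangle inequality in $k$ (or subadditivity of $x\mapsto x^{1/p}$ if $p<1$), and then \Holder{} to split the minimum-of-GMC factor from $\max_{0\le T\le L}\ind{O_{k,T}}$, I would invoke \Cref{prop:minmodeta} for the first factor (giving the exponent $\alpha(-pp_{1})$ in $t$, which is $\zeta(-pp_1)-1$ for $t\le\delta$ and $-pp_1$ for $t\ge\delta$, with $p_1=1+\e$ pushed arbitrarily close to $1$) and \Cref{prop:minmodeta} again, together with layercake, for $\para{\Proba{c'\geq \min_{0\le T\le L}\eta^{\delta}(T,T+a_{k})}}^{1/pp_{2}}\lessapprox k^{-\lambda(q-1)/pp_2}$, summable by taking $q$ large. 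This reproduces the structure of \Cref{lem:maxdshiftedGMC} and yields $c(L,t)\,t^{\alpha(p)}$ with $c(L,t)$ polynomial in $L,t$ and hence bounded for $t,L<B$.

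The main obstacle I anticipate is a bookkeeping one rather than a conceptual one: one must check that taking the maximum over $u\in P$ and the supremum over $T$ can be interchanged with the decomposition over $k$ without losing uniformity — specifically that $\max_{u\in P}\ind{O_{k,T}(u)}$ (the event with the $u$-dependent shift $(e^{u}-e^{-r})c$) is still handled by the single comparison with $Q(c')\bullet T$, because $(e^u-e^{-r})c$ is monotone increasing in $u$ and thus its range is exactly $[0,c']$. One then uses the monotonicity of $Q$ to bound the worst case by $Q(c')\bullet T$. A minor additional point is the degenerate endpoint $u=-r$, where $(e^{u}-e^{-r})c=0$ and $Q(0)\bullet T=0$, so the shift vanishes; this case is already covered by the $k=0$ term in the decomposition (where we just bound the indicator by $1$), so no separate argument is needed. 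Everything else is a routine transcription of the proof of \Cref{lem:maxdshiftedGMC}, so I would simply say ``the proof proceeds as in \Cref{lem:maxdshiftedGMC}'' after setting up the two reductions above.
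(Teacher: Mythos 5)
Your overall reduction---absorbing the bounded factor $e^{-u}$, replacing the $u$-dependent level $(e^{u}-e^{-r})c$ by a worst case $c'$, and then running the $a_{k}$-decomposition, $L_{p}$-triangle inequality and H\"older/FKG---is indeed the template the paper has in mind (it only writes out the minimum case, \Cref{lem:minshiftedGMCpertubation}, and declares the present lemma a ``modification'' of \Cref{lem:maxdshiftedGMC}). But there is a genuine gap: the chain of estimates you describe does not produce the exponent the lemma asserts. Once you lower bound the double maximum by $\min_{Y}\min_{T}\eta(Y+T,Y+T+t)$ and invoke \Cref{prop:minmodeta}, the $t$-exponent you get is $\zeta(-pp_{1})-1$ (resp.\ $-pp_{1}$), i.e.\ the exponents of \Cref{lem:minshiftedGMC}; these are strictly smaller than the claimed $\alpha(p)=\zeta(p)-1$ (resp.\ $p$) of \Cref{lem:maxdshiftedGMC}, so your concluding sentence ``this yields $c(L,t)\,t^{\alpha(p)}$'' does not follow from your own steps, and for small $t$ the weaker bound does not imply the stated one. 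The gain from $\zeta(-p)-1$ to $\zeta(p)-1$ is exactly what the shifted structure $Q(\cdot)\bullet T$ buys (it is the content of \Cref{cor:shiftedGMCmoments} and of the shifted-maximum machinery in \Cref{lem:maxdshiftedGMC}), and it is destroyed the moment you lower bound by the minimum of the \emph{unshifted} field over the whole range of possible shifts. The intended argument keeps the max structure: either read the power as $+p$ (as the reference to the exponents of \Cref{lem:maxdshiftedGMC} indicates) and mirror that proof, using the $u$-dependent decomposition events $\{Q_{b_{u}}\bullet T\in I_{k}\}$ with $b_{u}=(e^{u}-e^{-r})c$, bounding the increments by $\max_{Y\in I_{k}}\max_{T}$ and estimating them with \Cref{prop:maxmoduluseta}, the $u$-maximum being absorbed only in the indicator via $b_{u}\le e^{2}$ exactly as in the paper's proof of \Cref{lem:minshiftedGMCpertubation}; or, for the literal $-p$ power, bound the negative moment of the maximum by that of a single shifted increment (say $T=0$, $u=r$) and apply \Cref{cor:shiftedGMCmoments}, whose exponents coincide with $\alpha(p)$ up to $\e$.

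A secondary, fixable slip: with your $u$-independent decomposition on $Q(c')\bullet T$, on $O_{k,T}$ the perturbed shift is only localized to $[0,a_{k+1}]$, so the relevant extremum of unshifted GMC runs over an interval of length $L+a_{k+1}=L+(k+1)^{\lambda}$, not $L+\abs{I_{k}}\le L+1$ as you claim; the unit-length localization is precisely what the paper's $u$-dependent events preserve. The extra polynomial growth in $k$ would be harmless (the indicator factor decays like $k^{-\lambda(q-1)/(pp_{2})}$ with $q$ arbitrarily large), but the claim as written is incorrect, and fixing it still leaves the exponent mismatch above unresolved.
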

 And the following is the minimum version. 
\begin{lemma}\label{lem:minshiftedGMCpertubation}
Fix $L\geq 0$, $P:=[e^{-r},e^{r}]$ for $r<2$ and $c<1$. For $p>0$ we have 
\begin{eqalign}
&\Expe{\para{\min_{T\in [0,L]}\minls{u\in P}e^{-u}\eta\spara{Q\para{\para{e^{u}-e^{-r}}c}\bullet T+T,Q\para{\para{e^{u}-e^{-r}}c}\bullet T+T+t}   }^{-p} }\lessapprox c(L,t) t^{(\alpha(-p)-1)},
\end{eqalign}
for the same exponents as in \Cref{lem:minshiftedGMC} and $c(L,t)$ is uniformly bounded for bounded $t,L<B$.
\end{lemma}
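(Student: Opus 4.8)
The plan is to reduce \Cref{lem:minshiftedGMCpertubation} to the unperturbed estimate \Cref{lem:minshiftedGMC} by absorbing the perturbation factors $e^{-u}$ and the perturbed shift $Q\para{\para{e^{u}-e^{-r}}c}$ into uniformly bounded constants. First I would note that over $u\in P=[e^{-r},e^{r}]$ with $r<2$, the factor $e^{-u}$ lies between $e^{-e^{r}}$ and $e^{-e^{-r}}$, both strictly positive constants depending only on $r$; hence $\min_u e^{-u}\eta[\cdots]\geq e^{-e^{2}}\min_u \eta[\cdots]$ and the $e^{-u}$ contributes only a harmless multiplicative constant to the $-p$ moment. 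Similarly, since $c<1$ and $e^{u}-e^{-r}\in[0,e^{r}-e^{-r}]\subset[0,e^{2}]$, the perturbed masses $Q\para{\para{e^{u}-e^{-r}}c}\bullet T$ are monotone in the argument, so the whole $u$-minimum is comparable (up to constants) to the extremes $u=-r$ (giving shift $0$) and $u=r$ (giving a bounded shift $Q\para{\para{e^{r}-e^{-r}}c}\bullet T$).

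The key steps, in order, would be: (1) Peel off the $e^{-u}$ factor as above, leaving $\Expe{\para{\min_{T\in[0,L]}\min_{u\in P}\eta[Q((e^u-e^{-r})c)\bullet T+T, \cdots+t]}^{-p}}$ times a constant. (2) Observe that $\min_{u\in P}\eta[Q((e^u-e^{-r})c)\bullet T+T,\cdots]$ is bounded below by a two-shift minimum, or more cleanly, that since the shift $Q((e^u-e^{-r})c)\bullet T$ ranges over $[0, Q((e^r-e^{-r})c)\bullet T]\subseteq[0,Q(e^2)\bullet T]$, one has $\min_{u}\eta[Q((e^u-e^{-r})c)\bullet T+T,\cdots+t]\geq \min_{S\in[0,Q(e^2)\bullet T]}\eta[S+T,S+T+t]$. (3) Follow the exact decomposition argument of \Cref{lem:minshiftedGMC}: decompose on $O_{k,T}:=\set{Q(e^2)\bullet T\in[a_k,a_{k+1}]=:I_k}$ for $a_k=k^\lambda$, apply the $L_p$-triangle inequality and the max-splitting $\max f_1 f_2\le \max f_1\max f_2$, then decouple the GMC-minimum factor from the indicator factor by \Holder with exponents $p_1^{-1}+p_2^{-1}=1$. (4) Bound the GMC-minimum factor over the enlarged interval $[0,L+\abs{I_k}]$ via \Cref{prop:minmodeta}, yielding $x^{\alpha(-p)}\para{(L+\abs{I_k})/x + 2}$, and bound the indicator factor $\Proba{b\ge \min_{0\le T\le L}\eta^\delta(T,T+a_k)}$ by \Cref{prop:minmodeta} again with a large negative moment $q$, obtaining summability $\sum_k k^{-\lambda(q-1)/pp_2}<\infty$ by taking $q$ large; take $p_1=1+\e$ so $p_2$ is large and adjust $q$ accordingly.

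The one subtlety — and the part I expect to require the most care — is the presence of the extra $-1$ in the exponent $(\alpha(-p)-1)$ in the statement of \Cref{lem:minshiftedGMCpertubation} versus $\alpha(-p)$ in \Cref{lem:minshiftedGMC}. This shift presumably comes from the fact that after scaling out the denominator $Q_{d_{1}-c_{1}}\bullet u + u$ in the Lehto-welding application (see \Cref{eq:ratiowithmaximum} and Step 15 of the equal-length proof), one actually needs a small-ball/layercake estimate on $\min_{u}\eta[\cdots, \cdots + t]$ rather than the raw negative moment, and the layercake integral $\int_1^\infty t^{\alpha(-p)/(\cdots)}\dt$ converges only with the extra power. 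I would handle this by running the layercake argument: write the $-p$ moment as $\int_0^\infty \Proba{t^{1/p}\ge \min_{T,u}e^{-u}\eta[\cdots]}\dt$, split at $1$, and on $[1,\infty)$ reduce to $\Proba{\max_{T}\eta[T,T+t^{-1/p}]\ge c'}\le (t^{-1/p})^{\alpha(-p)+1}\cdot(\cdots)$ via \Cref{prop:maxmoduluseta} combined with FKG (exactly the mechanism used between \Cref{eq:reallinelinemaxbound} and \Cref{eq:exponenntofx}), picking up the needed $\alpha(-p)-1$ after the $t$-integration.

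Finally I would remark that the uniform boundedness of $c(L,t)$ for $t,L<B$ is immediate from the explicit form of the constants produced: each is a finite sum/product of the comparison constants from \Cref{prop:minmodeta}, \Cref{prop:maxmoduluseta}, the FKG constant $1$, and the bounded quantities $e^{e^2}$, $Q(e^2)\bullet T$-moments, none of which blow up for bounded parameters; the analogous maximum version \Cref{lem:maxshiftedGMCpertubation} is proved identically, replacing every appeal to \Cref{prop:minmodeta} by \Cref{prop:maxmoduluseta} (and \Cref{lem:minshiftedGMC} by \Cref{lem:maxdshiftedGMC}), with the same constant-absorption of $e^{-u}$ and the perturbed shift.
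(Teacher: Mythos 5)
Your proposal is correct and follows essentially the paper's own proof: the paper keeps the perturbation inside the decomposition events $O_{k,T,u}$ and only bounds $b_u=(e^u-e^{-r})c\le e^2$ at the level of the indicator, whereas you monotonically enlarge the shift range to $[0,Q(e^{2})\bullet T]$ and absorb $e^{-u}$ into a constant up front; either way one runs the \Holder decoupling of \Cref{lem:minshiftedGMC}, bounds both factors by \Cref{prop:minmodeta}, and sums in $k$ by taking $q$ large (note that with your up-front reduction the minimum on $O_{k,T}$ is over shifts in $[0,a_{k+1}]$, so the enlarged interval is $[0,L+a_{k+1}]$ rather than $[0,L+\abs{I_k}]$, which only costs a polynomial factor $k^{\lambda/(pp_{1})}$ that the large-$q$ indicator bound absorbs). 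Your concern about the extra $-1$ in the exponent is misplaced and the proposed layercake/FKG detour is unnecessary: steps (1)--(4) already yield $t^{\alpha(-p)}$ together with the covering factor $\para{L/t+2}^{1/(pp_{1})}$ from \Cref{prop:minmodeta}, which gives the stated $c(L,t)\,t^{\alpha(-p)-1}$ with $c(L,t)$ uniformly bounded for $t,L<B$, exactly as in the paper, whose proof simply ends with ``from here the proof proceeds as in \Cref{lem:minshiftedGMC}''.
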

Their proofs are modifications of \Cref{lem:maxdshiftedGMC} and \Cref{lem:minshiftedGMC}, so we only provide the proof for the minimum case.
\begin{proof}[proof of \Cref{lem:minshiftedGMCpertubation}]
The proof is the same as of \Cref{lem:minshiftedGMC} with the difference that we have to keep track of $u$. We let $b_{u}:=\para{e^{u}-e^{-r}}c$ and start with a decomposition
\begin{equation}
O_{k,T,u}:=\set{Q_{b_{u}}\bullet T\in [a_{k},a_{k+1}]=:I_{k}}=\set{\eta(T,T+a_{k+1})\geq b_{u}\geq \eta(T,T+a_{k})},
\end{equation}
for some diverging sequence $a_{k}$ to be chosen latter. We decompose for each $T$ and upper bound
\begin{eqalign}
&\Expe{\para{\min_{0\leq T\leq L}\minls{u\in P}e^{-u}\etam{Q_{b_{u}}\bullet T+T,Q_{b_{u}}\bullet T+T+t}    }^{-p} }\\
&\leq \Expe{\para{\maxl{Y\in I_{k}}\maxls{u\in P}\maxl{0\leq T\leq L}\sum_{k\geq 0}\para{e^{-u}\etam{Y+T,Y+T+t}}^{-1} \ind{O_{k,T,u}}  }^{p}}.    
\end{eqalign}
We temporarily take power $1/p$ and we apply triangle inequality for the $\max$ and the $L_p$-triangle inequality for $p\geq 1$
\begin{eqalign}
&\Expe{\para{\maxl{Y\in I_{k}}\maxls{u\in P}\maxl{0\leq T\leq L}\sum_{k\geq 0}\para{e^{-u}\etamu{Y+T,Y+T+t}{\delta}}^{-1} \ind{O_{k,T,u}}  }^{p}}^{1/p}\\
&\leq e^{r} \sum_{k\geq 0}\para{ \Expe{\para{\maxl{Y\in I_{k}}\maxl{0\leq T\leq L}\para{\etamu{Y+T,Y+T+t}{\delta}}^{-1} \maxls{u\in P}\maxl{0\leq T\leq L}\ind{O_{k,T,u}}}^{p} } }^{1/p}.    
\end{eqalign}
We set $a_{k}:=k^{\lambda}$ for $\lambda\in (0,1)$ and $k\geq 0$. Because the two factors are highly correlated we decouple them via \Holder
\begin{eqalign}
 &\para{\Expe{\para{\maxl{Y\in I_{k}}\maxl{0\leq T\leq L}\para{\etamu{Y+T,Y+T+t}{\delta}}^{-1} \maxls{u\in P}\maxl{0\leq T\leq L}\ind{O_{k,T,u}}}^{p} }}^{1/p} \\
\leq &\para{\Expe{\para{\minl{Y\in I_{k}}\minl{0\leq T\leq L}\etamu{Y+T,Y+T+t}{\delta} }^{-pp_{1}} }}^{1/pp_{1}}\\
&\cdot\para{\Expe{\maxls{u\in P}\maxl{0\leq T\leq L}\ind{b_{u}\geq \eta^{\delta}(T,T+a_{k})}}}^{1/pp_{2}},
\end{eqalign}
for $\frac{1}{p_{1}}+\frac{1}{p_{2}}=1$. For the $u$-maximum we just upper bound
\begin{equation}
b_{u}=\para{e^{u}-e^{-r}}c\leq \para{e^{r}-e^{-r}}c \leq e^{2}. 
\end{equation}
From here the proof proceeds as in the proof of \Cref{lem:minshiftedGMC}.

\end{proof}

\section{Inverse and Shifted GMC moments}
Here we state the moments for the inverse and shifted GMC from \cite{binder2023inverse}.
\begin{proposition}\label{prop:momentsofshiftedinverse}
Let $p\in \left(-\dfrac{\left(1+\frac{\gamma^{2}}{2}\right)^{2}}{2\gamma^{2}}, \infty \right)$, the height $\delta\leq 1$ and $a,x>0$.
\begin{itemize}
    \item (Positive moments) for all $p>0$
\begin{eqalign}
\Expe{(Q^{\delta}(a,a+x))^{p}}\leq& c_{1}x^{p_{1}}C_{pos}(a,\delta)+c_{2}x^{p_{2}},
\end{eqalign}
for $p_{1},p_{2}$ constrained as 
\begin{equation}
 \zeta(-p_{1})+p>1\tand p_{2}>p.   
\end{equation}

\item (Negative moments)For all  $p\in \left(0,\dfrac{\left(1+\frac{\gamma^{2}}{2}\right)^{2}}{2\gamma^{2}} \right)$
\begin{eqalign}
\Expe{(Q^{\delta}(a,a+x))^{-p}}\leq& \tilde{c}_{1}x^{-\tilde{p}_{1}}+\tilde{c}_{2}x^{-\tilde{p}_{2}},
\end{eqalign}
for $\tilde{p}_{1},\tilde{p}_{2}$ constrained as
\begin{equation}
\zeta(\tilde{p}_{1})>p+1    \tand     p>\tilde{p}_{2}.
\end{equation}
\end{itemize}
The constants dependence is
\begin{eqalign}
C_{pos}(a,\delta)&:=\maxp{\delta^{\zeta(-p_{1})+p-2},C_{inf,1}(\alpha,\delta,\rho),C_{inf,2}(\alpha,\delta,\rho) }   \\
C_{neg}(a,\delta)&:=C_{sup}(a,\delta,\rho)
\end{eqalign}
with the constants $C_{inf,1}$ , $C_{inf,2}$,$C_{sup}$ coming from \Cref{cor:shiftedGMCmoments}. 
\end{proposition}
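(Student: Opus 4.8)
\textbf{Proof proposal for \Cref{prop:momentsofshiftedinverse}.}

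The plan is to treat the positive and negative moments separately, in both cases going through the layer-cake formula and reverting to the GMC side via the hitting-time description of $Q^\delta$. Recall the identity $\{Q^\delta(a,a+x)\ge t\}=\{x\ge \eta^\delta(Q^\delta_a,Q^\delta_a+t)\}$ and, more to the point, the complementary one $\{Q^\delta(a,a+x)< t\}=\{x<\eta^\delta(Q^\delta_a,Q^\delta_a+t)\}$; these are the bridges between the inverse and the shifted GMC increments $\eta^\delta(Q^\delta_a,Q^\delta_a+\cdot)$. For the positive moments, I would write
\begin{eqalign}
\Expe{(Q^\delta(a,a+x))^p}=p\int_0^\infty t^{p-1}\Proba{x\ge \eta^\delta(Q^\delta_a,Q^\delta_a+t)}\,\dt,
\end{eqalign}
split the integral at $t=2$ (say), and on each piece bound the probability using the shifted-GMC small-ball/tail estimates from \Cref{cor:shiftedGMCmoments}: for small $t$ one gets a power $x^{p_1}$ against a negative power of $t$ controlled by $\zeta(-p_1)$, which is integrable near $0$ precisely when $\zeta(-p_1)+p>1$; for large $t$ one instead lower-bounds $\eta^\delta(Q^\delta_a,Q^\delta_a+t)$ by a full-height increment and extracts $x^{p_2}$ with $t^{-p_2}$, integrable at infinity iff $p_2>p$. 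The decomposition of the shift $Q^\delta_a$ into the dyadic-type events $\{Q^\delta_a\in[a_\ell,a_{\ell+1}]\}$ with $a_\ell=\rho\delta\,\ell^\lambda$ (exactly as in the single-point moment proof in \cite{binder2023inverse}) is what produces the constant $C_{pos}(a,\delta)$, and one reads off its $a,\delta$-dependence from the sum over $\ell$ together with the prefactors $\delta^{\zeta(-p_1)+p-2}$ from the scaling law and $C_{inf,1},C_{inf,2}$ from \Cref{cor:shiftedGMCmoments}.

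For the negative moments the structure is dual: I would use
\begin{eqalign}
\Expe{(Q^\delta(a,a+x))^{-p}}=p\int_0^\infty t^{-p-1}\Proba{Q^\delta(a,a+x)\le t}\,\dt
=p\int_0^\infty t^{-p-1}\Proba{x\le \eta^\delta(Q^\delta_a,Q^\delta_a+t)}\,\dt,
\end{eqalign}
again split at a fixed scale. For small $t$ the event $\{x\le \eta^\delta(Q^\delta_a,Q^\delta_a+t)\}$ is a GMC \emph{upper tail} event; applying Markov with exponent $\tilde p_1\in[1,\beta^{-1})$ to the (decomposed, shifted) GMC increment gives $t^{\zeta(\tilde p_1)}x^{-\tilde p_1}$, whose $t$-integral near $0$ converges iff $\zeta(\tilde p_1)>p+1$. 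For large $t$ one again passes to a full-height increment and uses a positive moment to get $t^{-\tilde p_2}x^{-\tilde p_2}$ with the trivial constraint $p>\tilde p_2$ for integrability at infinity. The admissible range of $p$, namely $p<\frac{(1+\gamma^2/2)^2}{2\gamma^2}$, is exactly the range in which one can choose $\tilde p_1<\beta^{-1}$ with $\zeta(\tilde p_1)>p+1$: maximizing $\zeta$ at $\tilde p_1=\tfrac12(1+\beta^{-1})$ gives $\zeta_{\max}=\tfrac{(\beta+1)^2}{4\beta}$, and $\zeta_{\max}-1=\frac{(1+\gamma^2/2)^2}{2\gamma^2}$ after substituting $\beta=\gamma^2/2$. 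The constant $C_{neg}(a,\delta)=C_{sup}(a,\delta,\rho)$ comes from the $\ell$-sum over the shift decomposition, now feeding the \emph{supremum}-type constant $C_{sup}$ of \Cref{cor:shiftedGMCmoments} rather than the infimum ones.

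The main obstacle is not any single estimate but the bookkeeping of the shift $Q^\delta_a$: because $\eta^\delta(Q^\delta_a,Q^\delta_a+t)$ is a GMC increment based at a \emph{random} location, one cannot apply the moment bounds directly and must interpose the decomposition $\{Q^\delta_a\in I_\ell\}$, bound $\ind{Q^\delta_a\in I_\ell}$ by $\ind{a\ge\eta^\delta(0,a_\ell)}$, and then decouple the two indicators — here via a supremum over the base point $T\in I_\ell$ together with the FKG inequality \Cref{FKGineq}, as in \Cref{cor:shiftedGMCmoments}. One must then check that the resulting $\ell$-sums converge, which forces $\lambda$ (and the Markov exponents inside $C_{inf}/C_{sup}$) to be chosen large enough; this is where the precise form of $C_{pos},C_{neg}$ and their uniformity (or lack thereof) in $a,\delta$ is determined. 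Everything else is a routine two-piece layer-cake integration once the shifted-GMC inputs from \Cref{cor:shiftedGMCmoments} and \Cref{momentseta} are in hand, and the constraints on $p_1,p_2,\tilde p_1,\tilde p_2$ stated in the proposition are exactly the integrability conditions emerging from that split.
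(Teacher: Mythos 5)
You should note at the outset that this paper does not actually prove \Cref{prop:momentsofshiftedinverse}: it is quoted verbatim from \cite{binder2023inverse} (the appendix says "Here we state the moments for the inverse and shifted GMC from \cite{binder2023inverse}"), so there is no in-paper proof to compare against. That said, your reconstruction — layercake, the hitting-time identity $\{Q^{\delta}(a,a+x)\ge t\}=\{x\ge \eta^{\delta}(Q^{\delta}_{a},Q^{\delta}_{a}+t)\}$, a split at a fixed scale, Markov with negative/positive moments of the shifted increment from \Cref{cor:shiftedGMCmoments}, and a decomposition of the random shift $Q^{\delta}_{a}$ into cells $[a_{\ell},a_{\ell+1}]$ — is exactly the machinery the paper uses in its own analogous arguments (e.g.\ the ratio proofs and \Cref{lem:maxdshiftedGMC}, \Cref{lem:minshiftedGMC}), and the constraints $\zeta(-p_{1})+p>1$, $p_{2}>p$, $\zeta(\tilde p_{1})>p+1$, $p>\tilde p_{2}$ do fall out as the integrability conditions of the two-piece integral, with the constants fed by $C_{inf},C_{sup}$. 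Two small slips are benign: the large-$t$ piece for negative moments produces $x^{-\tilde p_{2}}t^{+\tilde p_{2}}$ (not $t^{-\tilde p_{2}}$), which is why the constraint $p>\tilde p_{2}$ is needed at all; and for the small-$t$/positive-moment case both the event $\{x\ge\inf_{T}\eta(T,T+t)\}$ and the localization indicator $\{a\ge\eta(0,a_{\ell})\}$ are \emph{decreasing} in the field, so FKG gives the reverse inequality — there the decoupling must go through \Holder or a minimum of probabilities (as in \Cref{lem:minshiftedGMC} and the ratio proofs), FKG being available only in the opposite-monotonicity pairing used for $C_{sup}$.

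The genuine gap is your range verification for the negative moments. You claim $\zeta_{\max}-1=\frac{(1+\gamma^{2}/2)^{2}}{2\gamma^{2}}$; in fact $\zeta_{\max}=\frac{(\beta+1)^{2}}{4\beta}=\frac{(1+\gamma^{2}/2)^{2}}{2\gamma^{2}}$ itself, and $\zeta_{\max}-1=\frac{(1-\gamma^{2}/2)^{2}}{2\gamma^{2}}$. Hence the constraint $\zeta(\tilde p_{1})>p+1$ is satisfiable only for $p<\frac{(1-\gamma^{2}/2)^{2}}{2\gamma^{2}}$, strictly short of the proposition's stated range $p<\frac{(1+\gamma^{2}/2)^{2}}{2\gamma^{2}}$; your argument as written therefore does not reach the full range, and the "exact" match you assert is an algebra slip. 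The missing power of $t$ is precisely the $-1$ in the exponents of \Cref{cor:shiftedGMCmoments}, which originates in the covering/union bound over a shift-localization cell of fixed size (cf.\ the factor $\lceil L/x\rceil$ in \Cref{prop:maxmoduluseta}); to recover $t^{\zeta(\tilde p_{1})}$ rather than $t^{\zeta(\tilde p_{1})-1}$ in the small-$t$ regime one must localize $Q^{\delta}_{a}$ at scale comparable to $t$ (paying through the localization probability), or use a sharper argument as in \cite{binder2023inverse}. (To be fair, the restated proposition itself is in tension here — its own constraint $\zeta(\tilde p_{1})>p+1$ is empty for $p\ge\zeta_{\max}-1$ — but your proposal does not resolve this, and the claim that it does should be removed or repaired.)
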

\begin{corollary}\label{cor:shiftedGMCmoments}
Let $t>0$.
\begin{itemize}
    \item  For positive moments $p\in [1,\beta^{-1})$ we have
 \begin{eqalign}
\Expe{\para{\etamu{Q^{\delta}_{a},Q^{\delta}_{a}+t}{\delta_{0}}}^{p}} &\leq
t^{\alpha(p)}C_{sup}(a,\delta,\rho),
\end{eqalign}
where $\alpha(p):=\zeta(p)-1$ when $t\leq \delta_{0}$ and $\alpha(p):=p$ when $t\geq \delta_{0}$.

\item  For positive moments $p\in (0,1)$  we have
 \begin{eqalign}\label{eq:momentsp01shifted}
\Expe{\para{\etamu{Q^{\delta}_{a},Q^{\delta}_{a}+t}{\delta_{0}}}^{p}} &\leq
C_{sup}(a,\delta,\rho)\branchmat{t^{p(1-12\sqrt{2}\gamma)}\para{\ln\frac{1}{t}}^{p/2} & t\leq \frac{\rho \delta}{e}\\ \para{\ln\frac{1}{t}}^{p/2} & t\geq \frac{\rho \delta}{e}}.
\end{eqalign}
For simplicity we further bound by $t^{p\alpha}$ for $\alpha=1-12\sqrt{2}\gamma-\epsilon$ for small $\e>0$ to avoid carrying the log-term around.

 \item  For negative moments we have
\begin{eqalign}\label{shiftednegativeappinfmom}
\Expe{\para{\etamu{Q^{\delta}_{a},Q^{\delta}_{a}+t}{\delta_{0}}}^{-p} } \leq &t^{\alpha_{\e}(p)}C_{inf}(\alpha,\delta,\delta_{0},\rho),
\end{eqalign}
where $\alpha_{\e}(p):=\frac{\zeta(p(1+\e))-1}{1+\e}$ for $t\leq 2\delta_{0}$ and $\alpha_{\e}(p):=p$ for $t\geq 2\delta_{0}$.
\end{itemize}
\end{corollary}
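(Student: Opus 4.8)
The plan is to remove the randomness of the base point $Q^{\delta}_{a}$ by a decomposition-and-sandwich argument, of exactly the type used downstream in \Cref{lem:maxdshiftedGMC} and \Cref{lem:minshiftedGMC}. I would fix a sequence $a_{\ell}:=\rho\,\ell^{\lambda}$ with $\lambda\in(0,1)$ and small $\rho>0$, chosen so that the gaps $a_{\ell+1}-a_{\ell}$ stay uniformly bounded (by $\rho$) while $a_{\ell}\to\infty$, and write $O_{\ell}:=\set{Q^{\delta}_{a}\in[a_{\ell},a_{\ell+1}]}$. From $Q^{\delta}_{a}=\infp{t\ge 0:\eta^{\delta}[0,t]\ge a}$ and continuity of GMC, $O_{\ell}\subseteq\set{\eta^{\delta}[0,a_{\ell}]\le a}$, and on $O_{\ell}$ the increment is sandwiched, $\infl{T\in[a_{\ell},a_{\ell+1}]}\etamu{T,T+t}{\delta_{0}}\le\etamu{Q^{\delta}_{a},Q^{\delta}_{a}+t}{\delta_{0}}\le\supl{T\in[a_{\ell},a_{\ell+1}]}\etamu{T,T+t}{\delta_{0}}$. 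Since the $O_{\ell}$ are disjoint, raising to the power $p$ (or $-p$) and taking expectations turns the problem into a sum over $\ell$ of expectations of a sup (resp. inf) of the modulus of GMC over a \emph{deterministic} interval of bounded length, intersected with the decreasing event $\set{\eta^{\delta}[0,a_{\ell}]\le a}$, whose probability decays in $\ell$ (the $\ell=0$ term is treated by bounding that probability by $1$).

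For the positive moments $p\in[1,\beta^{-1})$ I would decouple the sup from the indicator by \Cref{FKGineq}: the sup is an increasing, and $\set{\eta^{\delta}[0,a_{\ell}]\le a}$ a decreasing, functional of the underlying (nonnegatively correlated) Gaussian fields $U^{\delta_{0}}$, $U^{\delta}$, so one gets a product bound. To the first factor I would apply the union-bound clause of \Cref{prop:maxmoduluseta} (after using translation invariance of the law of $\eta^{\delta_{0}}$ to start the shift at $0$) together with the GMC moment bounds of \Cref{momentseta}; this gives $\lessapprox\ceil{\rho/t}\,\Expe{\para{\etamu{0,t}{\delta_{0}}}^{p}}\lessapprox t^{\alpha(p)}$ up to $\delta_{0}$-dependent constants, with $\alpha(p)=\zeta(p)-1$ for $t\le\delta_{0}$ (the $-1$ being the $1/t$ from $\ceil{\rho/t}$) and $\alpha(p)=p$ for $t\ge\delta_{0}$. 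To the second factor I would apply Markov with an arbitrarily large exponent $r>0$, legitimate because GMC has all negative moments (\Cref{momentseta}): $\Proba{\eta^{\delta}[0,a_{\ell}]\le a}\le a^{r}\Expe{\para{\eta^{\delta}[0,a_{\ell}]}^{-r}}\lessapprox a^{r}a_{\ell}^{\alpha_{\delta}(-r)}$, with $\alpha_{\delta}(-r)=\zeta(-r)$ for $a_{\ell}\le\delta$ and $-r$ for $a_{\ell}\ge\delta$. Taking $r$ large compared with $\lambda$ makes $\sum_{\ell}a_{\ell}^{\alpha_{\delta}(-r)}<\infty$, and absorbing this series together with the $a^{r},\delta,\delta_{0},\rho,\lambda$ dependence into $C_{sup}(a,\delta,\rho)$ gives the bound. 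The clause $p\in(0,1)$ is identical, except subadditivity of $x\mapsto x^{p}$ replaces disjointness at the power step, and the first factor is estimated by the $p\in(0,1)$ clause of \Cref{prop:maxmoduluseta}, which is what produces the $(\ln\tfrac1t)^{p/2}$ factor and the exponent $t^{p(1-12\sqrt2\gamma)}$ (simplified to $t^{p(1-12\sqrt2\gamma-\e)}$).

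For the negative moments I would run the same decomposition, bounding below by the inf over $[a_{\ell},a_{\ell+1}]$ and keeping the same decreasing event $\set{\eta^{\delta}[0,a_{\ell}]\le a}$. Now $\para{\infl{\cdot}\etamu{\cdot}{\delta_{0}}}^{-p}$ is also a decreasing functional, so FKG points the wrong way; instead I would decouple by \Holder with a conjugate pair $(1+\e,\tfrac{1+\e}{\e})$. The $(1+\e)$-th moment of the inf is then bounded by \Cref{prop:minmodeta}, giving $\Expe{\para{\infl{T\in[0,a_{\ell+1}-a_{\ell}]}\etamu{T,T+t}{\delta_{0}}}^{-p(1+\e)}}^{1/(1+\e)}\lessapprox(a_{\ell+1}/\delta_{0})^{1/(1+\e)}t^{\alpha_{\e}(p)}$, with $\alpha_{\e}(p)=\tfrac{\zeta(p(1+\e))-1}{1+\e}$ for $t\le 2\delta_{0}$ and $-p$ for $t\ge 2\delta_{0}$ — the factor $2$ being exactly the $x/2$ that appears inside \Cref{prop:minmodeta}. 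The complementary \Holder factor $\Proba{\eta^{\delta}[0,a_{\ell}]\le a}^{\e/(1+\e)}$ again decays in $\ell$ for $r$ large, the $\ell$-series converges, and the $\ell$-independent constants are collected into $C_{inf}(\alpha,\delta,\delta_{0},\rho)$.

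The main obstacle I expect is the joint feasibility of the two competing requirements on the partition: the gaps $a_{\ell+1}-a_{\ell}$ must stay bounded, or else the $\ceil{L/t}$-loss in \Cref{prop:maxmoduluseta} and \Cref{prop:minmodeta} degrades the exponent of $t$, while $a_{\ell}$ must still diverge fast enough to cover $[0,\infty)$, and then the Markov exponent $r$ and the partition exponent $\lambda$ must be chosen compatibly so that the $\ell$-series converges; in the negative-moment clause there is the additional loss $p\mapsto p(1+\e)$ from the unavoidable \Holder step (FKG has the wrong sign), so one must also check $p(1+\e)$ stays in the admissible range. A secondary, mostly bookkeeping, difficulty is that every estimate ($\eta$-moments, max/min-modulus moments) changes form across $t=\delta_{0}$, resp. $t=2\delta_{0}$, so each step splits into two regimes, and the dependence of the final constants $C_{sup}$, $C_{inf}$ on $a$ (through $a^{\pm r}$), on $\delta,\delta_{0}$ (through the GMC prefactors $\delta_{0}^{\frac{\gamma^{2}}{2}(p^{2}-p)}$, etc.) and on $\rho,\lambda$ has to be tracked consistently with the statement.
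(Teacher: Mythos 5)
First, note that this paper does not actually prove \Cref{cor:shiftedGMCmoments}: it is imported verbatim from \cite{binder2023inverse} (``Here we state the moments for the inverse and shifted GMC from \cite{binder2023inverse}''), so there is no in-paper proof to match line by line. That said, your strategy for the clauses $p\in[1,\beta^{-1})$ and the negative moments — decompose $Q^{\delta}_{a}$ over a grid $a_{\ell}=\rho\ell^{\lambda}$ with bounded gaps, sandwich the increment by the sup/inf of $\eta^{\delta_{0}}(T,T+t)$ over the deterministic cell, use the inclusion $O_{\ell}\subseteq\set{\eta^{\delta}[0,a_{\ell}]\le a}$, decouple by \Cref{FKGineq} (sup case) or by a \Holder pair $(1+\e,\tfrac{1+\e}{\e})$ (inf case, where FKG has the wrong monotonicity), then invoke \Cref{prop:maxmoduluseta}, \Cref{prop:minmodeta} and \Cref{momentseta} and sum using arbitrarily large negative moments of GMC — is exactly the mechanism this paper itself deploys in \Cref{lem:maxdshiftedGMC} and \Cref{lem:minshiftedGMC}, and the phrase ``apply the FKG inequality as in \Cref{cor:shiftedGMCmoments}'' there indicates the original proof is of the same type. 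Your bookkeeping (the $\ceil{\rho/t}$ loss producing $\zeta(p)-1$ versus $p$ across $t=\delta_{0}$, the threshold $2\delta_{0}$ coming from the $x/2$ in \Cref{prop:minmodeta}, the Hölder loss $p\mapsto p(1+\e)$ explaining the $\e$ in $\alpha_{\e}(p)$, and the $\ell$-series controlled by taking the Markov exponent $r$ large relative to $\lambda$) is consistent with the stated exponents, up to harmless slips such as writing $a_{\ell+1}/\delta_{0}$ where the prefactor from \Cref{prop:minmodeta} only involves the cell length.

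The genuine gap is the clause $p\in(0,1)$. Your sandwich-plus-subadditivity route would give at best
$\Expe{\para{\sup_{T\in I_{\ell}}\etamu{T,T+t}{\delta_{0}}}^{p}}\lessapprox \ceil{\rho/t}\,\Expe{\para{\etamu{0,t}{\delta_{0}}}^{p}}\lessapprox t^{\zeta(p)-1}$,
and for $p<1$ one has $\zeta(p)-1<0$, which is nowhere near the claimed bound $t^{p(1-12\sqrt{2}\gamma)}\para{\ln\frac{1}{t}}^{p/2}$; moreover the union-bound clause \eqref{eq:maxmodulusetapone} of \Cref{prop:maxmoduluseta} is only stated for $p\in[1,\frac{2}{\gamma^{2}})$. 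The $p\in(0,1)$ clause of \Cref{prop:maxmoduluseta} that you invoke bounds a different object, namely $\int_{0}^{x}\sup_{T\in[0,L]}e^{\overline{U(T+s)\cap U(T)}}\,\deta_{+}(s)$, and passing from $\etamu{Q^{\delta}_{a},Q^{\delta}_{a}+t}{\delta_{0}}$ to that object is not a consequence of your grid sandwich: it requires the pathwise splitting of the field at the shift into the common part $U(T+s)\cap U(T)$ and the independent remainder (this is precisely where the modulus-of-continuity loss $12\sqrt{2}\gamma$ and the $\para{\ln\frac{1}{t}}^{p/2}$ factor originate, as well as the threshold $\rho\delta/e$ in \eqref{eq:momentsp01shifted}). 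So for that clause you must replace the union-bound/FKG step by the decoupling mechanism behind the second and third items of \Cref{prop:maxmoduluseta}; as written, your argument does not yield \eqref{eq:momentsp01shifted}.
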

\subsection{Precomposed term \sectm{$\eta^{k}(Q^{k+m}(x))$}}
Here we use the change of variables formula \cite[(4.9) Proposition.]{revuz2013continuous}: Consider any increasing, possibly infinite, right-continuous function $A:[0,\infty)\to [0,\infty]$ with inverse $C_{s}$, then if $f$ is a nonnegative Borel function on $[0,\infty)$ we have
\begin{equation}
\int_{[0,\infty)}f(u)dA_{u}=    \int_{[0,\infty)}f(C_{s})\ind{C_{s}<\infty}\ds.
\end{equation}
So this change of variables gives 
\begin{equation}
  \eta^{k}(Q^{k+m}(x))=\int_{0}^{x}e^{U^{k}_{k+m}(Q^{k+m}(s))}\ds  
\end{equation}
and so we can now condition by $\mathcal{U}^{k+m}_{\infty}$. Here as $m\to +\infty$, we simply get $\eta^{k}(x)$.
\begin{lemma}\label{lemm:precomposedetainvuppertrun}
Fix $\gamma<\sqrt{73} - 6 \sqrt{2}$, $x,\delta_{k+m}$, then we have
\begin{equation}\label{Bl2uppertr}
\Expe{\para{\eta^{k}(Q^{k+m}(x))-x}^{2}}\lessapprox \delta_{k}^{p} x^{2-p},
\end{equation}
for $p\in (0,1)$.
\end{lemma}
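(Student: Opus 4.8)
The plan is to reduce everything to a self‑intersection integral of the small‑scale chaos $\eta^{k+m}$ weighted by the covariance of the "band" field between the two scales, and then to the shifted‑GMC moment bounds of the appendix. Write $D:=U^{k}-U^{k+m}=W\big((\mathcal S^{\delta_{k}}_{0}\setminus\mathcal S^{\delta_{k+m}}_{0})+\cdot\big)$, the part of $U^{k}$ living on the scales between $\delta_{k+m}$ and $\delta_{k}$. By the white‑noise construction $D$ is independent of $\mathcal U^{k+m}$ (hence of $\eta^{k+m}$ and of $Q^{k+m}$), the UV divergences cancel so that $\E[D(u)^{2}]=R^{\delta_{k}}_{\delta_{k+m}}(0)<\infty$, and $e^{\overline{U^{k}}}/e^{\overline{U^{k+m}}}=e^{\overline D}$ with $\E[e^{\overline D(u)}]=1$. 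Feeding this into the change‑of‑variables formula quoted just before the statement gives $\eta^{k}(Q^{k+m}(x))=\int_{0}^{x}e^{\overline D(Q^{k+m}(s))}\,ds$, so $Y:=\eta^{k}(Q^{k+m}(x))-x=\int_{0}^{x}\big(e^{\overline D(Q^{k+m}(s))}-1\big)\,ds$.

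Squaring, applying Fubini and conditioning on $\mathcal U^{k+m}$ (using independence of $D$ and $\E[e^{\overline D}]=1$) yields
$$\E[Y^{2}]=\int_{0}^{x}\!\!\int_{0}^{x}\E\big[\phi\big(Q^{k+m}(s_{1})-Q^{k+m}(s_{2})\big)\big]\,ds_{1}\,ds_{2},\qquad \phi(r):=e^{\gamma^{2}K(r)}-1\ge 0,$$
where $K(r)=\E[D(0)D(r)]=R^{\delta_{k}}_{\delta_{k+m}}(|r|)$ is the truncated covariance of Lemma~\ref{linearU} (height $\delta_{k}$, lower cutoff $\delta_{k+m}$). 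From that explicit formula, $\phi$ is supported on $|r|\le\delta_{k}$, vanishes at $|r|=\delta_{k}$, and satisfies $\phi(r)\le C(\delta_{k}/(|r|\vee\delta_{k+m}))^{\gamma^{2}}$; equivalently, against the first moment of $\eta^{k+m}\otimes\eta^{k+m}$ one has $\phi(r)\,e^{\gamma^{2}R^{\delta_{k+m}}(|r|)}\le C\,\delta_{k}^{\gamma^{2}}|r|^{-\gamma^{2}}\ind{|r|\le\delta_{k}}$ — the inner cutoff $\delta_{k+m}$ disappears from the final bound, which is what makes the estimate uniform in $m$.

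The core step is the inner bound $\E[\phi(Q^{k+m}(s_{1})-Q^{k+m}(s_{2}))]\lesssim \delta_{k}^{\,p}|s_{1}-s_{2}|^{-p}$ for a suitable $p\in(0,1)$. By symmetry take $s_{1}<s_{2}$, $\tau:=s_{2}-s_{1}$, $R:=Q^{k+m}(s_{1},s_{2})\ge 0$; using $\phi\le C\delta_{k}^{\gamma^{2}}(\cdot\vee\delta_{k+m})^{-\gamma^{2}}\ind{\le\delta_{k}}$ and a layer‑cake decomposition reduces the inner expectation to integrals of $\P(R\le r)$ over $0<r\le\delta_{k}$. Since $\{R\le r\}=\{\eta^{k+m}(Q^{k+m}(s_{1}),Q^{k+m}(s_{1})+r)\ge\tau\}$ is a tail event for a shifted GMC of height $\delta_{k+m}$, Markov's inequality together with Corollary~\ref{cor:shiftedGMCmoments} applies: for $\delta_{k+m}\le r\le\delta_{k}$ one uses the moments $p\in[1,\beta^{-1})$ (giving $\P(R\le r)\lesssim(r/\tau)^{p}$), while for $r\le\delta_{k+m}$, where $\eta^{k+m}$ is genuinely fractal, one must use the $p\in(0,1)$ estimate of that corollary, whose exponent is $p(1-12\sqrt2\gamma)$ — positivity of this exponent forces $12\sqrt2\gamma<1$, and after absorbing the logarithmic factors, summing the layer‑cake, and combining with the $\delta_{k}^{\gamma^{2}}$ prefactor carried by $\phi$, the net gain is $\delta_{k}^{\,\gamma^{2}+12\sqrt2\gamma+o(1)}$, so one needs $\gamma^{2}+12\sqrt2\gamma<1$, i.e. $\gamma<\sqrt{73}-6\sqrt2$, to arrange $p\in(0,1)$. (The constants $C_{sup}(s_{i},\delta_{k+m})$ from Corollary~\ref{cor:shiftedGMCmoments} are uniformly bounded for $s_{i}$ in the bounded set $[0,x]$, so they are harmless.) Inserting this into the double integral and using $\int_{0}^{x}\!\int_{0}^{x}|s_{1}-s_{2}|^{-p}\,ds_{1}\,ds_{2}=\tfrac{2x^{2-p}}{(1-p)(2-p)}$ for $p\in(0,1)$ gives $\E[Y^{2}]\lesssim\delta_{k}^{\,p}x^{2-p}$.

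The main obstacle is exactly that core step: the endpoint $Q^{k+m}(s_{i})$ is a functional of $\eta^{k+m}$ and is strongly correlated with the increments of $\eta^{k+m}$ around it, so one cannot estimate the $\phi$‑weighted double integral simply by passing the expectation through $d\eta^{k+m}\otimes d\eta^{k+m}$ at a deterministic level; moreover the diagonal singularity of $\phi$ has precisely the strength of the multifractal self‑intersection of $\eta^{k+m}$, so the two must be handled jointly, and it is this that pins down the (very restrictive) range of $\gamma$ and makes the shifted‑ and maximum‑of‑GMC estimates of the appendix genuinely necessary. A secondary point is the regime $x<\delta_{k}$ (the one relevant to \eqref{eq:l2estimatepreco}): a crude grid decomposition of $[0,Q^{k+m}(x)]$ into $\delta_{k}$‑blocks together with the finite correlation length $\delta_{k+m}$ only gives $\E[Y^{2}]\lesssim\delta_{k}^{2}+\delta_{k}x$, which suffices when $x\ge\delta_{k}$ but not below it; there $[0,Q^{k+m}(x)]$ has length $\approx x\ll\delta_{k}$, so the $|u-v|\approx\delta_{k}$ part of $\phi$ only contributes on the rare event $\{Q^{k+m}(x)\gtrsim\delta_{k}\}=\{\eta^{k+m}([0,\delta_{k}])\lesssim x\}$, a far‑below‑mean chaos deviation absorbed by the small‑ball/Laplace bounds of \Cref{laptraeta} and \Cref{ratetoLebesgue}.
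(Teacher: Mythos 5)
Your proposal follows essentially the same route as the paper's proof: the change-of-variables/band-field identity $\eta^{k}(Q^{k+m}(x))=\int_{0}^{x}e^{\overline{D}(Q^{k+m}(s))}\,ds$ with $D=U^{k}-U^{k+m}$ independent of $\mathcal{U}^{k+m}$, the reduction to $\iint_{[0,x]^{2}}\E\big[\phi\big(Q^{k+m}(s_{1})-Q^{k+m}(s_{2})\big)\big]\,ds_{1}ds_{2}$ with $\phi=e^{\gamma^{2}R^{\delta_{k}}_{\delta_{k+m}}}-1$, and layercake/Markov combined with the shifted-GMC moments of \Cref{cor:shiftedGMCmoments}, with the constraint $1-12\sqrt{2}\gamma>\gamma^{2}$ (i.e.\ $\gamma<\sqrt{73}-6\sqrt{2}$) arising at exactly the same point as in the paper, namely where the $(0,1)$-moment gain must beat the $\gamma^{2}$-singularity of $\phi$ uniformly down to scale $\delta_{k+m}$. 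The only differences are cosmetic: the paper rescales by $\delta_{k}$ first and uses Markov exponents in $(0,1)$ in both regimes, whereas in your intermediate range $\delta_{k+m}\le r\le\delta_{k}$ the bound $\P(R\le r)\lesssim(r/\tau)^{p}$ with $p\ge 1$ must be capped at $1$ (or replaced by the $(0,1)$-moment bound, as the paper does) to keep the diagonal integral $\iint\tau^{-p}$ finite, and your closing remark invoking \Cref{laptraeta} and \Cref{ratetoLebesgue} for $x<\delta_{k}$ is unnecessary, since the main $\phi$-weighted estimate already covers all $x$.
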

\begin{proof}
We start by scaling the $\delta_{k}$
\begin{eqalign}
\Expe{\para{\eta^{k}(Q^{k+m}(x))-x}^{2}}=\delta_{k}^{2}\Expe{\para{\eta^{1}\para{Q^{m}\para{\frac{x}{\delta_{k}}}}-\frac{x}{\delta_{k}}}^{2}}.    
\end{eqalign}
In what follows we replace $\frac{x}{\delta_{k}}$ by $x$ for ease of notation and change it back for the final result. By first conditioning on $\mathcal{U}^{m}_{\infty}$ we have:
\begin{eqalign}
\Expe{\para{\eta^{1}(Q^{m}(x))-x}^{2}}
&\leq\Expe{\iint\limits_{[0,x]^{2}\cap \Delta_{1} }\para{\frac{\delta_{1}}{\abs{Q^{m}(s)-Q^{m}(t)}\vee  \delta_{m}} }^{\gamma^{2}}\ds\dt}- \Expe{\text{Area}\spara{\Delta_{1}}},
\end{eqalign}
where $\Delta_{1}:=\set{(s,t)\in [0,x]^{2}: \abs{Q^{m}(s)-Q^{m}(t)}\leq \delta_{1} }$.
First we estimate the expected size of the complement of the diagonal set
\begin{equation}\label{eq:areaofdiagonalset}
\Expe{\text{Area}\spara{\Delta_{1}}}=  2\iint\limits_{[0,x]^{2}, s<t } \Proba{Q^{m}(s,t)\leq  \delta_{1}}\ds\dt=2\iint\limits_{[0,x]^{2}, s<t } \Proba{t-s\leq \eta^{m}(Q^{m}(s),Q^{m}(s)+\delta_{1})}\ds\dt  
\end{equation}
and then bound by Markov for $p_{1}\in (0,1)$ and apply \cref{cor:shiftedGMCmoments}:
\begin{eqalign}
\eqref{eq:areaofdiagonalset}\leq&\iint\limits_{[0,x]^{2}, s<t }(t-s)^{-p_{1}}\Expe{\para{\eta^{m}(Q^{m}(s),Q^{m}(s)+\delta_{1})}^{p_{1}}}\ds\dt\\
\lessapprox &\delta_{1}^{p_{1}\alpha}\iint\limits_{[0,x]^{2}, s<t }(t-s)^{-p_{1}}\ds\dt\approx\delta_{1}^{p_{1}\alpha}x^{2-p_{1}}.
\end{eqalign}
For the double integral, we first study the case $\abs{Q^{m}(s)-Q^{m}(t)}\leq  \delta_{m}$. Here we get an upper bound for
\begin{eqalign}\label{eq:caselessthandeltam}
\para{\frac{\delta_{1}}{\delta_{m}} }^{\gamma^{2}} \Expe{\text{Area}\spara{\Delta_{m}}},    
\end{eqalign}
where $\Delta_{m}:=\set{(s,t)\in [0,x]^{2}: \abs{Q^{m}(s)-Q^{m}(t)}\leq \delta_{m} }$. We further scale
\begin{eqalign}
\eqref{eq:caselessthandeltam}\leq &\delta_{m}^{-\gamma^{2}}\iint\limits_{[0,x]^{2}, s<t }(t-s)^{-p_{1}}\Expe{\para{\eta^{m}(Q^{m}(s),Q^{m}(s)+\delta_{m})}^{p_{1}}}\ds\dt\\
=&\delta_{m}^{2-\gamma^{2}}\iint\limits_{\spara{0,\frac{x}{\delta_{m}}}^{2}, s<t }(t-s)^{-p_{1}}\Expe{\para{\eta^{1}(Q^{1}(s),Q^{1}(s)+\delta_{1})}^{p_{1}}}\ds\dt\\
\lessapprox&\delta_{m}^{2-\gamma^{2}} \para{\frac{x}{\delta_{m}}}^ {2-p_{1}}\approx\delta_{m}^{p_{1}-\gamma^{2}} x^{2-p_{1}}.
\end{eqalign}
This is non singular in $\delta_{m}$ by taking $p_{1}>\gamma^{2}$. Next we study the case $\abs{Q^{m}(s)-Q^{m}(t)}\geq  \delta_{m}$ to get:
\begin{eqalign}\label{eq:secondcasegreaterdeltam}
2\Expe{\iint\limits_{[0,x]^{2}\cap \Delta_{1,m} }\para{\frac{\delta_{1}}{Q^{m}(t)-Q^{m}(s)} }^{\gamma^{2}}\ds\dt},   
\end{eqalign}
for $\Delta_{1,m}:=\set{(s,t)\in [0,x]^{2}:t>s\tand  \delta_{m}\leq Q^{m}(t)-Q^{m}(s)\leq \delta_{1} }$. Using the layercake, we shift to GMC
\begin{eqalign}
\Expe{\ind{A_{1,m}}\para{\frac{1}{Q^{m}(t)-Q^{m}(s) } }^{\gamma^{2}}}    
=&\int_{c_{1}}^{c_{m}}\Proba{\eta^{m}\para{Q^{m}(s),Q^{m}(s)+\frac{1}{r^{1/\gamma^{2}}}}\geq t-s   }\dr,
\end{eqalign}
for $ A_{1,m}:=\set{\delta_{m}\leq Q^{m}(t)-Q^{m}(s)\leq \delta_{1}}$ and $c_{m}:=\para{\frac{1}{\delta_{m}}}^{\gamma^{2}}$. We again apply Markov for $p_{2}\in (0,1)$ and \cref{cor:shiftedGMCmoments}
\begin{eqalign}
\eqref{eq:secondcasegreaterdeltam}\lessapprox& \iint\limits_{[0,x]^{2}, s<t }(t-s)^{-p_{2}}\ds\dt \cdot \int_{c_{1}}^{c_{m}} \frac{1}{r^{\frac{p_{2}\alpha}{\gamma^{2}}}} \dr. 
\end{eqalign}
Here to avoid getting a divergent factor in $\delta_{m}$, we require $\frac{p_{2}\alpha}{\gamma^{2}}>1$ and so
\begin{eqalign}
\alpha>  \gamma^{2}\doncl \gamma<\sqrt{73} - 6 \sqrt{2}\approx 0.0587224  
\end{eqalign}
This is the most singular factor and so we just bound by $x^{2-p_{2}}$.
\end{proof}
\end{appendices}
\bibliography{sn-bibliography.bib}

\end{document}